\documentclass{amsart}

\usepackage{amsmath}
\usepackage{amsthm}
\usepackage{amsfonts}
\usepackage{amssymb}
\usepackage{enumerate}
\usepackage{graphicx}

\usepackage{hyperref}
\hypersetup{
    colorlinks,%
    citecolor=black,%
    filecolor=black,%
    linkcolor=black,%
    urlcolor=black
}

\newcommand{\E}{\mathbb{E}}
\newcommand{\Prob}{\mathbb{P}}

\newcommand{\R}{\mathbb{R}}


\newcommand{\eps}{\varepsilon}

\newcommand{\CM}{\mathcal{M}} 
\newcommand{\med}{\mathbb{M}}
\newcommand{\BJ}{\mathbf{J}}
\newcommand{\Bj}{\mathbf{1}}

\newcommand{\indicator}[1]{\ensuremath{\mathbf{1}_{\{#1\}}}}
\newcommand{\oindicator}[1]{\ensuremath{\mathbf{1}_{{#1}}}}

\DeclareMathOperator{\tr}{tr}

\DeclareMathOperator{\Span}{Span}
\DeclareMathOperator{\range}{range}
\DeclareMathOperator{\rank}{rank}

\DeclareMathOperator{\var}{Var}

\theoremstyle{plain}
  \newtheorem{theorem}{Theorem}[section]

  \newtheorem{lemma}[theorem]{Lemma}
  \newtheorem{corollary}[theorem]{Corollary}

\theoremstyle{definition}
  \newtheorem{definition}[theorem]{Definition}
  
  \newtheorem{remark}[theorem]{Remark}

\setcounter{tocdepth}{1}

\begin{document}
\title{Eigenvectors of random matrices: A survey}

\author[S. O'Rourke]{Sean O'Rourke}
\address{Department of Mathematics, University of Colorado at Boulder, Boulder, CO 80309  }
\email{sean.d.orourke@colorado.edu}

\author{Van Vu}
\thanks{V. Vu is supported by research grants DMS-0901216 and AFOSAR-FA-9550-09-1-0167.}
\address{Department of Mathematics, Yale University, New Haven, CT 06520, USA}
\email{van.vu@yale.edu}

\author{Ke Wang}
\address{Jockey Club Institute for Advanced Study,
Hong Kong University of Science and Technology, Hong Kong, China}
\address{Computing \& Mathematical Sciences, California Institute of Technology, Pasadena, CA 91125, USA}
\email{kewang@ust.hk}

\begin{abstract}
Eigenvectors of large matrices (and graphs) play an essential role in combinatorics and theoretical computer science. 
The goal of this survey is to provide an up-to-date account on properties of eigenvectors when the matrix (or graph) is random. 

\end{abstract}

\maketitle

\tableofcontents

\section{Introduction}

Eigenvectors of large matrices (and graphs) play an essential role in combinatorics and theoretical computer science. For instance, many properties of a graph can be deduced or estimated from its eigenvectors.  In recent years, many algorithms have been developed which take advantage of this relationship to study various problems including spectral clustering \cite{SM, vonL}, spectral partitioning \cite{McSherry, PSL}, PageRank \cite{PBMW}, and community detection \cite{Newman, Newman1}.  

The goal of this survey is to study basic properties of eigenvectors when the matrix (or graph) is random.  As this survey is written with combinatorics/theoretical computer science readers in mind, we try to formalize the results in  forms which
are closest to their interest and give references for further extensions.  Some of the results presented in this paper are new with proofs included, while many others have appeared in very recent papers.

 We focus on the following models of random matrices.   
 
 
\begin{definition}[Wigner matrix] \label{def:wigner}
Let $\xi, \zeta$ be real random variables with mean zero.  We say $W$ is a \emph{Wigner matrix} of size $n$ with atom variables 
$\xi,\zeta$ if $W = (w_{ij})_{i,j=1}^n$ is a random real symmetric $n \times n$ matrix that satisfies the following conditions.
\begin{itemize}
\item (independence) $\{w_{ij} : 1 \leq i \leq j \leq n\}$ is a collection of independent random variables.
\item (off-diagonal entries) $\{w_{ij} : 1 \leq i < j \leq n\}$ is a collection of independent and identically distributed (iid) copies of $\xi$.
\item (diagonal entries) $\{w_{ii} : 1 \leq i \leq n\}$ is a collection of iid copies of $\zeta$.  
\end{itemize}
If $\xi$ and $\zeta$ have the same distribution, we say $W$ is a Wigner matrix with atom variable $\xi$.  In this case, $W$ is a real symmetric matrix whose entries on and above the diagonal are iid copies of $\xi$. 
\end{definition}

One can similarly define complex Hermitian Wigner matrices whose off-diagonal entries are complex-valued random variables.  For the purposes of this survey, we focus on real symmetric Wigner matrices.

Throughout the paper, we consider various assumptions on the atom variables $\xi$ and $\zeta$.  We will always assume that $\xi$ is \emph{non-degenerate}, namely that there is no value $c$ such that $\Prob (\xi =c) =1$.

\begin{definition}[$K$-bounded] A random variable $\xi$ is \emph{$K$-bounded} if $|\xi| \le K$ almost surely. For combinatorial applications, the entries usually take on values $\{0, \pm 1\}$ and are 
$1$-bounded.  In general, we allow $K$ to depend on $n$.  
\end{definition}

Occasionally, we will assume the atom variable $\xi$ is symmetric.  Recall that a random variable $\xi$ is \emph{symmetric} if $\xi$ and $-\xi$ have the same distribution.  The Rademacher random variable, which take the values $\pm 1$ with equal probability is an example of a symmetric random variable.  

\begin{definition}[Sub-exponential]
A random variable $\xi$ is called \emph{sub-exponential} with exponent $\alpha > 0$  if there exists  a constant $\beta  > 0$ such that 
\begin{equation} \label{eq:subgauss}
	\Prob (|\xi| > t) \leq \beta \exp(- t^{\alpha} / \beta ) \quad \text{for all } t > 0.
\end{equation}  If $\alpha=2$, then $\xi$ is called \emph{sub-gaussian}, and $1 / \beta$ is the \emph{sub-gaussian moment} of $\xi$.  
\end{definition}

The prototypical example of a Wigner real symmetric matrix is the \emph{Gaussian orthogonal ensemble} (GOE).  
The GOE is defined as a Wigner random matrix with atom variables $\xi, \zeta$, where $\xi$ is a standard normal random variable and $\zeta$ is a normal random variable with mean zero and variance $2$.  The GOE is widely-studied in random matrix theory and mathematical physics.  However, due to its continuous nature, the GOE has little use in combinatorial applications.  

A case of principal interest in combinatorial applications is when both $\xi$ and $\zeta$ are Bernoulli random variables.  Let $0 < p < 1$, and take $\xi$ to be the random variable
\begin{equation} \label{eq:def:Bernoullip} 
	\xi := \left\{
     \begin{array}{ll}
       1-p, & \text{ with probability } p, \\
       -p, & \text{ with probability } 1-p.
     \end{array}
   \right. 
\end{equation}
In particular, $\xi$ has mean zero by construction.  Let $B_n(p)$ denote the $n \times n$ Wigner matrix with atom variable $\xi$ (i.e. the entries on and above the diagonal are iid copies of $\xi$).  
  We refer to $B_n(p)$ as a \emph{symmetric Bernoulli matrix} (with parameter $p$).   
The most interesting case is when $p=1/2$. In this case, $2 B_n(p)$ is the random symmetric Rademacher matrix, whose entries are $\pm 1$ with probability $1/2$. 

In applications, one often considers the adjacency matrix of a random graph.  We let $G(n,p)$ denote the Erd\"os--R\'enyi random graph on $n$ vertices with edge density $p$.  That is, $G(n,p)$ is a
 simple graph on $n$ vertices such that each edge $\{i,j\}$ is in $G(n,p)$ with probability $p$, independent of other edges.  We let $A_n(p)$ be the zero-one adjacency matrix of $G(n,p)$.  
$A_n(p)$ is not a Wigner matrix since its entries do not have mean zero.  

For the sake of simplicity, we will sometimes consider random graphs with loops (thus the diagonals of the adjacency matrix are also random).  Let $\tilde{G}(n,p)$ denote the Erd\"os--R\'enyi random graph with loops on $n$ vertices with edge density $p$.  That is, $\tilde{G}(n,p)$ is a graph on $n$ vertices such that each edge $\{i,j\}$ (including the case when $i = j$) is in $\tilde{G}(n,p)$ with probability $p$, independent of other edges.  We let $\tilde{A}_n(p)$ denote the zero-one adjacency matrix of $\tilde{G}(n,p)$.  Technically, $\tilde{A}_n(p)$ is not a Wigner random matrix because its entries do not have mean zero.  However, we can view $\tilde{A}_n(p)$ as a low rank deterministic perturbation of a Wigner matrix.  That is, we can write $\tilde{A}_n(p)$ as 
$$ \tilde{A}_n(p) =  p \BJ_n +  B_n(p), $$  where $\BJ_n$ is the all-ones matrix.  We also observe that $A_n(p)$ can be formed from $\tilde{A}_n(p)$ by replacing the diagonal entries  with zeros.  
In this note, we focus on the case when $p$ is a constant, independent of the dimension $n$, but will also give references to the case when $p$ decays with $n$.

For an $n \times n$ Hermitian matrix $M$, we let 
$ \lambda_1(M) \leq \cdots \leq \lambda_n(M) $
denote the ordered eigenvalues of $M$ (counted with multiplicity) with corresponding unit eigenvectors $v_1(M), \ldots, v_n(M)$.  It is important to notice that 
 the eigenvectors of $M$ are {\it not uniquely determined}.  In general, we let $v_1(M), \ldots, v_n(M)$ denote any orthonormal basis of eigenvectors of $M$ such that
 $$ M v_i(M) = \lambda_i(M) v_i(M), \quad 1 \leq i \leq n.$$
On the other hand, it is well known that   if the  spectrum of $M$ is \emph{simple} (i.e. all eigenvalues have multiplicity one) 
 then     the unit eigenvectors $v_1(M), \ldots, v_n(M)$ are  determined uniquely up to phase.  In this case, to avoid ambiguity, we always assume that the eigenvectors are taken so that their first non-zero coordinate 
 is positive.  Theorem \ref{thm:simplespectrum} below shows that, with high probability, the eigenvalues of many matrices under discussion 
 are simple and the coordinates of all eigenvectors are non-zero.

\begin{theorem}[\cite{TVsimple}]    \label{thm:simplespectrum}
Let $W$ be an $n \times n$ Wigner matrix with atom variables $\xi, \zeta$.  
\begin{itemize}
\item If $\xi$ is non-degenerate, then, for every $\alpha > 0$, there exists $C > 0$ (depending on $\alpha$ and $\xi$) such that the spectrum of $W$ is simple with probability at least $1 - C n^{-\alpha}$.  
\item Moreover, if $\xi$ and $\zeta$ are sub-gaussian, then, for every $\alpha > 0$, there exists $C > 0$ (depending on $\alpha$ and $\xi$) such that every coordinate of every eigenvector of $W$ is non-zero with probability at least $1 - C n^{-\alpha}$.  
\end{itemize}
In addition, the conclusions above also hold for the matrices $A_n(p)$ and $\tilde{A}_n(p)$ when $p \in (0,1)$ is fixed, independent of $n$.  
\end{theorem}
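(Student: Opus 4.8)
The plan is to prove the two parts separately, using a second-moment / swapping argument for the first and an additional conditioning argument for the second. For the first part (simplicity of the spectrum), the key observation is that $W$ has a repeated eigenvalue precisely when the discriminant of its characteristic polynomial vanishes, i.e. when $P(W) = 0$ where $P$ is the (universal) polynomial in the entries $w_{ij}$ given by $\operatorname{disc}(\det(xI - W))$. This $P$ is a nonzero polynomial with integer coefficients, so the event $\{W \text{ has a repeated eigenvalue}\}$ is contained in the zero set of a fixed polynomial in the $\binom{n+1}{2}$ independent entries. One would like to say that a nondegenerate random variable almost surely avoids any fixed hypersurface, but that is false in the discrete setting and, more importantly, one needs a \emph{quantitative} $n^{-\alpha}$ bound. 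The standard route (following Tao--Vu) is to fix a pair of indices, condition on all but a few entries, and show that conditionally the probability of hitting the zero set is $O(n^{-\alpha})$; this is done by exhibiting, for each fixed outcome of the conditioned-on entries, that the remaining polynomial in the free entries is not identically zero, together with an anticoncentration (small-ball / Erd\H{o}s--Littlewood--Offord type) estimate for linear or low-degree forms in the free entries. Concretely, if $\lambda$ is a repeated eigenvalue with eigenvector $u$ orthogonal to a second eigenvector, one can extract a useful identity by looking at how $\lambda$ and the associated spectral projector depend on a single entry $w_{ij}$: first-order perturbation theory gives $\partial_{w_{ij}}\lambda = c\, u_i u_j$ (up to symmetrization), and a genuine double eigenvalue forces a degenerate behavior that can be ruled out with probability $1 - O(n^{-\alpha})$ after an appropriate union bound over $O(n^2)$ choices.

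For the second part (no vanishing coordinates), the plan is to fix a coordinate $k \in \{1,\dots,n\}$ and an eigenvector $v$, and show $\Prob(v_k = 0) = O(n^{-\alpha-2})$, then union bound over the $n$ coordinates and (implicitly) over a basis of eigenvectors. The event $\{v_k = 0\}$ says that the eigenvector lies in the coordinate hyperplane $e_k^\perp$, equivalently that $e_k$ is orthogonal to some eigenvector, equivalently that the vector $e_k$ lies in an invariant subspace of $W$ spanned by the \emph{other} eigenvectors. The cleanest way to exploit this is to condition on the principal minor $W'$ obtained by deleting row and column $k$, together with the diagonal entry $w_{kk}$; write $W$ in block form with off-diagonal block given by the random vector $X = (w_{kj})_{j \ne k}$, whose coordinates are iid copies of $\xi$ and independent of $W'$. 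A short computation with the Schur complement / eigenvalue interlacing shows that $v_k = 0$ for some eigenvector of $W$ with eigenvalue $\lambda$ forces $\lambda$ to be an eigenvalue of $W'$ \emph{and} $X$ to be orthogonal to the corresponding eigenvector $w$ of $W'$. Since $w$ depends only on $W'$ and hence is independent of $X$, and since $\xi$ is nondegenerate (indeed sub-gaussian), the Erd\H{o}s--Littlewood--Offord / small-ball inequality gives $\Prob(\langle X, w\rangle = 0 \mid W') = O(n^{-1/2})$ — far too weak on its own, but one gains much more by noting $w$ is, with overwhelming probability, a \emph{delocalized} eigenvector (all coordinates of size $n^{-1/2+o(1)}$), for which the small-ball probability improves to $n^{-c}$ for any constant $c$ by iterating the Littlewood--Offord inequality, or one simply invokes the (already available, sub-gaussian) delocalization bounds together with a net argument over the $O(n)$ eigenvalues of $W'$ that could be repeated.

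The extension to $A_n(p)$ and $\tilde A_n(p)$ is handled by the decomposition $\tilde A_n(p) = p\BJ_n + B_n(p)$ recorded above: $\tilde A_n(p)$ is a rank-one deterministic perturbation of a (rescaled) Bernoulli Wigner matrix, and neither the discriminant argument nor the Schur-complement argument is sensitive to adding a fixed low-rank matrix — the characteristic polynomial is still a nonzero integer polynomial in the independent entries, and the per-coordinate conditioning still isolates a nondegenerate linear form in a fresh independent random vector. For $A_n(p)$, which is $\tilde A_n(p)$ with the diagonal zeroed out, one simply repeats the argument with the (now deterministic) diagonal; the only point requiring care is that zeroing the diagonal removes some of the randomness, but the off-diagonal entries alone already supply the anticoncentration needed in both parts.

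The main obstacle I expect is \textbf{getting the polynomial power $n^{-\alpha}$ rather than merely $o(1)$} in the first part: a naive zero-set argument gives only that a fixed hypersurface is avoided with probability bounded away from $1$ (or $1 - o(1)$ after a crude union bound), whereas the theorem demands an arbitrarily large polynomial rate. This is exactly where one must combine a genuinely quantitative Littlewood--Offord-type inequality with a carefully chosen conditioning that leaves enough free, highly anticoncentrated randomness, and then pay for an $O(n^2)$ union bound over pairs of indices (for simplicity) or an $O(n)$ bound over coordinates times $O(n)$ over candidate eigenvalues (for non-vanishing). Organizing the conditioning so that the ``free'' randomness forms a linear form with an overwhelmingly delocalized coefficient vector — so that its small-ball probability beats any fixed polynomial — is the technical heart of the proof, and is where I would expect to spend most of the effort; everything else is perturbation theory and bookkeeping. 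I will follow \cite{TVsimple} for the precise execution.
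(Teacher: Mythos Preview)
The paper does not prove this theorem; it is stated with attribution to \cite{TVsimple} and no argument is given here. So there is nothing in the present paper to compare your proposal against, and your closing line ``I will follow \cite{TVsimple} for the precise execution'' is in fact the entire content of what the paper does.

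That said, a few comments on your outline relative to the actual argument in \cite{TVsimple}. Your discriminant/hypersurface idea for simplicity is a natural first thought but is \emph{not} how Tao--Vu proceed, and for the reason you yourself identify: it is very hard to extract a polynomial rate $n^{-\alpha}$ from anticoncentration of a high-degree polynomial in many variables. Instead they bound the \emph{gaps} $\lambda_{i+1}(W_n)-\lambda_i(W_n)$ from below directly, via the minor/Schur-complement route you describe for the second part: condition on the top-left $(n-1)\times(n-1)$ minor $W_{n-1}$, use the interlacing identity (Lemma~\ref{lemma:interlace} here) and the coordinate formula (Lemma~\ref{lemma:coordinate}), and reduce to anticoncentration of the linear form $v_j(W_{n-1})^{\mathrm T}X$ against a delocalized deterministic vector. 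So the two parts of the theorem are really proved by the \emph{same} mechanism, not two different ones; your second-part sketch is essentially the whole proof, and the first part is a corollary of the gap bound rather than a separate discriminant argument. Your extension to $A_n(p)$ and $\tilde A_n(p)$ via the rank-one shift is correct in spirit and is how \cite{TVsimple} handles it as well.
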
 
\begin{remark}
In many cases, the bound $1 - C n^{-\alpha }$ appearing in Theorem \ref{thm:simplespectrum} can be improved to $1 - C\exp (n^{-c} )$ for some constants $C, c > 0$ (see \cite{NTV}). 
\end{remark}

Consequently, in many theorems  we can assume that the 
spectrum is simple and the eigenvectors are uniquely defined (using the convention that the first coordinate is positive).


\subsection{Overview and outline}  
Although this survey examines several models of random matrices, we mostly focus on Wigner matrices, specifically Wigner matrices whose atom variables have light tails (e.g. sub-exponential atom variables).  
In this case, the main message we would like to communicate 
 is that an eigenvector of a Wigner matrix behaves like a random vector uniformly distributed on the unit sphere. 
 While this concept is natural and intuitive, it is usually non-trivial to prove quantitatively. 
  We present several estimates to quantify this statement from different aspects.  In particular, we address
\begin{itemize} 
\item the joint distribution of many coordinates (Section \ref{sec:universality}),
\item the largest coordinate, i.e. the $\ell^{\infty}$-norm (Section \ref{sec:deloc}),
\item the smallest coordinate (Section \ref{sec:deloc}),
\item the $\ell^p$-norm (Section \ref{sec:mass}),
\item the amount of mass on a subset of coordinates (Section \ref{sec:mass}). 
\end{itemize} 
For comparison, we also discuss other models of random matrices, such as heavy-tailed and band random matrices, whose eigenvectors do not behave like random vectors uniformly distributed on the unit sphere.

The paper is organized as follows.  In Section \ref{sec:GOE}, we present results for the eigenvectors of a matrix drawn from the GOE (defined above). 
 This is the ``ideal'' situation, where  the eigenvectors are  indeed uniformly distributed on the unit sphere, thanks to the rotational invariance of the ensemble. 
 Next, in Section \ref{sec:universality}, we discuss {\it universality} results which give a direct comparison between eigenvectors of general Wigner matrices with those of the GOE.  In Section \ref{sec:deloc}, we present bounds on the largest coordinate (i.e. the $\ell^\infty$-norm) and smallest coordinate of eigenvectors of Wigner matrices.   Section \ref{sec:mass} gives more information about the mass distribution on the coordinates of an eigenvector, such as the magnitude of the smallest coordinates, or the amount of mass one can have on any subset of coordinates of linear size.  In  Section \ref{sec:nonzero}, we present results for deterministic perturbations of Wigner matrices.   In particular, this section contains results for the adjacency matrices $A_n(p)$ and $\tilde{A}_n(p)$.   In Section \ref{sec:ht}, we change direction and review two ensembles of random matrices whose eigenvectors do not behave like random vectors uniformly distributed on the unit sphere.  In Section \ref{sec:others} and Section \ref{sec:rrg}, we discuss results concerning some other models of random matrices, which have not been 
 mentioned in the introduction, such as
 random non-symmetric matrices or the adjacency matrix of a random regular graph. In the remaining sections, we represent proofs of the new results mentioned in the previous sections. The appendix contains a number of technical lemmas.

\subsection{Notation}
The \emph{spectrum} of an $n \times n$ real symmetric matrix $M$ is the multiset $\{ \lambda_1(M), \ldots, \lambda_n(M)\}$.  We use the phrase {\it bulk} of the spectrum to refer to the eigenvalues $\lambda_i(M)$ with $\eps n \le i \le (1-\eps) n$, where $\eps$ is a small positive constant. The remaining eigenvalues form the {\it edge} of the spectrum.  

For a vector $v = (v_i)_{i=1}^n \in \mathbb{R}^n$, we let
$$ \|v \|_{\ell^p} := \left( \sum_{i=1}^n |v_i|^p \right)^{1/p} $$
denote the $\ell^p$-norm of $v$.  We let $\|v\| := \|v\|_{\ell^2}$ be the Euclidean norm of $v$.    For $S \subset [n] := \{1, \ldots, n\}$, we denote
$$ \|v\|_S := \left( \sum_{i \in S} |v_i|^2 \right)^{1/2}. $$
It follows that $\|v\|_{[n]} = \|v\|$.  Let
$$ \| v \|_{\ell^\infty} := \max_{1 \leq i \leq n} |v_i| $$
denote the $\ell^\infty$-norm of $v$.  We introduce the notation
\begin{equation} \label{eq:def:min}
	\|v \|_{\min} := \min_{1 \leq i \leq n} |v_i| 
\end{equation}
to denote the minimal coordinate (in magnitude) of $v$; notice that this is {\it not} a norm. 
  For two vectors $u, v \in \mathbb{R}^n$, we let $u \cdot v = u^\mathrm{T} v$ be the dot product between $u$ and $v$.  $S^{n-1}$ is the unit sphere in $\mathbb{R}^n$.   

For an $m \times n$ matrix $M$ with real entries, we let $\|M\|$ denote the spectral norm of $M$:
$$ \| M \| := \max_{v \in S^{n-1}} \| M v \|. $$

We let $E^c$ denote the complement of the event $E$.  For a set $S$, $|S|$ is the cardinality of $S$.  For two random variables (or vectors) $\xi$ and $\zeta$, we write $\xi \sim \zeta$ if $\xi$ and $\zeta$ have the same distribution.  We let $N(\mu, \sigma^2)$ denote the normal distribution with mean $\mu$ and variance $\sigma^2$.  In particular, $\xi \sim N(0,1)$ means $\xi$ is a standard normal random variable.  In addition, $|N(0,1)|$ is the distribution which arises when one take the absolute value of a standard normal random variable.  

We use asymptotic notation under the assumption that $n$ tends to infinity.  We write $X = o(Y)$ if $|X| \leq c_n Y$ for some $c_n$ which converges to zero as $n \to \infty$.  In particular, $o(1)$ denotes a term which tends to zero as $n$ tends to infinity.

\section {A toy case: The Gaussian orthogonal ensemble} \label{sec:GOE}

The GOE (defined above) is a special example of a Wigner matrix, enjoying the property that it is  invariant under orthogonal transformations.  By the spectral theorem, any $n \times n$ real symmetric matrix $M$ can be decomposed as $M = U D U^\mathrm{T}$, where $U$ is an orthogonal matrix whose columns are the eigenvectors of $M$ and $D$ is a diagonal matrix whose entries are the eigenvalues of $M$.  However, if $M$ is drawn from the GOE, the property of being invariant under orthogonal transformations implies that $U$ and $D$ are independent.  It follows from \cite[Section 2.5.1]{AGZ} that the eigenvectors $v_1(M), \ldots, v_n(M)$ are uniformly distributed on 
$$ S^{n-1}_+ := \{ x = (x_1, \ldots, x_n) \in S^{n-1} : x_1 > 0 \}, $$
and $(v_1(M), \ldots, v_n(M))$ is distributed like a sample of Haar measure on the orthogonal group $O(n)$, with each column multiplied by a norm one scalar so that the columns all belong to $S_+^{n-1}$.  Additionally, the eigenvalues $(\lambda_1(M), \ldots, \lambda_n(M))$ have joint density
$$ p_n(\lambda_1, \ldots, \lambda_n) := \left\{
     \begin{array}{ll}
       \frac{1}{Z_n} \prod_{1 \leq i < j \leq n} |\lambda_i - \lambda_j| \prod_{i=1}^n e^{-\lambda_i^2/4}, & \text{ if } \lambda_1 \leq \cdots \leq \lambda_n, \\
       0, & \text{ otherwise}.
     \end{array}
   \right. $$
where $Z_n$ is a normalization constant.  We refer the interested reader to \cite[Section 2.5.1]{AGZ} for a further discussion of these results as well as additional references.  

In the following statements, we gather information about a unit eigenvector $v$ of a $n \times n$ matrix drawn from the GOE.  As discussed above, this is equivalent to studying a random vector $v$ uniformly distributed on $S^{n-1}_+$.  In fact, since all of the properties discussed below are invariant under scaling by a norm one scalar, we state all of the results in this section for a unit vector $v$ uniformly distributed over the unit sphere $S^{n-1}$ of $\mathbb{R}^n$.

\begin{theorem}[Smallest and largest coordinates] \label{thm:GOEnorm} 
Let $v$ be a random vector uniformly distributed on the unit sphere $S^{n-1}$.   Then, for any $C > 1$,  with probability at least $1-2 n^{1 - C}-\exp\left(-\frac{(C-1)^2}{4C^2}n\right)$, 
\begin{equation} \label{eq:GOEmax}
	\| v \|_{\ell^\infty} \leq \sqrt {\frac{2C^3 \log n}{n} }. 
\end{equation}
In addition, for $n \geq 2$, any $0 \leq c < 1$, and any $a>1$,  
\begin{equation} \label{eq:GOEmin}
	\| v \|_{\min} \geq \frac{c}{a} \frac{1}{n^{3/2}}
\end{equation}
with probability at least $\exp \left( -2c \right) - \exp \left( - \frac{ a^2 - \sqrt{2 a^2 -1} }{2} n \right)$.  
\end{theorem}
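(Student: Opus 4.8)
The plan is to realize the uniform distribution on $S^{n-1}$ via the Gaussian model: if $g = (g_1, \dots, g_n)$ has independent $N(0,1)$ coordinates, then $v := g/\|g\|$ is uniformly distributed on $S^{n-1}$, and since $|v_i| = |g_i|/\|g\|$ for each $i$, the two estimates reduce to controlling the $\chi^2_n$ random variable $\|g\|^2$ together with $\max_i|g_i|$ and $\min_i|g_i|$, respectively. One caveat throughout: $g_i$ is not independent of $\|g\|$, so rather than conditioning I would intersect two events and apply a union bound to their complements.

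For \eqref{eq:GOEmax}, first apply the lower-tail bound $\Prob(\chi^2_n \le n - 2\sqrt{nx}) \le e^{-x}$ with $x = \frac{(C-1)^2}{4C^2}\,n$, chosen so that $n - 2\sqrt{nx} = n/C$; this gives $\Prob(\|g\|^2 < n/C) \le \exp(-\frac{(C-1)^2}{4C^2}n)$. Second, a union bound over the $n$ coordinates together with the Gaussian tail estimate $\Prob(|g_1| > t) \le 2e^{-t^2/2}$ at $t = C\sqrt{2\log n}$ gives $\Prob(\max_i|g_i| > C\sqrt{2\log n}) \le 2n^{1-C^2} \le 2n^{1-C}$, using $C > 1$. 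On the complement of these two events, $\|v\|_{\ell^\infty} = \max_i|g_i|/\|g\| \le C\sqrt{2\log n}/\sqrt{n/C} = \sqrt{2C^3\log n/n}$, and summing the two failure probabilities finishes this part.

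For \eqref{eq:GOEmin}, I would bound $\|g\|$ from above and $\min_i|g_i|$ from below. For the upper bound on $\|g\|$, the tail estimate $\Prob(\chi^2_n \ge n + 2\sqrt{nx} + 2x) \le e^{-x}$ with $x$ solving $n + 2\sqrt{nx} + 2x = a^2 n$, namely $\sqrt{x/n} = \tfrac12(\sqrt{2a^2-1}-1)$ so that $x = \frac{a^2 - \sqrt{2a^2-1}}{2}\,n$, yields $\Prob(\|g\|^2 > a^2 n) \le \exp(-\frac{a^2-\sqrt{2a^2-1}}{2}n)$. For the lower bound on $\min_i|g_i|$, independence gives $\Prob(\min_i|g_i| \ge c/n) = (1 - \Prob(|g_1| < c/n))^n$; since the standard normal density is at most $\frac{1}{\sqrt{2\pi}} < 1$ we have $\Prob(|g_1| < c/n) < c/n$, and hence $\Prob(\min_i|g_i| \ge c/n) \ge (1-c/n)^n \ge e^{-2c}$, the last step using $\log(1-t) \ge -2t$ on $[0,\tfrac12]$ and $c/n \le \tfrac12$ (valid since $c < 1$ and $n \ge 2$). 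On $\{\|g\|^2 \le a^2 n\} \cap \{\min_i|g_i| \ge c/n\}$ one has $\|v\|_{\min} = \min_i|g_i|/\|g\| \ge (c/n)/(a\sqrt n) = \frac{c}{a}\,n^{-3/2}$, whence $\Prob(\|v\|_{\min} \ge \frac{c}{a}n^{-3/2}) \ge \Prob(\min_i|g_i| \ge c/n) - \Prob(\|g\|^2 > a^2 n) \ge e^{-2c} - \exp(-\frac{a^2-\sqrt{2a^2-1}}{2}n)$.

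There is no genuine obstacle; the argument is elementary. The two points requiring care are (i) not treating $\|g\|$ and the individual $g_i$ as independent --- intersecting events and union-bounding the complements is the clean workaround; and (ii) the fact that matching the precise constants in the statement forces the use of the sharp $\chi^2$ tail inequalities above rather than a generic sub-exponential bound, and of the exact product identity $\Prob(\min_i|g_i| \ge c/n) = (1-\Prob(|g_1|<c/n))^n$ rather than a union bound, which would be far too lossy when bounding this probability from below.
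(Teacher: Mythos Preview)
Your argument is correct and follows essentially the same route as the paper: represent $v=g/\|g\|$ with standard Gaussian $g$, control $\|g\|^2$ via the Laurent--Massart $\chi^2$ tail bounds, and pair this with a union bound on $\max_i|g_i|$ for \eqref{eq:GOEmax} and with the exact product formula for $\Prob(\min_i|g_i|\ge c/n)$ for \eqref{eq:GOEmin}. The only cosmetic difference is the split of the constant $C^3$ in the max bound (you take threshold $n/C$ for $\|g\|^2$ and $C\sqrt{2\log n}$ for the max, whereas the paper takes $n/C^2$ and $\sqrt{2C\log n}$), but both choices land on the same final bound and the same failure probability.
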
 
Next, we obtain the order of the $\ell^p$-norm of such a vector.  
\begin{theorem}[$\ell^p$-norm] \label{thm:GOElp}
Let $v$ be a random vector uniformly distributed on the unit sphere $S^{n-1}$.   Then, for any $p \geq 1$, there exists $c_p > 0$, such that almost surely
$$ \| v \|_{\ell^p}^p = n^{1 - p/2} c_p + o(n^{1-p/2}). $$
\end{theorem}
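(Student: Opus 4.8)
The plan is to realize the uniform vector $v$ on $S^{n-1}$ as a normalized Gaussian vector and apply the strong law of large numbers. Concretely, write $v = g / \|g\|$ where $g = (g_1, \ldots, g_n)$ has iid $N(0,1)$ coordinates; this is the standard fact that $g/\|g\|$ is uniform on $S^{n-1}$. Then
$$
\|v\|_{\ell^p}^p = \frac{\sum_{i=1}^n |g_i|^p}{\|g\|^p} = \frac{\frac{1}{n}\sum_{i=1}^n |g_i|^p}{\left(\frac{1}{n}\sum_{i=1}^n g_i^2\right)^{p/2}} \cdot n^{1 - p/2}.
$$
The first step is to control the numerator and denominator separately by the strong law of large numbers: almost surely $\frac{1}{n}\sum_{i=1}^n |g_i|^p \to \E |g_1|^p =: m_p$ (finite for every $p \geq 1$, since Gaussians have all moments), and $\frac{1}{n}\sum_{i=1}^n g_i^2 \to \E g_1^2 = 1$. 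Combining these, almost surely
$$
\frac{\frac{1}{n}\sum_{i=1}^n |g_i|^p}{\left(\frac{1}{n}\sum_{i=1}^n g_i^2\right)^{p/2}} \longrightarrow m_p,
$$
so the claim holds with $c_p := m_p = \E|N(0,1)|^p = \frac{2^{p/2}}{\sqrt{\pi}}\,\Gamma\!\left(\frac{p+1}{2}\right)$, and the error term $o(n^{1-p/2})$ is exactly the statement that the ratio converges to $c_p$.

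A small subtlety to address is that $g/\|g\|$ gives a vector on $S^{n-1}$, whereas the convention in this section (inherited from the GOE eigenvector normalization) places $v$ on $S^{n-1}_+$; but $\|\cdot\|_{\ell^p}$ is invariant under multiplication by a norm-one scalar, so the distinction is irrelevant, exactly as remarked in the text preceding the theorem. One should also note that for each fixed $n$ the identity above is an exact equality of random variables, and the almost-sure statement is understood along the natural sequence of couplings (e.g. fix one infinite iid Gaussian sequence and let $v^{(n)}$ be built from its first $n$ entries); alternatively, one may phrase the conclusion as: for each $\eps > 0$, $\Prob\big(|\,\|v\|_{\ell^p}^p - n^{1-p/2}c_p\,| > \eps\, n^{1-p/2}\big) \to 0$, which follows from the weak law together with Chebyshev/concentration and requires no coupling.

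I do not expect any genuine obstacle here: the only thing to be careful about is the bookkeeping in passing from the two separate law-of-large-numbers limits to the limit of the ratio (a routine application of the continuous mapping theorem, using that the denominator converges to $1 \neq 0$), and in stating precisely in what sense ``almost surely'' is meant given that the left-hand side depends on $n$. If a quantitative rate is desired, one can replace the strong law by standard Gaussian concentration for $\sum |g_i|^p$ and $\sum g_i^2$, but the statement as given only asks for the leading-order asymptotic, so the soft argument suffices.
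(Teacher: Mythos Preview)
Your proposal is correct and follows essentially the same approach as the paper: represent $v$ as a normalized Gaussian vector (the paper's Lemma~\ref{lemma:distr}), then apply the strong law of large numbers to $\frac{1}{n}\sum |g_i|^p$ and $\frac{1}{n}\sum g_i^2$ to conclude. The paper's algebraic bookkeeping is organized slightly differently (it writes $\frac{n^{p/2}}{n}\|v\|_{\ell^p}^p - c_p$ as a sum of two vanishing terms rather than invoking the continuous mapping theorem on the ratio), but the content is identical; your added remarks on the explicit value of $c_p$ and on the meaning of ``almost surely'' are correct and go a bit beyond what the paper spells out.
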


We now consider the distribution of mass over the coordinates of such a vector.  Recall that the beta distribution with shape parameters $\alpha, \beta > 0$ is the continuous probability distribution supported on the interval $[0,1]$ with probability density function
$$ f(x; \alpha, \beta) := \left\{
     \begin{array}{ll}
       \frac{1}{B(\alpha, \beta)} x^{\alpha-1}(1-x)^{\beta - 1}, & \text{if } 0 \leq x \leq 1 \\
       0, & \text{otherwise},
     \end{array}
   \right. $$
where $B(\alpha, \beta)$ is the normalization constant.  A random variable $\xi$ beta-distributed with shape parameters $\alpha, \beta > 0$ will be denoted by $\xi \sim \mathrm{Beta}(\alpha, \beta)$.

\begin{theorem}[Distribution of mass] \label{thm:distr}
Let $S$ be a proper nonempty subset of $[n]$, and let $v$ be a random vector uniformly distributed on the unit sphere $S^{n-1}$.  Then $\|v\|_S^2$ is distributed according to the beta distribution
$$ \|v\|_S^2 \sim \mathrm{Beta} \left( \frac{|S|}{2}, \frac{n - |S|}{2} \right). $$ 
In particular, this implies that $\|v\|_S^2$ has mean $\frac{|S|}{n}$ and variance $\frac{|S|(n-|S|)}{n^2(n/2+1)}$.  
\end{theorem}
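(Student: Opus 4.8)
The plan is to realize the uniform measure on $S^{n-1}$ through Gaussian coordinates and then reduce the statement to a classical Gamma--Beta identity. First I would let $g = (g_1, \ldots, g_n)$ be a vector of independent $N(0,1)$ random variables. Since the standard Gaussian measure on $\R^n$ is invariant under the action of the orthogonal group $O(n)$, the normalized vector $g/\|g\|$ is uniformly distributed on $S^{n-1}$; thus it suffices to compute the law of
$$ \|g/\|g\|\|_S^2 = \left(\sum_{i \in S} g_i^2\right) \Big/ \left(\sum_{i=1}^n g_i^2\right). $$

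Next I would set $X := \sum_{i \in S} g_i^2$ and $Y := \sum_{i \notin S} g_i^2$. Because $S$ is a proper nonempty subset of $[n]$, both sums are nonempty, $X$ and $Y$ are independent, $X$ is a chi-squared random variable with $|S|$ degrees of freedom, and $Y$ is chi-squared with $n-|S|$ degrees of freedom; equivalently $X \sim \mathrm{Gamma}(|S|/2, 2)$ and $Y \sim \mathrm{Gamma}((n-|S|)/2, 2)$. The quantity of interest is $\|v\|_S^2 = X/(X+Y)$.

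The key step is the identity: if $X \sim \mathrm{Gamma}(\alpha, \theta)$ and $Y \sim \mathrm{Gamma}(\beta, \theta)$ are independent, then $X/(X+Y) \sim \mathrm{Beta}(\alpha,\beta)$ (and, in fact, this ratio is independent of $X+Y$). I would prove this via the change of variables $(x,y) \mapsto (u,s) := \left(x/(x+y),\, x+y\right)$, whose Jacobian determinant is $s$; substituting into the product density, which is proportional to $x^{\alpha-1} y^{\beta-1} e^{-(x+y)/\theta}$, and integrating out $s$ leaves a density proportional to $u^{\alpha-1}(1-u)^{\beta-1}$ on $[0,1]$, i.e. exactly $\mathrm{Beta}(\alpha,\beta)$. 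Applying this with $\alpha = |S|/2$ and $\beta = (n-|S|)/2$ yields $\|v\|_S^2 \sim \mathrm{Beta}(|S|/2, (n-|S|)/2)$.

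Finally, the mean and variance follow from the standard formulas for $\mathrm{Beta}(\alpha,\beta)$, namely mean $\alpha/(\alpha+\beta)$ and variance $\alpha\beta/\left((\alpha+\beta)^2(\alpha+\beta+1)\right)$. Here $\alpha+\beta = n/2$, so the mean equals $|S|/n$ and the variance equals $|S|(n-|S|)/\left(n^2(n/2+1)\right)$, as claimed. No step is a genuine obstacle; the only points requiring a little care are the Jacobian computation in the Gamma--Beta identity and the rotational-invariance characterization of the uniform measure on the sphere, both of which are routine.
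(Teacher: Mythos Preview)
Your proposal is correct and follows essentially the same route as the paper: represent $v$ as a normalized standard Gaussian vector, rewrite $\|v\|_S^2$ as $X/(X+Y)$ for independent $\chi^2$ variables with $|S|$ and $n-|S|$ degrees of freedom, and identify this ratio as $\mathrm{Beta}(|S|/2,(n-|S|)/2)$. In fact you supply more detail than the paper, which simply cites a reference for the Gamma--Beta ratio identity rather than carrying out the change-of-variables computation.
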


As a corollary of Theorem \ref{thm:distr}, we obtain the following central limit theorem.  

\begin{theorem}[Central limit theorem] \label{thm:clt}
Fix $\delta \in (0,1)$.  For each $n \geq 2$, let $S_n \subset [n]$ with $|S_n| = \lfloor \delta n \rfloor$, and let $v_n$ be a random vector uniformly distributed on the unit sphere $S^{n-1}$.  Then
$$  \sqrt{\frac{n^3}{2|S_n|(n-|S_n|)}} \left( \|v_n\|^2_{S_n} - \frac{|S_n|}{n} \right) \longrightarrow N(0,1) $$
in distribution as $n \to \infty$.  
\end{theorem}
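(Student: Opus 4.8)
The plan is to realize $v_n$ through a Gaussian vector, rewrite the centered quantity exactly as a ratio, and then invoke a Lyapunov central limit theorem together with Slutsky's theorem. First I would let $g=(g_1,\dots,g_n)$ have i.i.d.\ $N(0,1)$ entries, so that $v_n:=g/\|g\|$ is uniformly distributed on $S^{n-1}$; writing $m:=|S_n|=\lfloor\delta n\rfloor$, $X:=\sum_{i\in S_n}g_i^2$ and $Y:=\sum_{i\notin S_n}g_i^2$, one has $\|v_n\|_{S_n}^2=X/(X+Y)$ with $X,Y$ independent, $\E X=m$, $\var X=2m$, $\E Y=n-m$, $\var Y=2(n-m)$. (This is exactly the Gamma representation underlying the $\mathrm{Beta}(m/2,(n-m)/2)$ law of Theorem~\ref{thm:distr}, so one could equally well take that theorem as the starting point.) Since $nX-m(X+Y)=(n-m)X-mY=:T_n$, I would record the exact identity
$$ \sqrt{\frac{n^3}{2m(n-m)}}\left(\|v_n\|_{S_n}^2-\frac{m}{n}\right)=\frac{T_n}{s_n}\cdot\frac{n}{X+Y}, \qquad s_n:=\sqrt{2m(n-m)n}, $$
noting that $\E T_n=0$ and $\var T_n=(n-m)^2\var X+m^2\var Y=2m(n-m)n=s_n^2$, so that $T_n/s_n$ is already standardized.

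Next, since $(X+Y)/n\to 1$ almost surely by the strong law of large numbers, Slutsky's theorem reduces everything to showing $T_n/s_n\to N(0,1)$. For that I would apply the Lyapunov central limit theorem to the triangular array $T_n=\sum_{i=1}^n Z_{n,i}$, where $Z_{n,i}:=(n-m)(g_i^2-1)$ for $i\in S_n$ and $Z_{n,i}:=-m(g_i^2-1)$ for $i\notin S_n$ (the additive constants cancel because $m(n-m)-(n-m)m=0$); these are independent, centered, and have $\sum_i\var Z_{n,i}=s_n^2$. Using $\E(g_1^2-1)^4=60$ gives $\sum_{i=1}^n\E|Z_{n,i}|^4=60\,m(n-m)\big[(n-m)^3+m^3\big]\le 60\,m(n-m)\,n^3$, while $s_n^4=4m^2(n-m)^2n^2$, so
$$ \frac{1}{s_n^4}\sum_{i=1}^n\E|Z_{n,i}|^4\le\frac{60\,n}{4\,m(n-m)}\longrightarrow 0, $$
since $m(n-m)\sim\delta(1-\delta)n^2$ as $n\to\infty$. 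This verifies Lyapunov's condition with exponent $4$, hence $T_n/s_n\to N(0,1)$, and combining this with the Slutsky step above completes the argument.

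I do not expect a genuine obstacle here; the one point to keep in mind is that, because $\delta\in(0,1)$ is fixed, both $m$ and $n-m$ grow linearly in $n$, so $m(n-m)\asymp n^2$, which is precisely what makes the Lyapunov ratio vanish and the random denominator $(X+Y)/n$ concentrate. If instead $\delta$ were allowed to depend on $n$ and approach $0$ or $1$, one would have to argue more carefully about the rate in Lyapunov's condition and about the fluctuations of $X+Y$, but in the stated regime the whole computation is routine.
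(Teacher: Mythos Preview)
Your proposal is correct and follows essentially the same route as the paper: represent $v_n$ via i.i.d.\ Gaussians, write $\|v_n\|_{S_n}^2-\frac{m}{n}$ as a ratio whose numerator is a centered linear combination of $\chi^2$ variables and whose denominator is $(X+Y)/n\to 1$, then combine a CLT for the numerator with Slutsky's theorem. The only cosmetic difference is that the paper applies the classical CLT separately to $(X-m)/\sqrt{2m}$ and $(Y-(n-m))/\sqrt{2(n-m)}$ and then combines them by independence and Slutsky (which requires the coefficients $\sqrt{(n-m)/n}$ and $\sqrt{m/n}$ to converge), whereas you fold everything into a single triangular array and invoke Lyapunov; your version is marginally cleaner in that it does not need convergence of $m/n$, but for fixed $\delta$ the two arguments are interchangeable.
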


We also have the following concentration inequality.  

\begin{theorem}[Concentration]\label{thm:gaussian}
Let $S \subset [n]$, and let $v$ be a random vector uniformly distributed on the unit sphere $S^{n-1}$.  Then, for any $t > 0$, 
$$ \left| \| v \|_{S}^2 - \frac{|S|}{n} \right| \leq \frac{8}{n} \left( \sqrt{nt} + t \right) $$
with probability at least $1 - \exp(- c n) - 4 \exp(-t)$, where $c > 0$ is an absolute constant.  
\end{theorem}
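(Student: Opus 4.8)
The plan is to realize the uniform measure on $S^{n-1}$ through a standard Gaussian vector and then reduce to concentration of chi-square variables. Let $g = (g_1,\dots,g_n)$ have i.i.d.\ $N(0,1)$ coordinates. By rotation invariance, $v := g/\|g\|$ is uniform on $S^{n-1}$ and is independent of $\|g\|$, so it suffices to prove the estimate for this $v$. Then
$$ \|v\|_S^2 = \frac{a}{b}, \qquad a := \sum_{i \in S} g_i^2 \sim \chi^2_{|S|}, \quad b := \sum_{i=1}^n g_i^2 \sim \chi^2_n, $$
and since $|S|/n = \E a/\E b$, the whole problem becomes showing that this ratio of dependent chi-square variables concentrates around the ratio of their means.

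Next I would invoke standard chi-square tail bounds --- the Laurent--Massart inequalities, or equivalently Bernstein's inequality applied to the sub-exponential variables $g_i^2 - 1$: for $X \sim \chi^2_k$ and any $t > 0$, $\Prob(|X - k| > 2\sqrt{kt} + 2t) \le 2 e^{-t}$. Applying this to $a$ (with $k = |S|$) and to $b$ (with $k = n$) and taking a union bound, there is an event $E_1$ with $\Prob(E_1) \ge 1 - 4 e^{-t}$ on which both $|a - |S|| \le 2\sqrt{|S|t} + 2t$ and $|b - n| \le 2\sqrt{nt} + 2t$. Separately, a crude lower bound on $b$ (another application of Laurent--Massart, or Gaussian concentration of $\|g\|$) gives an event $E_2$ with $\Prob(E_2) \ge 1 - e^{-cn}$ on which $b \ge n/2$; this is the source of the $\exp(-cn)$ term, and its only role is to keep the denominator under control uniformly in $t$ (for $t$ of order $n$ the claimed inequality is anyway trivial, since its left side is at most $1$ while the right side exceeds $1$).

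On $E_1 \cap E_2$ the rest is elementary algebra. Writing
$$ \frac{a}{b} - \frac{|S|}{n} = \frac{n(a - |S|) - |S|(b - n)}{n b}, $$
I would bound the numerator in absolute value, using $|S| \le n$, by
$$ n\bigl(2\sqrt{|S|t} + 2t\bigr) + |S|\bigl(2\sqrt{nt} + 2t\bigr) \le 2n\bigl(\sqrt{nt} + t\bigr) + 2n\bigl(\sqrt{nt} + t\bigr) = 4n\bigl(\sqrt{nt}+t\bigr), $$
and the denominator below by $nb \ge n^2/2$. Combining these gives $\bigl|\,\|v\|_S^2 - |S|/n\,\bigr| \le \frac{8}{n}(\sqrt{nt}+t)$ on $E_1 \cap E_2$, an event of probability at least $1 - e^{-cn} - 4e^{-t}$, as claimed. (An alternative starting point is Theorem \ref{thm:distr}, which identifies $\|v\|_S^2$ as $\mathrm{Beta}(|S|/2,(n-|S|)/2)$; one could then quote a concentration bound for the beta distribution, but the Gaussian route above is more self-contained.)

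The only mildly delicate point is the bookkeeping that produces the clean constant $8$ and ensures the denominator bound holds for \emph{all} $t > 0$; everything else is a routine invocation of chi-square concentration.
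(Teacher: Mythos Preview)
Your proof is correct and follows essentially the same approach as the paper: realize $v$ via a normalized Gaussian, apply the Laurent--Massart chi-square tail bounds to both the numerator $\sum_{i\in S} g_i^2$ and the denominator $\sum_{i=1}^n g_i^2$, and use a separate event to keep the denominator bounded below by $n/2$. The only cosmetic difference is that the paper splits $\frac{a}{b} - \frac{m}{n}$ as $\frac{a-m}{b} + \bigl(\frac{m}{b} - \frac{m}{n}\bigr)$ before bounding, whereas you combine everything over the common denominator $nb$; both routes yield the same constant $8$.
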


\begin{remark}
The proof of Theorem \ref{thm:gaussian} actually reveals that, for any $t > 0$, 
$$ \left| \| v \|_{S}^2 - \frac{|S|}{n} \right| \leq \frac{4}{n} \left( \sqrt{|S|t} + t \right) + 4 \frac{|S|}{n^2} \left( \sqrt{nt} + t \right) $$
with probability at least $1 - \exp(- c n) - 4 \exp(-t)$.  
\end{remark}

We now consider the maximum and minimum order statistics 
$$ \max_{S \subset [n] : |S| = \lfloor \delta n \rfloor} \|v\|_S \quad \text{ and } \quad \min_{S \subset [n] : |S|=\lfloor \delta n \rfloor} \|v\|_S $$
for some $0 < \delta < 1$.  Recall that the $\chi^2$-distribution with $k$ degrees of freedom is the distribution of a sum of the squares of $k$ independent standard normal random variables.  Let $F$ be the cumulative distribution function of the $\chi^2$-distribution with one degree of freedom. Following the notation in \cite{CHM}, let $Q$ denote the quantile function of $F$.  That is, 
\begin{equation} \label{eq:def:Q}
	Q(s) := \inf \{ x \in \mathbb{R} : F(x) \geq s \}, \quad 0 < s \leq 1, \quad Q(0) := \lim_{s \searrow 0} Q(s). 
\end{equation}
Define 
\begin{equation} \label{eq:def:H}
	H(s) := - Q(1-s), \quad 0\le s <1. 
\end{equation}

\begin{theorem}[Extreme order statistics] \label{thm:order}
For each $n \geq 1$, let $v_n$ be a random vector uniformly distributed on the unit sphere $S^{n-1}$.  Then, for any fixed $0 < \delta < 1$, 
$$ \max_{S \subset [n] : |S|=\lfloor \delta n \rfloor} \|v_n\|_S^2 \longrightarrow - \int_{0}^{\delta} H(u)~ du $$ and
$$ \min_{S \subset [n] : |S|=\lfloor \delta n \rfloor} \|v_n\|_S^2 \longrightarrow - \int_{1-\delta}^{1} H(u)~ du $$
in probability as $n \to \infty$.  
\end{theorem}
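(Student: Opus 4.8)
To prove Theorem~\ref{thm:order}, the plan is to pass from the uniform distribution on the sphere to i.i.d.\ Gaussians, reduce the combinatorial extremum to order statistics, and then invoke the classical asymptotics for intermediate trimmed sums.

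First I would write $v_n = g/\|g\|$, where $g = (g_1,\dots,g_n)$ has i.i.d.\ $N(0,1)$ coordinates; this is the standard representation of a uniform point on $S^{n-1}$, consistent with the $\mathrm{Beta}$ law appearing in Theorem~\ref{thm:distr}. Since the denominator $\|g\|^2 = \sum_{i=1}^n g_i^2$ does not depend on $S$, for any $k$ we have
$$ \max_{|S|=k} \|v_n\|_S^2 = \frac{1}{\|g\|^2}\sum_{i=1}^{k}\chi^2_{(n-i+1)}, \qquad \min_{|S|=k}\|v_n\|_S^2 = \frac{1}{\|g\|^2}\sum_{i=1}^{k}\chi^2_{(i)}, $$
where $\chi^2_{(1)}\le\cdots\le\chi^2_{(n)}$ are the order statistics of $g_1^2,\dots,g_n^2$, each $\chi^2_1$-distributed with c.d.f.\ $F$. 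This rearrangement observation is the crucial simplification: the extremizing set is the block of $k$ largest (resp.\ smallest) coordinates in magnitude, so no union bound over the $\binom{n}{k}$ subsets is required.

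Taking $k = \lfloor\delta n\rfloor$, I would apply the law of large numbers to get $n^{-1}\|g\|^2\to 1$ almost surely, and the standard trimmed-sum asymptotics (in the notation of \cite{CHM}, with quantile function $Q$ from \eqref{eq:def:Q}) to get
$$ \frac{1}{n}\sum_{i=1}^{\lfloor\delta n\rfloor}\chi^2_{(n-i+1)}\longrightarrow\int_{1-\delta}^1 Q(u)\,du, \qquad \frac{1}{n}\sum_{i=1}^{\lfloor\delta n\rfloor}\chi^2_{(i)}\longrightarrow\int_0^{\delta}Q(u)\,du, $$
both limits being finite because $F$ has finite mean; informally, $n^{-1}\sum_{i\le\delta n}\chi^2_{(i)}$ is a Riemann sum for $\int_0^{\delta} F_n^{-1}(u)\,du$ and the empirical quantile function $F_n^{-1}$ converges to $Q$. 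Slutsky's theorem then yields $\max_{|S|=\lfloor\delta n\rfloor}\|v_n\|_S^2\to\int_{1-\delta}^1 Q(u)\,du$ and $\min_{|S|=\lfloor\delta n\rfloor}\|v_n\|_S^2\to\int_0^{\delta}Q(u)\,du$ in probability, and the substitution $u\mapsto 1-u$ together with $H(s)=-Q(1-s)$ from \eqref{eq:def:H} rewrites these as $-\int_0^{\delta} H(u)\,du$ and $-\int_{1-\delta}^{1} H(u)\,du$, matching the statement.

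The main obstacle is the trimmed-sum convergence in the display above: one must control uniformly the contribution of the extreme order statistics (near the top for the maximum, near the bottom for the minimum) so that the empirical quantile integral converges to its population counterpart. For the $\chi^2_1$ law this is benign---all moments are finite and the density is positive and bounded on the relevant range---but it is the one step that uses more than elementary probability, and I would quote the relevant theorem from \cite{CHM} rather than reprove it.
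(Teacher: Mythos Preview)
Your proposal is correct and follows essentially the same route as the paper: Gaussian representation of the uniform vector on the sphere, reduction of the extremum to a trimmed sum of the $\chi^2_1$ order statistics, the \cite{CHM} asymptotics for that trimmed sum, the law of large numbers for $\|g\|^2/n$, and Slutsky to combine them. The paper cites the central limit theorem from \cite{CHM} (together with a domain-of-attraction check via \cite{deHaan} and a bound $H(1/n)=O(\log n)$ from \cite{taka}) and then reads off the weak law, whereas you invoke the weak law for trimmed sums directly; the paper also derives the minimum from the maximum via the complement identity rather than treating it separately, but these are cosmetic differences.
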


\begin{remark} \label{delta-mass} 
The integrals on the right-hand side involving $H$ can be rewritten in terms of the standard normal distribution.  Indeed, from \eqref{eq:nor} and integration by parts, we find
\begin{align*}
- \int_{0}^{\delta} H(u)~ du &= 2 \int_{\Phi^{-1}(1-\frac{\delta}{2})}^{\infty} x^2 \Phi'(x)\,dx\\
&= {\delta} + \sqrt{\frac{2}{\pi}} \Phi^{-1} \left(1-\frac{\delta}{2} \right) \exp\left(-\frac{1}{2} \left[ \Phi^{-1}\left(1-\frac{\delta}{2} \right) \right]^2 \right),
\end{align*}
where $\Phi(x)$ is the cumulative distribution function of the standard normal distribution. We also get
\begin{align*}
- \int_{1-\delta}^{1} H(u)~ du &= 1+ \int_{0}^{1-\delta} H(u)~ du\\
&=\delta - \sqrt{\frac{2}{\pi}} \Phi^{-1}\left(\frac{1+\delta}{2} \right) \exp\left(-\frac{1}{2} \left[ \Phi^{-1} \left(\frac{1+\delta}{2} \right) \right]^2 \right).
\end{align*}
\end{remark}

\begin{remark} \label{rem:delta-mass-order}
Using the expressions in Remark \ref{delta-mass}, one can show that, as $\delta$ tends to zero, 
$$ -\int_{0}^{\delta} H(u)~ du = \Theta \left(  \delta \log (\delta^{-1})  \right) $$
and
$$ -\int_{1-\delta}^{1} H(u)~ du = \Theta (\delta^3). $$
In other words, Theorem \ref{thm:order} implies that the smallest $\delta n$ coordinates of an eigenvector contribute only $\Theta(\delta^3)$ fraction of the mass.  
\end{remark}

 Figure \ref{fig:order} depicts numerical simulations of the distribution of $\max_{S \subset [n] : |S|=\lfloor \delta n \rfloor} \|v_n\|_S^2$ when $n=600$.  The simulation shows that $\max_{S \subset [n] : |S|=\lfloor \delta n \rfloor} \|v_n\|_S^2$ is highly concentrated at the value $g(\delta):= - \int_{0}^{\delta} H(u)~ du$ as indicated in Theorem \ref{thm:order}.  Indeed, numerical calculations show 
 \begin{align*}
	g(1/4) \approx 0.7236069618, \quad g(1/3) \approx 0.8167557098, \\
	g(1/2) \approx 0.9286740823, \quad g(3/5) \approx 0.9646603703.
\end{align*}

\begin{figure}[!Ht]
   \includegraphics[width=\textwidth]{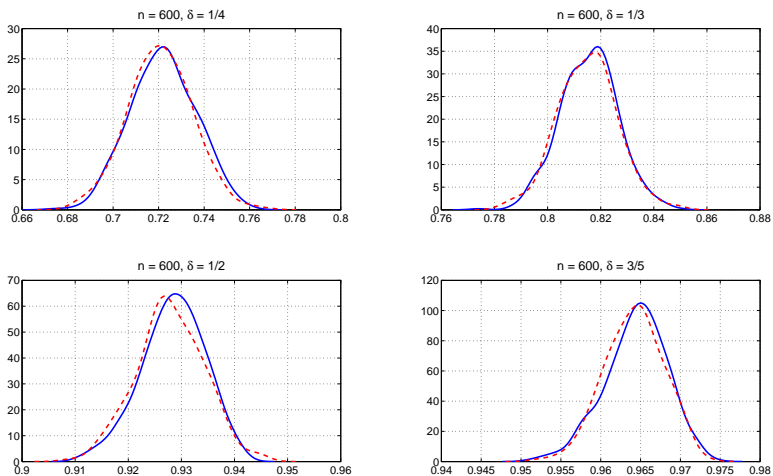}
   \caption{The probability density function of the distribution of $\max_{S \subset [n]: |S|=\lfloor \delta n \rfloor} \|v\|_S^2$ when $n=600$, based on $500$ samples. The blue curve is for $v$, a unit eigenvector of a matrix drawn from the GOE, while the red dashed curve corresponds to a unit eigenvector of a random symmetric Bernoulli matrix.}
   \label{fig:order}
\end{figure}

Our next result shows that these extreme order statistics concentrate around their expectation, even for relatively small values of $n$.  

\begin{theorem}[Concentration of the extreme order statistic]\label{thm:con-max}
Let $v$ be a random vector uniformly distributed on the unit sphere $S^{n-1}$.  Then, for any $1 \leq m \leq n$ and every $t \geq 0$, 
\begin{equation} \label{eq:conc-max}
	\Prob \Big( \Big| \max_{S \subset [n] : |S|=m} \|v \|_S - \E \max_{S \subset [n] : |S|=m} \|v \|_S \Big| > t \Big) \leq C \exp(-c nt^2) 
\end{equation}
and
\begin{equation} \label{eq:conc-min}
	\Prob \Big( \Big| \min_{S \subset [n] : |S|=m} \|v \|_S - \E \min_{S \subset [n] : |S|=m} \|v \|_S \Big| > t \Big) \leq C \exp(-c nt^2) 
\end{equation}
where $C,c > 0$ are absolute constants.  
\end{theorem}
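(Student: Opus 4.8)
The plan is to realize the extreme order statistics $\max_{|S|=m}\|v\|_S$ and $\min_{|S|=m}\|v\|_S$ as $1$-Lipschitz functions of $v$ on the unit sphere $S^{n-1}$, and then apply the classical concentration of measure for the uniform probability measure on $S^{n-1}$ (L\'evy's lemma). I would prove \eqref{eq:conc-max}; the argument for \eqref{eq:conc-min} is identical.

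Fix $1 \le m \le n$ and define $f \colon S^{n-1} \to \R$ by $f(v) := \max_{S \subset [n] : |S| = m} \|v\|_S$. For a fixed subset $S$ one has $\|v\|_S = \|P_S v\|$, where $P_S \colon \R^n \to \R^n$ is the orthogonal projection onto the coordinates indexed by $S$; since $\|P_S\| \le 1$, the reverse triangle inequality gives $\big| \|P_S v\| - \|P_S w\| \big| \le \|P_S(v-w)\| \le \|v-w\|$, so $v \mapsto \|v\|_S$ is $1$-Lipschitz on $\R^n$. A pointwise maximum of finitely many $1$-Lipschitz functions is again $1$-Lipschitz, hence $f$ is $1$-Lipschitz on $S^{n-1}$, with Lipschitz constant $1$ independent of $m$, $n$, and — crucially — of the (exponentially large) number $\binom{n}{m}$ of subsets involved. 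The same reasoning shows that $v \mapsto \min_{|S|=m}\|v\|_S$ is $1$-Lipschitz.

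Next I would invoke L\'evy's concentration inequality on the sphere: there are absolute constants $C_0, c_0 > 0$ so that for every $1$-Lipschitz $f \colon S^{n-1} \to \R$ with a median $M$ (with respect to the uniform measure),
\begin{equation*}
	\Prob\big( |f(v) - M| > t \big) \le C_0 \exp(-c_0 n t^2), \qquad t \ge 0,
\end{equation*}
where for the finitely many small $n$ not covered by the standard statement the inequality is trivial once $C_0$ is large enough. To pass from the median to the mean, integrate the tail bound: $|\E f(v) - M| \le \int_0^\infty \Prob(|f(v) - M| > t)\,dt \le C_1 / \sqrt{n}$ for an absolute constant $C_1$. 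For $t \ge 2C_1/\sqrt{n}$ the triangle inequality yields $\Prob(|f(v) - \E f(v)| > t) \le \Prob(|f(v)-M| > t/2) \le C_0 \exp(-c_0 n t^2/4)$, while for $0 \le t < 2C_1/\sqrt{n}$ the exponent $nt^2$ is bounded, so the desired estimate holds after enlarging the constants once more; since $\max_{|S|=m}\|v\|_S \le 1$ deterministically, the bound is also trivially valid for $t > 2$. This gives \eqref{eq:conc-max} with absolute constants $C, c$, and applying the same chain of inequalities to the $1$-Lipschitz function $v \mapsto \min_{|S|=m}\|v\|_S$ gives \eqref{eq:conc-min}.

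The only point that genuinely requires attention is the Lipschitz bookkeeping in the second step — in particular, that the Lipschitz constant of the maximum (and of the minimum) does not degrade with $m$, $n$, or the number of subsets $S$ — after which the theorem is a routine application of concentration of measure on the sphere; the median-to-mean conversion and the small-$n$ and large-$t$ edge cases are standard.
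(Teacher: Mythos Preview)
Your proposal is correct and follows essentially the same route as the paper: show that $v\mapsto\max_{|S|=m}\|v\|_S$ is $1$-Lipschitz on $S^{n-1}$, apply L\'evy's lemma to get concentration around the median, and then convert the median to the mean. The only cosmetic differences are that the paper cites a ready-made bound $|\med F(v)-\E F(v)|\le 12/\sqrt{n}$ rather than integrating the tail, and that the paper reduces \eqref{eq:conc-min} to \eqref{eq:conc-max} via the complementarity between the max over $|S|=m$ and the min over $|S|=n-m$, whereas you simply rerun the Lipschitz argument for the minimum; your treatment is just as valid.
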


We prove these results in Section \ref{sec:proof:GOE}.  

\section {Direct comparison theorems } \label{sec:universality}

In many cases, one can compare the eigenvectors of a Wigner matrix directly to those of the GOE.  In the random matrix theory literature, such results are often referred to as \emph{universality results}.  

Let $W$ be a Wigner matrix with atom variables $\xi, \zeta$.  We will require that $\xi$ and $\zeta$ satisfy a few moment conditions.  In particular, we say $\xi, \zeta$ are from the class $\CM_4$ if
\begin{itemize}
\item $\xi$ and $\zeta$ are sub-exponential random variables, 
\item $\E (\xi) = \E (\zeta)= \E(\xi^3) =0$, and
\item $\E(\xi^2) =1, \E(\xi^4) =3, \E(\zeta^2 )=2$.
\end{itemize} 
These conditions imply that the first four moments of the off-diagonal entries of $W$ match those of the GOE, and the first 2 moments of the diagonal entries of $W$ match those of the GOE. 

Let $\{X_n\}_{n \geq 1}$ and $\{Y_n\}_{n \geq 1}$ be two sequences of real random variables. In order to show that they have (asymptotically) the same distribution, it suffices to show that 
$$\Prob ( X_n \le t) -  \Prob( Y_n \le t) = o (1) $$ 
for all $t$. Notice that 
$$\Prob ( X_n \le t )= \E f_t(X_n),$$ 
where $f_t(X_n)$ is the indicator function of the event $\{X_n \le t\}$.  In practice, it is useful to smoothen $f_t$ a little bit to obtain a function  
with bounded derivatives, at the cost of an extra error term.  We are going to use this strategy in the next few theorems, which allow us to  compare $X_n$ with $Y_n$ by bounding 
$ \E F(X_n) - \E F(Y_n) $ for a large set of test functions $F$.  

One of the first universality results for eigenvectors is the following result.  

\begin{theorem}[Eigenvector coefficients of Wigner matrices, \cite{TVuniv-vector}]\label{thm:TVvector}  
Given $C > 0$ and random variables $\xi, \zeta$ from the class $\CM_4$, there are constants $\delta, C_0 > 0$ such that the following statement holds.  
Let $W$ be an $n \times n$ Wigner matrix with atom variables $\xi, \zeta$ and with unit eigenvectors $v_1, \ldots, v_n$.  Let $v_i(j)$ denote the $j$th entry of $v_i$.  For $1 \leq i,j \leq n$, let $Z_{i,j}$ be independent random variables with $Z_{i,j} \sim N(0,1)$ 
  for $j > 1$ and $Z_{i,1} \sim |N(0,1)|$.  Let $1 \leq k \leq n^{\delta} $, and let $1 \leq i_1 < \ldots < i_k \leq n$ and $1 \leq j_1 < \ldots < j_k \leq n$ be indices.
Then
\begin{equation}\label{sil}
	\left| \E F\left( ( \sqrt{n} v_{i_a}(j_b))_{1 \leq a,b \leq k} \right)  - \E F \left(( Z_{i_a,j_b})_{1 \leq a,b \leq k} \right) \right|   \le C_0 n^{-\delta} 
\end{equation}
whenever $F: \R^{k^2} \to \R$ is a smooth function obeying the bounds
$$ |F(x)| \leq C$$
and
$$ |\nabla^j F(x)| \leq C n^\delta $$
for all $x \in \R^{k^2}$ and $0 \leq j \leq 5$.
\end{theorem}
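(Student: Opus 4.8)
\emph{Proof strategy.} The plan is to split the comparison into an easy computation for the GOE and a Lindeberg-type replacement argument that transfers it to a general Wigner matrix. Write $Q(M):=F\big((\sqrt{n}\,v_{i_a}(j_b)(M))_{1\le a,b\le k}\big)$, a function of the real symmetric matrix $M$ with $|Q|\le C$, and let $W'$ be a GOE matrix of size $n$ (whose atom variables also lie in $\CM_4$), realized on the same probability space as $W$. Since $\big|\E Q(W)-\E F((Z_{i_a,j_b}))\big|\le\big|\E Q(W)-\E Q(W')\big|+\big|\E Q(W')-\E F((Z_{i_a,j_b}))\big|$, it suffices to bound each term by $O(n^{-\delta})$. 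For the GOE term, rotational invariance (Section~\ref{sec:GOE}) makes the eigenbasis of $W'$ Haar-distributed on $O(n)$ (up to column signs fixed by the first-coordinate convention), so $(\sqrt{n}\,v_{i_a}(j_b)(W'))_{a,b}$ is $\sqrt n$ times the sub-block of a Haar-orthogonal matrix indexed by rows $j_1,\dots,j_k$ and columns $i_1,\dots,i_k$. By the classical fact that sub-blocks of Haar-orthogonal matrices of size $n^{o(1)}$ are approximately Gaussian in total variation, with error $O(n^{-c})$, together with the observation that conditioning the first coordinate of each column to be positive turns its marginal into $|N(0,1)|$ while leaving the remaining coordinates asymptotically independent standard normal, one obtains $\big|\E Q(W')-\E F((Z_{i_a,j_b}))\big|=O(n^{-c})$, which is $O(n^{-\delta})$ once $\delta$ is small.

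For the main term $\big|\E Q(W)-\E Q(W')\big|$ I would run the entrywise Lindeberg swap. Enumerate the $O(n^2)$ independent entry-slots on and above the diagonal and let $W=W^{(0)},W^{(1)},\dots,W^{(N)}=W'$ interpolate by converting the slots one at a time. For a single swap at slot $(p,q)$, write $W^{(m-1)}=R+\tfrac{\gamma}{\sqrt n}V_{pq}$ and $W^{(m)}=R+\tfrac{\gamma'}{\sqrt n}V_{pq}$, where $V_{pq}$ is the symmetric matrix with $1$'s in positions $(p,q),(q,p)$ (only in $(p,p)$ if $p=q$), $R$ carries a $0$ in slot $(p,q)$, and $\gamma,\gamma'$ are the corresponding entries of $W,W'$, independent of $R$. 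Taylor-expand $t\mapsto Q(R+\tfrac{t}{\sqrt n}V_{pq})$, retaining the terms up to order four for an off-diagonal slot (up to order two for a diagonal slot), and take expectations: these retained terms contribute identically under $\gamma$ and $\gamma'$, because $\xi,\zeta\in\CM_4$ forces $\E\gamma^j=\E\gamma'^j$ for $0\le j\le 4$ off the diagonal and for $0\le j\le 2$ on it. The surviving Lagrange remainder is at most $n^{-5/2}$ (respectively $n^{-3/2}$) times $\big(\E|\gamma|^{5}+\E|\gamma'|^{5}\big)=O(1)$ — by sub-exponentiality — times the supremum over the relevant range of $t$ of the fifth (respectively third) derivative of $Q$ in the single variable $w_{pq}$. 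Summing over the $O(n^2)$ off-diagonal and $O(n)$ diagonal swaps yields $O(n^{-1/2+O(\delta)})$, hence $O(n^{-\delta})$ once $\delta$ is small, \emph{provided} these derivatives of $Q$ are $n^{O(\delta)}$ on an event of probability $1-O(n^{-A})$ for any fixed $A$; on the complement one uses only $|Q|\le C$, at cost $O(n^{-A})$ per swap.

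The substance of the argument — and the main obstacle — is exactly this bound on the derivatives of $Q$ in a single entry. One expresses the eigenvector coordinates through the resolvent $G(z)=(W-zI)^{-1}$ via the spectral projection $v_iv_i^{\T}=\frac{1}{2\pi i}\oint_{\Gamma_i}G(z)\,dz$, where $\Gamma_i$ encloses $\lambda_i$ and no other eigenvalue; combined with the sign convention (valid with probability $1-O(n^{-A})$ by Theorem~\ref{thm:simplespectrum}) this recovers $\sqrt n\,v_i(j)$ as a smooth function of the resolvent entries $G_{\bullet\bullet}(z)$, $z\in\Gamma_i$, and the identity $\partial_{w_{pq}}G=-G\,V_{pq}\,G$ turns each further derivative into a sum of boundedly many products of resolvent entries with indices among $\{j,1,p,q\}$. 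On the good event $\Omega$ — the intersection of the delocalization bound $\|v_i\|_{\ell^\infty}\le n^{-1/2+\eps}$, eigenvalue rigidity, and the level-repulsion bound that every gap $\lambda_{i+1}-\lambda_i$ exceeds $n^{-1-\eps}$, each holding with probability $1-O(n^{-A})$ — one takes $\Gamma_i$ at distance of order the local gap from the spectrum and shows $|G_{jk}(z)|\le n^{O(\eps)}$ there (splitting $G$ into the $\lambda_i$-term, controlled by delocalization, and the rest, controlled by a dyadic decomposition over the remaining eigenvalues using rigidity), so that all the required derivatives of $Q$ are $n^{O(\eps)}$ on $\Omega$. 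The delicate point is that $\Gamma_i$ itself depends on $W$, so one cannot differentiate under the integral sign directly, and nearly-degenerate eigenvalues must be ruled out; the standard remedy is to work with a deterministic family of contours adapted to the rigidity scale and to absorb anomalously small eigenvalue gaps into the exceptional event, where $|Q|\le C$ is all one needs. This swapping scheme is carried out in full in \cite{TVuniv-vector}; an alternative route to the same conclusion is the eigenvector moment flow driven by Dyson Brownian motion.
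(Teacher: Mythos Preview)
The paper does not prove this theorem; it is quoted from \cite{TVuniv-vector} (see the sentence following the statement: ``Theorem~\ref{thm:TVvector} is essentially the first part of \cite[Theorem 7]{TVuniv-vector}''). Your sketch is a faithful outline of the Tao--Vu argument in that reference: reduce to the GOE via a Lindeberg swap exploiting the four matching moments encoded in $\CM_4$, handle the GOE base case by the Haar--orthogonal structure of its eigenbasis and the classical approximation of small sub-blocks of Haar matrices by independent Gaussians, and control the swap remainders by bounding derivatives of the eigenvector functionals on a high-probability good event (delocalization, rigidity, gap lower bounds). One technical remark: in \cite{TVuniv-vector} the stability of eigenvector coordinates under a single swap is not organized around the contour-integral formula $v_iv_i^\T=\tfrac{1}{2\pi i}\oint_{\Gamma_i}G(z)\,dz$ as you write it, but rather through explicit Hadamard-type variation formulas for the quantities $P_{i,a}(M):=|v_i(M)^\T a|^2$ and a separate treatment of the sign; the resolvent/contour route you describe is closer in spirit to \cite{KY}. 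Either implementation leads to the same derivative bounds $n^{O(\eps)}$ on the good event, so your proposal is correct in substance even if the bookkeeping differs from the cited source.
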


Theorem \ref{thm:TVvector} is essentially the first part of \cite[Theorem 7]{TVuniv-vector}. The original theorem has $o(1)$ on the right-hand side of \eqref{sil}, but one can obtain the better bound $C_0 n^{-\delta}$ using the same proof. The essence of this theorem is that any set of at most $n^{\delta}$ coordinates (which may come from different eigenvectors) behaves like a set of iid Gaussian random variables.  Because of our normalization, requiring the first non-zero coordinate of the eigenvectors of $W$ to be positive, we cannot expect this coordinate to be close to a normal random variable.  However, Theorem \ref{thm:TVvector} shows that it is close in distribution to the absolute value of a standard normal random variable.  Similar results were also obtained in \cite{KY} for the case when $k= O(1)$.  

Another way to show that an eigenvector $v$ behaves like a random vector  $u$ uniformly distributed on the unit sphere is to fix a unit vector $a$, and compare the distribution of the inner product $v \cdot a $ with $u \cdot a$. 
It is easy to show that $u \cdot a$ satisfies the central limit theorem.  Namely, if $u_n$ is a random vector uniformly distributed on the unit sphere $S^{n-1}$ and $\{a_n\}$ is a sequence of unit vectors with $a_n \in S^{n-1}$, then
$$ \sqrt{n} u_n \cdot a_n \longrightarrow N(0,1) $$
in distribution as $n \to \infty$.  We refer the reader to \cite{Gfr} and \cite[Proposition 25]{TVuniv-vector} for details and a proof of this statement.  The following is a consequence of \cite[Theorem 13]{TVuniv-vector}. 

\begin{theorem}[Theorem 13 from \cite{TVuniv-vector}]  \label{thm:TVvectro2}  
Let $\xi, \zeta$ be random variables from the class $\CM_4$, and assume $\xi$ is a symmetric random variable.  For each $n \geq 1$, let $W_n$ be an $n \times n$ Wigner matrix with atom variables $\xi, \zeta$.  Let $\{a_n\}$  be a sequence of unit vectors with $a_n \in S^{n-1}$ such that $ \lim_{n \to \infty} \| a_n \| _{\ell^\infty} = 0 $, and let $\{i_n\}$ be a sequence of indices with $i_n \in [n]$.  Then
$$ \sqrt{n} v_{i_n}(W_n) \cdot a_n \longrightarrow N(0,1) $$
in distribution as $n \to \infty$.  
\end{theorem}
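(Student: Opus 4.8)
The plan is to prove the theorem by the method of moments, with the few‑coordinate universality of Theorem~\ref{thm:TVvector} as the essential input. Write $v=v_{i_n}(W_n)$ and $a=a_n$. Because $\xi$ is symmetric, $DWD\sim W$ for every diagonal sign matrix $D$; keeping track of the convention that the first coordinate of each eigenvector is positive, this yields $(\sqrt n\,v(1),(\sqrt n\,v(j))_{j\ge2})\sim(\sqrt n\,v(1),-(\sqrt n\,v(j))_{j\ge2})$. Hence $Z_n:=\sum_{j\ge2}a(j)\sqrt n\,v(j)$ is a symmetric random variable, and since $\sqrt n\,v\cdot a=a(1)\sqrt n\,v(1)+Z_n$ with $|a(1)|\le\|a\|_{\ell^\infty}\to0$ and $\sqrt n\,v(1)$ tight (it converges to $|N(0,1)|$ by the $k=1$ case of Theorem~\ref{thm:TVvector}), it suffices by Slutsky to show $Z_n\to N(0,1)$. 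All odd moments of $Z_n$ vanish by symmetry and $|Z_n|\le\sqrt n$ so all moments exist; since the normal law is determined by its moments, it is enough to prove $\E[Z_n^{2k}]\to(2k-1)!!$ for each fixed $k$.

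The next step is to expand $\E[Z_n^{2k}]=\sum_{j_1,\dots,j_{2k}=2}^{n}a(j_1)\cdots a(j_{2k})\,\E\big[\prod_{b=1}^{2k}\sqrt n\,v(j_b)\big]$ and group the $(2k)$‑tuples by the partition $\pi$ of $\{1,\dots,2k\}$ recording which $j_b$ agree. A partition with $r$ blocks involves only $r\le2k$ distinct coordinates of the single eigenvector $v$, so for $n$ large ($2k\le n^\delta$) Theorem~\ref{thm:TVvector}, applied with a test function depending only on the relevant row of coordinates, replaces each joint moment $\E[\prod\sqrt n\,v(j_b)]$ by the corresponding moment of independent standard normals, with an error uniform in the coordinates. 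Two points need care: the test functions in Theorem~\ref{thm:TVvector} must be bounded, so one first multiplies the monomial by a smooth cutoff at scale $n^{o(1)}$ and then uses eigenvector delocalization (see Section~\ref{sec:deloc}) to check that the cutoff is active with overwhelming probability and costs only $o(1)$; and the replacement must be done partition‑by‑partition, because within a fixed partition with block sizes $m_1,\dots,m_r$ the sum of the weights $\prod_i|a(l_i)|^{m_i}$ over distinct $l_i$ is at most $\prod_i\|a\|_{\ell^\infty}^{(m_i-2)_+}$ thanks to $\sum_j a(j)^2=1$, which equals $1$ for a pair partition and is $o(1)$ as soon as some block has size $\ge3$. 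A block of odd size forces the matching Gaussian moment (equivalently, by the sign symmetry above, the matching Wigner moment) to vanish, so only the $(2k-1)!!$ pair partitions survive, each contributing Gaussian moment $1$ times the weight $\sum_{l_1,\dots,l_k\ \text{distinct}}a(l_1)^2\cdots a(l_k)^2=1-O(\|a\|_{\ell^\infty}^2)\to1$; this gives $\E[Z_n^{2k}]\to(2k-1)!!$. The Gaussian moments obtained this way are precisely those of $\sqrt n\,u_n\cdot a_n$ for $u_n$ uniform on the sphere, consistent with the CLT for that model stated above.

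The main obstacle is exactly the mismatch of scales between Theorem~\ref{thm:TVvector}, which controls only $n^\delta$ coordinates at a time, and $\sqrt n\,v\cdot a$, which genuinely depends on all $n$ coordinates: a crude tuple‑by‑tuple use of Theorem~\ref{thm:TVvector} in the moment expansion does not close, since there are up to $n^{2k}$ tuples and $\|a\|_{\ell^1}$ can be as large as $\sqrt n$, so the accumulated additive error of order $n^{-\delta}\|a\|_{\ell^1}^{2k}$ fails to be $o(1)$. The fix is the partition bookkeeping above, which keeps each partition's total weight bounded (and small off the pair partitions) using $\|a\|_{\ell^\infty}\to0$ and $\|a\|_{\ell^2}=1$, so that the uniform $O(n^{-\delta+o(1)})$ error of Theorem~\ref{thm:TVvector} can be extracted from the sum; the remainder, including the truncation/delocalization interface and the harmless contribution of coordinate $1$, is routine. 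An alternative route, closer to \cite[Theorem 13]{TVuniv-vector} itself, bypasses the moment method and runs the Lindeberg replacement argument underlying Theorem~\ref{thm:TVvector} directly on $\E F(\sqrt n\,v\cdot a)$ for smooth bounded $F$, bounding the effect of swapping a single matrix entry via resolvent expansions; the symmetry of $\xi$ is again convenient there for controlling the lower‑order terms in the swap.
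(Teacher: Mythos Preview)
The survey does not supply its own proof of this theorem; it simply records the statement and attributes it to \cite[Theorem~13]{TVuniv-vector}. So there is nothing in the paper to compare against directly. That said, your sketch is sound and would constitute a correct proof once the details are written out. A few remarks on the plan and on how it relates to the original Tao--Vu argument:

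\textbf{Your route versus the original.} In \cite{TVuniv-vector} the result is obtained by running the four‑moment swapping argument directly on test functions of the scalar $\sqrt n\,v_{i_n}\cdot a_n$, reducing to the GOE where the claim is immediate from Haar distribution. Your primary route is different: you take Theorem~\ref{thm:TVvector} as a black box and deduce asymptotic normality by the method of moments, using the sign symmetry coming from $\xi\sim-\xi$ to kill odd moments and the partition bookkeeping to control the even ones. Both approaches are valid; yours is more combinatorial but has the virtue of being entirely downstream of the few‑coordinate comparison, while the original argument re‑opens the Lindeberg machinery. You already note this alternative at the end.

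\textbf{Technical points that deserve care when writing it out.} First, Theorem~\ref{thm:TVvector} as stated here indexes the input as a $k\times k$ array $(\sqrt n\,v_{i_a}(j_b))_{a,b}$ with $k$ distinct eigenvector indices \emph{and} $k$ coordinate indices; to extract $r$ coordinates of a single eigenvector you must pad with dummy eigenvector indices and let the test function depend only on the row you want --- this is what you mean by ``a test function depending only on the relevant row,'' and it is fine because $r\le 2k$ is a fixed constant. Second, the bounded‑$F$ hypothesis forces the smooth cutoff you describe; with cutoff scale $L=n^{\varepsilon}$ furnished by Theorem~\ref{thm:BEKHY} (which applies under the sub‑exponential hypothesis in $\CM_4$), the rescaled test function has $|F|\le C_k$ and $|\nabla^j F|\le C_k L^{-j}$, so the hypotheses of Theorem~\ref{thm:TVvector} are met and the moment error is $O(L^{2k}n^{-\delta})=o(1)$ for $\varepsilon$ small. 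Third, the weight bound you quote, $\sum_{l_1,\dots,l_r\ \text{distinct}}\prod_i|a(l_i)|^{m_i}\le\prod_i\|a\|_{\ell^\infty}^{(m_i-2)_+}$, is only correct because the sign symmetry has already eliminated all partitions with a block of size $1$ (for a singleton the corresponding factor is $\sum_l|a(l)|$, which can be as large as $\sqrt n$); you use this correctly, but it is worth saying explicitly that the $()_+$ is never active in the surviving terms. Finally, the convention on eigenvector signs requires the spectrum to be simple and $v_{i_n}(1)\neq0$; both hold with probability $1-O(n^{-A})$ by Theorem~\ref{thm:simplespectrum}, and the crude bound $|\sqrt n\,v(j)|\le\sqrt n$ makes the complementary event negligible in every moment.
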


In a recent paper \cite{BY}, it was showed that one can  remove the assumption that $\xi, \zeta$ are from the class $\CM_4$, with an extra restriction on the index $i_n$.

\begin{theorem}[Theorem 1.2 from \cite{BY}] \label{thm:BY}
Let $\xi, \zeta$ be sub-exponetial random variables with mean zero, and assume $\xi$ has unit variance.  For each $n \geq 1$, let $W_n$ be an $n \times n$ Wigner matrix with atom variables $\xi, \zeta$.  In addition, let $\{a_n\}$ be a sequence of unit vectors with $a_n \in S^{n-1}$.  
 Then there exists $\tilde\delta > 0$ such that, for any fixed integer $m \geq 1$ and any
$$ I_n \subset T_n := \left( [1, n^{1/4}] \cup [n^{1-\tilde\delta}, n-n^{1-\tilde\delta}] \cup [n - n^{1/4}, n] \right) \cap \mathbb{N} $$
with $|I_n| = m$, 
$$ \sqrt{n}( |a_n \cdot  v_i(W_n)| )_{i \in I_n} \longrightarrow ( |Z_i| )_{i=1}^m $$
in distribution as $n \to \infty$, where $Z_1, \ldots, Z_m$ are iid standard normal random variables.  
\end{theorem}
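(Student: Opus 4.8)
The plan is to follow the \emph{dynamical} approach of Bourgade--Yau \cite{BY}, since the hypotheses here impose no moment matching with the Gaussian atoms and do not require $\xi$ to be symmetric, so the four--moment and symmetry--based comparison results of Section~\ref{sec:universality} do not apply directly. The argument has three parts: (i) perturb $W_n$ by an infinitesimal Gaussian component via matrix Ornstein--Uhlenbeck dynamics; (ii) show that along these dynamics the relevant eigenvector observables relax to their Gaussian equilibrium, via the \emph{eigenvector moment flow}; (iii) compare the observables of $W_n$ with those of the perturbed matrix and conclude by the method of moments. For part (i), let $dW_t = n^{-1/2}\,dB_t - \tfrac12 W_t\,dt$ with $W_0 = W_n$ and $B_t$ a symmetric matrix Brownian motion; then $W_t$ has the same distribution as $e^{-t/2}W_n + (1-e^{-t})^{1/2}G$ with $G$ GOE--like, and in particular $W_t$ shares its first two moments with $W_n$ for all $t$.

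Write $\lambda_1(t)\le\cdots\le\lambda_n(t)$ and $v_1(t),\dots,v_n(t)$ for the eigenvalues and eigenvectors of $W_t$, and set $\zeta_k(t) := \sqrt n\, a_n\cdot v_k(t)$. Conditioning on the entire eigenvalue trajectory $\boldsymbol\lambda$, the vector $(\zeta_k(t))_k$ performs a closed diffusion on the sphere with coefficients built from $1/(\lambda_i(t)-\lambda_j(t))$. Following Bourgade--Yau, one tracks the moment observables
$$ f_t(\mathbf m) := \E\!\left[\, \textstyle\prod_k \zeta_k(t)^{2m_k} \,\Big|\, \boldsymbol\lambda \,\right], \qquad \mathbf m = (m_k)_k \in \N^n ; $$
after a combinatorial change of variables encoding perfect matchings, these satisfy a \emph{parabolic} system $\partial_t f_t = \mathcal L(t) f_t$ on the weighted graph of configurations, with nonnegative off-diagonal weights proportional to $(\lambda_i-\lambda_j)^{-2}$, whose unique equilibrium $f^{\mathrm{eq}}$ is the value taken by the product of moments when the $\zeta_k$ are independent centered Gaussians --- i.e. the Haar value.

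For part (ii) the goal is: for $t \ge n^{-1+\eps}$ and any fixed $\mathbf m$ supported on indices in $T_n$, one has $f_t(\mathbf m) = f^{\mathrm{eq}}(\mathbf m) + o(1)$ with probability $1-o(1)$. The inputs are eigenvalue rigidity and the \emph{isotropic} local semicircle law for $W_t$ (the latter is what permits an arbitrary direction $a_n$): these locate $\lambda_k(t)$ near its classical position at the optimal scale and control $a_n\cdot(W_t-z)^{-1}a_n$ down to $\Im z \gtrsim n^{-1}$ in the deep bulk and to edge--scale $n^{-2/3}$--type windows near the spectral edges. Inserting these into an $L^2$ energy estimate for $\mathcal L(t)$ together with a Nash--Sobolev ultracontractivity bound on the configuration graph (equivalently, the maximum--principle argument of Bourgade--Yau) gives decay of $f_t - f^{\mathrm{eq}}$ on the local relaxation time, which is $\ll n^{-1+\eps}$ for the indices in question. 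The restriction to $T_n$ is precisely dictated here: near the edges the gaps are of order $n^{-2/3}$ so relaxation is fast and rigidity is sharp, and on $[n^{1-\tilde\delta}, n - n^{1-\tilde\delta}]$ the bulk estimates hold at the required precision, whereas in the transition ranges between edge and deep bulk they degrade. Part (iii) is then a Green's function comparison: $f_0(\mathbf m)$ is a smooth bounded functional of imaginary parts of bilinear Green's function entries of $W_n$ at the appropriate scale, and since $W_n$ and $W_t$ share their first two moments and have sub-exponential tails, a Lindeberg replacement along the matrix entries (with $t=n^{-1+\eps}$, the third-- and higher--moment mismatches absorbed by the local law) yields $f_0(\mathbf m) = f_t(\mathbf m) + o(1)$. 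Combining with part (ii), $f_0(\mathbf m) = f^{\mathrm{eq}}(\mathbf m)+o(1)$ for every fixed $\mathbf m$ supported on $I_n \subset T_n$; since for fixed $m$ the family $\{f^{\mathrm{eq}}(\mathbf m)\}$ is exactly the (moment--determinate) joint moment sequence of $(|Z_i|^2)_{i=1}^m$ with $Z_i$ iid standard normal, the method of moments gives the claimed convergence of $\sqrt n(|a_n\cdot v_i(W_n)|)_{i\in I_n}$.

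The main obstacle is part (ii): deriving the eigenvector moment flow (the It\^o computation that produces its clean parabolic structure) and, above all, proving its relaxation --- this is where the sharp rigidity and isotropic local law must be fed into a delicate parabolic--regularity / maximum--principle argument on the configuration graph, and where the precise shape of the admissible index set $T_n$ is determined. By contrast, the dynamical reduction in part (i) and the Lindeberg comparison in part (iii) are, given the local laws, fairly routine.
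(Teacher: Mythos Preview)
The survey does not prove Theorem~\ref{thm:BY}; it is simply quoted from Bourgade--Yau \cite{BY} as an external input. Your outline correctly sketches the original Bourgade--Yau argument (Ornstein--Uhlenbeck dynamics, the eigenvector moment flow and its relaxation via a maximum principle with rigidity and the isotropic local law as inputs, followed by a Green's function comparison), so there is nothing to compare against in the present paper.
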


 One immediately obtains the following corollary 
 
\begin{corollary}[Corollary 1.3 from \cite{BY}] \label{cor:BY}
Let $W_n$, $\tilde\delta$, and $T_n$ be as in Theorem \ref{thm:BY}. For each $n \geq 1$, let $i_n \in T_n$, and let $v_{i_n}^{(n)}(j)$ be the $j$th coordinate of $v_{i_n}(W_n)$.  Assume $l$ is a fixed positive integer.  Then, for any $J_n \subset [n]$ with $|J_n| = l$,
$$ \sqrt{n} \left( \left| v_{i_n}^{(n)}(j) \right| \right)_{j \in J_n} \longrightarrow (|Z_j|)_{j=1}^l $$
in distribution as $n \to \infty$, where $Z_1, \ldots, Z_l$ are iid standard normal random variables.  
\end{corollary}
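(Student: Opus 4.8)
The plan is to derive Corollary~\ref{cor:BY} from Theorem~\ref{thm:BY} (in the case $m=1$) by the Cram\'er--Wold device. Write $J_n = \{j_1 < \cdots < j_l\}$ and let $w_n := (v_{i_n}^{(n)}(j_1), \ldots, v_{i_n}^{(n)}(j_l)) \in \R^l$ be the restriction of $v_{i_n}(W_n)$ to the coordinates in $J_n$, so that the object of interest is the vector $\sqrt{n}\,(|w_n(1)|,\ldots,|w_n(l)|)$. Since this quantity involves only absolute values of coordinates, it does not depend on the overall sign of the eigenvector $v_{i_n}(W_n)$; I would therefore equip $v_{i_n}(W_n)$ with an independent, uniformly random sign, so that $v_{i_n}(W_n)$ has the same distribution as $-v_{i_n}(W_n)$, and hence $\langle\theta,w_n\rangle$ has the same distribution as $-\langle\theta,w_n\rangle$ for every $\theta\in\R^l$ and every $n$.

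First I would reduce, via Cram\'er--Wold together with the continuous mapping theorem, to showing that for each $\theta\in\R^l$ one has $\sqrt{n}\,\langle\theta,w_n\rangle \to N(0,\|\theta\|_{\ell^2}^2)$ in distribution: this gives $\sqrt{n}\,w_n \to N(0,I_l)$, and applying the coordinatewise absolute value map then yields $\sqrt{n}\,(|v_{i_n}^{(n)}(j)|)_{j\in J_n} \to |N(0,I_l)| = (|Z_j|)_{j=1}^l$, which is the claim. To prove the displayed one-dimensional convergence, fix $\theta\neq 0$ and let $a_n\in S^{n-1}$ be the unit vector supported on $J_n$ with $a_n(j_k) = \theta_k/\|\theta\|_{\ell^2}$; then $a_n\cdot v_{i_n}(W_n) = \|\theta\|_{\ell^2}^{-1}\langle\theta,w_n\rangle$. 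Applying Theorem~\ref{thm:BY} with $m=1$, $I_n = \{i_n\}\subset T_n$, and this sequence of unit vectors gives $\sqrt{n}\,|a_n\cdot v_{i_n}(W_n)| \to |N(0,1)|$, i.e. $\sqrt{n}\,|\langle\theta,w_n\rangle| \to |N(0,\|\theta\|_{\ell^2}^2)|$ in distribution. Now I would combine this with the symmetry from the previous paragraph: for a bounded continuous test function $f$, split $f = f_e + f_o$ into even and odd parts; the odd part integrates to zero against both $\sqrt{n}\,\langle\theta,w_n\rangle$ (by symmetry) and against $N(0,\|\theta\|_{\ell^2}^2)$ (also symmetric), while $f_e(\sqrt{n}\,\langle\theta,w_n\rangle)$ is a bounded continuous function of $\sqrt{n}\,|\langle\theta,w_n\rangle|$ and hence its expectation converges to $\E f_e(N(0,\|\theta\|_{\ell^2}^2))$. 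This upgrades absolute-value convergence to $\sqrt{n}\,\langle\theta,w_n\rangle \to N(0,\|\theta\|_{\ell^2}^2)$, as needed.

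There is no serious obstacle here, which is why the result follows ``immediately'' from Theorem~\ref{thm:BY}; the only point deserving attention is that one cannot feed the vector of absolute values directly into Cram\'er--Wold, since it is a nonlinear function of $w_n$. One must first pass to the signed vector $w_n$, which forces the two elementary observations above: that the absolute-value statement is insensitive to the sign of $v_{i_n}(W_n)$ (so one may assume the relevant symmetry) and that symmetry plus convergence of $|\langle\theta,w_n\rangle|$ yields convergence of $\langle\theta,w_n\rangle$ itself. An alternative that sidesteps the random sign is to note that $\sqrt{n}\,w_n$ is tight (each coordinate converges in distribution by Theorem~\ref{thm:BY} applied with $m=1$ and $a_n$ a standard basis vector), pass to a subsequential limit $W^\ast$, observe that $\E\cos\langle s,W^\ast\rangle = e^{-\|s\|_{\ell^2}^2/2}$ for all $s\in\R^l$ because $\cos$ is even and $|\langle s,W^\ast\rangle|$ has the law of $|N(0,\|s\|_{\ell^2}^2)|$, deduce that the symmetrization of $W^\ast$ equals $N(0,I_l)$ and hence $|W^\ast|$ has the law of $(|Z_j|)_{j=1}^l$; since every subsequential limit of $\sqrt{n}\,(|v_{i_n}^{(n)}(j)|)_{j\in J_n}$ is then identified, the full sequence converges.
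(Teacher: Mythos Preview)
Your argument is correct. The survey itself does not supply a proof of this corollary: it is quoted from \cite{BY} and introduced only with the phrase ``One immediately obtains the following corollary,'' so there is no in-paper proof to compare against. Your derivation from Theorem~\ref{thm:BY} via Cram\'er--Wold is valid; in particular, the two delicate points you flag---that the statement of Theorem~\ref{thm:BY} concerns only $|a_n\cdot v_i|$, and that Cram\'er--Wold must be applied to the signed vector $w_n$ rather than to its coordinatewise absolute value---are handled cleanly by the random-sign symmetrization and the even/odd decomposition of test functions. The alternative route through tightness and the cosine part of the characteristic function is equally sound.
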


\begin{remark}
Both Theorem \ref{thm:BY} and Corollary \ref{cor:BY} hold for more general ensembles of random matrices than Wigner matrices.  Indeed, the results in \cite{BY} hold for so-called generalized Wigner matrices, where the entries above the diagonal are only required to be independent, not necessarily identically distributed; see \cite[Definition 1.1]{BY} for details.  
\end{remark}

\section {Extremal coordinates} \label{sec:deloc}

In this section,  we investigate the largest and smallest coordinates (in absolute value) of an eigenvector of a Wigner matrix. 

\subsection{The largest coordinate}
In view of Theorem \ref{thm:GOEnorm}, it is natural to conjecture that, for any eigenvector $v_j$, 
\begin{equation} \label{eq:linftyopt}
	\|v_j \| _{\ell^\infty} = O \left( \sqrt { \frac{\log n}{n}} \right) 
\end{equation}
with high probability.  The first breakthrough was made in  \cite[Theorem 5.1]{ESY1},   which provides a bound of the form $\frac{\log^{O(1)} n}{\sqrt n }$ for a large proportion of eigenvectors $v_j$, under 
some technical conditions on the distribution of the entries. 
 This result was extended to all eigenvectors under a weaker assumption in \cite{TVedge, TVuniv}, and many newer  
 papers  \cite{E, EKYY, ESY2, ESY1, ESY3, ESY4, ESYY, EY, EYY2, EYY1, EYY, TVsur, VW} give further strengthening and generalizations.  In particular,  the optimal bound in \eqref{eq:linftyopt} was recently obtained  in \cite{VW}. 
 
\begin{theorem}[Optimal upper bound; Theorem 6.1 from \cite{VW}] \label{thm:delo1} 
Let $\xi$ be a sub-gaussian  random variable with mean zero and unit variance.   Then, for any $C_1 > 0$ and any $0 < \eps < 1$, there exists a constant $C_2 > 0$ such that the following holds.  Let $W$ be an $n \times n$ Wigner matrix with atom variable $\xi$.  
\begin{itemize} 
\item (Bulk case) For any $ \eps n \le i \le  (1-\eps) n$, 
$$ \| v_i (W) \|_{\ell^\infty} \le C_2  \sqrt { \frac {\log  n } { n } } $$
with probability at least $1 -  n^{-C_1} $. 
\item (Edge case) For  $1\le i \le \eps n$ or $ (1- \eps)n \le i \le n $,  
$$ \| v_i(W) \|_{\ell^\infty} \le C_2   \frac{\log n}{\sqrt{n}} $$
with probability at least $1 -  n^{-C_1}$.  
\end{itemize}
\end{theorem}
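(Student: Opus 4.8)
The plan is to prove, for each fixed coordinate $j \in [n]$, the per‑coordinate bound $|v_i(W)(j)|^2 \le C_2^2 \log n/n$ in the bulk and $\le C_2^2 \log^2 n/n$ at the edge with failure probability at most $n^{-C_1-1}$, and then union bound over the $n$ coordinates. Since eigenvectors are scale‑invariant I will work with $M := W/\sqrt n$, whose spectrum sits near $[-2,2]$, and write $\lambda := \lambda_i(M)$, $v := v_i(M)$. By Theorem \ref{thm:simplespectrum} I may intersect with an event of probability $1 - O(n^{-C_1-1})$ on which the spectrum of $M$ is simple and all coordinates of all eigenvectors are nonzero. On this event, fixing $j$ and writing $M$ in block form with respect to its $j$th row and column — scalar corner $M_{jj}$, off‑diagonal block $\mathbf{x} \in \R^{n-1}$ (the $j$th column minus its diagonal entry, with i.i.d., mean‑zero, variance‑$1/n$, sub‑gaussian entries), principal minor $M^{(j)}$ independent of $\mathbf{x}$ — the eigenvalue equation $Mv = \lambda v$ together with $\|v\| = 1$ gives, since $v(j)\ne 0$ forces $\lambda \notin \mathrm{spec}(M^{(j)})$,
\[ |v(j)|^2 \;=\; \frac{1}{1 + \bigl\|(M^{(j)} - \lambda)^{-1}\mathbf{x}\bigr\|^2} \;=\; \frac{1}{1 + \sum_{k=1}^{n-1}\dfrac{(\mathbf{x}^\T\mathbf{w}_k)^2}{(\mu_k - \lambda)^2}}, \]
where $\mu_k,\mathbf{w}_k$ are the eigenvalues and eigenvectors of $M^{(j)}$; equivalently this is the spectral bound $|v(j)|^2 \le \eta\,\Im G_{jj}(\lambda+i\eta)$ for the resolvent $G=(M-z)^{-1}$. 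So everything reduces to a lower bound on the denominator.

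For the bulk case ($\eps n \le i \le (1-\eps)n$) I would condition on $M^{(j)}$ and invoke the strong local semicircle law — valid with overwhelming probability — in its sharp eigenvalue‑counting form: the number of $\mu_k$ in any interval $I$ agrees with $(n-1)\int_I\rho_{sc}$ up to an error $O(\log n)$, and $\lambda$ is within $O(\log n/n)$ of its classical location $\gamma_i$. Choosing a large constant $c_0=c_0(C_1,\eps)$, the window $I=[\lambda-c_0\tfrac{\log n}{n},\,\lambda+c_0\tfrac{\log n}{n}]$ then contains a set $S$ of at least $c_1 C_1\log n$ eigenvalues $\mu_k$ of $M^{(j)}$ (here the bulk assumption keeps $\rho_{sc}$ bounded below), each with $(\mu_k-\lambda)^{-2} \ge (c_0\log n/n)^{-2}$, so that
\[ \bigl\|(M^{(j)} - \lambda)^{-1}\mathbf{x}\bigr\|^2 \;\ge\; \Bigl(\frac{n}{c_0\log n}\Bigr)^{\!2}\sum_{k\in S}(\mathbf{x}^\T\mathbf{w}_k)^2 \;=\; \Bigl(\frac{n}{c_0\log n}\Bigr)^{\!2}\|P_S\mathbf{x}\|^2, \]
where $P_S$ is the orthogonal projection onto $\mathrm{span}\{\mathbf{w}_k : k\in S\}$. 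Since $\mathbf{x}$ is independent of $P_S$, has i.i.d. non‑degenerate coordinates, and $\E\|P_S\mathbf{x}\|^2 = |S|/n$, a small‑ball (anti‑concentration) estimate for the norm of a random vector's projection gives $\Prob\bigl(\|P_S\mathbf{x}\|^2 \le \tfrac12|S|/n\bigr) \le e^{-c|S|}$, which is $\le n^{-C_1-1}$ once $c_1$ is large — and this is exactly why one needs \emph{logarithmically many} nearby eigenvalues, i.e.\ a projection of rank $\Omega(\log n)$. On the complementary event the denominator is $\gtrsim n/\log n$, whence $|v(j)|^2 \le C_2^2\log n/n$ with $C_2=C_2(c_0)$; a union bound over $j$ finishes the bulk case.

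For the edge case I would instead use the resolvent bound $|v(j)|^2 \le \eta\,\Im G_{jj}(\lambda+i\eta)$ directly, since near a spectral edge the density vanishes like a square root: $\rho_{sc}(E)\asymp\sqrt{\kappa}$ and $\Im m_{sc}(E+i\eta)\asymp\sqrt{\kappa+\eta}$ with $\kappa=\bigl|\,|E|-2\,\bigr|$. The entrywise local law $\Im G_{jj}(z)=\Im m_{sc}(z)(1+o(1))$ holds down to $\eta\gg n^{-2/3}$ near the edge (and $\eta\gg n^{-1}$ away from it), so choosing $\eta$ of order $\log^{4/3}n\cdot n^{-2/3}$ at the very edge and of order $\log n/n$ once $\kappa$ exceeds that yields $|v(j)|^2 \lesssim \eta\sqrt{\kappa+\eta}\lesssim \log^2 n/n$ uniformly over $1\le i\le\eps n$ and $(1-\eps)n\le i\le n$; a union bound over $j$ does it. The extra factor $\sqrt{\log n}$ compared with the bulk is precisely the price of the vanishing density.

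The hard part is the input, not the bookkeeping: one needs the \emph{strong} local semicircle law and its eigenvalue‑counting rigidity \emph{at the optimal scale} $\eta\sim\log n/n$ in the bulk (and $\eta\sim n^{-2/3}\,\mathrm{polylog}$ near the edge). With only the weaker, polynomially‑lossy rigidity at scale $n^{\delta}/n$ one would get the suboptimal bound $\|v_i\|_{\ell^\infty}\lesssim n^{-1/2+\delta}$, so the sharp Erd\H{o}s--Yau--Yin‑type counting estimate is essential, and it is where I expect the real work to lie. A second, subtler point — and the reason the edge bound is stated as $\log n/\sqrt n$ rather than the sharper $\sqrt{\log n/n}$ — is that near the edge $\lambda$ and the spectrum of $M^{(j)}$ are correlated and $\lambda$ may a priori sit anomalously close to some $\mu_k$; passing through $\Im G_{jj}(\lambda+i\eta)$ with $\eta>0$, rather than the bare quadratic form, is what circumvents this. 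The sub‑gaussian hypothesis is used to guarantee the local‑law inputs and Theorem \ref{thm:simplespectrum} with the stated failure probabilities.
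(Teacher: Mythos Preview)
The paper does not prove this theorem; it is quoted from \cite{VW}, and the only proof content in this survey is Remark~\ref{nobound}, which notes that the sub-gaussian extension of \cite[Theorem~6.1]{VW} follows because \cite[Lemma~1.2]{VW} (the projection concentration lemma) holds under the sub-gaussian hypothesis via the Hanson--Wright inequality \cite[Theorem~2.1]{RVhw}. Your outline is a faithful reconstruction of the \cite{VW} strategy: the coordinate identity (Lemma~\ref{lemma:coordinate} here), an optimal-scale local semicircle/rigidity input to locate $\asymp\log n$ minor eigenvalues near $\lambda$, and a projection small-ball estimate for $\|P_S\mathbf x\|^2$ (this last is precisely the role of \cite[Lemma~1.2]{VW}, and is what Corollary~\ref{cor:projection} here yields with $t=d/2$).

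There is one genuine gap in your bulk argument. After conditioning on $M^{(j)}$ the eigenpairs $(\mu_k,\mathbf w_k)$ are fixed, but your window $I=[\lambda-c_0\tfrac{\log n}{n},\,\lambda+c_0\tfrac{\log n}{n}]$ is still random, since $\lambda=\lambda_i(M)$ depends on $\mathbf x$ and on the diagonal entry $M_{jj}$. Hence the index set $S$ and the projection $P_S$ are \emph{not} independent of $\mathbf x$, and you cannot apply the small-ball bound $\Prob(\|P_S\mathbf x\|^2\le\tfrac12|S|/n)\le e^{-c|S|}$ as written. The fix is the one you already half-invoke: center the window at the deterministic classical location $\gamma_i$ instead of at $\lambda$. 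Rigidity gives $|\lambda-\gamma_i|=O(\log n/n)$ on a high-probability event, so the eigenvalues $\mu_k\in[\gamma_i-c_0\tfrac{\log n}{n},\,\gamma_i+c_0\tfrac{\log n}{n}]$ --- a set now measurable with respect to $M^{(j)}$ alone, hence independent of $\mathbf x$ --- still satisfy $|\mu_k-\lambda|\le 2c_0\tfrac{\log n}{n}$, and the rest of your argument goes through verbatim. (Your edge argument via $|v(j)|^2\le\eta\,\Im G_{jj}(\lambda+i\eta)$ does not suffer from this, since the entrywise local laws are proved uniformly over the spectral parameter and can therefore be evaluated at the random point $\lambda+i\eta$.)
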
 

\begin{remark} \label{nobound} 
Theorem \ref{thm:delo1} was proved in \cite{VW} under the stronger assumption that $\xi$ is $K$-bounded for some constant $K$. One can 
easily obtain the more general result here by observing that \cite[Lemma 1.2]{VW} holds under the sub-gaussian assumption, as a special case of 
a recent result, \cite[Theorem 2.1]{RVhw}. The rest of the proof remains unchanged. 
\end{remark} 

Using Theorem \ref{thm:TVvector}, one can derive  a matching lower bound.  

\begin{theorem} [Matching lower bound] \label{thm:matching lower} 
Let $\xi, \zeta$ be random variables from the class $\CM_4$.  Then there exists a constant $c > 0$ such that the following holds.  Let $W$ be an $n \times n$ Wigner matrix with atom variables $\xi, \zeta$.  For any $1 \leq i \leq n$, 
$$ \| v_i(W) \|_{\ell^\infty} \geq c \sqrt{ \frac{\log n}{n}} $$
with probability $1 - o(1)$.  
\end{theorem}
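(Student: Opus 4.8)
The plan is to deduce the lower bound from the universality statement of Theorem~\ref{thm:TVvector}. That theorem says that, after multiplying by $\sqrt{n}$, any collection of at most $n^\delta$ coordinates of the eigenvectors of $W$ behaves, in the smoothed sense of that statement, like a family of iid standard Gaussians (up to the $|N(0,1)|$ correction for the first coordinate). Since the maximum of $k$ iid $N(0,1)$ variables concentrates near $\sqrt{2\log k}$, restricting attention to $k=n^{\delta'}$ well-chosen coordinates of the single eigenvector $v_i$ should already force $\|v_i\|_{\ell^\infty}\ge c\sqrt{(\log n)/n}$ with probability $1-o(1)$ for some small constant $c$.

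First I would apply Theorem~\ref{thm:TVvector} with the constant there taken to be $C=1$, obtaining $\delta,C_0>0$; then fix parameters $k:=\lfloor n^{\delta'}\rfloor$ for a sufficiently small $\delta'>0$ (in particular $\delta'<\delta$) and $t:=c_0\sqrt{\log n}$, where $c_0>0$ is pinned down at the end. Next I would choose the index sets: eigenvector indices $1\le i_1<\cdots<i_k\le n$ with $i\in\{i_1,\dots,i_k\}$, say $i=i_{a_0}$, which is possible since $k\le n$, and coordinate indices $j_b:=b+1$ for $b=1,\dots,k$, so that $2\le j_1<\cdots<j_k\le n$ once $n$ is large; taking $j_b\ge 2$ makes the comparison variables $Z_{i,j_b}$ genuine $N(0,1)$ rather than $|N(0,1)|$.

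Then I would build a test function. Fix a smooth $g:\R\to[0,1]$ with $g\equiv 1$ on $[-t,t]$, $g\equiv 0$ off $(-2t,2t)$, and $\|g^{(m)}\|_{\ell^\infty}=O(t^{-m})$ for $0\le m\le 5$, and set
\[
F\big((x_{a,b})_{1\le a,b\le k}\big):=\prod_{b=1}^k g(x_{a_0,b}).
\]
This $F$ depends only on the row belonging to $v_i$, satisfies $|F|\le 1$, and—since $t\to\infty$—an elementary computation (distributing up to five derivatives across the product and bounding the resulting elementary symmetric polynomials via $e_m(|y|)\le (\sum|y_b|)^m/m!$) gives $|\nabla^j F|=O\big(k^{5/2}t^{-5}\big)\le n^\delta$ for $0\le j\le 5$ and $n$ large, so $F$ is admissible. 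Because $\|v_i\|_{\ell^\infty}\ge n^{-1/2}\max_{1\le b\le k}|\sqrt n\,v_i(j_b)|$ and $F$ equals $1$ on the event that every rescaled coordinate $\sqrt n\,v_i(j_b)$ lies in $[-t,t]$,
\[
\Prob\Big(\|v_i\|_{\ell^\infty}<c_0\sqrt{(\log n)/n}\Big)\le\Prob\Big(\max_{1\le b\le k}|\sqrt n\,v_i(j_b)|\le t\Big)\le \E\,F\big((\sqrt n\,v_{i_a}(j_b))_{a,b}\big),
\]
and Theorem~\ref{thm:TVvector}, together with $g\le \mathbf{1}_{[-2t,2t]}$ and independence of the $Z_{i,j_b}$, bounds the last quantity by $\E\prod_{b=1}^k g(Z_{i,j_b})+C_0 n^{-\delta}\le\big(1-2(1-\Phi(2t))\big)^k+C_0 n^{-\delta}$, where $\Phi$ is the standard normal cdf.

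Finally I would run the Gaussian tail estimate: $\big(1-2(1-\Phi(2t))\big)^k\le\exp\!\big(-2k(1-\Phi(2t))\big)$, and the classical bound $1-\Phi(x)\ge \tfrac{1}{2x\sqrt{2\pi}}e^{-x^2/2}$ for $x\ge 1$ gives $k(1-\Phi(2t))\ge c'\,n^{\delta'-2c_0^2}/\sqrt{\log n}$, which tends to infinity as soon as $c_0$ is chosen with $2c_0^2<\delta'$. Hence the whole right-hand side is $o(1)$ and the theorem holds with $c=c_0$. The only point that calls for care is checking that $F$ satisfies the size and smoothness hypotheses of Theorem~\ref{thm:TVvector}; the substantive content is entirely that theorem, so there is no genuine obstacle—at the cost that this argument produces only some small constant $c$, not a value close to the GOE constant in Theorem~\ref{thm:GOEnorm}.
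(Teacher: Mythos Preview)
Your proof is correct and follows exactly the route the paper indicates, namely deriving the lower bound directly from Theorem~\ref{thm:TVvector}; the paper does not spell out a proof beyond that remark, and your construction of the smoothed test function $F$ on $k\approx n^{\delta'}$ coordinates is the natural way to implement it. One small bookkeeping point: the derivative bound $|\nabla^j F|=O(k^{j/2}t^{-j})$ forces $\delta'<2\delta/5$ (not merely $\delta'<\delta$), but your ``sufficiently small $\delta'$'' already covers this.
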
 

\vskip2mm 
\noindent {\bf Open question.} Prove the optimal bound in \eqref{eq:linftyopt} for all eigenvectors (including the edge case). 
\vskip2mm 
\noindent {\bf Open question.} Is the limiting distribution of $\| v_i \| _{\ell^\infty}$ universal, or does it depend on the atom variables $\xi$ and $\zeta$? 
\vskip2mm 

One can deduce a slightly-weaker version of Theorem \ref{thm:delo1} when the entries are only assumed to be sub-exponential with exponent $0 < \alpha < 2$ by applying \cite[Theorem 6.1]{VW} and a truncation argument.  We apply such an argument in Section \ref{sec:proof:extreme} to obtain the following corollary. 

\begin{corollary} \label{cor:delo} 
Let $\xi$ be a symmetric sub-exponential random variable with exponent $\alpha > 0$.  Assume further that $\xi$ has unit variance.  Then, for any $C_1$ and $0 < \eps < 1$, there is a constant $C_2$ such that the following holds.  Let $W$ be an $n \times n$ Wigner matrix with atom variable $\xi$. 
\begin{itemize} 
\item (Bulk case) For any $ \eps n \le i \le  (1-\eps) n$, 
$$ \| v_i (W) \|_{\ell^\infty} \le C_2  \sqrt { \frac {\log^{1 + 2/\alpha}  n } { n } } $$
with probability at least $1 - n^{-C_1} $. 
\item (Edge case) For  $1\le i \le \eps n$ or $ (1- \eps)n \le i \le n $,  
$$ \| v_i(W) \|_{\ell^\infty} \le C_2  \frac{\log^{1 + 2/\alpha} n}{\sqrt{n}} $$
with probability at least $1 - n^{-C_1}$. 
\end{itemize}
\end{corollary}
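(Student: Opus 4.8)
The plan is to derive Corollary~\ref{cor:delo} from the $K$-bounded version of \cite[Theorem~6.1]{VW} (the form in which it is actually proved; see Remark~\ref{nobound}) by truncating the entries at a level $K$ that grows slowly with $n$, and by keeping track of how the resulting bound depends on $K$. Throughout, fix $C_1$ and $\eps$, and let $\beta>0$ be a constant with $\Prob(|\xi|>t)\le\beta\exp(-t^\alpha/\beta)$ for all $t>0$.

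\noindent\textbf{Truncation and coupling.}
Choose $K=K_n=\Theta\big((\log n)^{1/\alpha}\big)$ large enough (say $K^\alpha/\beta=(C_1+2)\log n+\log\beta$) that the union bound below has error at most $n^{-C_1}$. Put $\hat\xi:=\xi\,\indicator{|\xi|\le K}$. Since $\xi$ is symmetric, so is $\hat\xi$, whence $\E\hat\xi=0$, and a routine estimate from \eqref{eq:subgauss} gives $\sigma_K^2:=\E\hat\xi^2=1-o(1)$. Let $\tilde\xi:=\hat\xi/\sigma_K$; this is symmetric with mean zero, unit variance, and $|\tilde\xi|\le K/\sigma_K\le 2K$ for $n$ large. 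Construct the Wigner matrix $\tilde W$ with atom variable $\tilde\xi$ on the same probability space as $W$ by setting $\tilde w_{ij}=w_{ij}\,\indicator{|w_{ij}|\le K}/\sigma_K$ for $1\le i\le j\le n$. On the event $\Omega:=\{|w_{ij}|\le K\text{ for all }1\le i\le j\le n\}$ we have $\tilde W=\sigma_K^{-1}W$; since $\sigma_K>0$, the matrices $W$ and $\tilde W$ then have identical eigenvectors (with the sign convention) and the same ordering of eigenvalues, so $v_i(W)=v_i(\tilde W)$ for every $i$ on $\Omega$. A union bound over the at most $n^2$ entries gives $\Prob(\Omega^c)\le n^2\beta\exp(-K^\alpha/\beta)\le n^{-C_1}$.

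\noindent\textbf{Applying the $K$-bounded estimate.}
The matrix $\tilde W$ satisfies the hypotheses of the $K$-bounded version of Theorem~\ref{thm:delo1} with boundedness parameter $2K$. The crucial input is that this bound degrades by at most a polynomial factor in $K$: a careful reading of the proof in \cite{VW} shows that $K$ enters only through the large-deviation estimates for the linear and quadratic forms in the matrix entries, costing a factor $O(K)$ in the bulk regime and $O(K^2)$ at the edge (and reducing to the stated form of Theorem~\ref{thm:delo1} when $K=O(1)$). Hence, with probability $1-O(n^{-C_1})$,
$$\|v_i(\tilde W)\|_{\ell^\infty}\le C\,K\sqrt{\tfrac{\log n}{n}}\ \ (\eps n\le i\le(1-\eps)n),\qquad \|v_i(\tilde W)\|_{\ell^\infty}\le C\,K^2\,\tfrac{\log n}{\sqrt n}\ \ (\text{edge}),$$
with $C$ depending only on $\eps$ and $C_1$.

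\noindent\textbf{Conclusion, and where the work is.}
Since $K=\Theta\big((\log n)^{1/\alpha}\big)$, we have $K\sqrt{\log n/n}=\Theta\big(\sqrt{(\log n)^{1+2/\alpha}/n}\big)$ and $K^2\log n/\sqrt n=\Theta\big((\log n)^{1+2/\alpha}/\sqrt n\big)$. Intersecting the two good events and using $v_i(W)=v_i(\tilde W)$ on $\Omega$ yields the claimed bounds for $\|v_i(W)\|_{\ell^\infty}$ with probability at least $1-n^{-C_1}$, after relabelling $C_1$ in the intermediate steps and absorbing numerical factors into $C_2$. (For $\alpha\ge 2$ the atom variable is already sub-gaussian, so one may skip the truncation and apply Theorem~\ref{thm:delo1} directly, getting the stronger bound.) The only non-routine ingredient is the middle step: one must verify that relaxing the constant-boundedness hypothesis of \cite[Theorem~6.1]{VW} to $K$-boundedness costs at most a factor $K$ in the bulk and $K^2$ at the edge — equivalently, that the Hoeffding/Bernstein- and Hanson--Wright-type inequalities for the linear and quadratic forms used in \cite{VW} are applied with the sharp $K$-dependence (contributing $O(K\sqrt{\log n})$, resp. $O(K^2\log n)$, where a Gaussian atom would contribute $O(\sqrt{\log n})$, resp. $O(\log n)$). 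The remaining steps are routine bookkeeping.
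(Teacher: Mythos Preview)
Your proposal is correct and follows essentially the same approach as the paper: truncate the entries at level $K=\Theta((\log n)^{1/\alpha})$, observe that the truncated matrix coincides (up to a scalar) with $W$ with probability $1-n^{-C_1}$, and then invoke the $K$-bounded form of \cite[Theorem~6.1]{VW} on the truncated matrix. The paper is slightly terser---it absorbs the variance normalization into a footnote and simply asserts that applying \cite[Theorem~6.1]{VW} to $\tilde W$ yields the stated bound, leaving the explicit $K$- (equivalently, $\log^{1/\alpha} n$-) dependence implicit---whereas you spell out where the factors of $K$ and $K^2$ arise; but the argument is the same.
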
 

Corollary \ref{cor:delo} falls short of the optimal bound in \eqref{eq:linftyopt}; it remains an open question to obtain the optimal bound when the entries are not sub-gaussian.  

Next, we discuss a generalization.  Notice that $\| v \|_{\ell^\infty} = \max _{1 \le i \le n} | v \cdot e_i |$, where $e_1, \ldots, e_n$ are the standard basis vectors. 
What happens if we consider the inner product $v \cdot u$, for any fixed unit vector $u$?  The theorems in the previous section show that, under certain technical assumptions, $ \sqrt  n v \cdot u $ is approximately Gaussian, which implies that $| v \cdot u |$ is typically of order $n^{-1/2} $. The following result gives a strong deviation bound.

\begin{theorem} [Isotropic delocalization, Theorem 2.16 from \cite{BEKHY}]  \label{thm:BEKHY}
Let $\xi$ and $\zeta$ be zero-mean sub-exponential random variables, and assume that $\xi$ has unit variance.  Let $W$ be an $n \times n$ Wigner matrix with atom variables $\xi, \zeta$.  Then, for any $C_1 > 0$ and $0 < \eps < 1/2$, there exists $C_2 > 0$ (depending only on $C_1, \eps$, $\xi$, and $\zeta$) such that 
$$ \sup_{1 \leq i \leq n} |v_i \cdot u| \leq \frac{n^{\eps}}{\sqrt{n}}, $$
for any fixed unit vector $u \in S^{n-1}$, with probability at least $1 - C_2 n^{-C_1}$.    
\end{theorem}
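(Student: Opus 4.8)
The goal is to prove Theorem \ref{thm:BEKHY}, the isotropic delocalization bound $\sup_{1 \le i \le n} |v_i \cdot u| \le n^{\eps - 1/2}$ holding with probability $1 - C_2 n^{-C_1}$ for any fixed unit vector $u$. The plan is to deduce this from isotropic local law estimates for the resolvent $G(z) = (W - z)^{-1}$ of the Wigner matrix, which is the standard route in this circle of ideas and is what \cite{BEKHY} develops. First I would record the spectral identity
\begin{equation} \label{eq:specrep}
	\Im \langle u, G(E + i\eta) u \rangle = \sum_{k=1}^n \frac{\eta\, |v_k \cdot u|^2}{(\lambda_k - E)^2 + \eta^2},
\end{equation}
which for any fixed index $i$ gives the one-sided bound
\[
	|v_i \cdot u|^2 \le \frac{(\lambda_i - E)^2 + \eta^2}{\eta}\, \Im \langle u, G(E+i\eta) u \rangle \le 2\eta\, \Im \langle u, G(\lambda_i + i\eta) u \rangle
\]
upon choosing $E = \lambda_i$. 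Thus it suffices to show that, with the stated probability, $\Im \langle u, G(z) u \rangle = O(n^{2\eps - 1}/\eta)$ uniformly over $z$ on a suitable net, at a spectral scale $\eta$ slightly above $1/n$, say $\eta = n^{-1+\eps'}$ for a small $\eps' < \eps$.

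The key input is the \emph{isotropic local semicircle law}: for sub-exponential Wigner entries one has, with overwhelming probability (i.e. probability $\ge 1 - C n^{-D}$ for every $D$, after a net/union bound),
\[
	\sup_{z} \left| \langle u, G(z) u \rangle - m_{sc}(z) \right| \le n^{o(1)} \left( \sqrt{ \frac{\Im m_{sc}(z)}{n \eta} } + \frac{1}{n\eta} \right),
\]
uniformly for $z = E + i\eta$ with $E$ in a bounded interval and $\eta \ge n^{-1+c}$, where $m_{sc}$ is the Stieltjes transform of the semicircle distribution. Since $\Im m_{sc}(z) = O(1)$ in the bulk and $O(\eta/\sqrt{\kappa + \eta})$ near the edge (with $\kappa$ the distance to the spectral edge), the right-hand side is at most $n^{o(1)}(n\eta)^{-1/2}$. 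Feeding $\eta = n^{-1+\eps'}$ into the bound above for $|v_i \cdot u|^2$ yields
\[
	|v_i \cdot u|^2 \le 2\eta \cdot n^{o(1)} \left( 1 + (n\eta)^{-1/2} \right) \le n^{-1 + \eps' + o(1)},
\]
and taking $\eps' + o(1) < 2\eps$ and a union bound over the $n$ indices $i$ (absorbing the factor $n$ into $n^{o(1)}$, or rather into the slack between $\eps'$ and $2\eps$) gives the claim with $C_2 n^{-C_1}$ by choosing the net fine enough and $D > C_1$ in the overwhelming-probability statement. The edge case is handled the same way: the constraint $\eps < 1/2$ is exactly what leaves room for the weaker edge behavior of $\Im m_{sc}$ and the $\log$-type corrections there, and one restricts attention away from a vanishing-but-polynomial window if needed.

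The main obstacle is establishing the isotropic local law itself in this generality — controlling $\langle u, G(z) u \rangle$ for an \emph{arbitrary deterministic} unit vector $u$, rather than just the diagonal entries $G_{ii}$ (which is the entrywise local law). This is precisely the technical heart of \cite{BEKHY}, proved there by a self-consistent / fluctuation-averaging argument combined with a bootstrap in the imaginary part $\eta$, or alternatively by polynomialization (expanding $\langle u, G u\rangle = \sum_{i,j} \bar u_i G_{ij} u_j$ and using the entrywise local law together with large-deviation bounds for the off-diagonal sums $\sum_{i \ne j} \bar u_i G_{ij} u_j$). For the survey it is legitimate to cite \cite[Theorem 2.12 (or the relevant isotropic local law therein)]{BEKHY} as a black box and then run the two-line resolvent argument above; I would present the deduction in full and point to \cite{BEKHY} (and the earlier \cite{KY, EYY}) for the local law. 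A secondary point to handle carefully is the net argument making the bound uniform in $z$ and in $i$: $\langle u, G(z) u\rangle$ is Lipschitz in $z$ with constant $O(\eta^{-2})$, so a polynomial net of spacing $n^{-10}$ suffices, and the union bound over both the net and the $n$ eigenvector indices is harmless against the overwhelming-probability estimate.
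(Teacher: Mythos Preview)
The paper does not give its own proof of this statement: Theorem~\ref{thm:BEKHY} is quoted directly as Theorem~2.16 of \cite{BEKHY}, with no argument supplied in the survey. Your sketch---bounding $|v_i\cdot u|^2$ by $\eta\,\Im\langle u,G(\lambda_i+i\eta)u\rangle$ via the spectral representation~\eqref{eq:specrep} and then invoking the isotropic local semicircle law of \cite{BEKHY} at scale $\eta=n^{-1+\eps'}$---is exactly the standard deduction used in \cite{BEKHY} itself (and in the earlier entrywise analogues of \cite{ESY1,EYY}), so your proposal is correct and coincides with the intended route.
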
 

\begin{remark} 
Theorem \ref{thm:BEKHY} and the results in \cite{BEKHY} actually hold for a larger class of so-called generalized Wigner matrices whose entries have bounded moments; see \cite[Section 2.2]{BEKHY} for details.   
\end{remark}

\subsection{The smallest coordinate} 

We now turn our attention to the smallest coordinate of a given eigenvector.  To this end, we recall the definition of $\| \cdot \|_{\min}$ given in \eqref{eq:def:min}.  

\begin{theorem}[Individual coordinates: Lower bound] \label{lower}  
Let $\xi, \zeta$ be sub-gaussian random variables with mean zero, and assume $\xi$ has unit variance.  Let $W$ be an $n \times n$ Wigner matrix with atom variables $\xi, \zeta$.  Let $v_1, \ldots, v_n$ denote the unit eigenvectors of $W$, and let $v_i(j)$ denote the $j$th coordinate of $v_i$.  Then there exist constants $C, c, c_0, c_1 > 0$ (depending only on $\xi, \zeta$) such that, for any $n^{-c_0} < \alpha < c_0$ and $\delta \geq n^{-c_0/\alpha}$, 
$$ \sup_{1 \leq i, j \leq n} \Prob \left( |v_i(j)| \leq \frac{\delta}{\sqrt{n} (\log n)^{c_1 }} \right) \leq C \frac{\delta}{\sqrt{\alpha}} + C \exp(-c \log^2 n). $$
\end{theorem}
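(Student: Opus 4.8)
The plan is to reduce, via the Schur-complement identity for a single diagonal minor, to an anti-concentration estimate for how close $\lambda_i$ lies to the spectrum of that minor; the parameter $\alpha$ will be a tuning parameter in the anti-concentration step, and the power $c_1$ will absorb the various polylogarithmic losses. By permutation invariance assume $j=n$, and write $W=\begin{pmatrix}W^{(n)}&a\\a^\T&w_{nn}\end{pmatrix}$, where $W^{(n)}$ is the $(n-1)\times(n-1)$ minor, with eigenvalues $\mu_1\le\cdots\le\mu_{n-1}$ and orthonormal eigenvectors $u_1,\dots,u_{n-1}$; by Cauchy interlacing $\mu_{i-1}\le\lambda_i\le\mu_i$. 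The eigenvalue equation $Wv_i=\lambda_iv_i$ gives the classical identity
$$|v_i(n)|^2=\Bigl(1+\sum_{k=1}^{n-1}\frac{(u_k\cdot a)^2}{(\mu_k-\lambda_i)^2}\Bigr)^{-1}=\bigl(1+\|(W^{(n)}-\lambda_i)^{-1}a\|^2\bigr)^{-1},$$
valid almost surely. I condition on $W^{(n)}$ throughout (so only $a,w_{nn}$ remain random) and work on a ``good'' event $\mathcal E$ of probability $\ge 1-Ce^{-c\log^2 n}$ on which: the eigenvalues of $W$ and of $W^{(n)}$ satisfy rigidity, the eigenvectors $u_k$ are delocalized with $\|u_k\|_\infty\le(\log n)^C/\sqrt n$ (by delocalization for the sub-gaussian Wigner matrix $W^{(n)}$, cf.\ Theorem~\ref{thm:delo1}), $\|a\|^2\le 2n$, and the Hanson--Wright bounds used below hold. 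On $\mathcal E$, a small value $|v_i(n)|^2\le\delta^2/(n(\log n)^{2c_1})$ forces $\sum_k(u_k\cdot a)^2/(\mu_k-\lambda_i)^2\ge n(\log n)^{2c_1}/(2\delta^2)$; the ``bulk'' of this sum — the contribution of the $\mu_k$ at distance $\ge(\log n)/\sqrt n$ from $\lambda_i$ — is $O(n)$ on $\mathcal E$ by Hanson--Wright for the sub-gaussian vector $a$ (independent of $W^{(n)}$) together with the entrywise local semicircle law, so some one of the $O(\log^{O(1)}n)$ ``near'' terms, say $(u_{i-1}\cdot a)^2/(\lambda_i-\mu_{i-1})^2$, must be $\gtrsim n(\log n)^{2c_1}/((\log n)^{O(1)}\delta^2)$.

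The second ingredient is the secular equation: conditionally on $W^{(n)},w_{nn}$, $\lambda_i$ is the unique root in $(\mu_{i-1},\mu_i)$ of $\lambda-w_{nn}=\sum_k(u_k\cdot a)^2/(\lambda-\mu_k)$.
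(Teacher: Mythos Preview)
Your outline is essentially the same scheme the paper follows: reduce to $j=n$, use the Schur-complement identity $|v_i(n)|^{-2}=1+\sum_k (u_k\cdot a)^2/(\mu_k-\lambda_i)^2$ (the paper's Lemma~\ref{lemma:coordinate}), show that the ``far'' part of the sum is at most a polylog times $n/\delta^2$, and conclude that a small coordinate forces the minimal interlacing gap $m_i=\min_k|\mu_k-\lambda_i|$ to be at most $\delta/\sqrt n$. The paper's Lemma~\ref{lemma:techsumbnd} does the sum bound by first controlling each $(u_k\cdot a)^2\le C\log^2 n$ (Hanson--Wright, i.e.\ Corollary~\ref{cor:projection}) and then summing $1/(\mu_k-\lambda_i)^2$ via a dyadic decomposition and the local law; your Hanson--Wright-on-the-whole-quadratic-form variant is an acceptable alternative provided you freeze $\lambda$ on a net (since $\lambda_i$ depends on $a$).

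The substantive gap is what comes next. Your proposal ends at ``The second ingredient is the secular equation\ldots'' without saying how to extract from it the bound
\[
\Prob\bigl(m_i\le \delta/\sqrt n\bigr)\ \le\ C\,\delta/\sqrt\alpha \,+\, C\exp(-c\log^2 n),
\]
nor what the role of $\alpha$ is. This is precisely the hard step, and the paper does not attempt to prove it here: it quotes it as a black box (Theorem~\ref{thm:NTV}, due to Nguyen--Tao--Vu). The parameter $\alpha$ and the range $\delta\ge n^{-c_0/\alpha}$ come from that result's inverse Littlewood--Offord machinery, not from anything in your outline; rigidity and the secular equation alone do not produce the $\delta/\sqrt\alpha$ anti-concentration. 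So either invoke Theorem~\ref{thm:NTV} explicitly at this point (as the paper does), or be prepared to reproduce a nontrivial portion of \cite{NTV}. With that citation in place, your argument and the paper's are the same proof.
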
 

By the union bound, we immediately obtain the following corollary. 
\begin{corollary} \label{cor:lower}
Let $\xi, \zeta$ be sub-gaussian random variables with mean zero, and assume $\xi$ has unit variance.  Let $W$ be an $n \times n$ Wigner matrix with atom variables $\xi, \zeta$.  Let $v_1, \ldots, v_n$ denote the unit eigenvectors of $W$, and let $v_i(j)$ denote the $j$th coordinate of $v_i$.  Then there exist constants $C, c, c_0, c_1 > 0$ (depending only on $\xi, \zeta$) such that, for any $n^{-c_0} < \alpha < c_0$ and $\delta \geq n^{-c_0/\alpha}$, 
$$ \sup_{1 \leq i \leq n} \Prob \left( \| v_i(W) \|_{\min} \leq \frac{\delta}{n^{3/2} (\log n)^{c_1 }} \right) \leq C \frac{\delta}{\sqrt{\alpha}} + C \exp(-c \log^2 n). $$
\end{corollary}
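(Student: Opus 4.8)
The plan is to deduce Corollary \ref{cor:lower} from Theorem \ref{lower} by a routine union bound over the $n$ coordinates of $v_i$, combined with a rescaling of the parameter $\delta$. The starting observation is that $\|v_i(W)\|_{\min} = \min_{1 \le j \le n} |v_i(j)|$, so for any threshold $t > 0$ one has the set identity $\{\|v_i(W)\|_{\min} \le t\} = \bigcup_{j=1}^n \{|v_i(j)| \le t\}$, and hence, by subadditivity, $\Prob(\|v_i(W)\|_{\min} \le t) \le \sum_{j=1}^n \Prob(|v_i(j)| \le t) \le n \sup_{1 \le j \le n}\Prob(|v_i(j)| \le t)$.

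To absorb the extra factor of $n$, I would apply Theorem \ref{lower} not with $\delta$ but with $\delta' := \delta/n$, so that the threshold $\frac{\delta'}{\sqrt n (\log n)^{c_1}}$ there equals the threshold $\frac{\delta}{n^{3/2}(\log n)^{c_1}}$ in the Corollary. This requires verifying that the hypothesis $\delta' \ge n^{-\bar c_0/\alpha}$ of Theorem \ref{lower} (with $\bar c_0$ its constant) is met. Since $\delta \ge n^{-c_0/\alpha}$ gives $\delta' \ge n^{-(\alpha+c_0)/\alpha}$, and since $\alpha < c_0$, it suffices to take the Corollary's constant $c_0$ small enough that $2c_0 \le \bar c_0$; then $\alpha + c_0 < 2c_0 \le \bar c_0$, as needed. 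The conditions $n^{-c_0} < \alpha < c_0$ and the sub-gaussian hypotheses are untouched by shrinking $c_0$. Theorem \ref{lower} then gives $\sup_{1 \le i,j \le n}\Prob\big(|v_i(j)| \le \frac{\delta}{n^{3/2}(\log n)^{c_1}}\big) \le C\frac{\delta}{n\sqrt\alpha} + C\exp(-c\log^2 n)$.

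Combining the two displays yields $\sup_{1 \le i \le n}\Prob\big(\|v_i(W)\|_{\min} \le \frac{\delta}{n^{3/2}(\log n)^{c_1}}\big) \le C\frac{\delta}{\sqrt\alpha} + Cn\exp(-c\log^2 n)$, where in the first term the factor $n$ cancels exactly against the $\delta/n$. It remains only to clean up the second term: since $n\exp(-c\log^2 n) = \exp(\log n - c\log^2 n) \le \exp(-\tfrac{c}{2}\log^2 n)$ as soon as $\log n \ge 2/c$, and the finitely many smaller values of $n$ can be accommodated by enlarging $C$, we obtain the claimed inequality with $c$ replaced by $c/2$. I do not anticipate a genuine obstacle here: the only care needed is the bookkeeping that reconciles the admissible ranges of $\alpha$ and $\delta$ and the constants $c, c_0, c_1$ between the two statements --- there is no new probabilistic input beyond Theorem \ref{lower}.
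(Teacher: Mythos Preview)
Your proof is correct and follows essentially the same approach as the paper, which simply states ``By the union bound, we immediately obtain the following corollary'' without spelling out the details. Your careful bookkeeping with the rescaled parameter $\delta' = \delta/n$, the adjustment of $c_0$ to ensure the hypothesis of Theorem~\ref{lower} is met, and the absorption of the factor $n$ into the sub-gaussian tail term are exactly the steps implicit in that one-line justification.
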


In particular, Corollary \ref{cor:lower} implies that, with high probability, 
$$ \| v_i(W) \|_{\min} = \Omega \left( \frac{1}{n^{3/2} (\log n)^{c_1} } \right). $$
In view of Theorem \ref{thm:GOEnorm}, this is optimal up to logarithmic factors.

\vskip2mm 

\noindent {\bf Open question.} Is the limiting distribution of $\| v_j(W) \|_{\min}$ universal, or does it depend on the atom variables $\xi$ and $\zeta$?

\vskip2mm

We prove Corollary \ref{cor:delo} and Theorem \ref{lower} in Section \ref{sec:proof:extreme}.

\section {No-gaps delocalization} \label{sec:mass}

The results in the previous section address how much mass can be contained in a single coordinate.  
We next turn to similar estimates for the amount of mass contained on a number of  coordinates.  In particular, the following results assert that any subset of coordinates of linear size must contain a non-negligible fraction of the vector's $\ell^2$-norm.   Following Rudelson and Vershynin \cite{RVgaps}, we refer to this phenomenon as \emph{no-gaps delocalization}.

Using Corollary \ref{cor:BY}, we obtain the following analogue of Theorem \ref{thm:order}.  Recall the function $H$ defined in \eqref{eq:def:H}.  

\begin{theorem}\label{thm:generalized} 
Let $W_n$, $\tilde\delta$, and $T_n$ be as in Theorem \ref{thm:BY}.  For each $n$, let $k_n \in T_n$.  
Let $v_{k_n}$ be a unit eigenvector of $W_n$ corresponding to $\lambda_{k_n}(Y_n)$. Then, for any fixed $0< \delta <1$, 
$$ \max_{S\subset [n]: |S|=\lfloor\delta n \rfloor} \|v_{k_n}\|_S^2 \longrightarrow -\int_{0}^{\delta} H(u) \, du  $$
and
$$ \min_{S\subset [n]: |S|=\lfloor\delta n \rfloor} \|v_{k_n}\|_S^2 \longrightarrow -\int_{1-\delta}^1 H(u) \, du  $$
in probability as $n \to \infty$, where $H$ is defined in \eqref{eq:def:H}.
\end{theorem}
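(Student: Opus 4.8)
\textbf{Proof proposal for Theorem \ref{thm:generalized}.}

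The plan is to transfer the GOE result, Theorem \ref{thm:order}, to general Wigner matrices using the joint-coordinate universality provided by Corollary \ref{cor:BY}. The key observation is that, by the argument in \cite{CHM} underlying Theorem \ref{thm:order}, the limit $-\int_0^\delta H(u)\,du$ arises as the almost-sure limit of the normalized sum of the top $\lfloor \delta n\rfloor$ order statistics of $n$ i.i.d.\ $\chi^2_1$ random variables; that is, if $g_1,\dots,g_n$ are i.i.d.\ $N(0,1)$ and $g_{(1)}^2\ge\cdots\ge g_{(n)}^2$ are the ordered squares, then $\frac1n\sum_{\ell=1}^{\lfloor\delta n\rfloor} g_{(\ell)}^2 \to -\int_0^\delta H(u)\,du$ and $\frac1n\sum_{\ell=n-\lfloor\delta n\rfloor+1}^{n} g_{(\ell)}^2 \to -\int_{1-\delta}^1 H(u)\,du$ in probability. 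Since $\max_{|S|=m}\|v\|_S^2 = \sum$ of the $m$ largest values of $\{v(j)^2\}_{j=1}^n$ and similarly for the min, and since $v(j)^2 = \frac1n (\sqrt n\, v(j))^2$, the statement for the GOE is precisely the statement that the empirical distribution of $\{n\, v(j)^2\}_{j=1}^n$ behaves, at the level of its top and bottom $\delta$-tails, like that of $\{g_j^2\}$.

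First I would fix $k_n \in T_n$ and write $u_j := \sqrt n\, v_{k_n}(j)$ for $j=1,\dots,n$, and let $F_n$ denote the empirical cumulative distribution function of $\{u_j^2\}_{j=1}^n$. The goal becomes to show that $F_n$ converges in probability (in the appropriate weak sense, say at all continuity points, which here are all points since the limit is continuous) to $F$, the $\chi^2_1$ cdf, and moreover that the top and bottom $\delta$-truncated means converge; the latter requires only weak convergence plus a mild uniform integrability, which holds because $\|v_{k_n}\|_2^2 = 1$ gives $\frac1n\sum_j u_j^2 = 1$ exactly, pinning the first moment. Second I would establish the one-dimensional convergence $\E F_n(t) \to F(t)$ for each fixed $t$: by exchangeability $\E F_n(t) = \Prob(u_1^2 \le t)$, and Corollary \ref{cor:BY} with $l=1$, $J_n=\{1\}$ gives $u_1 = \sqrt n\, v_{k_n}(1) \to |Z|$ in distribution (up to the sign convention, which is irrelevant after squaring), hence $\Prob(u_1^2\le t)\to F(t)$. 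Third, for concentration I would show $\Var F_n(t) \to 0$: writing $F_n(t) = \frac1n\sum_j \mathbf 1_{\{u_j^2\le t\}}$, one has $\Var F_n(t) \le \frac1n + \frac{1}{n^2}\sum_{j\ne j'}[\Prob(u_j^2\le t, u_{j'}^2\le t) - \Prob(u_j^2\le t)\Prob(u_{j'}^2\le t)]$, and the pairwise joint distribution of $(u_j, u_{j'})$ converges by Corollary \ref{cor:BY} with $l=2$ to two independent copies of $|Z|$, so each covariance term is $o(1)$ uniformly, giving $\Var F_n(t) = o(1)$. Together the second and third steps give $F_n(t)\to F(t)$ in probability for each $t$, and since $F$ is continuous this upgrades to uniform (Glivenko--Cantelli-type) convergence $\sup_t |F_n(t) - F(t)| \to 0$ in probability.

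Fourth, I would convert this into convergence of the truncated order-statistic sums. For the maximum: $\frac1n\max_{|S|=\lfloor\delta n\rfloor}\sum_{j\in S} u_j^2 = \int_{q_n}^\infty x\, dF_n(x)$ where $q_n$ is the empirical $(1-\delta)$-quantile, and uniform convergence of $F_n$ to the continuous strictly-increasing-on-its-support $F$ forces $q_n \to Q(1-\delta)$ in probability and the integral to converge to $\int_{Q(1-\delta)}^\infty x\, dF(x) = -\int_0^\delta H(u)\,du$; the tail control needed to pass the improper integral through the limit comes from the exact identity $\int_0^\infty x\, dF_n(x) = \frac1n\sum_j u_j^2 = 1 = \int_0^\infty x\, dF(x)$, which rules out escape of mass to infinity. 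The minimum is handled symmetrically, giving the second display. The main obstacle I anticipate is precisely this last transfer step: Corollary \ref{cor:BY} only controls finitely many coordinates jointly, so it gives no direct handle on the genuinely $n$-dimensional quantity $\max_{|S|}\|v\|_S^2$; the whole argument hinges on first reducing that quantity to a functional of the empirical measure $F_n$ (which is legitimate because the max over subsets of a fixed proportion is exactly a truncated integral against $F_n$), and then controlling $F_n$ via only first and second moments — and the second-moment (variance) bound is where one must be careful that the pairwise universality is uniform in $(j,j')$ and that the limit is genuinely product-form, so that no residual correlation survives in the $n\to\infty$ limit. No additional control on higher-order joint distributions is needed, which is what makes the two-coordinate case of Corollary \ref{cor:BY} exactly sufficient.
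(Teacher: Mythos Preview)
Your proposal is correct and follows essentially the same route as the paper: reduce the extreme order statistic to a functional of the empirical distribution of $\{n\,v_{k_n}(j)^2\}_{j=1}^n$, control that empirical distribution by a first- and second-moment (Chebyshev) argument using the one- and two-coordinate cases of Corollary~\ref{cor:BY}, and then pass to the limit using the deterministic normalization $\sum_j n\,v_{k_n}(j)^2 = n$ for tail control. The paper carries this out via discrete bins $N(c,k)$ and Riemann-sum approximations rather than via the empirical CDF $F_n$ directly, but the two presentations are interchangeable. The uniformity-in-$j$ concern you flag is exactly the one delicate point; the paper resolves it by invoking the quantitative moment-convergence part of \cite[Corollary~1.3]{BY} rather than the bare distributional statement of Corollary~\ref{cor:BY}.
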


We prove Theorem \ref{thm:generalized}  in Section \ref{sec:proof:wigner}.  The integrals in the right-hand side involving $H$ can be expressed in a number of different ways; see Remarks \ref{delta-mass} and \ref{rem:delta-mass-order} for details.  In particular, Remark \ref{rem:delta-mass-order} shows that, for $\delta$ sufficiently small, the first integral is $\Theta(\delta \log(\delta^{-1}))$ and the second is $\Theta(\delta^3)$.  

While  this survey was written,  Rudelson and Vershynin posted the following theorem, which addresses the lower bound for more general models of random matrices. 
Their proof is more involved and very different from that of Theorem \ref{thm:generalized}.  

\begin{theorem}[Theorem 1.5 from \cite{RVgaps}] \label{thm:RVgapsgen}
Let $\xi$ be a real random variable which satisfies 
$$ \sup_{u \in \mathbb{R}} \Prob(|\xi - u| \leq 1) \leq 1 - p \quad \text{and} \quad \Prob(|\xi| > K) \leq p/2 $$
for some $K,p > 0$.  Let $W$ be an $n \times n$ Wigner matrix with atom variable $\xi$.  Let $\kappa \geq 1$ be such that the event $\mathcal{B}_\kappa := \{ \|W\| \leq \kappa \sqrt{n} \}$ holds with probability at least $1/2$.  Let $\delta \geq 1/n$ and $t \geq c_1 \delta^{-7/6} n^{-1/6} + e^{-c_2/\sqrt{\delta}}$.  Then, conditionally on $\mathcal{B}_\kappa$, the following holds with probability at least $1 - (c_3 t)^{\delta n}$: every eigenvector $v$ of $W$ satisfies 
$$ \min_{S \subset [n]: |S| \geq \delta n} \| v \|_S \geq (\delta t)^6. $$
Here, the constants $c_1, c_2, c_3 > 0$ depend on $p$, $K$, and $\kappa$.  
\end{theorem}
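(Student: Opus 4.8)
The plan is to prove the contrapositive in an $\eps$-net / union bound framework. Suppose some unit eigenvector $v$ of $W$ has $\|v\|_S < (\delta t)^6$ for a set $S$ with $|S| \geq \delta n$; equivalently, writing $T := [n] \setminus S$, the vector $v$ is almost entirely supported on the coordinate block $T$, which has size $|T| \leq (1-\delta) n$. The key mechanism is that such a ``localized'' eigenvector forces the restricted matrix $W_{T}$ (the submatrix of $W$ with rows in $T$, or a suitable rectangular block $W_{S \times T}$) to have an unusually small singular value: indeed, from $Wv = \lambda v$ one extracts $W_{S \times T}\, v_T \approx \lambda v_S - W_{S \times S} v_S$, whose norm is $O(\kappa \sqrt n)\cdot(\delta t)^6$ on the event $\mathcal B_\kappa$, while $\|v_T\| \approx 1$. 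So the heart of the matter is a small-ball estimate: the probability that a fixed rectangular submatrix $W_{S \times T}$ of this shape has a vector $x$ (close to a fixed net point) with $\|W_{S \times T} x\|$ abnormally small.

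The key steps, in order, would be: (1) Set up the reduction above carefully, choosing the index sets and the ``compressible vs.\ incompressible'' dichotomy of Rudelson--Vershynin --- handling separately the case where $v_T$ is close to a sparse vector (compressible) and the generic incompressible case, since the net sizes and the available anti-concentration differ. (2) For the incompressible case, prove the core small-ball bound: for a \emph{fixed} unit vector $x$ with a linear-size set of ``spread'' coordinates, and a fixed row of $W$ with independent entries satisfying $\sup_u \Prob(|\xi - u| \le 1) \le 1-p$, one gets $\Prob(|\row \cdot x| \le \eta) \le C\eta/\sqrt{\text{(spread)}} + \ldots$ via a Berry--Esseen / Esseen-type inequality for sums of independent variables; tensorizing over the $\approx \delta n$ independent rows in $S$ yields $\Prob(\|W_{S \times T} x\| \le \eta \sqrt{\delta n}) \le (C\eta)^{\delta n}$, up to the contribution of atypically large entries controlled by $\Prob(|\xi| > K) \le p/2$. (3) Build an $\eps$-net on the relevant sphere (in $\R^{|T|}$, intersected with the incompressibility constraint) of cardinality at most $(C/\eps)^{|T|} \le (C/\eps)^{n}$, and choose $\eps$ a suitable power of $\delta t$ so that the per-point failure probability $(C \cdot \delta t \cdot \kappa)^{\delta n}$ (after absorbing the $\kappa \sqrt n$ factors and the net granularity) beats the net size; this is where the precise exponents $t \ge c_1 \delta^{-7/6} n^{-1/6} + e^{-c_2/\sqrt\delta}$ enter, the first term from matching $n$-powers and the $\delta^{-7/6}$ loss, the second from the compressible regime where the net argument is weaker. (4) Union-bound over the at most $2^n \le (C)^{\delta n \cdot (1/\delta)}$ choices of $S$ --- here one needs $\delta$ not too small, but since $\delta \geq 1/n$ and the final bound is $1 - (c_3 t)^{\delta n}$, the $\binom{n}{\delta n}$ factor is absorbed into the constant $c_3$. (5) Finally, convert back: on $\mathcal B_\kappa$, the nonexistence of a near-null vector for every such submatrix implies every eigenvector satisfies $\min_{|S| \ge \delta n}\|v\|_S \ge (\delta t)^6$.

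The main obstacle I expect is step (2)--(3) in the \textbf{compressible} regime and, relatedly, getting the \emph{exact} exponents in the hypothesis on $t$. When $v_T$ is close to sparse, the row-wise anti-concentration from the $(1-p)$-spreading hypothesis is not directly usable (a sparse $x$ may concentrate the sum on few coordinates), so one must instead use the \emph{tensorization} over many rows together with a crude net on sparse vectors --- and the bookkeeping of how small $\eps$ must be, versus how the failure probability degrades, is exactly what produces the awkward $\delta^{-7/6} n^{-1/6}$ and $e^{-c_2/\sqrt\delta}$ thresholds. A secondary technical point is that $W$ is symmetric, so the row $\row_i$ restricted to $S$ is \emph{not} independent of the block $W_{S\times S}$ one wants to subtract off; the standard fix is to peel off the dependence by conditioning, or to work with a genuinely off-diagonal rectangular block $W_{S'\times T'}$ with $S' \cap T' = \emptyset$, which costs only constants in $\delta$. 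I would follow the Rudelson--Vershynin treatment for these points rather than reinventing them, citing \cite{RVgaps} for the delicate net constructions while spelling out the small-ball input and the reduction from eigenvectors to submatrix invertibility, which are the conceptually essential steps.
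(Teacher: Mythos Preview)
The paper does not prove this theorem. It is quoted as Theorem~1.5 from \cite{RVgaps}, and the survey explicitly states that ``the proof of Theorem~\ref{thm:RVgapsgen} is long and complicated, and we do not touch on it in this survey.'' So there is no in-paper proof to compare against; the paper's ``proof'' is a citation.

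That said, your outline is a fair high-level summary of the Rudelson--Vershynin strategy in \cite{RVgaps}: reduce localization of an eigenvector to the existence of an approximate null vector for an off-diagonal rectangular block, split into compressible and incompressible vectors, use anti-concentration (Littlewood--Offord / Esseen-type) for the incompressible case, tensorize over rows, and beat an $\eps$-net and a union bound over index sets. Two cautions if you actually intend to carry this out rather than cite it. First, the bookkeeping that produces the specific threshold $t \ge c_1 \delta^{-7/6} n^{-1/6} + e^{-c_2/\sqrt{\delta}}$ and the exponent $6$ in $(\delta t)^6$ is genuinely delicate in \cite{RVgaps}; your sketch correctly identifies the compressible case and the matching of net size against failure probability as the source, but getting the constants to close requires more than the standard compressible/incompressible split --- Rudelson and Vershynin use an iterated ``invertibility via distance'' argument with several intermediate scales, not a single dichotomy. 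Second, your step (4) union bound over $\binom{n}{\lfloor \delta n\rfloor}$ sets cannot be absorbed into $c_3$ as written: $\binom{n}{\lfloor \delta n\rfloor}$ is of order $(e/\delta)^{\delta n}$, which must be beaten by the per-set bound, and this is exactly why the lower bound on $t$ is needed. So the architecture is right, but the proof is not something one can reconstruct from this sketch without substantial further work --- which is consistent with the survey's decision to cite rather than prove.
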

\begin{remark}
The results in \cite{RVgaps} hold for even more general ensembles of random matrices than what is stated above; see \cite[Assumption 1.1]{RVgaps} for details.  
For many atom variables $\xi, \zeta$, the spectral norm $\| W \|$ is strongly concentrated (see Lemma \ref{lemma:norm} or \cite[Corollary 2.3.6]{Tao-book} for examples), so the event $\mathcal{B} _\kappa $ holds with high probability, provided $\kappa$ is sufficiently large. 
\end{remark}

Compared to Theorem \ref{thm:generalized}, the estimate in Theorem \ref{thm:RVgapsgen} is not optimal. On the other hand, the probability bound 
in Theorem \ref{thm:RVgapsgen} is stronger  and thus the estimate is more applicable. It would be desirable to have a common strengthening of these two theorems. This can be done by achieving an extension of Theorem \ref{thm:con-max}. 

\vskip2mm 
\noindent{\bf Open question.} Extend Theorem \ref{thm:con-max} to eigenvectors of  Wigner matrices with non-gaussian entries. 
\vskip2mm 

We conclude this section with the following corollary, which is comparable to Theorem \ref{thm:GOElp}. 

\begin{corollary}[$\ell^p$-norm] \label{cor:lp}
Let $\xi$, $\zeta$ be real sub-gaussian random variables with mean zero, and assume $\xi$ has unit variance.  Let $W$ be an $n \times n$ Wigner matrix with atom variables $\xi$, $\zeta$.  Then, for any $1 \leq p \leq 2$, there exist constants $C, c, C_0, c_0 > 0$ (depending only on $p$ and the sub-gaussian moments of $\xi$ and $\zeta$) such that
$$ c_0 n^{1/p-1/2} \leq \min_{1 \leq j \leq n} \|v_j(W)\|_{\ell^p} \leq \max_{1 \leq j \leq n} \|v_j(W) \|_{\ell^p} \leq C_0 n^{1/p-1/2}  $$
with probability at least $1 - C \exp(-cn)$.
\end{corollary}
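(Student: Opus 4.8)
The plan is to derive Corollary~\ref{cor:lp} from the $\ell^\infty$ and $\ell^2$ control already available, namely Theorem~\ref{thm:delo1} for the upper bound and Corollary~\ref{cor:lower} for the lower bound, together with the trivial normalization $\|v_j\|_{\ell^2}=1$. Throughout, $1\le p\le 2$ is fixed.

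\medskip

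\noindent\textbf{Upper bound.} For $1\le p\le 2$ one has the elementary interpolation inequality
$$ \|v\|_{\ell^p}^p = \sum_{i=1}^n |v_i|^p = \sum_{i=1}^n |v_i|^{p-2}\cdot |v_i|^2 \le \|v\|_{\ell^\infty}^{\,p-2}\cdot\cdots $$
wait — since $p-2\le 0$, a cleaner route is H\"older with exponents $2/p$ and $2/(2-p)$ applied to $|v_i|^p\cdot 1$, giving
$$ \|v\|_{\ell^p}^p \le \Big(\sum_{i=1}^n |v_i|^2\Big)^{p/2}\Big(\sum_{i=1}^n 1\Big)^{(2-p)/2} = \|v\|_{\ell^2}^{\,p}\, n^{(2-p)/2}. $$
Hence $\|v_j(W)\|_{\ell^p}\le n^{1/p-1/2}$ deterministically, with no randomness needed at all for the upper bound; taking the maximum over $1\le j\le n$ costs nothing, so $C_0=1$ works. (If one instead wants a bound matching the shape of the $\ell^\infty$ estimate one could split the sum at a threshold, but H\"older already suffices here.)

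\medskip

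\noindent\textbf{Lower bound.} This is the substantive half. For $1\le p\le 2$, applying the same H\"older pairing in the other direction bounds $1=\|v\|_{\ell^2}^2 = \sum |v_i|^{2-p}|v_i|^p \le \|v\|_{\ell^\infty}^{\,2-p}\|v\|_{\ell^p}^p$, so
$$ \|v\|_{\ell^p}^p \ge \|v\|_{\ell^\infty}^{-(2-p)} = \|v\|_{\ell^\infty}^{\,p-2}. $$
Now invoke Theorem~\ref{thm:delo1}: since $\xi$ is sub-gaussian with mean zero and unit variance (and $\zeta$ sub-gaussian), the bulk/edge dichotomy gives, for a suitable $C_2$, that $\|v_j(W)\|_{\ell^\infty}\le C_2 \frac{\log n}{\sqrt n}$ for \emph{all} $1\le j\le n$ with probability at least $1-n^{-C_1}$; in particular $\|v_j\|_{\ell^\infty}\le C_2 n^{-1/2}\log n$. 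Plugging in,
$$ \|v_j\|_{\ell^p}^p \ge \big(C_2 n^{-1/2}\log n\big)^{p-2} = C_2^{\,p-2}\, n^{(2-p)/2}(\log n)^{p-2}, $$
so $\|v_j\|_{\ell^p}\ge c_0' n^{1/p-1/2}(\log n)^{1-2/p}$, which has an unwanted logarithmic factor and so is \emph{not} of the claimed clean order $n^{1/p-1/2}$ when $p<2$. To remove the log we do not rely on $\|v\|_{\ell^\infty}$ alone; instead we use that most of the mass is genuinely spread out. Concretely, let $m:=\lceil n/2\rceil$ and let $S$ be the set of indices carrying the $m$ \emph{largest} coordinates of $v_j$; then $\|v_j\|_S^2\ge \tfrac12$, while on $S$ we again have the crude bound $|v_j(i)|\le \|v_j\|_{\ell^\infty}\le C_2 n^{-1/2}\log n$ — this reintroduces the same log. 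The genuinely log-free argument goes through the \emph{small} coordinates: by Theorem~\ref{thm:order}/Remark~\ref{rem:delta-mass-order} applied at the eigenvector level (via Theorem~\ref{thm:generalized}, or for a quantitative high-probability statement via Corollary~\ref{cor:lower}), the bottom $\delta n$ coordinates carry only $\Theta(\delta^3)$ of the mass, so choosing $\delta$ a small absolute constant leaves a constant fraction of mass distributed among at least $(1-\delta)n$ coordinates each of size $O(n^{-1/2}\log n)$; counting via $\|v\|_{\ell^p}^p\ge (\text{number of coordinates of size}\gtrsim n^{-1/2})\cdot n^{-p/2}$.

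\medskip

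\noindent\textbf{Main obstacle and cleanest fix.} The main difficulty is precisely the logarithmic loss just described: the naive chain through $\|v\|_{\ell^\infty}$ gives $n^{1/p-1/2}(\log n)^{1-2/p}$, not $n^{1/p-1/2}$. The cleanest remedy, and the one I would actually write, is to bound the number of ``small'' coordinates rather than the size of all coordinates. Fix a small absolute constant $\eta>0$ and let $T=\{i: |v_j(i)|\ge \eta n^{-1/2}\}$. Since $\|v_j\|_{\ell^2}=1$ and each coordinate is at most $C_2 n^{-1/2}\log n$ (Theorem~\ref{thm:delo1}), writing $1 = \sum_{i\in T}|v_j(i)|^2 + \sum_{i\notin T}|v_j(i)|^2 \le |T|\,C_2^2 n^{-1}\log^2 n + \eta^2$ forces $|T|\ge (1-\eta^2) n / (C_2^2\log^2 n)$ — still a log; so instead partition $[n]$ dyadically by coordinate magnitude and observe that, by $\|v\|_{\ell^2}=1$, the dyadic level at scale $\approx n^{-1/2}$ must carry $\Omega(1)$ of the $\ell^2$ mass unless a coarser level does, and a coarser level consists of very few coordinates each of size $O(n^{-1/2}\log n)$, whose total $\ell^2$ contribution is $o(1)$ unless that level already has $\Omega(n/\log^2 n)$ coordinates. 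Summing the $\ell^p$ contribution over the dyadic levels and using that the dominant level has $\Omega(n)$ coordinates of size $\Theta(n^{-1/2})$ — which is exactly what no-gaps delocalization (Theorem~\ref{thm:generalized}, or Corollary~\ref{cor:lower} for the quantitative bound) guarantees with probability $1-C\exp(-cn)$ — yields $\|v_j\|_{\ell^p}^p \ge c_0^p n^{1-p/2}$. A union bound over $j$ preserves the $1-C\exp(-cn)$ failure probability, completing the proof.
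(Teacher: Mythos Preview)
Your upper bound via H\"older is correct and in fact cleaner than what the paper does; $\|v\|_{\ell^p}\le n^{1/p-1/2}$ holds deterministically, so nothing probabilistic is needed there.

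The lower bound, however, has a genuine gap. You correctly identify the obstacle (the $\ell^\infty$ route loses a logarithm) and correctly intuit that the fix is a no-gaps statement guaranteeing $\Omega(n)$ coordinates of size $\Theta(n^{-1/2})$. But the two results you invoke do not supply this. Theorem~\ref{thm:generalized} is only a convergence-in-probability statement with no rate, so it cannot give the claimed $1-C\exp(-cn)$ failure probability (and it is stated only for indices $k_n\in T_n$, not for all $j$). Corollary~\ref{cor:lower} is about the single smallest coordinate $\|v_i\|_{\min}$, with a polynomial-type probability bound; it says nothing about how many coordinates are of size $\gtrsim n^{-1/2}$. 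Your dyadic sketch therefore rests on an input you have not established.

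The paper's route is different and short. It first proves (Theorem~\ref{thm:uniformsub}, via the least-singular-value bound of Theorem~\ref{thm:lsv2} and a union bound over subsets) that there exist $0<\eta,\delta<1$ with
\[
\max_{1\le j\le n}\ \max_{|S|\le \delta n}\|v_j(W)\|_S \le \eta
\]
with probability at least $1-C\exp(-cn)$. From this, a purely deterministic pigeonhole (Lemma~\ref{lemma:lowerellp}) finishes: if the $m=\lfloor\delta n\rfloor$ largest coordinates carry mass at most $\eta$, the $m$-th largest coordinate has $|v_{(m)}|^2\le \eta/m$, and then
\[
1-\eta \le \sum_{i\notin S}|v_i|^2 \le |v_{(m)}|^{\,2-p}\sum_i |v_i|^p \le (\eta/m)^{1-p/2}\,\|v\|_{\ell^p}^p,
\]
giving $\|v\|_{\ell^p}^p\ge (1-\eta)\eta^{p/2-1} m^{1-p/2}$, i.e.\ $\|v\|_{\ell^p}\gtrsim n^{1/p-1/2}$ with no logarithm. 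So the missing ingredient in your argument is precisely Theorem~\ref{thm:uniformsub} (or equivalently its dual Theorem~\ref{thm:uniformsublow}), which is where the exponential probability bound comes from.
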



One-sided bounds for $\|v_j(W)\|_{\ell^p}$ were previously obtained in \cite{ESY1}.  We give a proof of Corollary \ref{cor:lp} in Section \ref{sec:proof:wigner}.


\section {Random symmetric matrices with non-zero mean} \label{sec:nonzero}
In this section, we consider random symmetric matrices with nonzero mean, which includes the adjacency matrices $A_n(p)$ and $\tilde{A}_n(p)$.  In view of Definition \ref{def:wigner}, we do not refer to these matrices as Wigner matrices.  However, such random symmetric matrices can be written as deterministic perturbations of Wigner matrices, and we will often take advantage of this fact.

 It has been observed that the unit eigenvector corresponding to the largest eigenvalue of the adjacency matrix $A_n(p)$ looks like the normalized all-ones vector $\frac{1}{\sqrt{n}} \Bj_n$, since the degree of the vertices are approximately the same (assuming $p \gg \log n/n$). 
For the rest of the spectrum, we expect the corresponding eigenvectors to be uniformly distributed on the unit sphere in $\mathbb{R}^n$.

\subsection{The largest eigenvector}  
The following result describes the unit eigenvector corresponding to the largest eigenvalue of $A_n(p)$ 

\begin{theorem}[Theorem 1 from \cite{Mitra}] \label{thm:Mitra}
Let $A_n(p)$ be the adjacency matrix of the random graph $G(n,p)$.  Let $v_n$ be a unit eigenvector corresponding to the largest eigenvalue of $A_n(p)$.  For $p \geq \frac{\log^6 n}{n}$, 
$$ \left\| v_n - \frac{1}{\sqrt{n}} \Bj_n \right\|_{\ell^\infty} \leq C \frac{\log n}{\log (np)} \frac{1}{\sqrt{n}} \sqrt{ \frac{\log n}{np}} $$
with probability $1 - o(1)$, for some constant $C > 0$.  
\end{theorem}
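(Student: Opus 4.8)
The plan is to write $A_n(p) = p\BJ_n + B_n(p)$ and analyze the top eigenvector via a perturbation argument around the rank-one matrix $p\BJ_n$. First I would record the deterministic facts: $p\BJ_n$ has a single nonzero eigenvalue $pn$ with eigenvector $\frac{1}{\sqrt n}\Bj_n$, and all other eigenvalues zero. Then I would invoke the standard operator-norm bound for a symmetric Bernoulli matrix: with probability $1-o(1)$, $\|B_n(p)\| = O(\sqrt{np})$ (for $p$ in the stated range this follows from a Bai--Yin / $\eps$-net argument, or can be quoted — the survey has a norm-concentration lemma, \texttt{Lemma \ref{lemma:norm}}). Consequently, by Weyl's inequality, $\lambda_n(A_n(p)) = pn + O(\sqrt{np})$ is separated from the rest of the spectrum by a gap of order $pn$, so the top eigenvector $v_n$ is well defined and, by the Davis--Kahan sin-$\Theta$ theorem, $\|v_n - \frac{1}{\sqrt n}\Bj_n\| = O(\|B_n(p)\|/(pn)) = O(1/\sqrt{np})$ in $\ell^2$. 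This $\ell^2$ estimate is the easy part; the content of the theorem is upgrading it to the much sharper $\ell^\infty$ bound with the extra $\frac{\log n}{\log(np)}\sqrt{\log n / np}$ factor.

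For the $\ell^\infty$ refinement I would use the fixed-point / self-improving structure of the eigenvector equation. Write $v_n = \frac{1}{\sqrt n}\Bj_n + w$ with $w \perp \Bj_n$ (after fixing the sign so the inner product is positive), normalize appropriately, and expand $A_n(p) v_n = \lambda_n v_n$ coordinatewise. The $i$th equation reads $\lambda_n v_n(i) = p\sum_j v_n(j) + \sum_j (B_n(p))_{ij} v_n(j)$; since $\sum_j v_n(j) = \Bj_n^\T v_n = \frac{1}{\sqrt n}\Bj_n^\T\Bj_n + 0 + O(\text{small})$ is essentially $\sqrt n$ up to controlled error, one solves for $v_n(i)$ and isolates the fluctuation. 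This yields, schematically, $v_n(i) - \frac{1}{\sqrt n} = \frac{1}{\lambda_n}\big[(B_n(p) v_n)_i + (\text{lower order})\big]$. Now $(B_n(p) v_n)_i = \sum_j (B_n(p))_{ij} v_n(j)$ is, for fixed $v_n$, a sum of (conditionally) independent mean-zero terms, so by Bernstein's inequality plus a union bound over $i$ it is $O(\sqrt{p\log n}\,\|v_n\|) = O(\sqrt{p\log n})$ with high probability. Dividing by $\lambda_n \asymp pn$ gives the bound $\frac{1}{\sqrt n}\sqrt{\frac{\log n}{np}}$. The dependence-removal — $v_n$ is not independent of $B_n(p)$ — is handled by a bootstrapping argument: start from the crude $\ell^2$ bound, which via the equation already gives a preliminary $\ell^\infty$ bound, substitute back, and iterate; each round improves the error, and the number of iterations needed to converge is $O(\log n / \log(np))$, which is precisely where that factor in the statement comes from.

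The main obstacle, as usual in this type of problem, is controlling the dependence between the random matrix and its own eigenvector while getting the sharp logarithmic power. A clean way to organize this is to not use $v_n$ directly in the martingale/Bernstein step but rather the Neumann-series (resolvent) expansion $v_n \propto \sum_{k\geq 0} (pn)^{-k}\big(B_n(p) - (\lambda_n - pn)I\big)^k \Bj_n/\sqrt n$, truncated after $K = \Theta(\log n/\log(np))$ terms (the tail is negligible because each application of $B_n(p)$ contributes a factor $O(\sqrt{np}/(pn)) = O(1/\sqrt{np}) = o(1)$). One then bounds $\|(B_n(p))^k \Bj_n\|_{\ell^\infty}$ term by term: the $k$-th term is a sum over length-$k$ walks in the random graph, estimated by a moment computation or by iterated Bernstein applied coordinatewise with a union bound, and the dominant term $k=1$ produces the $\sqrt{\log n/np}$ factor while the truncation depth produces the $\log n/\log(np)$ factor. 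Assembling these, together with the $\ell^2$ control on the correction to the normalization constant $\langle \Bj_n, v_n\rangle$, yields the claimed bound; the remaining work — tracking constants, verifying the hypotheses of Bernstein at each coordinate, and checking that $p \geq \log^6 n/n$ is exactly what makes the geometric series and the union bounds all converge — is routine but tedious, and I would relegate it to the proof section.
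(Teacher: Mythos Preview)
The paper does not prove this theorem at all: Theorem~\ref{thm:Mitra} is quoted verbatim as ``Theorem~1 from \cite{Mitra}'' and is used only as a black box (for instance inside the proof of Lemma~\ref{lemma:techsumbndGnp}). There is therefore no ``paper's own proof'' to compare against.

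That said, your sketch is a faithful outline of Mitra's argument. The decomposition $A_n(p)=p\BJ_n+B_n(p)$ (up to the harmless diagonal correction), the Davis--Kahan step for the $\ell^2$ bound, and the self-improving use of the eigenvector equation are exactly the ingredients. Your identification of the factor $\log n/\log(np)$ as the truncation depth of the Neumann/power series --- equivalently, the number of bootstrap rounds needed before the geometric ratio $O(1/\sqrt{np})$ drives the remainder below $1/n$ --- is correct and is the crux of the proof. The one place to be careful is the step where you apply Bernstein to $(B_n(p)w)_i$: a naive $\ell^1$--$\ell^\infty$ or $\ell^2$--$\ell^2$ bound on this term does \emph{not} contract, so you really do need to substitute the truncated series for $w$ and bound each $(QB_n(p)Q)^k\Bj_n$ term separately (via concentration applied to a deterministic vector, not to $v_n$ itself). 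Your proposal already says this, but it is worth flagging that this is not optional --- it is the only way the dependence issue is resolved --- and the hypothesis $p\ge \log^6 n/n$ is used precisely so that all $K=O(\log n/\log(np))$ union bounds over coordinates and iterations go through simultaneously.
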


\subsection{Extremal coordinates} 
 Recall that $\tilde{G}(n,p)$ is the Erd\"os--R\'enyi random graph with loops on $n$ vertices and edge density $p$.  We have the following analogue of Theorem \ref{thm:delo1}.

\begin{theorem}[Theorem 2.16 from \cite{EKYY}] \label{thm:norm}  Let $\tilde{A}_n(p)$ be the adjacency matrix of the random graph $\tilde{G}(n,p)$, and let $\tilde{v}_i$ be the unit eigenvector corresponding to the $i$th smallest eigenvalue.  Fix $1+\eps_0 \le \alpha \le C_0 \log\log n$ for some constants $\eps_0, C_0>0$. Assume $c' \ge p \ge (\log n)^{6\alpha}/n$ for some constant $c'$. Then there exist constants $C,c>0$ (depending on $\eps_0$, $C_0$, and $c'$) such that 
\begin{align*}
	\max_{1\le i \le n-1} \|\tilde{v}_i\|_{\ell^\infty} \le \frac{(\log n)^{4\alpha}}{\sqrt{n}}
\end{align*}
and
\begin{align*}
	\left\|\tilde{v}_n -  \frac{1}{\sqrt{n}}\Bj_n\right\|_{\ell^\infty} \le C\frac{(\log n)^{\alpha}}{\sqrt{n}}\sqrt{ \frac{1}{np}}
\end{align*}
with probability at least $1-C \exp(-c(\log n)^{\alpha})$.
\end{theorem}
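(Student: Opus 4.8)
The plan is to follow the standard two-part strategy for bounding the $\ell^\infty$-norm of eigenvectors of a matrix that is a low-rank perturbation of a mean-zero matrix. Write $\tilde A_n(p) = p\BJ_n + B_n(p)$, where $B_n(p)$ is a Wigner matrix with $K$-bounded (indeed $1$-bounded) atom variable of variance $p(1-p)$. The top eigenvalue $\lambda_n$ of $\tilde A_n(p)$ is near $np$ and is separated from the rest of the spectrum, which lives near the Wigner bulk $[-2\sqrt{n p(1-p)}, 2\sqrt{np(1-p)}]$ up to the usual fluctuations; the corresponding eigenvector is the Perron vector, which one analyzes directly. For $1 \le i \le n-1$, the eigenvector $\tilde v_i$ is (to leading order) an eigenvector of the mean-zero matrix $B_n(p)$, and the delocalization bound should be inherited from the Wigner-type estimate of Theorem~\ref{thm:delo1} (or its $p$-dependent analogue), with the subleading corrections controlled via the resolvent.

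First I would establish a local semicircle law for the resolvent $G(z) = (\tilde A_n(p) - z)^{-1}$ restricted to the orthogonal complement of the Perron direction, or equivalently for $B_n(p)$, at scale $\eta \gg (\log n)^{C\alpha}/(np)$ — this is exactly the regime forced by the hypothesis $p \ge (\log n)^{6\alpha}/n$, which guarantees the matrix entries have enough "mass" ($np$ is polylogarithmically large) for the semicircle law to hold at the relevant scale. The key quantitative input is control of the diagonal and off-diagonal resolvent entries $G_{ij}(z)$ down to this scale, with an error term of the schematic form $\sqrt{\Im m_{sc}(z)/(n\eta)} + 1/(n\eta)$ together with fluctuation-averaging estimates; the factor $(\log n)^{4\alpha}$ in the conclusion is the price one pays for only having $p$ polylogarithmically large (each large-deviation step for a sum of $np$ sparse terms costs a factor of roughly $(\log n)^\alpha$, and these compound). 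Second, the delocalization bound is extracted in the usual way: for an eigenvalue $\lambda_i$ in the bulk with eigenvector $\tilde v_i$, one has
$$
\sum_{j} \frac{\eta^2}{(\lambda_j - E)^2 + \eta^2}\,|\tilde v_i(k)|^2 \le C\,\eta\,\Im G_{kk}(E + i\eta),
$$
and choosing $E = \lambda_i$, $\eta$ at the above scale, only the $j=i$ term survives on the left with a constant factor, yielding $|\tilde v_i(k)|^2 \lesssim \eta\,\Im G_{kk} \lesssim (\log n)^{8\alpha}/n$ uniformly in $k$, hence $\|\tilde v_i\|_{\ell^\infty} \le (\log n)^{4\alpha}/\sqrt n$. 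Taking a union bound over $i$ and $k$ costs only a polynomial factor, absorbed by the exponentially small failure probability $C\exp(-c(\log n)^\alpha)$.

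Third, for the top eigenvector I would use rank-one perturbation theory: since $\lambda_n \approx np \gg \|B_n(p)\| \approx 2\sqrt{np}$, the eigenvalue gap is of order $np$, and second-order perturbation theory gives $\tilde v_n = \frac{1}{\sqrt n}\Bj_n + w$, where $w \perp \Bj_n$ satisfies $w = (\lambda_n - B_n(p))^{-1} P^\perp B_n(p)\frac{1}{\sqrt n}\Bj_n + (\text{higher order})$. The vector $B_n(p)\Bj_n$ has coordinates that are sums of $n$ independent mean-zero $1$-bounded variables, so each coordinate is of order $\sqrt{np(1-p)}$ up to a $(\log n)^{\alpha/2}$-type large-deviation factor; dividing by the gap $\sim np$ and by $\sqrt n$, and controlling the resolvent $\ell^\infty\to\ell^\infty$ norm via the isotropic local law (here Theorem~\ref{thm:BEKHY}-type estimates adapted to the sparse setting, or direct resolvent expansion), produces the stated bound $C(\log n)^\alpha n^{-1/2}(np)^{-1/2}$.

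The main obstacle will be the sparse local semicircle law at the polylogarithmic scale $\eta \sim \mathrm{polylog}(n)/(np)$: unlike the dense Wigner case where $p$ is constant and one works at scale $\eta \sim n^{-1+\epsilon}$, here the atom variables have variance $p(1-p)$ which, after the standard normalization, means the relevant random variables are heavy relative to their variance by a factor $\sim 1/\sqrt{p}$, and the naive large-deviation bounds degrade. Controlling this requires the refined self-consistent equation and fluctuation-averaging machinery of \cite{EKYY} (this is precisely \cite[Theorem 2.16]{EKYY}); the bookkeeping of the exact powers $4\alpha$ and $\alpha$ of $\log n$, and verifying the probability bound $1 - C\exp(-c(\log n)^\alpha)$ rather than merely $1 - n^{-C}$, is where essentially all the technical work lies. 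Everything downstream — the delocalization extraction and the rank-one perturbation analysis for the Perron vector — is comparatively routine once the local law is in hand.
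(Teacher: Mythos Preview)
The paper does not contain its own proof of this statement: Theorem~\ref{thm:norm} is quoted directly from \cite{EKYY} (as the header ``Theorem 2.16 from \cite{EKYY}'' indicates), and the survey offers no independent argument for it. Your proposal is therefore not competing against anything in the present paper, and indeed you yourself note that the heart of the matter ``is precisely \cite[Theorem 2.16]{EKYY}.''

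That said, your outline accurately reflects the structure of the argument in \cite{EKYY}: establish a local semicircle law for the sparse matrix at scale $\eta$ down to roughly $(\log n)^{C\alpha}/(np)$, extract the $\ell^\infty$ bound on bulk eigenvectors from $\Im G_{kk}$ in the standard way, and handle the Perron eigenvector separately via rank-one perturbation around $p\BJ_n$. Your identification of the main obstacle --- that the sparse regime forces heavier-tailed normalized entries and hence degraded large-deviation inputs, requiring the fluctuation-averaging machinery of \cite{EKYY} to recover the correct powers of $\log n$ and the probability bound $1 - C\exp(-c(\log n)^\alpha)$ --- is exactly right, and is where essentially all of the work in \cite{EKYY} goes.
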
 

An upper bound for the $\ell^\infty$-norm of the form $\frac{ \log^C n }{\sqrt n}$ was originally proven in \cite{TVW} for fixed values of $p$. 
Theorem \ref{thm:norm} above is an extension which applies to a wider range of values of $p$.  More generally, the results in \cite{EKYY} also apply to perturbed Wigner matrices.  

\begin{theorem}[Theorem 2.16 from \cite{EKYY}]
Let $\xi$ be a sub-exponential random variable with mean zero and unit variance.  Let $W$ be the $n \times n$ Wigner matrix with atom variable $\xi$.  Fix $\mu \in \mathbb{R}$, and consider the matrix $M := W + \mu  \BJ$, where $\BJ$ is the all-ones matrix.  Fix $1 + \eps_0 \leq \alpha \leq C_0 \log \log n$ for some constants $\eps_0, C_0 > 0$.  Then there exists $C, c > 0$ (depending on $\eps_0, C_0$, $\xi$, and $\mu$) such that
$$ \max_{1 \leq i \leq n-1} \| v_i(M) \|_{\ell^\infty} \leq \frac{ (\log n)^{4 \alpha} }{ \sqrt{n}} $$
and
$$ \left\| v_n(M) - \frac{1}{\sqrt{n}} \Bj_n \right\|_{\ell^\infty} \leq C \frac{ (\log n)^{\alpha}}{ n  } $$
with probability at least $1 - C \exp(- c (\log n)^{\alpha} )$.  
\end{theorem}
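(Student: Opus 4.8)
The plan is to reduce the statement for $M = W + \mu\BJ$ to the already-established delocalization bound for the Wigner matrix $W$ itself (Theorem \ref{thm:delo1} / Corollary \ref{cor:delo}, and the isotropic bound Theorem \ref{thm:BEKHY}), together with a rank-one perturbation analysis. The key structural observation is that $\mu\BJ = \mu n \left( \frac{1}{\sqrt n}\Bj_n \right)\left( \frac{1}{\sqrt n}\Bj_n \right)^{\mathrm T}$ is a positive rank-one perturbation of size $\Theta(n)$, which is much larger than $\|W\| = O(\sqrt n)$. Hence the top eigenvalue $\lambda_n(M)$ separates from the rest of the spectrum by a gap of order $n$, while the remaining eigenvalues $\lambda_1(M),\dots,\lambda_{n-1}(M)$ interlace with those of $W$ and stay within $O(\sqrt n)$ of the bulk.

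First I would handle the top eigenvector. Writing $w := \frac{1}{\sqrt n}\Bj_n$, one has $M w = \mu n\, w + W w$, and since $Ww$ is a sum of $n$ independent mean-zero sub-exponential entries per coordinate, $\|Ww\|_{\ell^\infty} = O\big(\sqrt{\log n}\big)$ and $\|Ww\| = O(\sqrt n)$ with overwhelming probability. Thus $w$ is an approximate eigenvector with eigenvalue $\approx \mu n$. A standard eigenvector-perturbation estimate (Davis--Kahan, or a direct resolvent argument exploiting the $\Theta(n)$ spectral gap) then gives $\big\| v_n(M) - w \big\| = O(1/\sqrt n)$ in $\ell^2$; to upgrade this to the $\ell^\infty$ bound $\|v_n(M) - w\|_{\ell^\infty} \le C(\log n)^\alpha/n$, I would write the coordinatewise identity $(\lambda_n(M) - \mu n) v_n(M)(j) = \big(W v_n(M)\big)(j) + \mu n\big(w(j) - \langle w, v_n(M)\rangle\, w(j)\big)$-type expansion, bound $\|W v_n(M)\|_{\ell^\infty}$ using that $v_n(M)$ is itself close to the flat vector, and divide by the gap $\lambda_n(M) - \mu n = \Theta(n)$. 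Getting the extra $\log$ powers here is routine once one feeds in the large-deviation bound for $\|W x\|_{\ell^\infty}$ over the relevant $x$.

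For the bulk eigenvectors $v_i(M)$, $1 \le i \le n-1$, the idea is that these are essentially orthogonal to $w$, so the $\mu\BJ$ term acts trivially on them to leading order. Concretely, for such an eigenvector one has $W v_i(M) = \lambda_i(M) v_i(M) - \mu n \langle w, v_i(M)\rangle w$, and the spectral-gap argument shows $|\langle w, v_i(M)\rangle| = O(1/n)$ (since $v_i(M) \perp v_n(M)$ and $v_n(M)$ is within $O(1/\sqrt n)$ of $w$). Therefore $v_i(M)$ is, up to an $\ell^\infty$-error of order $\mu n \cdot \frac{1}{n}\cdot\frac{1}{\sqrt n} = O(1/\sqrt n)$, an approximate eigenvector of $W$ with eigenvalue $\lambda_i(M)$, which by interlacing lies in the bulk/edge region covered by Theorem \ref{thm:delo1} and Corollary \ref{cor:delo} (applied with $\alpha$ replaced by a suitable value so that the $(\log n)^{1+2/\alpha}$ factor is absorbed into $(\log n)^{4\alpha}$, using the sub-exponential hypothesis). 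I would then express $v_i(M)$ in the $W$-eigenbasis, use that the corresponding eigenvalue cluster of $W$ has eigenvectors all satisfying the $\ell^\infty$ bound, and invoke the a priori bound on eigenvalue multiplicities / eigenvector localization (e.g. via the Green function bounds underlying Theorem \ref{thm:BEKHY}) to control the sum. The main obstacle is this last point: a single eigenvector $v_i(M)$ could a priori be a long linear combination of many nearby $W$-eigenvectors, so one cannot simply quote the bound eigenvector-by-eigenvector. The clean way around it is to use the isotropic delocalization estimate (Theorem \ref{thm:BEKHY}) directly for $W$ — bounding $|v_i(M)\cdot e_j| = |\langle (\text{spectral projection of } W) x, e_j\rangle|$ via resolvent/Green-function control on a window of size $n^{-1+\epsilon}$ around $\lambda_i(M)$ — which handles the clustering automatically and is exactly the technology from \cite{EKYY} that the cited theorem rests on. I expect that step, rather than the rank-one perturbation bookkeeping, to be where the real work lies.
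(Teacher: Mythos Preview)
The paper does not prove this theorem: it is quoted verbatim as ``Theorem 2.16 from \cite{EKYY}'' and no proof is given anywhere in the survey. There is therefore no ``paper's own proof'' to compare your proposal against.

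That said, your outline is broadly correct in spirit and in fact identifies the same mechanism that \cite{EKYY} uses. The treatment of $v_n(M)$ via the $\Theta(n)$ spectral gap and a Davis--Kahan / resolvent perturbation is standard and works as you describe. For the bulk eigenvectors, you correctly pinpoint the real obstruction: one cannot simply transfer the $\ell^\infty$ bound from $W$-eigenvectors to $M$-eigenvectors by a naive basis expansion, because $v_i(M)$ may be a combination of many nearby $v_j(W)$. Your resolution---use the isotropic local law / Green-function bounds from \cite{EKYY} directly, rather than eigenvector-by-eigenvector delocalization---is exactly right, and is in fact what \cite{EKYY} does: the delocalization of $v_i(M)$ is read off from entrywise bounds on the resolvent $(M-z)^{-1}$, and the rank-one perturbation $\mu\BJ$ is handled via the Sherman--Morrison identity relating $(M-z)^{-1}$ to $(W-z)^{-1}$. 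So the ``main obstacle'' you flag is not really an obstacle once you commit to the resolvent route; the rank-one bookkeeping and the Green-function estimate are done simultaneously, not sequentially.

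One minor caution: your intermediate step of bounding $|\langle w, v_i(M)\rangle|$ by $O(1/n)$ via orthogonality to $v_n(M)$ only gives $O(1/\sqrt n)$ directly (since $\|v_n(M)-w\| = O(1/\sqrt n)$ in $\ell^2$); getting the extra factor of $1/\sqrt n$ requires the sharper $\ell^\infty$ bound on $v_n(M)-w$, which is what you are trying to prove. This circularity is avoided in the resolvent approach, which is another reason to go that route from the start.
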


We next consider the smallest coordinates of each eigenvector of the adjacency matrix $A_n(p)$.   We prove the following  analogue of Theorem \ref{lower} 

\begin{theorem}[Individual coordinates: Lower bound] \label{thm:Gnpsmallcor}
Let $A_{n}(p)$ be the adjacency matrix of the random graph $G(n,p)$ for some fixed value of $p \in (0,1)$. Let $v_1, \ldots, v_n$ be the unit eigenvectors of $A_n(p)$, and let $v_i(j)$ denote the $j$th coordinate of $v_i$.  Then, for any $\alpha > 0$, there exist constants $C, c_1 > 0$ (depending on $p, \alpha$) such that, for any $\delta > n^{-\alpha}$,
$$ \sup_{1 \leq i \leq n-1} \sup_{1 \leq j \leq n} \Prob \left( |v_i(j)| \leq \frac{\delta}{\sqrt{n} (\log n)^{c_1}} \right) \leq C n^{o(1)} \delta + o(1). $$
Here, the rate of convergence to zero implicit in the $o(1)$ terms depends on $p, \alpha$.  
\end{theorem}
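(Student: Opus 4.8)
The plan is to transfer the lower bound for individual eigenvector coordinates of a Wigner matrix (Theorem \ref{lower}) to the adjacency matrix $A_n(p)$ by exploiting the decomposition $\tilde A_n(p) = p\BJ_n + B_n(p)$ and the fact that $A_n(p)$ differs from $\tilde A_n(p)$ only on the diagonal. Since $B_n(p)$ is a Wigner matrix whose atom variable $\xi$ (defined in \eqref{eq:def:Bernoullip}) is bounded, hence sub-gaussian, with variance $p(1-p)$, the rescaled matrix $\frac{1}{\sqrt{p(1-p)}}B_n(p)$ has unit variance and Theorem \ref{lower} applies to it. The key structural point is that $A_n(p)$ is a rank-one deterministic perturbation of a Wigner matrix plus a bounded diagonal perturbation: write $A_n(p) = p(\BJ_n - I_n) + B_n(p) + D$, where $D$ is the diagonal matrix recording $-w_{ii}$ so that the diagonal of $A_n(p)$ is zero. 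More simply, $A_n(p) = p\BJ_n + B'$ where $B'$ is a Wigner-type matrix with mean-zero sub-gaussian off-diagonal entries and a deterministic (constant) diagonal shift $-p$; a constant diagonal shift does not change eigenvectors at all. So it suffices to analyze eigenvectors of $p\BJ_n + B_n(p)$ away from the top.

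The main tool for handling the rank-one perturbation $p\BJ_n$ is an eigenvector comparison/interlacing argument. For $1 \le i \le n-1$, the eigenvector $v_i(p\BJ_n + B_n(p))$ should be close to an eigenvector of $B_n(p)$ because $p\BJ_n$ is rank one: by the Davis--Kahan type bounds, or more directly by resolvent identities, for indices $i$ not corresponding to the outlier eigenvalue the eigenvector of the perturbed matrix is a small perturbation of the corresponding eigenvector of $B_n(p)$. Concretely, following the approach used for such perturbed models (as in the cited work \cite{TVW, EKYY}), one writes the eigenvector equation $(p\BJ_n + B_n(p))v = \lambda v$ as $B_n(p) v = \lambda v - p\langle \Bj_n, v\rangle \Bj_n$, so $v = \langle \Bj_n, v\rangle \, p (\lambda - B_n(p))^{-1}\Bj_n$ whenever $\lambda$ is not an eigenvalue of $B_n(p)$. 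This expresses each coordinate $v(j)$ in terms of the resolvent of the Wigner matrix $B_n(p)$ applied to a fixed vector; one then needs a lower bound on $|e_j^\T (\lambda - B_n(p))^{-1}\Bj_n|$, or, alternatively, one runs the perturbation argument the other direction: the genuine eigenvectors $v_i(B_n(p))$ for the bulk/edge indices are within $o(1)$ in Euclidean norm (after suitable index matching) of the $v_i$ of the perturbed matrix, and then a coordinate that is not too small for the Wigner eigenvector stays not too small for the perturbed one. The cleanest route is probably: (i) show the eigenvalues and eigenvectors of $A_n(p)$, restricted to indices $1 \le i \le n-1$, are obtained from those of $B_n(p)$ by a one-step interlacing, with the eigenvector angles controlled; (ii) apply Theorem \ref{lower} to $B_n(p)$ to get that, for each fixed $j$, $\Prob(|v_i(B_n(p))(j)| \le \delta/(\sqrt n (\log n)^{c_1})) \le C n^{o(1)}\delta + o(1)$ after optimizing the parameter $\alpha$ in that theorem; (iii) combine to conclude the same (with adjusted constants absorbed into $n^{o(1)}$) for $A_n(p)$.

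The hard part will be step (i): controlling the eigenvector under the rank-one perturbation uniformly in $i$ well enough that a small-ball bound for one coordinate survives. A crude $\ell^2$-closeness $\|v_i(A_n) - v_i(B_n)\| = o(1)$ is not by itself enough, since a single coordinate of size $n^{-1/2}$ could be destroyed by an $\ell^2$-perturbation of size $o(1)$. One needs instead an $\ell^\infty$ (or coordinate-wise) control of the perturbation, of order $o(n^{-1/2}(\log n)^{-c_1})$ or better, for most eigenvectors — this is where the resolvent local law estimates for $B_n(p)$ (isotropic delocalization, Theorem \ref{thm:BEKHY}, and local semicircle estimates) enter, to show that $p(\lambda_i - B_n(p))^{-1}\Bj_n$ has all coordinates comparable to $n^{-1/2}$ and that the correction to $v_i(B_n(p))$ coming from the rank-one term is coordinate-wise negligible. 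An alternative that avoids $\ell^\infty$ control of the eigenvector difference is to redo the proof of Theorem \ref{lower} directly for $A_n(p)$: that proof (in Section \ref{sec:proof:extreme}) presumably proceeds via a resolvent/least-singular-value argument on a minor of $W$, and the analogous minor of $A_n(p)$ is again a rank-one perturbation of a Wigner minor, so the same small-ball machinery (Rudelson--Vershynin type least singular value bounds for the relevant random vector, which still has mean-zero sub-gaussian fluctuations) goes through with $p\BJ$-type terms being handled as a fixed shift. I would pursue whichever of these two routes the Section \ref{sec:proof:extreme} argument makes most transparent; the diagonal modification ($A_n$ vs. $\tilde A_n$, zeroing the diagonal) is a genuinely low-rank-free, entrywise bounded change of the matrix and is handled by the same perturbation estimates with no new ideas, contributing only to the implicit constants and the $n^{o(1)}$ factor.
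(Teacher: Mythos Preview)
The paper takes your second route: it redoes the proof of Theorem~\ref{lower} directly for $A_n(p)$, never attempting to transfer eigenvector information from $B_n(p)$ via perturbation. Your first route is not pursued, and for good reason: as you yourself note, $\ell^2$-closeness of eigenvectors is useless for a single-coordinate small-ball bound, and promoting it to $\ell^\infty$-closeness at the required $o(n^{-1/2}(\log n)^{-c_1})$ scale would take more work than the direct argument.

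However, your description of the second route is too vague at the one point where the $G(n,p)$ case genuinely differs from the Wigner case. The proof of Theorem~\ref{lower} is not a Rudelson--Vershynin least-singular-value argument; it is based on the explicit identity of Lemma~\ref{lemma:coordinate},
\[
|v_i(n)|^2=\frac{1}{1+\sum_{j=1}^{n-1}\frac{|v_j(A_{n-1})^{\T} X|^2}{|\lambda_j(A_{n-1})-\lambda_i(A_n)|^2}},
\]
where $X$ is the last column of $A_n$ and $A_{n-1}$ the top-left minor. One then needs (a) an upper bound on each $|v_j(A_{n-1})^{\T} X|$, and (b) a lower bound on the gap $m_i=\min_j|\lambda_j(A_{n-1})-\lambda_i(A_n)|$. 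Part~(b) is supplied by an $A_n(p)$-analogue of the eigenvalue-gap bounds of \cite{NTV}, stated in the paper as Theorem~\ref{thm:NTVGnp}; this is the source of the $Cn^{o(1)}\delta+o(1)$ on the right-hand side. Part~(a) is where your ``handled as a fixed shift'' hides the issue: the entries of $X$ are $\{0,1\}$-Bernoulli, \emph{not} mean zero, so Corollary~\ref{cor:projection} does not apply directly. The paper's fix is to write, for $1\le j\le n-2$,
\[
v_j(A_{n-1})^{\T} X = v_j(A_{n-1})^{\T}(X-p\Bj_{n-1}) + p\sqrt{n-1}\,v_j(A_{n-1})^{\T}\tfrac{1}{\sqrt{n-1}}\Bj_{n-1},
\]
bound the first term via Corollary~\ref{cor:projection} (now the vector is centered), and bound the second by orthogonality of $v_j(A_{n-1})$ to $v_{n-1}(A_{n-1})$ together with $\|v_{n-1}(A_{n-1})-\tfrac{1}{\sqrt{n-1}}\Bj_{n-1}\|_{\ell^\infty}$ from Theorem~\ref{thm:Mitra}. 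The remaining index $j=n-1$ (the outlier eigenvector of the minor) is treated separately using that $|\lambda_{n-1}(A_{n-1})-\lambda_i(A_n)|\gtrsim np$ for $i\le n-1$. This centering-plus-Mitra step is the one genuinely new ingredient relative to the Wigner proof, and your proposal does not identify it.
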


Notice that Theorem \ref{thm:Gnpsmallcor} does not address the eigenvector corresponding to the largest eigenvalue of $A_n(p$).  
Bounds for this eigenvector can easily be obtained using Theorem \ref{thm:Mitra} and are left as an exercise.

\subsection{No-gaps delocalization} 
In \cite{DLL}, the following result is developed as a tool  in the authors' study of nodal domain for the eigenvectors of $G(n,p)$. 
 
\begin{theorem}[Theorem 3.1 from \cite{DLL}]\label{thm:DLL}
Let $A_n(p)$ be the adjacency matrix of the random graph $G(n,p)$, and let $v_i$ be the unit eigenvector corresponding to the $i$th smallest eigenvalue. For every $p\in (0,1)$ and every $\varepsilon>0$, there exist $C, c, \eta> 0$ (depending on $\eps$ and $p$) such that, for every fixed subset $S\subset [n]$ of size $|S|\ge(\frac{1}{2}+\varepsilon)n$, 
$$ \min_{1\le i \le n-1} \|v_i\|_S \ge \eta $$
with probability at least $1-C\exp(-c n)$.
\end{theorem}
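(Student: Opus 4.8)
The plan is to combine the block structure of $A_n(p)$ with a smallest--singular--value estimate for a tall random rectangular matrix. Since $\|v\|_{S'} \ge \|v\|_{S}$ whenever $S \subseteq S'$, it suffices to prove the bound for subsets $S$ of the minimal size $|S| = \lceil (\tfrac12+\varepsilon)n \rceil$; set $T := [n]\setminus S$, so that $|T| = \Theta(n)$ and the aspect ratio $|T|/|S|$ is bounded away from $1$. Write $A_n(p) = p\BJ_n + W$ with $W := A_n(p) - p\,\mathbf{1}\mathbf{1}^{\mathrm{T}}$, a symmetric matrix with bounded entries (equal, up to a deterministic $-pI$ on the diagonal, to a Wigner matrix with a bounded atom variable). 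We work on the event $\{\|W\| \le K\sqrt{n}\}$, which by Lemma~\ref{lemma:norm} has probability $1 - e^{-cn}$; on it, Cauchy interlacing for the rank--one positive perturbation $p\,\mathbf{1}\mathbf{1}^{\mathrm{T}}$ gives $|\lambda_i(A_n(p))| \le K\sqrt{n}$ for every $i \le n-1$ (only $\lambda_n$ is an outlier). Restricting $A_n(p) v_i = \lambda_i v_i$ to the rows in $S$ and writing $v_i = v_S + v_T$ according to the partition yields the identity
\[
  W_{S,T}\, v_T \;=\; (\lambda_i I - W_{S,S})\, v_S \;-\; p\,(\mathbf{1}^{\mathrm{T}} v_i)\, \mathbf{1}_S ,
\]
where $W_{S,T}$ is the $|S|\times|T|$ random block and $\mathbf{1}_S$ is the all--ones vector supported on $S$.

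The rest of the argument is deterministic, given two high--probability facts: (i) $\sigma_{\min}(W_{S,T}) \ge c_0\sqrt{n}$ for a constant $c_0 = c_0(p,\varepsilon) > 0$; and (ii) $|\mathbf{1}^{\mathrm{T}} v_i| = o(1)$ for every $i \le n-1$. Assume, towards a contradiction, that $\|v_i\|_S = \|v_S\| < \eta$ for a small constant $\eta$ to be chosen. Then the right--hand side of the identity has norm at most $(|\lambda_i| + \|W_{S,S}\|)\,\|v_S\| + p\,|\mathbf{1}^{\mathrm{T}} v_i|\,\sqrt{|S|} \le 2K\sqrt{n}\,\eta + o(\sqrt{n})$, the last term being negligible precisely because of (ii). On the left, $|\mathbf{1}_T^{\mathrm{T}} v_T| = |\mathbf{1}^{\mathrm{T}} v_i - \mathbf{1}_S^{\mathrm{T}} v_S| \le o(1) + \sqrt{|S|}\,\eta$, so the component of $v_T$ along $\mathbf{1}_T$ has norm $O(\eta) + o(1)$; writing $v_T = v_T^{\perp} + (\text{that component})$ with $v_T^{\perp} \perp \mathbf{1}_T$, we get $\|v_T^{\perp}\| \ge 1 - O(\eta) - o(1)$ and, since $\|W_{S,T}\| \le \|W\| \le K\sqrt n$,
\[
  \|W_{S,T} v_T\| \;\ge\; \sigma_{\min}(W_{S,T})\,\|v_T^{\perp}\| - K\sqrt{n}\bigl(O(\eta)+o(1)\bigr) \;\ge\; c_0\sqrt{n}\,(1 - O(\eta)) - o(\sqrt{n}).
\]
Comparing the two sides gives $c_0 \le 2K\eta + O(\eta) + o(1)$, which is impossible once $\eta$ is a small enough constant multiple of $c_0/K$ and $n$ is large (the statement being vacuous for small $n$). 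Hence $\min_{i\le n-1}\|v_i\|_S \ge \eta$ on an event of probability $1 - C e^{-cn}$.

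It remains to secure (i) and (ii). Fact (i) is the standard invertibility estimate for a rectangular random matrix with i.i.d., bounded, mean--zero entries whose aspect ratio is bounded away from $1$: the per--vector small--ball probability $(C\varepsilon)^{|S|}$ dominates the cardinality $(3/\delta)^{|T|}$ of a net of the unit sphere of $\mathbb{R}^T$, and a union bound finishes it; this is precisely where $\varepsilon > 0$ is used, and why the conclusion is a constant $\eta$ depending on $\varepsilon$ (cf.\ Litvak--Pajor--Rudelson--Tomczak--Jaegermann and Rudelson--Vershynin). Fact (ii) is the main obstacle: a crude Davis--Kahan bound (using that the top eigenvector of $A_n(p)$ lies within $O(n^{-1/2})$ of $\tfrac{1}{\sqrt n}\Bj_n$) only yields $|\mathbf{1}^{\mathrm{T}} v_i| = O(1)$, which is insufficient when $\varepsilon$ is small, since both sides of the contradiction then scale like constants. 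To obtain $o(1)$, use the secular equation: on the good event and for $\lambda_i \notin \mathrm{spec}(W)$ one has $v_i \parallel (W - \lambda_i I)^{-1}\mathbf{1}$, whence normalisation gives $p\,|\mathbf{1}^{\mathrm{T}} v_i| = \|(W - \lambda_i I)^{-1}\mathbf{1}\|^{-1}$; restricting $\|(W-\lambda_i I)^{-1}\mathbf{1}\|^2 = \sum_j \langle \mathbf{1}, w_j\rangle^2 (\mu_j - \lambda_i)^{-2}$ (with $(\mu_j,w_j)$ the eigenpairs of $W$) to an energy window of constant width $\delta_0$ about $\lambda_i$, and invoking the isotropic local semicircle law for $W$ so that $\sum_{|\mu_j - \lambda_i|\le\delta_0}\langle\mathbf{1},w_j\rangle^2 = \mathbf{1}^{\mathrm{T}} P_{[\lambda_i-\delta_0,\lambda_i+\delta_0]}(W)\,\mathbf{1} \gtrsim \delta_0\sqrt{n}$ uniformly in $i$ with exponentially small failure probability, forces $\|(W-\lambda_i I)^{-1}\mathbf{1}\|^2 \gtrsim \sqrt{n}/\delta_0 \to \infty$, hence $|\mathbf{1}^{\mathrm{T}} v_i| = o(1)$; the degenerate case $\lambda_i \in \mathrm{spec}(W)$ and the extreme index $i = n-1$ (edge scaling) need only cosmetic changes. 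Alternatively, one may hope to bypass (ii) by observing that $A_n(p)$ is a rank--one deterministic perturbation of a symmetric matrix with i.i.d.\ entries and appealing to the no--gaps machinery of \cite{RVgaps} (Theorem~\ref{thm:RVgapsgen} under \cite[Assumption 1.1]{RVgaps}) with $\delta = \tfrac12+\varepsilon$ and $t$ a suitable constant.
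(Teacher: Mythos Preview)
The paper does not prove Theorem~\ref{thm:DLL}; it is quoted from \cite{DLL}, so there is no in-paper proof to compare against. Your overall strategy---block decomposition plus a least-singular-value bound for the tall rectangular block $W_{S,T}$---is the right one and does give the exponential probability $1-C\exp(-cn)$ for fact~(i). The genuine gap is fact~(ii). Neither the secular-equation argument via the isotropic local law nor Theorem~\ref{thm:BEKHY} yields failure probability $e^{-cn}$: those results give at best $\exp\!\bigl(-c(\log n)^{c\log\log n}\bigr)$ (cf.\ Theorem~\ref{thm:dim}), which is far weaker than the stated bound and does not survive intersection with the $e^{-cn}$ event you need. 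Your alternative route through \cite{RVgaps} is also blocked: as the paragraph immediately following Theorem~\ref{thm:DLL} in the paper points out, Theorem~\ref{thm:RVgapsgen} requires $\|A_n(p)\|=O(\sqrt{n})$, which fails here since $\lambda_n(A_n(p))=\Theta(n)$.

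The clean fix is to avoid (ii) altogether. Apply the orthogonal projection $P_S^{\perp}:=I_S-\tfrac{1}{|S|}\Bj_S\Bj_S^{\mathrm T}$ to both sides of your displayed identity. On the right, $P_S^{\perp}\Bj_S=0$ kills the troublesome term $p(\Bj^{\mathrm T}v_i)\Bj_S$ \emph{deterministically}, and $P_S^{\perp}A_{S,S}=P_S^{\perp}(W_{S,S}-pI_S)$ still has operator norm $O(\sqrt{n})$ on the good event. On the left you are reduced to lower-bounding $\sigma_{\min}(P_S^{\perp}W_{S,T})$; since for each fixed unit $u\in\mathbb{R}^{T}$ the vector $W_{S,T}u\in\mathbb{R}^{S}$ has independent mean-zero sub-gaussian coordinates, Corollary~\ref{cor:projection} gives $\|P_S^{\perp}W_{S,T}u\|^2\ge c\,|S|$ with probability $1-e^{-c|S|}$, and the standard net argument over $S^{|T|-1}$ (with $|T|\le(\tfrac12-\varepsilon)n<|S|-1$) closes the union bound with room to spare. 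This yields $\|v_i\|_S\ge\eta$ on an event of probability $1-C\exp(-cn)$, exactly as claimed, without ever estimating $\Bj^{\mathrm T}v_i$.
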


Theorem \ref{thm:DLL} is not nearly as strong as  Theorem \ref{thm:RVgapsgen} in Section \ref{sec:mass}, which holds for all subsets $S \subset [n]$ of specified size. 
 However, the  results from \cite{RVgaps} do not apply directly to  the adjacency matrix $A_n(p)$.  In particular, Theorem  \ref{thm:RVgapsgen} only applies when the spectral norm of the matrix is $O(\sqrt{n})$ with high probability. However, 
  the largest eigenvalue of 
 $A_n (p)$ is of order $\Theta (n)$. 

We conclude this section by discussing some variations  which do not require the spectral norm to be $O(\sqrt{n})$.  Specifically, we consider eigenvectors of matrices of the form $M := W + J$, where $W$ is a Wigner matrix and $J$ is a real symmetric deterministic low-rank matrix.  When $W$ is drawn from the GOE, it suffices to consider the case that $J$ is diagonal (since $W$ is invariant under conjugation by orthogonal matrices).  Thus, we begin with the case when $J$ is diagonal.  
\begin{theorem}[Diagonal low-rank perturbations] \label{thm:diag}
Let $\xi$, $\zeta$ be real sub-gaussian random variables with mean zero, and assume $\xi$ has unit variance.  Let $W$ be an $n \times n$ Wigner matrix with atom variables $\xi$, $\zeta$; let $J$ be $n \times n$ deterministic, diagonal real symmetric matrix with rank $k$.  Let $M:=W+J$, and consider $\lambda_j(M) \in [\lambda_1(W), \lambda_n(W)]$ and its corresponding unit eigenvector $v_j(M)$. For any constant $\delta \in (0,1)$ and any fixed set $S \subset [n]$ with size $|S| = \lfloor \delta n \rfloor$,  there exist constants $C,c > 0$ and $0 < \eta_1 \leq \eta_2 < 1$ (depending only on $\delta$, $k$, and the sub-gaussian moments of $\xi$ and $\zeta$) such that
$$ \eta_2 \geq \|v_j(M)\|_S \geq \eta_1 $$
with probability at least $1 -  C\exp \left( -c (\log n)^{c \log \log n} \right)$.
\end{theorem}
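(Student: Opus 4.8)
The plan is to reduce the claim about $M = W + J$ with $J$ diagonal of rank $k$ to a statement about $W$ alone, by peeling off the low-rank perturbation one eigenvalue at a time, and then to invoke the GOE-comparison and no-gaps results already available (Theorems~\ref{thm:con-max}, \ref{thm:generalized}, \ref{thm:gaussian}) for $W$. First I would set up notation: write $J = \sum_{\ell=1}^k \mu_\ell e_{i_\ell} e_{i_\ell}^\mathrm{T}$, so that $M$ and $W$ differ by a matrix of rank $k$. The starting observation is the classical interlacing/resolvent identity: if $\lambda_j(M)$ lies in the bulk of $[\lambda_1(W),\lambda_n(W)]$ and is not too close to any $\mu_\ell$ (which we can arrange, since the $\mu_\ell$ live on the real line but the bulk of the spectrum of $W$ has been shifted by an $O(1)$ amount, and away from the finitely many perturbed directions), then the eigenvector $v_j(M)$ can be written via the resolvent of $W$ as
$$ v_j(M) = c_j \, (W - \lambda_j(M))^{-1} \Big( \sum_{\ell=1}^k \beta_\ell e_{i_\ell} \Big), $$
for suitable scalars $\beta_\ell$ and a normalizing constant $c_j$. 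Equivalently, $v_j(M)$ is a linear combination of the $k$ vectors $(W-\lambda_j(M))^{-1} e_{i_\ell}$, each of which is, up to normalization, a ``deformed'' eigenvector in the direction of a standard basis vector.

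The next step is to control $\|(W - z)^{-1} e_{i_\ell}\|_S$ versus $\|(W-z)^{-1} e_{i_\ell}\|$ for $z = \lambda_j(M)$ a point at distance $\Omega(1)$ from the spectrum of $W$ — here we crucially use that $\lambda_j(M)$ is separated from $\operatorname{spec}(W)$ by a constant (because the perturbation is rank $k = O(1)$, each perturbed eigenvalue is pushed an $\Omega(1)$ distance from the bulk edge, and conversely a bulk point $\lambda_j(M)$ differs from every $\lambda_i(W)$ by $\Omega(1)$ only for the finitely many indices near the perturbation; for the remaining indices one uses that $v_j(M)$ is itself close to an honest eigenvector $v_{j'}(W)$ and applies Theorem~\ref{thm:generalized} or \ref{thm:gaussian} directly). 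Expanding in the eigenbasis of $W$, $(W-z)^{-1} e_{i_\ell} = \sum_m \frac{v_m(W)(i_\ell)}{\lambda_m(W) - z} v_m(W)$, so $\|(W-z)^{-1}e_{i_\ell}\|_S^2$ is a quadratic form in the coordinates $v_m(W)(i_\ell)$ weighted by $(\lambda_m(W)-z)^{-2}$, and isolated on $S$ it becomes $\sum_{a\in S} \big( \sum_m \frac{v_m(W)(i_\ell) v_m(W)(a)}{\lambda_m(W)-z}\big)^2$. One then wants to show this is comparable (up to constants depending on $\delta$ and the spectral gap) to its full-$[n]$ counterpart with high probability; the eigenvector delocalization bound of Theorem~\ref{thm:delo1} together with rigidity of the eigenvalues of $W$ (which controls the denominators and lets one replace the weighted sums by their deterministic Stieltjes-transform-type limits) gives two-sided bounds $\eta_1 \le \|v_j(M)\|_S \le \eta_2$ with the stated probability $1 - C\exp(-c(\log n)^{c\log\log n})$, the superpolynomially-small-but-not-exponential error coming from the eigenvalue-simplicity/gap events of Theorem~\ref{thm:simplespectrum} and the rigidity estimates for non-Gaussian $W$.

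The main obstacle I anticipate is twofold. The easier-to-state difficulty is the case distinction on $j$: for indices $j$ such that $\lambda_j(M)$ is genuinely an ``original'' bulk eigenvalue barely moved by $J$, the perturbation analysis above degenerates (the resolvent at $z=\lambda_j(M)$ is singular), and one must instead argue directly that $v_j(M)$ is $o(1)$-close in $\ell^2$ to the corresponding $v_{j'}(W)$ and then transfer the $\|\cdot\|_S$ bound from $W$ to $M$ — this requires a quantitative eigenvector-stability estimate under rank-$k$ perturbations with an explicit gap, which in turn needs eigenvalue rigidity. The genuinely hard part is obtaining the two-sided no-gaps bound for $\|v_j(M)\|_S$ with a probability bound strong enough ($1-C\exp(-c(\log n)^{c\log\log n})$) rather than merely $1-o(1)$: the comparison theorems of Section~\ref{sec:universality} only give convergence in distribution, so for the high-probability statement one must go through delocalization (Theorem~\ref{thm:delo1}) and a concentration argument for the quadratic form $\|(W-z)^{-1}e_{i_\ell}\|_S^2$ — essentially a Hanson--Wright-type bound conditioned on the spectrum of $W$, combined with the local law / rigidity inputs that are available for sub-gaussian Wigner matrices. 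Assembling these pieces, taking a union bound over the (bounded number of) perturbed directions, and tracking the dependence of $\eta_1,\eta_2$ on $\delta$ and $k$ finishes the proof.
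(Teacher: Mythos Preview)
Your approach has a fundamental gap that would prevent it from working. The resolvent formula
\[
v_j(M) = c_j\,(W-\lambda_j(M))^{-1}\sum_{\ell=1}^k \beta_\ell e_{i_\ell}
\]
is formally correct, but your subsequent claim that ``$\lambda_j(M)$ is separated from $\operatorname{spec}(W)$ by a constant'' is false and in fact the opposite of what the hypothesis $\lambda_j(M)\in[\lambda_1(W),\lambda_n(W)]$ gives you. By eigenvalue interlacing for a rank-$k$ perturbation, $\lambda_j(M)$ lies between $\lambda_{j-k}(W)$ and $\lambda_{j+k}(W)$, so it sits \emph{inside} the spectrum of $W$, typically within distance $O(n^{-1/2})$ of several eigenvalues of $W$. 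Hence $(W-\lambda_j(M))^{-1}$ has norm of order $\sqrt{n}$ or worse, and the weighted sums $\sum_m \frac{v_m(W)(i_\ell)\,v_m(W)(a)}{\lambda_m(W)-\lambda_j(M)}$ are dominated by a handful of nearly-resonant terms whose contribution you cannot control by delocalization or rigidity alone. Your proposed fallback --- arguing that $v_j(M)$ is $o(1)$-close in $\ell^2$ to some $v_{j'}(W)$ via Davis--Kahan --- fails for exactly the same reason: the relevant spectral gap is $O(n^{-1/2})$, far too small to conclude any useful eigenvector stability under a rank-$k$ perturbation of bounded operator norm.

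The paper's proof takes a completely different route that avoids the resolvent of $W$ altogether. It writes $M$ in $2\times 2$ block form according to the partition $S\cup S^c$, splits the eigenvector as $v_j(M)=\begin{pmatrix}x_j\\y_j\end{pmatrix}$ with $x_j\in\mathbb{R}^{|S|}$, and argues by contradiction: if $\|x_j\|$ were very small, the block equation $(D+J_D-\lambda_j(M)I)y_j=-B^{\mathrm T}x_j$ forces $y_j$ to live (up to a small remainder) in a subspace $V_j$ spanned by eigenvectors of $D+J_D$ whose eigenvalues are within a small window of $\lambda_j(M)$; by the local semicircle law this subspace has dimension $O(\tau n)$ for a tunable $\tau$. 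One then builds an $\varepsilon$-net $\mathcal N$ for such subspaces depending only on $D$, conditions on $D$, and uses the \emph{other} block equation together with the projection lemma and a union bound over $\mathcal N$ to show that $\|P_H B v\|$ is bounded below for all unit $v\in V_j$. This yields a lower bound on $\|x_j\|$ that contradicts the assumption. The independence of $B$ from $D$ is what makes the conditioning-and-net argument work, and this is precisely the structure your resolvent approach discards.
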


Theorem \ref{thm:diag} does not handle the eigenvectors corresponding to the extreme eigenvalues of $M$ due to the condition $\lambda_j(M) \in [\lambda_1(W), \lambda_n(W)]$.  In particular, it is possible for the eigenvectors corresponding to the largest and smallest eigenvalues to be \emph{localized} for certain choices of the diagonal matrix $J$.  A localized eigenvector is an eigenvector whose mass in concentrated on only a few coordinates.  We present an example of this phenomenon in the following theorem.

\begin{theorem} \label{thm:OVW}
Let $\xi$, $\zeta$ be real sub-gaussian random variables with mean zero and unit variance.  Then there exist constants $C, c > 0$ such that the following holds.  Let $W$ be an $n \times n$ Wigner matrix with atom variables $\xi$, $\zeta$.  Let $J = \sqrt{n} \theta e_k e_k^\mathrm{T}$ for some $\theta > C$ and $1 \leq k \leq n$, where $e_1, \ldots, e_n$ is the standard basis of $\mathbb{R}^n$.  Set $M := W + J$.  Then
$$ | v_n(M) \cdot e_k|^2 \geq 1 - \frac{C^2}{\theta^2} $$
with probability at least $1 - C \exp(-cn)$.  
\end{theorem}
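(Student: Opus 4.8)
The plan is to exploit the rank-one structure of $J = \sqrt{n}\,\theta\, e_k e_k^{\mathrm{T}}$ together with the fact that $\|W\| = O(\sqrt{n})$ with overwhelming probability (Lemma \ref{lemma:norm}). First I would record the bound $\|W\| \le C_0 \sqrt{n}$ on an event $\mathcal{E}$ of probability at least $1 - C\exp(-cn)$; on this event I work deterministically. The key linear-algebra identity is that if $v_n = v_n(M)$ is the top unit eigenvector of $M$ with eigenvalue $\lambda_n(M)$, then writing $v_n = \alpha e_k + w$ with $w \perp e_k$ and $\alpha = v_n \cdot e_k$, the eigenvector equation $Mv_n = \lambda_n(M) v_n$ gives, after projecting onto $e_k$ and onto $e_k^{\perp}$, a pair of relations linking $\alpha$, $w$, $\lambda_n(M)$ and the blocks of $W$. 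The heuristic is that for $\theta$ large the matrix $M$ has a huge diagonal spike at position $k$, so $\lambda_n(M) \approx \sqrt{n}\,\theta$ and the corresponding eigenvector is forced to align with $e_k$.

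The main steps are as follows. (1) Lower bound the top eigenvalue: since $e_k^{\mathrm{T}} M e_k = \sqrt{n}\,\theta + w_{kk} \ge \sqrt{n}\,\theta - C_0\sqrt{n}$ on $\mathcal{E}$ (controlling $|w_{kk}| \le \|W\|$), the variational characterization gives $\lambda_n(M) \ge \sqrt{n}\,\theta - C_0\sqrt{n} = \sqrt{n}(\theta - C_0)$. (2) Upper bound the contribution away from $e_k$: from $\lambda_n(M) v_n = M v_n = \sqrt{n}\,\theta\,\alpha\, e_k + W v_n$, project onto $e_k^{\perp}$ to get $\lambda_n(M)\, w = (I - e_k e_k^{\mathrm{T}}) W v_n$, hence $\lambda_n(M)\,\|w\| \le \|W\| \le C_0\sqrt{n}$, so $\|w\| \le C_0\sqrt{n}/\lambda_n(M) \le C_0/(\theta - C_0)$. (3) Conclude: $|\alpha|^2 = 1 - \|w\|^2 \ge 1 - C_0^2/(\theta - C_0)^2$. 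Choosing $C$ to be a suitable multiple of $C_0$ (say $C = 2C_0$ or larger, so that $\theta > C$ makes $\theta - C_0 \ge \theta/2$ and the constants work out to the stated form $1 - C^2/\theta^2$ after relabeling) gives the claimed bound on the event $\mathcal{E}$, which has probability at least $1 - C\exp(-cn)$.

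The only genuinely delicate point is making the constant bookkeeping in step (3) match the precise form $1 - C^2/\theta^2$ stated in the theorem: one needs $\theta > C$ large enough that $\theta - C_0$ can be bounded below by a fixed fraction of $\theta$, and then $C_0^2/(\theta - C_0)^2 \le 4C_0^2/\theta^2$, so taking the theorem's $C$ to be $\max(2C_0, C_1)$ where $C_1$ is the constant from the norm bound (and $c$ the corresponding one) closes the argument. This is routine once the deterministic identities in steps (1)--(2) are in place; the rest is just the concentration of $\|W\|$, which is quoted. No universality or delocalization input is needed here — the rank-one spike is large enough that crude operator-norm control suffices, which is exactly why the eigenvector is localized rather than delocalized.
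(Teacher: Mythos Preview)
Your argument is correct and is essentially the same approach the paper has in mind: the paper says Theorem~\ref{thm:OVW} follows from a (slightly modified) Davis--Kahan $\sin\theta$ theorem together with the spectral-norm bound of Lemma~\ref{lemma:norm}, and what you have written is precisely a bare-hands Davis--Kahan argument specialized to this rank-one spike. Your step~(2), projecting the eigenvector equation onto $e_k^{\perp}$ to bound $\|w\|$ by $\|W\|/\lambda_n(M)$, is exactly the content of the $\sin\theta$ bound in this setting, so there is no substantive difference between the two routes.
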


Specifically, Theorem \ref{thm:OVW} implies that for $\theta$ large enough (say, $\theta = \Omega(\log n)$), most of the mass of the eigenvector $v_n(M)$ is concentrated on the $k$th coordinate.  This is in contrast to Theorem \ref{thm:diag}, which shows that, with high probability, the other eigenvectors cannot concentrate on a single coordinate.   Theorem \ref{thm:OVW} is part of a large collection of results concerning the extreme eigenvalues and eigenvectors of perturbed random matrices.  We refer the reader to \cite{BBP, BR, KY2, OVW, PSR, RS} and references therein for many other results.  

Theorem \ref{thm:OVW} follows as a simple corollary of a slightly-modified version of the Davis--Kahan $\sin \theta$ Theorem (see, for instance, \cite[Theorem 4]{OVW}) and a bound on the spectral norm of a Wigner matrix (Lemma \ref{lemma:norm}); we leave the details as an exercise. 

Theorems \ref{thm:diag} and \ref{thm:OVW} both deal with diagonal perturbations.  For more general perturbations, we have the following result.  

\begin{theorem}[General low-rank perturbations] \label{thm:perturb}
Let $\xi$, $\zeta$ be real sub-gaussian random variables with mean zero, and assume $\xi$ has unit variance.  Let $W$ be an $n \times n$ Wigner matrix with atom variables $\xi$, $\zeta$; let $J$ be $n \times n$ deterministic real symmetric matrix with rank $k$.  Let $\eps_1, \eps_0 > 0$.  Let $M:=W+J$, and consider $\lambda_j(M)$ such that $\eps_1 n \leq j \leq (1-\eps_1)n$ and its corresponding unit eigenvector $v_j(M)$. For any constant $\delta \in (0,1)$ and any fixed set $S \subset [n]$ with size $|S| = \lfloor \delta n \rfloor$,  there exist constants $C,c > 0$ and $0 < \eta < 1$ (depending only on $\delta$, $k$, $\eps_0$, $\eps_1$, and the sub-gaussian moments of $\xi$ and $\zeta$) such that 
$$ \Prob \left( \frac{1}{n^{1 - \eps_0}} \leq \| v_j(M) \|_S \leq \eta \right) \leq C \exp \left( -c (\log n)^{c \log \log n} \right). $$
\end{theorem}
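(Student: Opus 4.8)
The plan is to use the rank-$k$ structure of $J$ to express $v_j(M)$ through the resolvent of $W$ evaluated on the fixed $k$-dimensional space $\range(J)$, thereby reducing to a statement about $W$ alone that does not see $\|J\|$, and then to run the small-ball/anti-concentration machinery already used for the diagonal case (Theorem~\ref{thm:diag}), with the isotropic delocalization of Theorem~\ref{thm:BEKHY} replacing the column-of-the-resolvent estimates used there. Write $J=U\Sigma U^{\mathrm T}$ with $U$ an $n\times k$ isometry onto $\range(J)$ and $\Sigma$ an invertible symmetric $k\times k$ matrix, and put $\lambda:=\lambda_j(M)$. Iterating the interlacing inequality for rank-one perturbations $k$ times gives $\lambda_{j-k}(W)\le\lambda\le\lambda_{j+k}(W)$, so by eigenvalue rigidity for Wigner matrices $\lambda$ lies in a fixed compact subinterval of the bulk with probability $1-O(n^{-A})$ for every $A$ (independently of $\|J\|$, since a rank-$k$ perturbation only displaces eigenvalues by $k$ positions). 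On the overwhelmingly likely event $\lambda\notin\operatorname{spec}(W)$, the Woodbury identity shows $\lambda$ is an eigenvalue of $M$ exactly when $I_k+\Sigma U^{\mathrm T}(W-\lambda)^{-1}U$ is singular, and then $v_j(M)=-(W-\lambda)^{-1}U\hat w/\|(W-\lambda)^{-1}U\hat w\|$ for a unit vector $\hat w\in S^{k-1}$ spanning the (generically one-dimensional) kernel of that $k\times k$ matrix. Consequently
$$ \|v_j(M)\|_S^2=\frac{\|P_S(W-\lambda)^{-1}U\hat w\|^2}{\|(W-\lambda)^{-1}U\hat w\|^2}, $$
with $P_S$ the coordinate projection onto $S$; the right side depends on $J$ only through the fixed isometry $U$ and through the values of $\lambda$ and $\hat w$.

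Because $k$ is constant, I would next take a polynomially fine net of $S^{k-1}$ and argue for each fixed unit vector $y=U\hat w_0$ separately, expanding $(W-\lambda)^{-1}y=\sum_{l}\langle v_l(W),y\rangle(\lambda_l(W)-\lambda)^{-1}v_l(W)$ in the eigenbasis of $W$. The isotropic local semicircle law underlying Theorem~\ref{thm:BEKHY} controls the overlaps $\langle v_l(W),y\rangle$, and rigidity controls the denominators. I would then split according to whether $\lambda$ is \emph{resonant}, i.e.\ anomalously closer to a single eigenvalue $\lambda_{l_0}(W)$ than to all the others. In the resonant case the expansion is, up to a negligible error, a scalar multiple of $v_{l_0}(W)$, so $\|v_j(M)\|_S\approx\|v_{l_0}(W)\|_S$, which by no-gaps delocalization for Wigner matrices (Theorem~\ref{thm:RVgapsgen} applied to $S$ and to $[n]\setminus S$) lies in a compact subinterval of $(0,1)$; in particular $\|v_j(M)\|_S>\eta$ once $\eta$ is chosen small enough, so we fall outside the window $[n^{-(1-\eps_0)},\eta]$.

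In the non-resonant case $(W-\lambda)^{-1}y$ is a genuinely spread-out superposition of many eigenvectors of $W$, and the heuristic computation (using $|\langle v_l(W),y\rangle|\lesssim n^{\eps-1/2}$, $\|v_l(W)\|_S^2\approx\delta$, and near-orthogonality of the vectors $P_Sv_l(W)$) gives $\|v_j(M)\|_S^2\approx\delta$. To turn this into the claimed dichotomy — and in particular to rule out $\|P_S(W-\lambda)^{-1}y\|$ landing in the forbidden range — I would rerun the inverse Littlewood--Offord / least-common-denominator argument used to prove Theorem~\ref{thm:diag} (and Theorem~\ref{lower}), now with $(W-\lambda)^{-1}$ applied to arbitrary unit vectors rather than to coordinate vectors; the isotropic law is precisely the extra input that makes this substitution legitimate.

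The hard part will be this last step, and more precisely the fact that $\lambda=\lambda_j(M)$ is itself a function of all of $W$: one cannot union bound over a net of $\lambda$, since $(W-\lambda)^{-1}$ is unstable on the $1/n$ scale of the eigenvalue spacing. As for Theorem~\ref{thm:diag}, I expect this to be handled by conditioning on all but a few rows and columns of $W$ and showing the leftover randomness already forces $P_S(W-\lambda)^{-1}y$ off the interval $[n^{-(1-\eps_0)},\eta]$ unless a rare structured event occurs; the threshold $n^{-(1-\eps_0)}$ is exactly where the small-ball estimates cease to be effective (hence the escape clause in the statement), and combining them over the $\lambda$- and $\hat w$-nets with the least-common-denominator estimates is what degrades the probability from polynomial to $\exp(-c(\log n)^{c\log\log n})$. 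The remaining points are routine bookkeeping: excluding the (discrete-atom only) coincidence $\lambda_j(M)\in\operatorname{spec}(W)$, via simplicity-of-spectrum type arguments as in Theorem~\ref{thm:simplespectrum}; checking that $\ker(I_k+\Sigma U^{\mathrm T}(W-\lambda)^{-1}U)$ is one-dimensional with high probability; and bounding $|\langle v_{l_0}(W),y\rangle|$ away from $0$ in the resonant case, which is again an anti-concentration statement covered by the same machinery.
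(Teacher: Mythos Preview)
Your approach is genuinely different from the paper's, and the proposal has a real gap at precisely the point you flag as ``the hard part''; it also rests on a misreading of what the diagonal case actually uses. The paper never touches the resolvent of the full matrix $W$, nor any inverse Littlewood--Offord/LCD machinery. It follows the same block-decomposition template as Theorem~\ref{thm:diag}: write $v_j(M)=\begin{pmatrix}x_j\\y_j\end{pmatrix}$ with $x_j$ the $S$-part, assume for contradiction $n^{-(1-\eps_0)}\le\|x_j\|\le\tau\eps$, and use the $y$-block of the eigenvalue equation, $(D+J_D-\lambda_j(M)I)y_j=-(B^{\mathrm T}+J_B^{\mathrm T})x_j$, to trap $y_j$ (up to error $\eps$) in a subspace of dimension $O(\tau n)+k$ via Lemma~\ref{lemma:structure2}. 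The two new wrinkles over the diagonal case are: (i) rigidity is used to swap $\lambda_j(M)$ for the deterministic classical location $\sqrt{n}\gamma_j$, at the cost of a rank-one correction $Q_j=y_jx_j^{\mathrm T}/\|x_j\|^2$ whose operator norm is controlled exactly by the assumed lower bound $\|x_j\|\ge n^{-(1-\eps_0)}$; (ii) the off-diagonal block $J_B^{\mathrm T}$ contributes an extra $k$-dimensional piece $\range((D+J_D-(\sqrt{n}\gamma_j+\eta)I)^{-1}J_B^{\mathrm T})$ to the trapping subspace. After the swap, that subspace depends only on $D$ and $J$, so one conditions on $D$, builds a net via Lemma~\ref{lemma:largenet}, and applies the Hanson--Wright projection estimate (Lemma~\ref{lemma:qua}) to the independent block $B$ in the $x$-equation to force $\|x_j\|$ large---a contradiction. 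Thus the threshold $n^{-(1-\eps_0)}$ is not where small-ball estimates fail; it is precisely what makes the $\lambda_j(M)\mapsto\sqrt{n}\gamma_j$ substitution harmless and hence decouples the trapping subspace from $B$.

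Your route keeps all of $W$ intact and tries to analyze $(W-\lambda)^{-1}y$ with $\lambda=\lambda_j(M)$. The obstacle you identify---that $\lambda$ is a function of all of $W$ and the resolvent is unstable on the eigenvalue-spacing scale---is exactly what the paper's block decomposition plus $\gamma_j$-substitution is engineered to avoid, and your proposed fix (``condition on all but a few rows and columns'') is not concrete: removing rows and columns does not decouple $\lambda$ from the leftover randomness once $\lambda$ is pinned down by the Woodbury determinant condition, and $\hat w$ inherits the same dependence. Moreover, the isotropic law gives only one-sided bounds $|\langle v_l(W),y\rangle|\le n^{\eps-1/2}$, not the two-sided anti-concentration you need in the non-resonant regime to conclude $\|v_j(M)\|_S^2\approx\delta$. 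As written, the argument does not close; the paper's device of replacing $\lambda_j(M)$ by a deterministic quantity before any net is built is the missing idea.
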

\begin{remark}
Theorem \ref{thm:perturb} shows that, with high probability, either $\|v_j(M)\|_S \geq \eta$ or $\|v_j(M)\|_S \leq \frac{1}{n^{1 - \eps_0}}$.  Based on the previous results, we do not expect the later case to be a likely event.  However, it appears additional structural information about $J$ (for instance, that $J$ is diagonal as in Theorem \ref{thm:diag}) is required to eliminate this possibility.  See the proof of Theorem \ref{thm:perturb} for additional details.  
\end{remark}

In the case when $J$ has rank one, we have the following.  

\begin{theorem}[Rank one perturbations] \label{thm:singlerank1}
Let $\xi$, $\zeta$ be real sub-gaussian random variables with mean zero and unit variance, and fix $\eps_1 > 0$.  Then there exist constants $C, c > 0$ such that the following holds.  Let $W$ be an $n \times n$ Wigner matrix with atom variables $\xi$, $\zeta$.  Suppose $J = \theta u u^\mathrm{T}$, where $\theta \in \mathbb{R}$ and $u \in \mathbb{R}^n$ is a unit vector.  Set $M := W + J$.   Then, for any integer $j$ with $\eps_1 n \leq j \leq (1-\eps_1) n$, 
$$ | v_j(M) \cdot u | \leq \frac{C (\log n)^{c \log \log n}}{|\theta|} $$
with probability $1 - o(1)$.  
\end{theorem}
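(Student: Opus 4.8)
The plan is to exploit the exact identity linking an eigenvector of a rank-one perturbation to the resolvent of the unperturbed matrix, and then feed into it the isotropic local semicircle law. It is convenient to rescale: set $H := W/\sqrt{n}$ (so that $\|H\| = 2 + o(1)$), $\tilde\theta := \theta/\sqrt{n}$, and $\mu := \lambda_j(M)/\sqrt{n}$; then $v_j(M)$ is the unit eigenvector of $H + \tilde\theta\, uu^{\mathrm{T}}$ for the eigenvalue $\mu = \lambda_j(H + \tilde\theta\, uu^{\mathrm{T}})$, and
$$ (H - \mu I)\, v_j(M) \;=\; -\tilde\theta\,(u\cdot v_j(M))\, u . $$
If $u\cdot v_j(M) = 0$ the claimed bound is trivial, so assume $u\cdot v_j(M) \neq 0$. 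Pairing the displayed identity with an arbitrary $w \in \ker(H - \mu I)$ gives $\tilde\theta(u\cdot v_j(M))(u\cdot w) = 0$, forcing $u \perp \ker(H - \mu I)$, hence $u \in \range(H - \mu I)$. Letting $v^\perp$ be the component of $v_j(M)$ orthogonal to $\ker(H-\mu I)$, one finds $v^\perp = -\tilde\theta(u\cdot v_j(M))(H - \mu I)^{\dagger} u$ with $(H-\mu I)^{\dagger}$ the Moore--Penrose pseudoinverse, and since $\|v^\perp\| \le \|v_j(M)\| = 1$ this gives the key bound
$$ |u\cdot v_j(M)| \;\le\; \frac{1}{|\tilde\theta|\,\|(H-\mu I)^{\dagger} u\|} \;=\; \frac{\sqrt{n}}{|\theta|\,\|(H-\mu I)^{\dagger} u\|}. $$
(Alternatively one can first invoke Theorem~\ref{thm:simplespectrum} and strict interlacing to get $\mu \notin \operatorname{spec}(H)$ with probability $1 - o(1)$ and work with an honest inverse.) Everything is thereby reduced to a lower bound on $\|(H - \mu I)^{\dagger} u\|$, i.e. to showing that the mass of the fixed vector $u$ is not overly concentrated on the eigenvectors of $H$ far from $\mu$.

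To obtain that lower bound I would compare with the resolvent slightly off the real axis. Expanding in the eigenbasis of $H$ and discarding the kernel modes (legitimate since $u \perp \ker(H-\mu I)$), for any $\eta > 0$,
$$ \|(H - \mu I)^{\dagger} u\|^2 \;=\; \sum_{k:\,\lambda_k(H)\neq\mu}\frac{|u\cdot v_k(H)|^2}{(\lambda_k(H)-\mu)^2} \;\ge\; \frac{1}{\eta}\,\Im\bigl( u^{\mathrm{T}} (H - \mu I - i\eta I)^{-1} u \bigr). $$
Two standard inputs then finish the job. First, Cauchy interlacing for the rank-one perturbation yields $\lambda_{j-1}(H) \le \mu \le \lambda_{j+1}(H)$, and eigenvalue rigidity for sub-gaussian (indeed sub-exponential) Wigner matrices places $\lambda_{j\pm 1}(H)$ within $o(1)$ of the semicircle quantiles; as $\eps_1 n \le j \le (1-\eps_1)n$, with probability $1 - o(1)$ the energy $\mu$ lies in the bulk, $|\mu| \le 2 - \kappa$ for some $\kappa = \kappa(\eps_1) > 0$. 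Second, the isotropic local semicircle law for Wigner matrices \cite{BEKHY} gives, with probability $1-o(1)$ and uniformly over all $|E| \le 2-\kappa$ and all $\eta \ge (\log n)^{A\log\log n}/n$, that $u^{\mathrm{T}}(H - EI - i\eta I)^{-1} u = m_{\mathrm{sc}}(E+i\eta) + o(1)$; since $\Im m_{\mathrm{sc}}(E+i\eta) \asymp \rho_{\mathrm{sc}}(E) \ge c_\kappa > 0$ throughout the bulk, we get $\Im\bigl(u^{\mathrm{T}}(H-\mu I - i\eta I)^{-1}u\bigr) \ge c_\kappa/2$ for $n$ large. Taking $\eta := (\log n)^{A\log\log n}/n$ then gives $\|(H-\mu I)^{\dagger}u\|^2 \ge (c_\kappa/2)\, n\, (\log n)^{-A\log\log n}$, and substituting into the key bound yields $|u\cdot v_j(M)| \le \sqrt{2/c_\kappa}\,(\log n)^{(A/2)\log\log n}/|\theta|$, i.e. the assertion with $c = A/2$ and $C = \sqrt{2/c_\kappa}$.

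The main obstacle is the use of the isotropic local law at spectral scales $\eta$ as small as $n^{-1+o(1)}$, which is exactly what produces the $(\log n)^{c\log\log n}$ factor: for Wigner matrices with merely sub-gaussian (rather than $K$-bounded) entries the control parameter in the local law has this $(\log n)^{O(\log\log n)}$ shape. Two smaller points require attention. Since $\mu$ is random and $W$-dependent, one needs the local law \emph{uniformly} in the energy; this comes from the pointwise version by a union bound over a polynomially fine net together with the fact that $z \mapsto u^{\mathrm{T}}(H - zI)^{-1}u$ is $\eta^{-2}$-Lipschitz on the line $\Im z = \eta$. And one should check that $\inf_{|E|\le 2-\kappa}\Im m_{\mathrm{sc}}(E+i\eta)$ is bounded below uniformly in small $\eta$, which is immediate from $\Im m_{\mathrm{sc}}(E+i0^{+}) = \pi \rho_{\mathrm{sc}}(E) > 0$ on the bulk and continuity. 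Note the argument is genuinely special to rank one: the secular-equation identity above is what makes it short, whereas the higher-rank Theorem~\ref{thm:perturb} calls for heavier tools (and, as remarked there, extra structure on $J$ to rule out the localized alternative).
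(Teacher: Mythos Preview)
Your argument is correct and complete, but it follows a genuinely different route from the paper's proof. The paper multiplies the eigenvalue equation $Mv_j(M)=\lambda_j(M)v_j(M)$ on the left by $v_j(W)^{\mathrm T}$ to obtain
\[
|v_j(W)\cdot u|\,|v_j(M)\cdot u|\;\le\;\frac{|\lambda_j(M)-\lambda_j(W)|}{|\theta|}\;\le\;\frac{\lambda_{j+1}(W)-\lambda_j(W)}{|\theta|},
\]
the last step by rank-one interlacing. The bulk eigenvalue gap is then controlled by rigidity \cite{EYY}, giving $\lambda_{j+1}(W)-\lambda_j(W)\le C(\log n)^{c\log\log n}/\sqrt n$, and the factor $|v_j(W)\cdot u|$ on the left is bounded \emph{below} by $(\sqrt n\log n)^{-1}$ using the Bourgade--Yau eigenvector normality result, Theorem~\ref{thm:BY}. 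Dividing through yields the claim.

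Your route bypasses Theorem~\ref{thm:BY} entirely: instead of comparing $v_j(M)$ against a single eigenvector $v_j(W)$, you compare it against the full resolvent $(H-\mu-i\eta)^{-1}$ and invoke the isotropic local law of \cite{BEKHY} to lower-bound $\|(H-\mu I)^{\dagger}u\|$. This is arguably more robust, since the isotropic local law is a first-moment estimate on Green functions, while Theorem~\ref{thm:BY} rests on the eigenvector moment flow. On the other hand, the paper's argument is shorter and more elementary once one is willing to quote Theorem~\ref{thm:BY}, and it makes the mechanism transparent: the overlap $|v_j(M)\cdot u|$ is small precisely because a bulk eigenvalue barely moves under a rank-one perturbation while $u$ is generically spread over the eigenbasis of $W$. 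Both arguments deliver the same $(\log n)^{c\log\log n}$ loss, coming respectively from the control parameter in the isotropic local law and from rigidity.
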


When $\theta := \mu n$ and $u := n^{-1/2} \Bj_n$, $J$ becomes the matrix in which every entry takes the value $\mu$.  In this case, the entries of $W + J$ have mean $\mu$ instead of mean zero.  Thus, applying Theorem \ref{thm:singlerank1}, we immediately obtain the following corollary.  

\begin{corollary}[Wigner matrices with non-zero mean]
Let $\xi$, $\zeta$ be real sub-gaussian random variables with mean zero and unit variance, and fix $\eps_1 > 0$.  Then there exist constants $C, c > 0$ such that the following holds.  Let $\mu \in \mathbb{R}$ with $\mu \neq 0$.  Let $W$ be an $n \times n$ Wigner matrix with atom variables $\xi$, $\zeta$.  Set $M := W + \mu \BJ$, where $\BJ$ is the all-ones matrix.  Then, for any integer $j$ with $\eps_1 n \leq j \leq (1-\eps_1) n$, 
$$ | v_j(M) \cdot \Bj_n | \leq \frac{C (\log n)^{c \log \log n}}{|\mu| \sqrt{n}} $$
with probability $1 - o(1)$.  
\end{corollary}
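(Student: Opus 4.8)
The plan is to obtain this corollary as an immediate consequence of Theorem \ref{thm:singlerank1} by recognizing $\mu \BJ$ as a rank-one perturbation of exactly the type treated there. Set $u := \tfrac{1}{\sqrt n}\,\Bj_n$ and $\theta := \mu n$. First I would verify that $u$ is a unit vector, since $\|u\|^2 = \tfrac1n \sum_{i=1}^n 1 = 1$. Next, because $\Bj_n \Bj_n^{\T} = \BJ_n$ is the all-ones matrix, we have $\theta\, u u^{\T} = \mu n \cdot \tfrac1n\,\Bj_n \Bj_n^{\T} = \mu \BJ$, so $M = W + \mu \BJ = W + \theta u u^{\T}$ is precisely the perturbed matrix to which Theorem \ref{thm:singlerank1} applies. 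The hypothesis $\mu \neq 0$ guarantees $\theta = \mu n \neq 0$, and the index range $\eps_1 n \le j \le (1-\eps_1)n$ in the corollary is exactly the bulk range required by that theorem.

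Applying Theorem \ref{thm:singlerank1} with this choice of $\theta$ and $u$, we obtain constants $C, c > 0$ (depending only on $\eps_1$ and the sub-gaussian moments of $\xi$ and $\zeta$, and in particular \emph{not} on $\mu$ or on $u$) such that, for any integer $j$ with $\eps_1 n \le j \le (1-\eps_1) n$,
$$ |v_j(M) \cdot u| \le \frac{C (\log n)^{c \log \log n}}{|\theta|} = \frac{C (\log n)^{c \log \log n}}{|\mu|\, n} $$
with probability $1 - o(1)$. Finally I would rescale: since $v_j(M) \cdot u = \tfrac{1}{\sqrt n}\, v_j(M) \cdot \Bj_n$, multiplying the inequality through by $\sqrt n$ yields
$$ |v_j(M) \cdot \Bj_n| \le \frac{C (\log n)^{c \log \log n}}{|\mu|\sqrt n} $$
with probability $1 - o(1)$, which is exactly the claimed bound, with the same constants $C, c$.

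There is essentially no genuine obstacle here: the only things to check are the algebraic identity $\mu \BJ = (\mu n)\, u u^{\T}$ and the fact that the constants produced by Theorem \ref{thm:singlerank1} are uniform over the choice of the unit vector $u$ and the coupling strength $\theta$ — both of which are immediate from the statement as given. For context one may note, as in the discussion preceding the corollary, that when $\mu > 0$ the matrix $M = W + \mu \BJ$ has a single Perron-type eigenvalue of order $\mu n$ well separated from the bulk, with eigenvector close to $\tfrac1{\sqrt n}\Bj_n$; the content of the corollary is that the bulk eigenvectors, whose coordinates live at scale $n^{-1/2}$, overlap the spike direction $\Bj_n$ by only $O\!\big((\log n)^{c\log\log n}/(|\mu|\sqrt n)\big)$, a quantitative version of the expectation that these eigenvectors are nearly orthogonal to $\Bj_n$.
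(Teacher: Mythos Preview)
Your proposal is correct and matches the paper's approach exactly: the paper also sets $\theta := \mu n$ and $u := n^{-1/2}\Bj_n$, notes that $J = \theta u u^\T$ is then the all-$\mu$ matrix, and applies Theorem \ref{thm:singlerank1} directly.
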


We prove Theorems \ref{thm:Gnpsmallcor}, \ref{thm:diag}, \ref{thm:perturb} and \ref{thm:singlerank1} in Section \ref{sec:proof:nonzero}.

\section{Localized eigenvectors: Heavy-tailed and band random matrices} \label{sec:ht}

As we saw in Theorem \ref{thm:OVW}, the eigenvectors corresponding to the extreme eigenvalues of a perturbed Wigner matrix can be \emph{localized}, meaning that most of the mass is contained on only a few coordinates.  For instance, in Theorem \ref{thm:OVW}, most of the mass was contained on a single coordinate.  We now discuss a similar phenomenon for heavy-tailed and band random matrices.  

\subsection{Heavy-tailed random matrices}

Most of the results from the previous sections required the atom variables $\xi$, $\zeta$ to be sub-exponential or sub-gaussian.  In particular, these conditions imply that $\xi$ and $\zeta$ have finite moments of all orders.  In other words, the atom variables have very light tails.  We now consider the case when the atom variables have heavy tails, such as when $\xi$ and $\zeta$ have only one or even zero finite moments.  In this case, the eigenvectors corresponding to the largest eigenvalues behave very differently than predicted by the results above.  

\begin{theorem}[Theorem 1.1 from \cite{BP}] \label{thm:ht}
Let $\xi$ be a real random variable satisfying
$$ \Prob (|\xi| \geq x) = L(x) x^{-\alpha} $$
for all $x > 0$, where $0 < \alpha < 2$ and $L:(0,\infty) \to (0,\infty)$ is a slowly varying function, i.e., for all $t > 0$,
$$ \lim_{x \to \infty} \frac{ L(tx)}{L(x)} = 1. $$
For each $n \geq 1$, let $W_n$ be an $n \times n$ Wigner matrix with atom variable $\xi$.  Fix an integer $k \geq 0$.  Then, for every fixed $\eps > 0$, 
$$ \| v_{n-k}(W_n) \|_{\ell^\infty} \geq \frac{1}{\sqrt{2}} - \eps $$
with probability $1 - o(1)$.  In addition,
$$ \min_{S \subset [n] : |S| = n-2} \| v_{n-k}(W_n) \|_S \longrightarrow 0 $$
in probability as $n \to \infty$.  
\end{theorem}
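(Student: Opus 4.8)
The plan is to exploit the well-known structure of extreme eigenvalues and eigenvectors of heavy-tailed Wigner matrices, following the approach of Soshnikov and the subsequent refinements in \cite{BP}. The key heuristic is that when $0 < \alpha < 2$, the largest entries of $W_n$ (in absolute value) are much larger than the typical row sums, so the top of the spectrum is governed by these few giant entries rather than by a collective effect. Concretely, let $w_{i^*j^*}$ be the entry of largest absolute value among the $\binom{n}{2}$ independent off-diagonal entries; since $\Prob(|\xi| \geq x) = L(x)x^{-\alpha}$ with $\alpha < 2$, one has $|w_{i^*j^*}| = n^{2/\alpha + o(1)}$, whereas a typical row has $\ell^2$-norm only $n^{1/2 + o(1)}$. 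The $2 \times 2$ principal submatrix indexed by $\{i^*, j^*\}$ therefore has an eigenvalue of order $|w_{i^*j^*}|$, which by eigenvalue interlacing forces $\lambda_n(W_n)$ to be essentially $|w_{i^*j^*}|$, and the associated eigenvector to be nearly supported on the two coordinates $i^*, j^*$, with roughly equal weight on each — hence $\|v_n(W_n)\|_{\ell^\infty} \approx 1/\sqrt{2}$. The same reasoning applied to the $k$-th largest entry handles $v_{n-k}(W_n)$.

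The steps I would carry out are as follows. First, establish the order statistics of the entries: with probability $1-o(1)$, the $(k+1)$ largest values $|w^{(1)}| \geq \cdots \geq |w^{(k+1)}|$ among the off-diagonal entries all have magnitude $n^{2/\alpha + o(1)}$ and are attained at disjoint index pairs, and moreover $|w^{(k+2)}| = o(|w^{(k+1)}|)$ is not needed but the separation $|w^{(j)}|/|w^{(j+1)}| = 1 + o(1)$ can fail, so one should instead just use that all of $|w^{(1)}|, \dots, |w^{(k+1)}|$ are $\gg \|W_n\|_{\text{rest}}$, where $\|W_n\|_{\text{rest}}$ is the norm of the matrix with these $k+1$ entries (and their symmetric partners) removed. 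Second, bound $\|W_n\|_{\text{rest}}$: after truncating at level $n^{2/\alpha - \eta}$ the entries have enough moments that the truncated matrix has operator norm $n^{2/\alpha - \eta + o(1)}$ (or one can cite a standard bound for heavy-tailed matrices with the largest entries excised). Third, apply a perturbation/interlacing argument: write $W_n = D_n + E_n$ where $D_n$ collects the $k+1$ dominant entries (a direct sum of $2\times 2$ blocks plus zeros) and $E_n = W_n - D_n$ has norm $\ll |w^{(k+1)}|$; then by Weyl's inequality the top $k+1$ eigenvalues of $W_n$ are within $\|E_n\|$ of those of $D_n$, and by the Davis–Kahan $\sin\theta$ theorem the corresponding eigenvectors are within $O(\|E_n\|/\text{gap})$ of the eigenvectors of $D_n$. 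Since each $2\times 2$ block $\begin{pmatrix} 0 & w \\ w & 0 \end{pmatrix}$ has top eigenvector $\tfrac{1}{\sqrt 2}(1, \pm 1)$, this gives $\|v_{n-k}(W_n)\|_{\ell^\infty} = \tfrac{1}{\sqrt 2} + o(1)$ and, removing the two dominant coordinates, $\min_{|S| = n-2}\|v_{n-k}(W_n)\|_S^2 \leq \|v_{n-k}(W_n)\|^2 - 2(\tfrac{1}{\sqrt 2} - \eps)^2 \to 0$, where one chooses $S = [n] \setminus \{i^*, j^*\}$.

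The main obstacle will be step three's control of the spectral gap between $\lambda_{n-k}$ and the bulk (or between consecutive dominant eigenvalues), since the Davis–Kahan bound degrades if two of the giant entries $|w^{(j)}|, |w^{(j+1)}|$ happen to be comparable in size — which, unlike the case $\alpha \geq 2$, can occur with non-negligible probability because the point process of rescaled entries converges to a Poisson process rather than concentrating. To handle this I would not try to separate the individual dominant eigenvalues from each other; instead I would work with the whole $(k+1)$-dimensional dominant invariant subspace $V := \Span\{v_{n-k}, \dots, v_n\}$, show via Davis–Kahan (applied to the spectral projection onto $[\,|w^{(k+1)}| - \|E_n\|,\, \infty)$) that $V$ is $o(1)$-close to $\Span\{e_{i_1}, e_{j_1}, \dots, e_{i_{k+1}}, e_{j_{k+1}}\}$, and then observe that \emph{every} unit vector in a subspace that is $o(1)$-close to $\Span\{e_{i_\ell}, e_{j_\ell}\}_{\ell}$ which is additionally an eigenvector (so its mass distributes according to the block structure) must have $\ell^\infty$-norm close to $1/\sqrt 2$; the degenerate-block case $w^{(j)} \approx w^{(j+1)}$ only mixes coordinates \emph{within} a union of $2\times 2$ blocks, still leaving the $\ell^\infty$-norm at $\approx 1/\sqrt 2$ provided no three of the dominant entries share a common index, which holds with probability $1 - o(1)$ by a second-moment computation on collisions. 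A secondary technical point is the diagonal entries $w_{ii}$: if $\zeta$ (here equal to $\xi$) is heavy-tailed, a single giant diagonal entry would instead produce an eigenvector localized on \emph{one} coordinate, giving $\|v_n\|_{\ell^\infty} \approx 1$; the statement as given (with the $1/\sqrt2$) implicitly uses that the largest of the $\binom{n}{2}$ off-diagonal entries dominates the largest of the $n$ diagonal entries, which is true asymptotically since $\binom{n}{2} \gg n$ and $L(x)x^{-\alpha}$ is regularly varying — this comparison I would make explicit at the start of step one.
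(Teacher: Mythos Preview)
The paper does not give a proof of this theorem; it is quoted from \cite{BP} and followed only by the one-paragraph heuristic in Section~\ref{sec:ht} (largest eigenvalues behave like largest entries, each large entry sits at a single off-diagonal position and its symmetric partner, so the top eigenvectors localize on two coordinates). Your proposal is a reasonable attempt to turn that heuristic into an argument, so there is no ``paper's proof'' to compare strategy against beyond noting that your intuition matches the survey's.

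There is, however, a concrete gap in the decomposition you set up. You define $\|W_n\|_{\mathrm{rest}}$ as the norm of $W_n$ with only the top $k+1$ entries (and their partners) removed, and in step~3 you take $D_n$ to collect exactly those $k+1$ dominant entries. But then $E_n = W_n - D_n$ still contains $w^{(k+2)}$, so $\|E_n\| \geq |w^{(k+2)}|$; by the Poisson limit for the order statistics, $|w^{(k+2)}|/|w^{(k+1)}|$ converges in law to a random variable supported on all of $(0,1)$, hence $\|E_n\|$ is of the \emph{same} order $a_n \asymp n^{2/\alpha}$ as every gap you could use. Both Weyl and Davis--Kahan then give only $O(1)$ control, not $o(1)$, and the conclusion does not follow. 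Your step~2 bound ``truncate at level $n^{2/\alpha-\eta}$ to get norm $n^{2/\alpha-\eta+o(1)}$'' is correct for \emph{that} truncated matrix, but that matrix is not $E_n$: truncation at $n^{2/\alpha-\eta}$ removes roughly $n^{\alpha\eta}\to\infty$ entries, not $k+1$.

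The repair is to use the step-2 truncation as the actual decomposition: let $D_n$ collect \emph{all} entries above $\eta a_n$ for a small parameter $\eta$, so that $\|E_n\| = O(\eta a_n)$ and $D_n$ is (with high probability) a direct sum of disjoint $2\times 2$ blocks with positive eigenvalues $|w^{(1)}|,|w^{(2)}|,\dots$. Now the relevant gap for Davis--Kahan on the top $(k+1)$-dimensional eigenspace is $|w^{(k+1)}|-|w^{(k+2)}|$, which by the Poisson limit exceeds $\delta a_n$ with probability at least $1-\varepsilon$ for some $\delta=\delta(\varepsilon)>0$; choosing $\eta\ll\delta$ and then $\varepsilon\to 0$ yields the $o(1)$ control you need. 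Your remarks about handling near-degenerate blocks via the full invariant subspace and about the diagonal being negligible (since $n \ll \binom{n}{2}$) are correct, but they sit on top of this corrected decomposition rather than substituting for it.
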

\begin{remark}
Theorem \ref{thm:ht} also holds when $2 \leq \alpha < 4$ provided the atom variable $\xi$ is symmetric; see \cite[Theorem 1.1]{BP} for details.  
\end{remark}

Theorem \ref{thm:ht} shows that the eigenvectors corresponding to the largest eigenvalues of $W_n$ are concentrated on at most two coordinates.  This is considerably different than the cases discussed above when $\xi$ has light tails.  

Let us try to explain this phenomenon based on the tail behavior of $\xi$.  It is well known that the largest entry of an $n \times n$ Wigner matrix with sub-gaussian entries is $O(\sqrt{\log n})$ with high probability.  However, when the tails are heavy, the maximum entry of $W_n$ can be significantly larger.  It was observed by Soshnikov \cite{Sosh, Sosh2} that, in this case, the largest eigenvalues behave like the largest entries of the matrix.  Intuitively, the eigenvector corresponding to the largest eigenvalue of $W_n$ should localize on the coordinates which match the largest entry.  Since $W_n$ is symmetric, the largest entry can appear at most twice.  Hence, we expect this eigenvector to be concentrated on at most two coordinates.  This heuristic has led to a number of results regarding the eigenvalues and eigenvectors of heavy-tailed Wigner matrices; we refer the reader to \cite{ABAP, BG, BP,BorG, BorG2, CB, Sosh, Sosh2} and references therein for further details and additional results.  

\subsection{Random band matrices}

The standard basis elements $e_1, \ldots, e_n$ of $\mathbb{R}^n$ are always eigenvectors of an $n \times n$ diagonal matrix.  In other words, the eigenvectors of a diagonal matrix are localized.  Band matrices generalize diagonal matrices by only allowing the entries on and near the diagonal to be non-zero while requiring the other entries, away from the diagonal, to be zero.  

We can form random band matrices from Wigner matrices.  Indeed, let $W$ be an $n \times n$ Wigner matrix with atom variables $\xi$, $\zeta$.  For simplicity, let us assume that $\xi$ and $\zeta$ are sub-gaussian random variables.  We can form an $n \times n$ random band matrix $T$ from $W$ with band width $L \geq 1$ by replacing the $(i,j)$-entry of $W$ by zero if and only if $|i-j| \geq L$.  Hence, the $(i,j)$-entry of $T$ is just the $(i,j)$-entry of $W$ when $|i-j| < L$.  A random band matrix with width $n$ is a Wigner matrix, and a random band matrix with band width $1$ is a diagonal matrix.  Thus, we expect a transition in the eigenvector behavior when the band width $L$ interpolates between $1$ and $n$.  Indeed, it is conjectured that for $L$ significantly smaller than $\sqrt{n}$, the eigenvectors will be localized (with localization length on the order of $L^2$).  On the other hand, for $L$ sufficiently larger than $\sqrt{n}$, it is expected that the eigenvectors of $T$ behave more like the eigenvectors of $W$.  Some partial results in this direction have been established in \cite{EK, EK2, EKYY2, Sband}.  

While random band matrices can be constructed from Wigner matrices, they are, in general, not Wigner matrices, and we will not focus on them here.  We refer the interested reader to \cite{BP, EK, EK2, EKYY2, Sband, Soxford} and references therein for results concerning the spectral properties of random band matrices.  In the discussion above, we have focused on the case when the atom variables $\xi$ and $\zeta$ are sub-gaussian.  However, Theorem \ref{thm:ht} can be extended to random band matrices constructed from heavy-tailed Wigner matrices; see \cite{BP} for details.

\section{Singular vectors and eigenvectors of non-Hermitian matrices} \label{sec:others}

In this section, we consider the singular vectors and eigenvectors of non-Hermitian random matrices. 

Let $M$ be a $p \times n$ matrix with real entries.  Recall that the singular values of $M$ are the square roots of the eigenvalues of $M M^\mathrm{T}$.  The left singular vectors are the eigenvectors of $M M^\mathrm{T}$, and the right singular vectors are the eigenvectors of $M^\mathrm{T} M$.  Following our previously introduced notation, we will write 
$$ \sqrt{ \lambda_1(M M^\mathrm{T})}, \ldots, \sqrt{ \lambda_p ( M M^\mathrm{T}) } $$ 
to denote the singular values and 
$$ v_1( M M^\mathrm{T}), \ldots, v_p(M M^\mathrm{T}) $$ 
to denote the left singular vectors of $M$.  

Let $M=M_{p,n}=(\zeta_{ij})_{1\le i\le p, 1\le j \le n}$ be a random matrix (more specifically, a sequence of random matrices) whose entries are independent real random variables with mean zero and unit variance.  Assume $\lim_{n\to \infty} p/n =y$ for some $y\in (0,1]$.  The delocalization properties of the singular vectors of $M_{p,n}$ have been explored in \cite{CAB, PY, TVcovariance, VW, W-edge}. The optimal bound of $O(\sqrt{\log n/n})$ for the $\ell^\infty$-norm was obtained recently in \cite{VW}.

\begin{theorem}[Delocalization of singular vectors, Theorem B.3 from \cite{VW}]  \label{thm:singularvector} 
Let $\zeta$ be a sub-gaussian random variable with mean zero and unit variance.  Then, for any $C_1 > 0$ and any $0 < \eps < 1$, there exists a constant $C_2 > 0$ such that the following holds.   Assume the entries of $M_{p,n}=(\zeta_{ij})_{1\le i\le p, 1\le j \le n}$ are iid copies of $\zeta$.  Let $a := (1-\sqrt{y})^2$ and $b := (1+\sqrt{y})^2$. 
\begin{itemize}
\item (Bulk case) For any $1 \leq i \leq n$ such that $\frac{1}{n} \lambda_i(M_{p,n} M_{p,n}^\mathrm{T}) \in [a + \eps, b - \eps]$, there is a corresponding left singular vector $v_i(M_{p,n} M_{p,n}^\mathrm{T})$ such that
$$ \| v_i( M_{p,n} M_{p,n}^\mathrm{T}) \|_{\ell^\infty} \leq C_2 \sqrt{\frac{\log n}{{n}}} $$
with probability at least $1 -  n^{-C_1}$.  The same also holds for right singular vectors. 
\item (Edge case) For any $1 \leq i \leq n$ such that $\frac{1}{n} \lambda_i( M_{p,n} M_{p,n}^\mathrm{T}) \in [a - \eps, a + \eps] \cup [b - \eps, b + \eps]$ if $a \neq 0$ and $\frac{1}{n} \lambda_i( M_{p,n} M_{p,n}^\mathrm{T}) \in [4 - \eps, 4]$ if $a = 0$, there is a corresponding left singular vector $v_i(M_{n,p} M_{n,p}^\mathrm{T})$ such that
$$ \| v_i(M_{n,p} M_{n,p}^\mathrm{T}) \|_{\ell^\infty} \leq C_2\frac{\log n}{\sqrt{n}} $$
with probability at least $1 - n^{-C_1}$.  The same also holds for right singular vectors. 
\end{itemize}
\end{theorem}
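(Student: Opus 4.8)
The result is the Marchenko--Pastur analogue of the Wigner bound in Theorem \ref{thm:delo1} (\cite[Theorem 6.1]{VW}), and the plan is to rerun the proof of that theorem with the semicircle law replaced by the Marchenko--Pastur law throughout. The first step is to trade a statement about singular vectors of $M_{p,n}$ for one about eigenvectors of a Hermitian matrix. The most convenient reduction is to work directly with the sample covariance matrix $\mathcal{S} := M_{p,n} M_{p,n}^{\mathrm{T}}$: its eigenvalues are the $\lambda_i(M_{p,n}M_{p,n}^{\mathrm{T}})$ and its unit eigenvectors are precisely the left singular vectors $v_i(M_{p,n}M_{p,n}^{\mathrm{T}})$, while the right singular vectors come from the identical argument applied to $M_{p,n}^{\mathrm{T}} M_{p,n}$. (One could instead pass to the Hermitian dilation $\begin{pmatrix} 0 & M_{p,n} \\ M_{p,n}^{\mathrm{T}} & 0 \end{pmatrix}$, whose unit eigenvectors have the form $\tfrac{1}{\sqrt{2}}(\pm u_i \,;\, v_i)$, so that one $\ell^\infty$ bound controls both families at once; but its entries are not identically distributed, so I prefer $\mathcal{S}$.) The entries of $\mathcal{S}$ are not independent, but the \emph{rows} of $M_{p,n}$ are, which is all the argument uses.

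The engine is a Schur-complement identity for eigenvector coordinates. Fix an eigenpair $(\mu, w)$ of $\mathcal{S}$ with $\|w\| = 1$ and a coordinate $j$; delete the $j$th row $Y_j^{\mathrm{T}}$ of $M_{p,n}$, and write $M^{(j)}$ for the remaining $(p-1)\times n$ matrix and $\mathcal{S}^{(j)} := M^{(j)}(M^{(j)})^{\mathrm{T}}$ for the corresponding minor. Provided $\mu \notin \mathrm{spec}(\mathcal{S}^{(j)})$ (which holds with the probability we need), solving the eigenvalue equation blockwise gives
\[ w(j)^2 = \frac{1}{1 + Y_j^{\mathrm{T}} (M^{(j)})^{\mathrm{T}} \bigl( \mathcal{S}^{(j)} - \mu \bigr)^{-2} M^{(j)} Y_j}. \]
The denominator is a quadratic form in the sub-gaussian vector $Y_j$, which is independent of $M^{(j)}$; by the Hanson--Wright inequality in its sub-gaussian form (\cite[Theorem 2.1]{RVhw}, exactly as used in Remark \ref{nobound}) it concentrates around its conditional expectation
\[ \tr\bigl[(\mathcal{S}^{(j)} - \mu)^{-2} \mathcal{S}^{(j)}\bigr] = \sum_{k} \frac{\lambda_k(\mathcal{S}^{(j)})}{\bigl(\lambda_k(\mathcal{S}^{(j)}) - \mu\bigr)^2}. \]
So, up to the probabilistic fluctuations, bounding $w(j)^2$ from above reduces to a \emph{lower} bound on this sum, uniform in $i$ and $j$.

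That lower bound comes from eigenvalue rigidity for sample covariance matrices. Using the local Marchenko--Pastur law (\cite{PY, BEKHY}) and Cauchy interlacing between $\mathcal{S}$ and $\mathcal{S}^{(j)}$, one shows that in the bulk $\tfrac1n\mu \in [a+\eps, b-\eps]$ the eigenvalues $\lambda_k(\mathcal{S}^{(j)})$ obey rigidity at the optimal scale, the local density is bounded below, and no eigenvalue $\mu = \lambda_i(\mathcal{S})$ of the full matrix is separated from its neighbours in $\mathrm{spec}(\mathcal{S}^{(j)})$ by more than $O(\sqrt{\log n})$ times the typical $O(1)$ spacing; since $\mu = \Theta(n)$, even the single nearest term of the sum is then $\gtrsim n/\log n$, whence $w(j)^2 \lesssim \log n/n$ and $\|v_i\|_{\ell^\infty} \lesssim \sqrt{\log n/n}$, just as in the proof of Theorem \ref{thm:delo1}. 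The edge window $\tfrac1n\mu \in [a-\eps,a+\eps] \cup [b-\eps,b+\eps]$ contains both near-edge bulk eigenvalues (handled by the bulk argument with degrading constants) and genuinely edge eigenvalues, where the density vanishes like a square root and the local law applies only at a coarser scale; carrying the same computation through with the edge local law loses an additional logarithmic factor and yields $\|v_i\|_{\ell^\infty} \lesssim \log n/\sqrt{n}$. When $a = 0$ (that is, $y = 1$) the left endpoint is instead a hard edge where $\mathrm{spec}(\mathcal{S})$ accumulates near $0$; this is excluded by restricting to $\tfrac1n\mu \in [4-\eps,4]$, which the soft right-edge analysis covers. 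The right singular vectors are treated identically with $M_{p,n}^{\mathrm{T}}M_{p,n}$ in place of $\mathcal{S}$.

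The hard part is making all this uniform in $i$ and $j$. On one side, one needs the sharp gap estimate --- no eigenvalue of $\mathcal{S}$ in a local gap of the minor spectrum larger than $O(\sqrt{\log n})$ times the spacing, rather than $n^{o(1)}$ times it --- which is precisely what separates the optimal $\sqrt{\log n/n}$ from the easier $\log^{O(1)} n / \sqrt{n}$ that a crude local law gives, and which requires the local Marchenko--Pastur law at the smallest scale plus level-repulsion/interlacing control. On the other side, in the extremal case (when $\mu$ sits in a large gap) the Hanson--Wright fluctuations of the quadratic form are of the same order as its conditional mean, so a naive concentration bound does not survive the $O(n^2)$-fold union bound over $(i,j)$; one has to use that such extremal eigenvalues are polynomially rare and treat them with the finer, self-improving arguments of \cite{VW}. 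The soft-edge bookkeeping and the $a=0$ case are then routine given the corresponding edge local laws.
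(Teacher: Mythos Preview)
The paper does not supply its own proof of this theorem: it is quoted as Theorem~B.3 of \cite{VW}, and the only argument the paper adds is the one-line observation (the sentence immediately following the statement, paralleling Remark~\ref{nobound}) that the bounded-entry hypothesis of \cite{VW} can be relaxed to sub-gaussian because the Hanson--Wright inequality of \cite{RVhw} supplies the needed concentration for the relevant quadratic form. Your sketch correctly isolates exactly this ingredient, and the remainder of what you wrote is a faithful outline of how the proof in \cite{VW} actually runs (coordinate formula via the minor, quadratic form in the deleted row, local Marchenko--Pastur law in place of the local semicircle law, bulk versus soft-edge bookkeeping). So there is no discrepancy to report; you have reconstructed both the cited argument and the paper's sub-gaussian upgrade.
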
 

Similar to Theorem \ref{thm:delo1} this was first proved under the stronger assumption that the entries of the matrix are bounded, but one can obtain this version using the same argument as in Remark \ref{nobound}. 
The analogue  of Theorem \ref{thm:singularvector} for the eigenvectors of $M_{n,n}$ was recently proved in \cite{RVeigen}, using a completely different method. 

\begin{theorem}[Delocalization of eigenvectors;  Theorem 1.1 from \cite{RVeigen}] Let $M=(\zeta_{ij})_{1\le i,j \le n}$ be an $n\times n$ random matrix whose entries are 
independent real random variables with mean zero, variance at least one, and 
$$ \sup_{p \ge 1} p^{-1/2} (\E |\zeta_{ij}|^p)^{1/p} \le K $$ 
for all $1 \leq i,j \leq n$.  Then, for any $t \geq 2$, with probability at least $1-n^{1-t}$, all unit eigenvectors $v$ of $M$ satisfy $$\|v\|_{\ell^\infty} \le \frac{C t^{3/2} 
\log^{9/2}n}{\sqrt{n}}, $$
where $C$ depends only on $K$.
\end{theorem}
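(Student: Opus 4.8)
The plan is to combine a leave-one-out (Schur complement) identity for the eigenvector coordinates with a net argument that decouples the eigenvalue from the relevant minor, following Rudelson and Vershynin. Fix a coordinate $j \in [n]$ and suppose $Mv = \lambda v$ for a unit vector $v=(v_1,\dots,v_n) \in \C^n$. Delete the $j$-th row and column of $M$ to obtain the $(n-1)\times(n-1)$ minor $M^{(j)}$, let $c$ denote the $j$-th column of $M$ with its $j$-th entry removed, and let $w$ denote $v$ with its $j$-th coordinate removed. Reading off the $n-1$ equations of $Mv=\lambda v$ indexed by $i\neq j$ gives $(M^{(j)}-\lambda I)w = -v_j\, c$. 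On the event that $M^{(j)}-\lambda I$ is invertible --- the complementary event is negligible, since for $v_j \neq 0$ it forces the vector $c$, which is independent of $M^{(j)}$, into a proper subspace determined by $M^{(j)}$, an event of small probability by anti-concentration of the entries --- this yields $w = -v_j (M^{(j)}-\lambda I)^{-1} c$, and combining with $\|w\|^2+|v_j|^2 = 1$ gives the identity
$$ |v_j|^2 = \frac{1}{1 + \|(M^{(j)}-\lambda I)^{-1} c\|^2}. $$
So the entire problem reduces to showing that, with probability at least $1-n^{1-t}$, every eigenvalue $\lambda$ of $M$ and every $j$ satisfy $\|(M^{(j)}-\lambda I)^{-1} c\| \gtrsim \sqrt{n}\big/(t^{3/2}\log^{9/2} n)$; the factor $n$ lost in the union bound over $j$ must be absorbed into the failure probability.

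Since $\lambda$ depends on the whole matrix, hence on both $M^{(j)}$ and $c$, I would decouple it with a net. On the overwhelmingly likely event $\{\|M\|\le C_0\sqrt{n}\}$ (a standard operator-norm bound, similar in spirit to Lemma \ref{lemma:norm}) every eigenvalue of $M$ lies in the disk of radius $C_0\sqrt n$; cover this disk by a net $\mathcal N$ of cardinality $n^{O(1)}$ and mesh $n^{-A}$ for a large constant $A$. For a \emph{fixed} $z\in\mathcal N$, the minor $M^{(j)}$ is independent of the column $c$, so conditionally on $M^{(j)}$ one has, using only that the entries of $c$ are independent with variance at least one,
$$ \E_c\,\|(M^{(j)}-zI)^{-1} c\|^2 \;\ge\; \big\| (M^{(j)}-zI)^{-1} \big\|_{\mathrm{HS}}^2 \;=\; \sum_k \sigma_k(M^{(j)}-zI)^{-2} \;\ge\; \sigma_{\min}(M^{(j)}-zI)^{-2}. $$
The crucial input is then an \emph{anti-invertibility} estimate: for $z$ in the bulk of the (rescaled) circular law, $\sigma_{\min}(M^{(j)}-zI) \lesssim (\log n)^{O(1)}/\sqrt n$ with failure probability beating $|\mathcal N|\,n$; this is where the non-normality of $M^{(j)}$ is exploited (the inverse has large operator norm even though $z$ sits at Euclidean distance $\Theta(1)$ from the spectrum), through resolvent and eigenvalue-condition-number bounds for random matrices with independent entries. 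The few eigenvalues near or beyond the spectral edge need a separate argument, using that the spectral radius of $M$ is $(1+o(1))\sqrt n$ with high probability so that essentially all eigenvalues are bulk eigenvalues.

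It then remains to upgrade the second-moment estimate to a lower-tail bound for the nonnegative quadratic form $c^{\mathrm T} A\, c$ with $A = (M^{(j)}-zI)^{-*}(M^{(j)}-zI)^{-1} \succeq 0$. A Hanson--Wright inequality handles the regime where $\|A\|=\sigma_{\min}(M^{(j)}-zI)^{-2}$ is not too large compared with $\tr A$; in the nearly-singular regime one instead bounds $\|(M^{(j)}-zI)^{-1}c\| \ge |\langle \tilde v,\,c\rangle|\big/\sigma_{\min}(M^{(j)}-zI)$, where $\tilde v$ is the least right singular vector of $M^{(j)}-zI$, and invokes a small-ball (Littlewood--Offord) estimate for $\langle \tilde v, c\rangle$ together with delocalization of $\tilde v$, both of which follow from Rudelson--Vershynin-type quantitative invertibility for the shifted random matrix. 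Finally one union bounds over $j$ and over $\mathcal N$, and transfers from a net point $z$ to the true eigenvalue $\lambda$ using the crude deterministic estimate $\|(M^{(j)}-zI)^{-1}\| \le n^{A'}$, which holds with probability $1-n^{-\omega(1)}$ by the complementary (lower) bound on the smallest singular value and makes $z\mapsto (M^{(j)}-zI)^{-1}$ Lipschitz at scale $n^{-A}$.

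The main obstacle is precisely this pair of quantitative-invertibility inputs for \emph{shifted} random matrices, \emph{uniformly} in the shift $z$: an upper bound on $\sigma_{\min}(M^{(j)}-zI)$ (so that $\|(M^{(j)}-zI)^{-1}c\|$ is forced to be large) and, in the degenerate regime, control of the interaction between the least singular vector and the independent column $c$; both must hold with failure probability small enough to survive a polynomial-size net and a union over all $n$ minors. This is where the hypotheses are used in full: the moment condition $\sup_{p\ge1} p^{-1/2}(\E|\zeta_{ij}|^p)^{1/p}\le K$ supplies the sub-gaussian tails needed for the norm bound, the net, and the small-ball inequalities, while the variance lower bound guarantees that $c$ has a genuine projection onto the directions seen by $(M^{(j)}-zI)^{-1}$. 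The accumulated logarithmic losses from resolving the net, from the anti-concentration step, and from the invertibility estimates are what produce the $t^{3/2}\log^{9/2}n$ in the conclusion. One should also remember to carry the convention-dependent normalization through the complex setting and to fold the low-probability exceptional events (non-invertible minors, atypically large operator norm or spectral radius) into the stated failure probability $n^{1-t}$.
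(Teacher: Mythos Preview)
The survey paper does not contain its own proof of this statement; the theorem is quoted from \cite{RVeigen} with only a remark on generalizations. So there is no in-paper argument to compare against, and your proposal should be read as a sketch of the Rudelson--Vershynin proof itself.

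Your outline does capture the skeleton of that argument: the leave-one-out identity $|v_j|^2=1/\bigl(1+\|(M^{(j)}-\lambda I)^{-1}c\|^2\bigr)$, a net over possible eigenvalue locations to decouple $\lambda$ from the pair $(M^{(j)},c)$, and a lower bound on $\|(M^{(j)}-zI)^{-1}c\|$ obtained from concentration of the quadratic form $c^{\mathrm T}Ac$ together with a small-ball estimate in the nearly-singular regime. Your diagnosis that the heart of the matter is an \emph{upper} bound on $\sigma_{\min}(M^{(j)}-zI)$---an anti-invertibility statement, opposite in direction to the usual least-singular-value bounds---is exactly right, and is the main new idea in \cite{RVeigen}.

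Two places where the proposal is thinner than the actual proof deserve flagging. First, you treat the anti-invertibility bound as a black box (``resolvent and eigenvalue-condition-number bounds''), but in \cite{RVeigen} this is established by a self-contained argument: one shows that the distance from a fixed column of $M^{(j)}-zI$ to the span of the remaining columns is small with high probability, which forces $\sigma_{\min}$ down by the usual distance-to-hyperplane bound on singular values. That step uses column independence and a small-ball inequality and does not invoke local-law machinery. Second, for the small-ball step in your ``nearly-singular'' regime you need only \emph{incompressibility}, not full delocalization, of the relevant singular vector of $M^{(j)}-zI$; this is strictly weaker and is obtained inside the Rudelson--Vershynin compressible/incompressible dichotomy, so there is no circularity with the theorem you are proving. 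With those two inputs made precise, your scheme goes through and produces the stated bound with the polylogarithmic losses you anticipate.
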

\begin{remark}
The above result holds for more general matrix ensembles, e.g. random matrices with independent sub-exponential entries; see \cite[Corollary 1.5]{RVeigen} for details. 
\end{remark}

\section{Random regular graphs} \label{sec:rrg}

We now turn to the eigenvectors of the adjacency matrix of random regular graphs. Recall that a regular graph is a simple graph where each vertex has the same degree, and a $d$-regular graph is a regular graph with vertices of degree $d$.  It is well-known that a $d$-regular graph on $n$ vertices exists if and only if $n \geq d+1$ and $nd$ is even.  Let $G_{n,d}$ denote a random $d$-regular graph chosen uniformly from all $d$-regular graphs on $n$ vertices.
 It is easy to see that the adjacency matrix of $G_{n,d}$ has a trivial eigenvector $\frac{1}{\sqrt{n}} \Bj_n$ corresponding to the eigenvalue $d$. Further, it has been conjectured that every non-trivial unit eigenvector behaves like a uniform vector on the unit sphere.

In the combinatorics/computer science literature, the most interesting case to consider is when $d$ is a constant. 
This also seems to be the most difficult case to study.  In this case, the strongest delocalization result known to the authors is the following, which is a corollary of \cite[Theorem 1]{BL}

\begin{theorem}[Theorem 1 from \cite{BL}] \label{thm:BL}
Let $d$ be fixed and $\eps>0$.  Then there is a constant $\delta >0$ (depending on $d$ and $\eps$) such that the following holds. With probability $1- o(1)$, for any unit eigenvector $v$ of the adjacency matrix of $G_{n,d}$, 
any subset $S \subset [n]$ satisfying $$\|v\|_S^2 > \eps$$ must be of 
 size $|S| \ge n^{\delta}.$ \end{theorem}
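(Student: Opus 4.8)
The plan is to deduce this statement from the structure theorem of Backhausz--Szegedy in [BL], which asserts that the empirical eigenvector process of $G_{n,d}$ converges (in an appropriate local-weak sense on the $d$-regular tree) to an eigenvector of the adjacency operator of the infinite tree that is an \emph{invariant Gaussian process}. The key quantitative consequence I would extract is a bound of the form: for every fixed $\eps > 0$ there is $\kappa = \kappa(d,\eps) > 0$ such that, with probability $1-o(1)$, every unit eigenvector $v$ has $\|v\|_{\ell^\infty} \le n^{-\kappa}$; more precisely, the number of coordinates $j$ with $|v_j|^2 \ge n^{-\kappa}$ is $o(n)$, uniformly over eigenvectors. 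In fact what [BL, Theorem 1] supplies directly is the stronger ``entropic'' delocalization: the profile of $|v_j|^2$ cannot be concentrated, so that no set carrying an $\eps$-fraction of the mass can be too small.

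The argument would run as follows. First, fix $\eps>0$ and suppose toward a contradiction that with non-vanishing probability there is a unit eigenvector $v$ and a set $S$ with $\|v\|_S^2 > \eps$ but $|S| < n^{\delta}$ for $\delta$ to be chosen small. Then the restricted mass $\sum_{j \in S} |v_j|^2 > \eps$ while $|S|$ is subpolynomial, so by pigeonhole a single coordinate carries mass at least $\eps/|S| > \eps n^{-\delta}$, i.e. $\|v\|_{\ell^\infty} \ge \sqrt{\eps}\, n^{-\delta/2}$. Second, invoke the delocalization bound coming from [BL]: since the limiting tree-eigenvector process is a genuine (non-atomic) Gaussian process, the empirical distribution of $\{\sqrt{n}\, v_j\}_{j \in [n]}$ converges to a nondegenerate law, which forces $\|v\|_{\ell^\infty} = n^{o(1)}/\sqrt{n} = o(n^{-\delta/2})$ for any fixed $\delta>0$, with probability $1-o(1)$, uniformly over all unit eigenvectors. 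Choosing $\delta$ small enough that $n^{-\delta/2}$ decays slower than the [BL] bound yields the contradiction. Third, to make ``uniformly over all eigenvectors'' rigorous one cannot simply union bound over the $n$ eigenvectors if the per-eigenvector failure probability is only $o(1)$; instead I would use the form of the [BL] result that already controls \emph{all} eigenvectors simultaneously (its statement is about the joint eigenvector-eigenvalue empirical measure on the whole spectrum), so the event ``some unit eigenvector is badly localized'' is itself shown to have probability $o(1)$.

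The main obstacle is that [BL] is fundamentally a \emph{local-weak-convergence / spectral-measure} statement, and converting it into a \emph{uniform, every-eigenvector, every-small-set} conclusion of the type claimed requires care: one must rule out the possibility that a vanishing fraction of eigenvectors (invisible to the empirical measure) is localized, and one must handle the dependence between the set $S$ and the eigenvector $v$. I expect the right way through is exactly the packaging done in [BL, Theorem 1] itself---namely that the Backhausz--Szegedy machinery is applied to the random graph together with a ``worst-case'' choice of eigenvector and set, so that the conclusion holds with probability $1-o(1)$ over $G_{n,d}$ simultaneously for all $v$ and all $S$. Everything else (the pigeonhole step, the translation between $\ell^\infty$-control and small-set mass) is elementary. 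It is worth noting that one does not expect the exponent $\delta$ to be optimal, and indeed improving it to $\delta = 1-o(1)$ (true no-gaps delocalization) is the content of the open problems discussed above and is not achievable by this method.
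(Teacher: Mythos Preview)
The paper does not give its own proof of this statement; it is quoted as a corollary of \cite[Theorem~1]{BL}, where \cite{BL} is Brooks--Lindenstrauss, \emph{Non-localization of eigenfunctions on large regular graphs}. So the relevant comparison is between your proposal and the Brooks--Lindenstrauss argument.

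There is a genuine gap in your proposal, and in fact two distinct problems.

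First, you have misidentified the reference. The machinery you invoke---local-weak convergence to an invariant Gaussian eigenvector process on the $d$-regular tree---is the Backhausz--Szegedy framework, not Brooks--Lindenstrauss. The actual \cite{BL} argument is deterministic once one knows the graph has few short cycles: one uses Chebyshev-type polynomials $P_T$ of the adjacency operator (of degree $T$ comparable to the girth, hence $T \asymp \log n$ for $G_{n,d}$) as approximate spectral projectors, and bounds $\|\mathbf{1}_S \phi\|^2$ by counting non-backtracking walks of length $\le T$ starting and ending in $S$. The outcome is an inequality of the shape $\|\phi\|_S^2 \le C(d,\eps)\,|S|^{\,c/T}$ for some $c=c(d)>0$, which for $T \asymp \log n$ forces $|S| \ge n^{\delta}$. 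The exponent $\delta$ is tied to the girth and is not close to $1$; no $\ell^\infty$ bound of strength $n^{-1/2+o(1)}$ is used or obtained.

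Second, even granting the Backhausz--Szegedy input, the step ``the empirical distribution of $\{\sqrt{n}\,v_j\}$ converges to a nondegenerate law, hence $\|v\|_{\ell^\infty} = n^{o(1)}/\sqrt{n}$'' is false as stated. Convergence of the empirical measure says nothing about a single extreme coordinate: one can have $n-1$ coordinates of size $\Theta(n^{-1/2})$ and one coordinate of size $\Theta(1)$ while the empirical law still converges. Your pigeonhole reduction shows only that a set $S$ with $|S|<n^{\delta}$ and $\|v\|_S^2>\eps$ would force $\|v\|_{\ell^\infty} \ge \sqrt{\eps}\,n^{-\delta/2}$; ruling this out requires an honest $\ell^\infty$ delocalization bound for fixed $d$, and no such bound of the needed strength is currently known (indeed, obtaining it is exactly the open problem discussed in the survey). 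The Brooks--Lindenstrauss route sidesteps this by never passing through $\ell^\infty$ control.
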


From \cite{DP}, one  has the following result  for the eigenvectors in the case when $d$ grows slowly with the vertex size $n$.  
\begin{theorem}[Theorem 3 from \cite{DP}] Fix $\delta>0$. Let $d := d_n = (\log n)^\gamma$ for $\gamma>0$. Let $\eta_n := (r_n-r_n^{-1})/2$ where $r_n := \exp(d_n^{-\alpha})$ for some $0< \alpha <\min(1, \gamma^{-1})$. Let $T_n \subset [n]$ be a deterministic set of size $L_n = o(\eta_n^{-1})$. Let $\Omega_n$ be the event that some unit eigenvector $v$ of the adjacency matrix of $G_{n,d}$ satisfies $\|v\|_{T_n}^2 \ge 1-\delta$. Then, for all sufficiently large $n$,
$$\Prob(\Omega_n^c) \ge e^{-L_n \eta_n/d_n} \left(1-o(d_n^{-1}) \right)=1-o(d_n^{-1}).$$
\end{theorem}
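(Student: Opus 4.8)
The plan is to reduce the localization statement ``$\|v\|_{T_n}^2\ge 1-\delta$'' to a lower bound on the imaginary parts of the diagonal resolvent entries of $A=A(G_{n,d})$ evaluated at the mesoscopic spectral window $\eta_n$, and then to rule that out by a local-law type estimate showing those entries are $O(1)$ all the way down to scale $\eta_n$. Write $\mathcal{G}(z):=(A-z)^{-1}$ for $\Im z>0$ and $m_n(z):=\tfrac1n\tr\mathcal{G}(z)$, and let $\lambda_1\le\cdots\le\lambda_n$ and $v_1,\dots,v_n$ be the eigenvalues and unit eigenvectors of $A$.

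\textbf{Step 1: from a localized eigenvector to a large resolvent entry.} Suppose $v$ is a unit eigenvector with eigenvalue $\lambda$ and $\|v\|_{T_n}^2\ge 1-\delta$. If $\lambda=d$ then $v=n^{-1/2}\Bj_n$, whose $T_n$-mass is $L_n/n=o(1)$, so we may assume $\lambda\ne d$. The spectral decomposition gives $\Im\mathcal{G}_{ii}(\lambda+i\eta_n)=\sum_{k}\eta_n\,|v_k(i)|^2/((\lambda_k-\lambda)^2+\eta_n^2)\ge |v(i)|^2/\eta_n$, and summing over $i\in T_n$ yields
$$ 1-\delta\ \le\ \|v\|_{T_n}^2\ \le\ \eta_n\sum_{i\in T_n}\Im\mathcal{G}_{ii}(\lambda+i\eta_n)\ \le\ \eta_n L_n\cdot\max_{1\le i\le n}\Im\mathcal{G}_{ii}(\lambda+i\eta_n). $$
Hence $\Omega_n$ is contained in the event that $\max_i\sup_{E\in I}\Im\mathcal{G}_{ii}(E+i\eta_n)\ge (1-\delta)/(\eta_n L_n)$, where $I:=[-C\sqrt{d},C\sqrt{d}]$ for a suitable constant $C$ is a deterministic interval that, with overwhelming probability, contains every non-trivial eigenvalue of $A$ (an easy moment bound with walk length $O(\log n)$).

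\textbf{Step 2: a mesoscopic bound on the resolvent, and the conclusion.} The crux is to show that, with probability at least $1-o(d_n^{-1})$,
$$ \sup_{1\le i\le n}\ \sup_{E\in I}\ \Im\mathcal{G}_{ii}(E+i\eta_n)\ =\ O(1)\qquad(\text{in fact }O(d_n^{-1/2})\text{ for }E\text{ in the bulk}). $$
Since $L_n=o(\eta_n^{-1})$, this makes the right side of the display in Step 1 equal to $\eta_n L_n\cdot O(1)=o(1)<1-\delta$, a contradiction, so $\Prob(\Omega_n)=o(d_n^{-1})$, which rearranges to $\Prob(\Omega_n^c)\ge 1-o(d_n^{-1})\ge e^{-L_n\eta_n/d_n}(1-o(d_n^{-1}))$ because $L_n\eta_n/d_n=o(d_n^{-1})$. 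To prove the resolvent bound one sets up a self-consistent equation for the diagonal entries coming from the locally tree-like (Kesten--McKay) structure of $G_{n,d}$: removing a vertex $i$ and using the Schur complement, $\mathcal{G}_{ii}(z)\approx\bigl(-z-\sum_{j\sim i}\mathcal{G}^{(i)}_{jj}(z)\bigr)^{-1}$ with the $\mathcal{G}^{(i)}_{jj}$ close to the regular-tree value $m_{\mathrm{sc},d}(z)$, and then bootstraps in $\eta$ from $O(1)$ down to $\eta_n$. The fluctuations of the tree recursion contract at a rate whose exponential form is $r_n=e^{d_n^{-\alpha}}$, so the bootstrap closes exactly while $\eta\ge\eta_n$; the constraint $\alpha<\min(1,\gamma^{-1})$ is what keeps this fluctuation scale below $\eta_n$ for $d_n=(\log n)^\gamma$, and the exponentially small failure probability is supplied by switching-based concentration for the local edge statistics of $G_{n,d}$ (which also avoids conditioning on simplicity).

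\textbf{Expected main obstacle.} The hard part is the uniform mesoscopic resolvent bound of Step 2 at the scale $\eta_n\asymp d_n^{-\alpha}$: one must run the self-consistent-equation bootstrap for $G_{n,d}$ with \emph{growing} degree $d_n=(\log n)^\gamma$, uniformly over all $E\in I$ (including near the spectral edge, where the estimate is actually easier since the density is small there) and with a failure probability strong enough to beat $d_n^{-1}$ --- which in DP's combinatorial framework means sharp switching/enumeration estimates for local edge counts in $G_{n,d}$. Secondary points are the clean handling of the edge eigenvalues so that Step 1 applies to \emph{every} non-trivial eigenvector, and the (routine) bookkeeping verifying $L_n\eta_n/d_n=o(d_n^{-1})$, so that the lossy prefactor $e^{-L_n\eta_n/d_n}$ in the stated bound is consistent with the cleaner conclusion $\Prob(\Omega_n^c)=1-o(d_n^{-1})$.
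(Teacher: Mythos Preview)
The survey does not supply a proof of this statement; it is quoted from Dumitriu--Pal \cite{DP} without argument, so there is no ``paper's own proof'' to compare your proposal against.

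On the merits of the proposal itself: Step~1 is the correct and standard reduction from eigenvector localization to a lower bound on diagonal resolvent entries. The entire content lies in Step~2, and there what you have written is an outline with a substantial gap rather than a proof. You assert that a local law $\sup_i \Im\,\mathcal{G}_{ii}(E+i\eta_n)=O(1)$ holds uniformly down to scale $\eta_n\asymp d_n^{-\alpha}$ via a Schur-complement/self-consistent-equation bootstrap, but you do not carry this out, and this is precisely the hard step. For random regular graphs with degree $d_n=(\log n)^\gamma$ and \emph{arbitrary} $\gamma>0$, establishing such a local law is highly non-trivial: the resolvent-based local law of \cite{BKY}, cited in the same section of the survey, requires $d\gtrsim\log^4 n$, i.e.\ essentially $\gamma\ge 4$. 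Closing the bootstrap for smaller $\gamma$ is exactly where the difficulty lives, since the Schur-complement fluctuations are of relative size $d^{-1/2}$ and must be beaten by the spectral-window parameter. Your sentence ``the fluctuations of the tree recursion contract at a rate whose exponential form is $r_n=e^{d_n^{-\alpha}}$, so the bootstrap closes exactly while $\eta\ge\eta_n$'' names the right scale but is an assertion, not an argument; a local law at this scale in this degree regime would itself be a theorem at least as strong as the one you are trying to prove.

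For context, Dumitriu--Pal's own argument in \cite{DP} does not go through a resolvent bootstrap. It is a moment/trace-method proof: one controls quantities of the form $\sum_{i\in T_n}(A^{2k})_{ii}$ for $k$ growing with $n$ by counting closed walks on the locally tree-like graph, and combines this with a spectral-density estimate on windows of width $\eta_n$. The peculiar form $\eta_n=(r_n-r_n^{-1})/2$ with $r_n=\exp(d_n^{-\alpha})$ and the prefactor $e^{-L_n\eta_n/d_n}$ in the statement are artifacts of that combinatorial walk-counting, not of a resolvent analysis.
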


Finally, let us mention the following recent result from \cite{BKY}, which provides a near optimal  bound 
when $d$ grows sufficiently fast with $n$.

\begin{theorem}[Corollary 1.2 from \cite{BKY}] 
There exist constants $C, C' > 0$ such that the following holds.  Let $C^{-1} \log^4 n \le d \le C n^{2/3} \log^{-4/3}n$. Then any unit eigenvector $v$ of the adjacency matrix of $G_{n,d}$ satisfies 
$$\|v\|_{\ell^\infty} \leq C' \frac{\log^2 n}{\sqrt{n}} $$
with probability at least $1-\exp(-2\log^2 n\cdot\log \log n)$.
\end{theorem}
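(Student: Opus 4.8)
The plan is to reduce the $\ell^\infty$ bound to an entrywise local semicircle law for the resolvent of the (centered, rescaled) adjacency matrix, using the elementary fact that a large eigenvector coordinate forces a large diagonal resolvent entry at a small imaginary part. Write $A$ for the adjacency matrix of $G_{n,d}$ and set
\[ H := \frac{1}{\sqrt{d(1 - d/n)}}\left( A - \frac{d}{n}\BJ_n \right), \]
so that $H$ is symmetric with mean-zero entries of variance $\approx 1/n$, it fixes the trivial eigenvector $n^{-1/2}\Bj_n$ with eigenvalue $O(n^{-1/2})$, and — by a standard a priori estimate (a moment bound, or a byproduct of the local law below), valid since $d \ge \log^4 n$ — all remaining eigenvalues of $H$ lie in $[-3,3]$ on an event of the required probability. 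The trivial eigenvector already satisfies the asserted bound, so fix a non-trivial unit eigenvector $v$ with $Hv = \mu v$, $|\mu|\le 3$, $v \perp \Bj_n$, and a coordinate $j \in [n]$. For $z = \mu + i\eta$ the spectral decomposition of $G(z) := (H - z)^{-1}$ yields
\[ \Im G_{jj}(z) \;=\; \sum_{k}\frac{\eta\,|v_k(j)|^2}{(\mu_k - \mu)^2 + \eta^2} \;\ge\; \frac{|v(j)|^2}{\eta}, \]
so it suffices to produce the bound $\Im G_{jj}(\mu + i\eta_*) = O(1)$ at the scale $\eta_* := C_0\,\log^4 n/n$: this gives $|v(j)|^2 \le C_0\log^4 n/n$ for every $j$, i.e. exactly $\|v\|_{\ell^\infty}\le C'\log^2 n/\sqrt n$.

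Thus the real content is an entrywise local law: on an event of probability at least $1 - \exp(-c\log^2 n\log\log n)$ one has
\[ \max_{1\le i,j\le n}\bigl|G_{ij}(z) - \delta_{ij}\,m_{\mathrm{sc}}(z)\bigr| \;=\; o(1) \]
simultaneously for all $z = E + i\eta$ with $|E|\le 3$ and $\eta \ge \eta_*$, where $m_{\mathrm{sc}}$ is the Stieltjes transform of the semicircle law. By the deterministic bound $\|G'(z)\|\le \eta^{-2}$, it is enough to verify this on a polynomially fine grid in $z$, so that the supremum over $z$ and the $\max$ over $(i,j)$ cost only $\mathrm{poly}(n)$ in the union bound, which is absorbed into the exponentially small failure probability. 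Since $\Im m_{\mathrm{sc}}(z)\le 1/\pi$ uniformly on this region, the local law gives $\Im G_{jj}(z) = O(1)$ for every such $z$; combined with the previous paragraph this delocalizes \emph{all} non-trivial eigenvectors at once — once the local-law event holds the conclusion is deterministic (take the grid point within $n^{-10}$ of $\mu + i\eta_*$, and use continuity of $\Im G_{jj}$ together with $\|G'\|\le \eta_*^{-2} = \mathrm{poly}(n)$ on the relevant segment), so no further union bound over eigenvectors is needed.

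I expect the local law to be the main (essentially the only) obstacle: the entries of $A$ are strongly dependent — every row sum equals $d$ exactly — so the resolvent/cavity expansion available for matrices with independent entries does not apply. The substitute is the \emph{switching method}: perturb the graph by local switchings (swap a pair of disjoint edges $\{i_1,j_1\},\{i_2,j_2\}$ for $\{i_1,j_2\},\{i_2,j_1\}$, and its multi-edge ``local resampling'' refinements) and exploit the near-invariance of the uniform measure on $d$-regular graphs under switchings to compute expectations of low-degree polynomials in the entries of $G$. This yields an approximate self-consistent equation $G_{ii} = m_{\mathrm{sc}}(z) + (\mathrm{error})$ together with off-diagonal decay; one then runs a bootstrap argument, continuous in $\eta$, transporting the bound from large $\eta$ (where it is trivial) down to $\eta = \eta_*$, at each step using the stability of $m^2 + zm + 1 = 0$ on $\{|E|\le 3\}$, with the usual care near the edges $E=\pm 2$ where one tracks the square-root behaviour. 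The crux — and the source of the hypotheses on $d$ — is to show that the switching error terms, in particular the fluctuation produced by the hard $d$-regularity constraint (governed by negative powers of $d$ and of $n\eta$, with the combinatorics of iterated switchings pushing the other way), are $o(1)$ down to $\eta_*$; this is what forces $d \gtrsim \log^4 n$ and $d \lesssim n^{2/3}\log^{-4/3}n$, and what must be arranged so that all accumulated error probabilities stay below $\exp(-c\log^2 n\log\log n)$. By contrast, the reduction in the first two paragraphs is routine.
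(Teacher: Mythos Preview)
The survey paper does not contain a proof of this statement; it is simply quoted from \cite{BKY} as one of several results about eigenvectors of random regular graphs, with no argument given. So there is nothing in the paper to compare your proposal against.

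That said, your outline is essentially the strategy of \cite{BKY} itself: reduce delocalization to a diagonal bound on the Green's function via $\Im G_{jj}(\mu+i\eta)\ge |v(j)|^2/\eta$, establish an entrywise local semicircle law for the centered and rescaled matrix $H$ down to scale $\eta\asymp \log^4 n/n$, and observe that the trivial eigenvector is already delocalized. You have also correctly identified the central difficulty (the hard $d$-regularity constraint destroys independence, so the standard Schur-complement/cavity derivation of the self-consistent equation is unavailable) and the remedy (local resampling via switchings, which leaves the uniform measure approximately invariant and recovers enough averaging to close the bootstrap). The range of $d$ and the failure probability in the statement are exactly the outcome of balancing the switching error terms against the $1/(n\eta)$ fluctuation scale, as you indicate. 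So as a high-level sketch your proposal is sound and matches the source; the actual work, of course, is in carrying out the switching analysis and the stability/bootstrap argument with the stated quantitative control, which is the content of \cite{BKY}.
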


\begin{remark}
More generally, the results in \cite{BKY} hold for other models of random regular graphs besides the uniform model $G_{n,d}$ discussed here; see \cite[Section 1.2]{BKY} for details.  
\end{remark}

\section{Proofs for the Gaussian orthogonal ensemble} \label{sec:proof:GOE}

In order to prove the results in Section \ref{sec:GOE}, we will need the following characterization of a unit vector uniformly distributed on the unit sphere $S^{n-1}$.   

\begin{lemma} \label{lemma:distr}
Let $v$ be a random vector uniformly distributed on the unit sphere $S^{n-1}$.  Then $v$ has the same distribution as 
$$ \left( \frac{ \xi_1}{\sqrt{\sum_{j=1}^n \xi_j^2}}, \ldots, \frac{ \xi_n}{\sqrt{\sum_{j=1}^n \xi_j^2}}\right) $$
where $\xi_1, \ldots, \xi_n$ are iid standard normal random variables.  
\end{lemma}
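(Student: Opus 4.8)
The plan is to exploit the rotational invariance of the standard Gaussian measure on $\R^n$. Write $g := (\xi_1, \ldots, \xi_n)$ for the vector of iid standard normal random variables. First I would record that the joint density of $g$ is $(2\pi)^{-n/2} \exp(-\|x\|^2/2)$, which depends on $x \in \R^n$ only through $\|x\|$; consequently, for every orthogonal matrix $O \in O(n)$, the vector $O g$ has the same distribution as $g$. Moreover, since $\{g = 0\}$ has Lebesgue measure zero, $\|g\| > 0$ almost surely, so the normalized vector $u := g/\|g\|$ is well-defined and takes values in $S^{n-1}$.

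Next I would show that the law of $u$ is invariant under $O(n)$. For $O \in O(n)$ we have $\|Og\| = \|g\|$, hence $O u = O g / \|g\|$, which by the previous step has the same distribution as $g/\|g\| = u$. Since $O(n)$ acts transitively on $S^{n-1}$, there is a unique Borel probability measure on $S^{n-1}$ invariant under this action---namely the uniform (normalized surface) measure---and therefore $u$ is uniformly distributed on $S^{n-1}$. This is exactly the asserted distributional identity.

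An equivalent route, making the same computation explicit, is to change to polar coordinates $x = r\omega$ with $r > 0$ and $\omega \in S^{n-1}$: the Gaussian density then factorizes as a function of $r$ alone times the uniform measure on the sphere, which shows directly that $\|g\|$ and $g/\|g\|$ are independent and that the angular component is uniform. There is no real obstacle in this argument; the only point deserving care is the invocation of uniqueness of the $O(n)$-invariant probability measure on $S^{n-1}$ (equivalently, the correct disintegration of surface measure in the polar-coordinates version), and for this I would simply cite a standard reference such as \cite[Section 2.5.1]{AGZ} rather than reproving it.
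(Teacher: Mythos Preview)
Your proposal is correct and follows essentially the same approach as the paper: the paper's proof is a one-line appeal to the rotational invariance of the Gaussian vector together with a pointer to the literature, and your argument simply spells out the standard details (invariance of $g$ under $O(n)$, passage to $g/\|g\|$, and uniqueness of the $O(n)$-invariant probability on $S^{n-1}$).
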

\begin{proof}
The claim follows from the fact that the Gaussian vector $(\xi_k)_{k=1}^n$ is rotationally invariant.  We refer the reader to \cite{J} for further details and other interesting results regarding entries of uniformly distributed unit vectors, and, more generally, results concerning entries of orthogonal matrices distributed according to Haar measure.  
\end{proof}

We now prove Theorem \ref{thm:GOEnorm}.  In order to do so, we will need the following result from \cite{LM}.  
\begin{lemma}[Lemma 1 from \cite{LM}] \label{lemma:chi}
Suppose $X$ is a $\chi^2$-distributed with $k$ degrees of freedom.  Then, for all $t > 0$, 
$$ \Prob( X - k \geq 2 \sqrt{kt} + 2t) \leq \exp(-t) $$
and
$$ \Prob(k - X \geq 2 \sqrt{kt} ) \leq \exp(-t). $$ 
\end{lemma}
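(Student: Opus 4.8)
\textbf{Proof proposal for Lemma \ref{lemma:chi}.}

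The plan is to prove both tail bounds by the standard exponential moment (Chernoff) method applied to $X = \sum_{i=1}^k \xi_i^2$, where $\xi_1,\dots,\xi_k$ are iid standard normals. First I would record the moment generating function: for $\lambda < 1/2$,
$$ \E \exp(\lambda \xi_i^2) = (1-2\lambda)^{-1/2}, \qquad \text{so} \qquad \E \exp(\lambda X) = (1-2\lambda)^{-k/2}. $$
For the upper tail, for any $0 < \lambda < 1/2$ and any $u > 0$, Markov's inequality gives
$$ \Prob(X - k \geq u) \leq \exp(-\lambda(k+u)) (1-2\lambda)^{-k/2} = \exp\left( -\lambda(k+u) - \tfrac{k}{2}\log(1-2\lambda) \right). $$
I would then bound the exponent using the elementary inequality $-\log(1-x) \leq x + \tfrac{x^2}{2(1-x)}$ (or, more simply, choose $\lambda$ to make the resulting expression tractable) so that it suffices to show the exponent is at most $-t$ when $u = 2\sqrt{kt} + 2t$. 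The cleanest route is: substitute the optimal $\lambda = \tfrac{1}{2}\cdot\tfrac{u}{k+u}$, which yields
$$ \Prob(X-k \ge u) \le \exp\left( \tfrac{k}{2}\left[ \tfrac{u}{k} - \log\left(1+\tfrac{u}{k}\right) \right] \cdot(-1)\right) \cdot (\text{bookkeeping}), $$
and then use the function-comparison inequality $\log(1+y) \geq \tfrac{y}{1+y/2}$ (equivalently, $y - \log(1+y) \geq \tfrac{y^2/2}{1+y}$ for $y>0$) to show that $k\big( \tfrac{u}{k} - \log(1+\tfrac{u}{k})\big)/2 \geq t$ whenever $u \ge 2\sqrt{kt}+2t$; this last algebraic step reduces to verifying $h(u) := \tfrac{(u/k)^2/2}{1+u/k}\cdot \tfrac{k}{2} - t \ge 0$, which is a one-variable convexity/monotonicity check.

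For the lower tail, I would repeat the argument with a negative exponential parameter: for $\lambda > 0$,
$$ \Prob(k - X \geq u) \leq \exp(-\lambda(k-u))(1+2\lambda)^{-k/2} = \exp\left( -\lambda(k-u) - \tfrac{k}{2}\log(1+2\lambda) \right), $$
optimize over $\lambda > 0$ (the optimum is $\lambda = \tfrac{1}{2}\cdot\tfrac{u}{k-u}$ when $u < k$, and the bound is vacuous when $u \ge k$ since a $\chi^2$ variable is nonnegative), and use the complementary inequality $\log(1-y) \geq \tfrac{-y}{1-y} + \tfrac{\text{(correction)}}{}$ — more precisely $-y - \log(1-y) \geq 0$ and the sharper $ -\log(1-y) - y \le \tfrac{y^2}{2(1-y)}$ plus $y+\log(1-y) \le -y^2/2$ — to reduce to checking that the exponent is at most $-t$ when $u = 2\sqrt{kt}$. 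Because the lower-tail bound carries no $2t$ term, the relevant algebraic inequality is simpler: after substituting the optimal $\lambda$ one needs $\tfrac{k}{2}\big( \log(1 - u/k)^{-1}\cdot(\text{terms})\big) \ge t$ reducing to $u^2 \ge 4kt$, i.e.\ exactly $u \ge 2\sqrt{kt}$, after applying $y + \log(1-y) \le -y^2/2$.

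The main obstacle is purely the elementary-calculus step of verifying that the optimized Chernoff exponents dominate $t$ for the stated choices of $u$; there is no probabilistic subtlety once the MGF is in hand. One has to be a little careful to pick the right auxiliary inequality for $-\log(1-2\lambda)$ and $-\log(1+2\lambda)$ so that the constants come out to exactly $2\sqrt{kt}+2t$ and $2\sqrt{kt}$ rather than something weaker; the slickest implementation is the one in Laurent--Massart, which writes the exponent as $k\psi(u/k)$ for an explicit convex function $\psi$ and then invokes $\psi(y) \ge$ a quadratic lower bound. Alternatively, one may simply cite \cite{LM} verbatim, since the lemma is quoted there; but the self-contained Chernoff argument above is short enough to include in full.
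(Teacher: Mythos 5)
The paper does not prove this lemma at all --- it is quoted verbatim from Laurent--Massart \cite{LM} --- so any correct self-contained argument is a bonus. Your overall strategy (Chernoff bound from the exact $\chi^2$ moment generating function, then an elementary inequality to dominate the optimized exponent) is exactly the route taken in \cite{LM}, and your lower-tail argument closes correctly: with the optimal $\lambda = \tfrac{u}{2(k-u)}$ the exponent becomes $\tfrac{k}{2}\left[y + \log(1-y)\right]$ with $y = u/k$, and $-y-\log(1-y) \ge y^2/2$ (all Taylor terms of $-\log(1-y)-y$ are nonnegative) gives $\Prob(k-X\ge u)\le e^{-u^2/4k}$, i.e.\ precisely $u = 2\sqrt{kt}$. (Minor point: the exponent in your lower-tail display should read $\lambda(k-u) - \tfrac{k}{2}\log(1+2\lambda)$, not $-\lambda(k-u)-\cdots$.)

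The upper tail, however, has a concrete gap: the auxiliary inequality you propose, $y - \log(1+y) \ge \tfrac{y^2}{2(1+y)}$, is true but \emph{not sharp enough}. It reduces the claim to $u^2 \ge 4t(k+u)$, i.e.\ $u \ge 2t + 2\sqrt{t^2+kt}$, and at $u = 2\sqrt{kt}+2t$ one computes $u^2 - 4t(k+u) = -4t^2 < 0$, so the verification $h(u)\ge 0$ you describe actually fails. The fix is the one \cite{LM} uses: bound the centered log-MGF by $-\lambda - \tfrac12\log(1-2\lambda) \le \tfrac{\lambda^2}{1-2\lambda}$ for $0\le\lambda<\tfrac12$ (term-by-term comparison of the Taylor series $\sum_{j\ge2}\tfrac{(2\lambda)^j}{2j}$ with the geometric series $\lambda^2\sum_{i\ge0}(2\lambda)^i$), so that $\Prob(X-k\ge u)\le \exp\bigl(-\sup_{\lambda}[\lambda u - \tfrac{k\lambda^2}{1-2\lambda}]\bigr)$; this supremum equals $\tfrac{k}{2}\bigl(1+\tfrac{u}{k}-\sqrt{1+2u/k}\bigr)$, whose inverse is exactly $u = 2\sqrt{kt}+2t$. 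Equivalently, if you insist on the exact optimization $\lambda=\tfrac{u}{2(k+u)}$, the inequality you need is $y-\log(1+y)\ge 1+y-\sqrt{1+2y}$, not $y-\log(1+y)\ge\tfrac{y^2}{2(1+y)}$. With that single substitution your argument is complete and matches \cite{LM}.
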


\begin{proof}[Proof of Theorem \ref{thm:GOEnorm}] In view of Lemma \ref{lemma:distr}, it suffices to assume that
$$ v := \left( \frac{ \xi_1}{\sqrt{\sum_{j=1}^n \xi_j^2}}, \ldots, \frac{ \xi_n}{\sqrt{\sum_{j=1}^n \xi_j^2}}\right), $$
where $\xi_1, \ldots, \xi_n$ are iid standard normal random variables.  

We first verify \eqref{eq:GOEmax}.  Let $C > 1$ and $c_1=1/C<1$.  Define the events
$$ \Omega_1 := \left\{ \sqrt{ \sum_{j=1}^n \xi_j^2 } \geq c_1\sqrt{n} \right\} $$
and
$$ \Omega_2 := \left\{ \max_{1 \leq i \leq n} |\xi_i| \leq \sqrt{2C \log n} \right\}. $$
In order to verify \eqref{eq:GOEmax}, it suffices to show $\Omega_1 \cap \Omega_2$ holds with probability at least $1-2 n^{1 - C}-\exp(-\frac{(C-1)^2}{4C^2}n)$.  

As $\sum_{j=1}^n \xi_j^2$ is $\chi^2$-distributed with $n$ degrees of freedom, Lemma \ref{lemma:chi} implies that
$$ \Prob(\Omega_1^c) = \Prob \left( \sum_{j=1}^n \xi_j^2 < c_1^2 n \right) \leq \exp\left(-\frac{(1-c_1^2)^2}{4} n\right) \le \exp\left(-\frac{(1-c_1)^2}{4}n \right).$$

Since $\xi_1$ is a standard normal random variable, it follows that, for every $t \geq 0$, 
\begin{equation} \label{eq:normalbound}
	\Prob \left( |\xi_1| > t \right) \leq 2 e^{-t^2/2}; 
\end{equation}
this bound can be deduced from the exponential Markov inequality; see also \cite[Section 5.2.3]{RV}.  Thus, from \eqref{eq:normalbound}, we have
\begin{align*}
	\Prob \left( \Omega_2^c \right) \leq n \Prob(|\xi_1| > \sqrt{2C \log n}) \leq 2n \exp \left(-C \log n \right) = 2 n^{1 - C}.
\end{align*}
Combining the bounds above yields 
$$ \Prob( \Omega_1^c \cup \Omega_2^c) \leq \exp\left(-\frac{(1-c_1)^2}{4}n \right) + 2 n^{1 - C} $$
and the proof of \eqref{eq:GOEmax} is complete.  

We now verify \eqref{eq:GOEmin}.  Let $0 \leq c < 1$ and $a>1$.  Define the events
$$ \Omega_3 := \left\{ \sqrt{ \sum_{j=1}^n \xi_j^2 } \leq a \sqrt{n}\right\} $$
and
$$ \Omega_4 := \left\{ \min_{1 \leq i \leq n} |\xi_i| \geq \frac{c}{n} \right\}. $$
It suffices to show that $\Omega_3 \cap \Omega_4$ holds with probability at least 
$$ \exp \left( -2c \right) - \exp \left( - \frac{ a^2 - \sqrt{2 a^2 -1} }{2} n \right). $$ 

From Lemma \ref{lemma:chi}, we again find
\begin{equation} \label{eq:omega3}
	\Prob(\Omega_3^c) \leq \exp\left(-\frac{(\sqrt{2a^2-1}-1)^2}{4} n\right) = \exp \left( - \frac{ a^2 - \sqrt{2 a^2 -1} }{2} n \right). 
\end{equation}  

Since $\xi_1$ is a standard normal random variable, we have
\begin{equation*}
	\Prob \left( |\xi_1| \leq \frac{c}{n} \right) = \frac{2}{\sqrt{2 \pi}} \int_{0}^{c n^{-1}} e^{-t^2/2} dt \leq \int_0^{cn^{-1}} dt = \frac{c}{n}. 
\end{equation*}
Hence, we obtain
\begin{align*}
	\Prob \left( \Omega_4 \right) &= \left( 1 - \Prob \left(|\xi_1| < \frac{c}{n}\right) \right)^{n} \\
	&\geq \left( 1 - \frac{c}{n} \right)^n \\
	&= \exp \left( n \log \left(1 - \frac{c}{n} \right) \right).
\end{align*}
By expanding the Taylor series for $\log (1-x)$, it follows that
$$ \log \left(1 - \frac{c}{n} \right) \geq - \frac{c}{n} - \frac{c^2}{n^2} \frac{1}{1 - \frac{c}{n}}, $$
and hence
\begin{equation} \label{eq:omega4}
	\Prob (\Omega_4) \geq \exp(-c) \exp \left( - \frac{c^2}{n-c} \right) \geq \exp(-2c),
\end{equation}
for $n \geq 2$.

Since 
$$ \Prob(\Omega_3 \cap \Omega_4) \geq 1 - \Prob(\Omega_3^c) - \Prob(\Omega_4^c) = \Prob(\Omega_4) - \Prob(\Omega_3^c), $$
we apply \eqref{eq:omega3} and \eqref{eq:omega4} to conclude that
\begin{align*}
	\Prob(\Omega_3 \cap \Omega_4) \geq \exp(-2c) - \exp \left( - \frac{ a^2 - \sqrt{2 a^2 -1} }{2} n \right), 
\end{align*}
as desired.  
\end{proof}

We now prove Theorem \ref{thm:GOElp} using Lemma \ref{lemma:distr} and the law of large numbers.

\begin{proof}[Proof of Theorem \ref{thm:GOElp}]
In view of Lemma \ref{lemma:distr}, it suffices to assume that
$$ v := \left( \frac{ \xi_1}{\sqrt{\sum_{j=1}^n \xi_j^2}}, \ldots, \frac{ \xi_n}{\sqrt{\sum_{j=1}^n \xi_j^2}}\right), $$
where $\xi_1, \ldots, \xi_n$ are iid standard normal random variables. 

Fix $p \geq 1$, and define $c_p := \E|\xi_1|^p$.  Then
\begin{align*}
	\frac{n^{p/2}}{n} \|v \|_{\ell^p}^p - c_p &= \left( \left( \frac{n}{\sum_{j=1}^n |\xi_j|^2} \right)^{p/2} - 1 \right) \frac{1}{n} \sum_{i=1}^n |\xi_i|^p + \frac{1}{n} \sum_{i=1}^n \left( |\xi_i|^p - c_p \right).
\end{align*}
By the law of large numbers, it follows that almost surely
$$ \lim_{n \to \infty} \frac{1}{n} \sum_{i=1}^n |\xi_i|^p = c_p \quad \text{and} \quad \lim_{n \to \infty} \frac{1}{n} \sum_{j=1}^n |\xi_j|^2 = 1. $$
Hence, we conclude that almost surely
$$ \lim_{n \to \infty} \left| \frac{n^{p/2}}{n} \|v \|_{\ell^p}^p - c_p \right| = 0, $$
and the claim follows.  
\end{proof}

Let $S \subset [n]$.  It follows from Lemma \ref{lemma:distr} that $\|v\|_{S}^2$ has the same distribution as 
\begin{equation} \label{eq:xichar}
	\frac{\sum_{i=1}^{|S|} \xi_i^2}{ \sum_{j=1}^n \xi_j^2}, 
\end{equation}
where $\xi_1, \ldots, \xi_n$ are iid standard normal random variables.  
In particular, we observe that, for any $1 \leq k \leq n$, $\sum_{j=1}^k \xi_j^2$ is $\chi^2$-distributed with $k$ degrees of freedom.  Thus, the random variable in \eqref{eq:xichar} can be expressed as $\frac{X}{X+W}$ where $X$ and $W$ are independent $\chi^2$-distributed random variables with $|S|$ and $n-|S|$ degrees of freedom respectively.  Theorem \ref{thm:distr} follows from computing the distribution of this ratio; see \cite{S} for details.  

Theorem \ref{thm:clt} is an immediate consequence of Theorem \ref{thm:distr} and the following lemma. 

\begin{lemma}\label{lem:beta}
Let $(\alpha_n)_{n = 1}^\infty$ and $(\beta_n)_{n=1}^\infty$ be sequences of positive integers which satisfy
\begin{enumerate}[(i)]
\item $\alpha_n  \to \infty$ and $\beta_n \to \infty$ as $n \to \infty$, 
\item $\frac{\alpha_n}{\alpha_n + \beta_n}$ converges to a limit in $[0,1]$ as $n \to \infty$.  
\end{enumerate}
For each $n \geq 1$, let $X_n \sim \mathrm{Beta}\left( \frac{\alpha_n}{2}, \frac{\beta_n}{2} \right)$.  Then
$$ \sqrt{ \frac{ (\alpha_n + \beta_n)^3}{2 \alpha_n \beta_n} } \left( X_n - \frac{\alpha_n}{\alpha_n + \beta_n} \right) \longrightarrow N(0,1) $$
in distribution as $n \to \infty$.  
\end{lemma}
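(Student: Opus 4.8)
The plan is to realize the Beta random variable in terms of chi-squared variables and then invoke a standard central limit theorem together with Slutsky's theorem. Recall from the discussion preceding the lemma that if $X \sim \mathrm{Beta}(\alpha/2, \beta/2)$, then $X$ has the same distribution as $\frac{U}{U+V}$, where $U$ and $V$ are independent $\chi^2$-distributed random variables with $\alpha$ and $\beta$ degrees of freedom respectively. Equivalently, writing $U = \sum_{i=1}^{\alpha_n} \xi_i^2$ and $V = \sum_{j=1}^{\beta_n} \eta_j^2$ for iid standard normals $\xi_i, \eta_j$, we have
$$ X_n - \frac{\alpha_n}{\alpha_n + \beta_n} = \frac{\beta_n U - \alpha_n V}{(\alpha_n + \beta_n)(U + V)}. $$
The numerator is a centered sum of independent random variables (each $\xi_i^2$ contributes $\beta_n(\xi_i^2 - 1)$ and each $\eta_j^2$ contributes $-\alpha_n(\eta_j^2-1)$, after noting $\beta_n \alpha_n - \alpha_n \beta_n = 0$ cancels the constant parts), so it should be amenable to a CLT; the denominator, after rescaling, concentrates.

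First I would compute the variance of the numerator. Since $\Var(\xi_i^2) = 2$, the variance of $\beta_n U - \alpha_n V$ is $2\alpha_n \beta_n^2 + 2\beta_n \alpha_n^2 = 2\alpha_n\beta_n(\alpha_n+\beta_n)$. Hence the natural normalization of the numerator is by $\sqrt{2\alpha_n\beta_n(\alpha_n+\beta_n)}$, and one checks that
$$ \sqrt{\frac{(\alpha_n+\beta_n)^3}{2\alpha_n\beta_n}}\left(X_n - \frac{\alpha_n}{\alpha_n+\beta_n}\right) = \frac{\beta_n U - \alpha_n V}{\sqrt{2\alpha_n\beta_n(\alpha_n+\beta_n)}} \cdot \frac{\alpha_n + \beta_n}{U+V}. $$
The second factor converges in probability to $1$ by the weak law of large numbers applied to $U/\alpha_n$ and $V/\beta_n$ (using hypothesis (i) that both $\alpha_n, \beta_n \to \infty$), since $\E(U+V) = \alpha_n + \beta_n$ and its standard deviation is of lower order. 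So by Slutsky's theorem it suffices to show the first factor converges in distribution to $N(0,1)$.

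For the first factor, I would apply the Lindeberg--Feller CLT for triangular arrays of independent (but non-identically-distributed across the array) random variables. The array consists of the terms $\beta_n(\xi_i^2-1)/\sqrt{2\alpha_n\beta_n(\alpha_n+\beta_n)}$ for $1 \le i \le \alpha_n$ and $-\alpha_n(\eta_j^2-1)/\sqrt{2\alpha_n\beta_n(\alpha_n+\beta_n)}$ for $1 \le j \le \beta_n$; these are independent, mean zero, and by the variance computation above their variances sum to $1$. The Lindeberg condition requires controlling, for each $\epsilon > 0$, the sum of truncated second moments; because $\xi_i^2 - 1$ and $\eta_j^2 - 1$ have finite variance and identical distributions within each block, and the coefficients $\beta_n^2/(2\alpha_n\beta_n(\alpha_n+\beta_n)) = \beta_n/(2\alpha_n(\alpha_n+\beta_n))$ and $\alpha_n/(2\beta_n(\alpha_n+\beta_n))$ both tend to $0$ under (i), the individual terms become uniformly negligible and the Lindeberg sums vanish. (Here hypothesis (ii), that $\alpha_n/(\alpha_n+\beta_n)$ has a limit, is in fact not needed for the CLT to hold — it is harmless to keep it in the statement, and is what is used downstream in Theorem \ref{thm:clt}.) Alternatively, one could avoid triangular-array machinery entirely: by independence, the characteristic function of the first factor factors as a product over the two blocks, and using the explicit characteristic function of a $\chi^2$ variable one can take logarithms and Taylor-expand to see convergence to $e^{-t^2/2}$; this route is more computational but fully elementary.

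The main obstacle is the bookkeeping in verifying the Lindeberg condition cleanly in a way that works uniformly in the relative sizes of $\alpha_n$ and $\beta_n$ — in particular when one of them grows much faster than the other, so that one block has few terms with large coefficients and the other has many terms with small coefficients. The key observation that resolves this is that in \emph{both} blocks the per-term variance contribution, namely $\beta_n/(2\alpha_n(\alpha_n+\beta_n))$ in the first block and $\alpha_n/(2\beta_n(\alpha_n+\beta_n))$ in the second, tends to zero: for the first block this is because $\beta_n/(\alpha_n+\beta_n) \le 1$ while $\alpha_n \to \infty$, and symmetrically for the second. Once uniform smallness of the summands is in hand, the Lindeberg sums are dominated by $\E[(\xi_1^2-1)^2]$ times a vanishing prefactor, and the argument closes. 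I would organize the write-up as: (1) the chi-squared representation and algebraic identity; (2) the denominator-concentration step via the weak law; (3) the CLT for the numerator via Lindeberg--Feller; (4) assembly via Slutsky.
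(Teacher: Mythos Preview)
Your argument is correct and follows the same overall architecture as the paper's proof: represent $X_n$ as $U/(U+V)$ with independent chi-squareds, rewrite the target as a ``numerator'' factor times $(\alpha_n+\beta_n)/(U+V)$, handle the second factor by the law of large numbers, and conclude via Slutsky. The one genuine difference is in how you treat the numerator. The paper further decomposes it as
\[
\sqrt{\tfrac{\beta_n}{\alpha_n+\beta_n}}\cdot\frac{U-\alpha_n}{\sqrt{2\alpha_n}}\;-\;\sqrt{\tfrac{\alpha_n}{\alpha_n+\beta_n}}\cdot\frac{V-\beta_n}{\sqrt{2\beta_n}},
\]
applies the classical CLT to each standardized chi-square separately, and then invokes Slutsky using hypothesis~(ii) so that the two deterministic coefficients converge. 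You instead run Lindeberg--Feller directly on the combined triangular array, which is slightly more work but, as you correctly observe, does not use hypothesis~(ii) at all. So your route buys a marginally stronger statement (the CLT holds without assuming $\alpha_n/(\alpha_n+\beta_n)$ converges), while the paper's route is a bit shorter because it avoids checking the Lindeberg condition and instead leans on (ii) to glue the two blocks together via Slutsky. Your verification that the maximal per-term variance tends to zero (since $\beta_n/(\alpha_n+\beta_n)\le 1$ and $\alpha_n\to\infty$, and symmetrically) is exactly the right observation for the Lindeberg step.
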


We present the proof of Lemma \ref{lem:beta} in Appendix \ref{appendix:beta}.  We now prove Theorem \ref{thm:gaussian}.   
\begin{proof}[Proof of Theorem \ref{thm:gaussian}]
Let $m:= |S|$.  In view of Lemma \ref{lemma:distr}, it suffices to bound 
$$ \left| \frac{\sum_{i=1}^m \xi_i^2}{\sum_{j=1}^n \xi_j^2} - \frac{m}{n} \right|, $$
where $\xi_1, \ldots, \xi_n$ are iid standard normal random variables.  Fix $t > 0$, and define the events
\begin{align*}
	\Omega_1 &:= \left\{ \sum_{j=1}^n \xi_j^2 \geq \frac{n}{2} \right\}, \\
	\Omega_2 &:= \left\{ \left| \sum_{j=1}^n \xi_j^2 - n \right| \leq 2 \sqrt{nt} + 2t \right\}, \\
	\Omega_3 &:= \left\{ \left| \sum_{i=1}^{m} \xi_i^2 - m \right| \leq 2 \sqrt{mt} + 2t \right\}.
\end{align*}

By the union bound and Lemma \ref{lemma:chi}, we have
\begin{align*}
	\Prob( \Omega_1^c \cup \Omega_2^c \cup \Omega_3^c) \leq \exp( -c n)  + 4 \exp(-t)
\end{align*}
for some absolute constant $c>0$.  

On the event $\Omega_1 \cap \Omega_2 \cap \Omega_3$, we observe that
\begin{align*}
	\left| \frac{\sum_{i=1}^m \xi_i^2}{\sum_{j=1}^n \xi_j^2} - \frac{m}{n} \right| &\leq \frac{ \left| \sum_{i=1}^m \xi_i^2 - m \right|}{ \sum_{j=1}^n \xi_j^2} + \left| \frac{m}{\sum_{j=1}^n \xi_j^2} - \frac{m}{n} \right| \\
	&\leq \frac{2}{n} \left| \sum_{i=1}^m \xi_i^2 - m \right| + \frac{m}{n} \left| \frac{n}{\sum_{j=1}^n \xi_j^2} - 1 \right| \\
	&\leq \frac{2}{n} \left( 2 \sqrt{mt} + 2t \right) + 2 \frac{m}{n^2} \left| n - \sum_{j=1}^n \xi_j^2 \right| \\
	&\leq \frac{4}{n} \left( \sqrt{mt} + t \right) + 4 \frac{m}{n^2} \left( \sqrt{nt} + t \right).
\end{align*}
Since $m \leq n$, the claim follows.  
\end{proof}

We conclude this section with the proofs of Theorems \ref{thm:order} and \ref{thm:con-max}.

\begin{proof}[Proof of Theorem \ref{thm:order}]
In view of Lemma \ref{lemma:distr}, it suffices to consider the vector
$$ v_n := \frac{1}{ \sqrt{ \sum_{j=1}^n \xi_j^2 } } \left( \xi_1, \ldots, \xi_n \right), $$
where $\xi_1, \xi_2, \ldots$ are iid standard normal random variables.  Thus, $\xi_1^2, \xi_2^2, \ldots$ are iid $\chi^2$-distributed random variables with one degree of freedom.  Recall that $F$ is the cumulative distribution function of $\xi_1^2$, $Q$ is the quantile function of $F$ defined in \eqref{eq:def:Q}, and $H$ is defined in \eqref{eq:def:H}.

Let $\xi_{(n,1)}^2  \leq \cdots \leq \xi_{(n,n)}^2$ denote the order statistics based on the sample $\xi_1^2, \ldots, \xi_n^2$.  Let $m_n := \lfloor \delta n \rfloor$.  Define
$$ S_{n,m_n} := \sum_{i=1}^{m_n} \xi_{(n,n-m_n+i)}^2 $$ 
to be the partial sum of the largest $m_n$ entries, and set 
$$ S_n := \sum_{i=1}^n \xi_i^2. $$

We observe that 
\begin{equation} \label{eq:charlarge}
	\max_{S \subset [n] : |S|=m_n} \|v_n\|_S^2 = \frac{S_{n,m_n}}{S_n} .
\end{equation}

Since $F$ is a special case of the gamma distribution, it follows from \cite[Theorem 1.1.8]{deHaan} and \cite[Corollary 2]{CHM} that
\begin{equation}\label{eq:trim}
\frac{S_{n,m_n}-\mu_n}{\sqrt{n}a_n} \longrightarrow N(0,1)
\end{equation}
in distribution as $n \to \infty$, where 
$$ \mu_n :=-n \int_{1/n}^{m_n/n} H(u)~ du - H\left(\frac{1}{n}\right) $$ 
and
$$ a_n := \left(\int_{1/n}^{m_n/n} \int_{1/n}^{m_n/n} (\min(u,v)-uv)~dH(u)dH(v) \right)^{1/2}. $$

Since 
$$ \lim_{x\to \infty} \frac{1-F(x)}{\sqrt{\frac{2}{\pi}} x^{-1/2} \exp(-x/2)} = 1, $$
we apply \cite[Proposition 1]{taka} to obtain that $H(\frac{1}{n}) \leq C \log n$ for some constant $C>0$. Thus, we have 
$$\lim_{n \to \infty} \frac{\mu_n}{n} = - \int_{0}^{\delta} H(u)~ du $$
and
$$\lim_{n \to \infty} a_n = \left(\int_{0}^{\delta} \int_{0}^{\delta} (\min(u,v)-uv)~dH(u)dH(v) \right)^{1/2}.$$
From \eqref{eq:trim}, we conclude that
$$ \frac{S_{n,m_n}}{n} \longrightarrow - \int_{0}^{\delta} H(u)~ du $$
in probability as $n \to \infty$.  

On the other hand, by the law of large numbers, we have
$$ \frac{S_{n}}{n} \longrightarrow 1 $$
in probability as $n \to \infty$.  Thus, by Slutsky's theorem (see Theorem 11.4 in \cite[Chapter 5]{G}) and \eqref{eq:charlarge}, we have
$$ \max_{S \subset [n] : |S|=m_n} \|v_n\|_S^2 = \frac{S_{n,m_n}}{n} \cdot \frac{n}{S_n} \longrightarrow - \int_{0}^{\delta} H(u)~ du $$
in probability as $n \to \infty$.  

For the minimum, we note that 
$$ \min_{S \subset [n]: |S|=m_n} \|v_n\|_S^2 = 1-  \max_{T \subset [n]:|T|=n-m_n} \|v_n\|_T^2, $$
and hence
$$ 1-  \max_{T \subset [n]:|T|=n-m_n} \|v_n\|_T^2 \longrightarrow 1+ \int_{0}^{1- \delta} H(u)~ du= - \int_{1-\delta}^{1} H(u)~ du $$
in probability as $n \to \infty$.  Here, we used the fact that $\int_0^1 Q(s) ~ds = - \int_{0}^1 H(u)~du=1$.
\end{proof}

\begin{proof}[Proof of Theorem \ref{thm:con-max}]
We first observe that it suffices to prove \eqref{eq:conc-max}.  Indeed, \eqref{eq:conc-min} follows immediately from \eqref{eq:conc-max} by applying the identity
$$ \max_{S \subset [n]: |S|=m} \|v\|_S + \min_{S \subset [n]: |S|=n-m} \|v\|_S = 1.  $$ 

Let $1 \leq m \leq n$, and define the function $F: \mathbb{R}^n \to \mathbb{R}$ by 
$$ F(X) :=  \max_{S \subset [n]: |S|=m} \|X\|_S. $$ 
Clearly, $|F(v) - \E F(v)| \leq 2 \| v \| = 2$.  Thus, it suffices to show 
$$ \Prob \left( |F(v) - \E F(v)| > t \right) \leq C \exp(-c t^2 n) $$
for all $t \in [0,2]$ (as opposed to all $t \geq 0$).  In addition, by taking the absolute constant $C$ sufficiently large, it suffices to prove
\begin{equation} \label{eq:conc-show}
	\Prob \left( |F(v) - \E F(v)| > t \right) \leq C \exp(-c t^2 n) 
\end{equation}
for all $t \in \left( 24 n^{-1/2}, 2 \right]$.  

Clearly, $F$ is $1$-Lipschitz.  Thus, by L\'{e}vy's lemma (see, for example, \cite[Theorem 14.3.2]{M}), we have, for all $0 \leq t \leq 1$, 
\begin{equation} \label{eq:levy}
	\Prob \left( |F(v) - \med F(v)| > t \right) \leq 4 \exp \left( - t^2 n / 2 \right). 
\end{equation}
Here $\med F(v)$ denotes the median of the random variable $F(v)$.  In addition, \cite[Proposition 14.3.3]{M} implies that
\begin{equation} \label{eq:medexp}
	|\med F(v) - \E F(v)| \leq \frac{12}{\sqrt{n}}. 
\end{equation}

We will use \eqref{eq:medexp} to replace the median appearing in \eqref{eq:levy} with expectation.  Indeed, assume $24 n^{-1/2} < t \leq 2$.  Then, from \eqref{eq:medexp}, we have
\begin{align*}
	\Prob \left( |F(v) - \E F(v)| > t \right) &\leq  \Prob \left( |F(v) - \med F(v)| > t - | \E F(v) - \med F(v)| \right) \\
		&\leq \Prob \left( |F(v) - \med F(v)| > \frac{t}{2} \right).
\end{align*}
The bound in \eqref{eq:conc-show} now follows by applying \eqref{eq:levy}.  
\end{proof}

\section{Tools required for the remaining proofs}

We now turn our attention to the remaining proofs.  In order to better organize the arguments, we start by collecting a variety of deterministic and probabilistic tools we will require in subsequent sections.  

\subsection{Tools from linear algebra}

The Courant--Fisher minimax characterization of the eigenvalues (see, for instance, \cite[Chapter III]{Bhatia}) states that
$$ \lambda_i(M) = \min_V \max_{u \in V} u^\ast M u, $$
where $M$ is a Hermitian $n \times n$ matrix, $V$ ranges over $i$-dimensional subspaces of $\mathbb{C}^n$, and u ranges over unit vectors in $V$.  

From this one can obtain \emph{Cauchy's interlacing inequalities}:
\begin{equation} \label{eq:cauchy}
	\lambda_i(M_{n}) \leq \lambda_i(M_{n-1}) \leq \lambda_{i+1}(M_n) 
\end{equation}
for all $i < n$, where $M_n$ is an $n \times n$ Hermitian matrix and $M_{n-1}$ is the top $(n-1) \times (n-1)$ minor.  One also has the following more precise version of the Cauchy interlacing inequality.

\begin{lemma}[Interlacing identity; Lemma 40 from \cite{TVuniv}] \label{lemma:interlace}
Let
$$ M_n = \begin{pmatrix} M_{n-1} & X \\ X^\ast & m_{nn} \end{pmatrix} $$
be an $n \times n$ Hermitian matrix, where $M_{n-1}$ is the upper-left $(n-1) \times (n-1)$ minor of $M_n$, $X \in \mathbb{C}^{n-1}$, and $m_{nn} \in \mathbb{R}$.  Suppose that $X$ is not orthogonal to any of the unit eigenvectors $v_j(M_{n-1})$ of $M_{n-1}$.  Then we have
\begin{equation} \label{eq:interlaceeq}
	\sum_{j=1}^{n-1} \frac{ |v_j(M_{n-1})^\ast X|^2}{\lambda_j(M_{n-1}) - \lambda_i(M_n)} = m_{nn} - \lambda_i(M_n) 
\end{equation}
for every $1 \leq i \leq n$.  
\end{lemma}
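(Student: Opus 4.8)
The plan is to collapse the eigenvalue equation for $M_n$ onto its last coordinate via a Schur-complement computation. Fix $1 \le i \le n$ and abbreviate $\lambda := \lambda_i(M_n)$. Choose a unit eigenvector $u = \begin{pmatrix} w \\ c \end{pmatrix}$ of $M_n$ for the eigenvalue $\lambda$, with $w \in \mathbb{C}^{n-1}$ and $c \in \mathbb{C}$. Reading off the top $n-1$ rows and the bottom row of $M_n u = \lambda u$ gives
$$ (M_{n-1} - \lambda I) w = -c X \qquad \text{and} \qquad X^\ast w + m_{nn} c = \lambda c. $$
The first thing to establish is that the hypothesis on $X$ makes $M_{n-1} - \lambda I$ invertible and $c \neq 0$. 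For invertibility: if $M_{n-1} z = \lambda z$ for some unit vector $z$, pairing the first identity with $z$ and using that $M_{n-1}$ is Hermitian forces $c \langle X, z \rangle = 0$; once we also know $c \neq 0$ this contradicts the assumption that $X$ is not orthogonal to any eigenvector of $M_{n-1}$, so $\lambda = \lambda_i(M_n)$ is not an eigenvalue of $M_{n-1}$. For $c \neq 0$: if $c = 0$ then $w$ would be a nonzero vector with $(M_{n-1} - \lambda I) w = 0$ and, from the second identity, $X^\ast w = 0$, i.e.\ an eigenvector of $M_{n-1}$ orthogonal to $X$ — again impossible. (Both facts also follow from the strictness of the Cauchy interlacing inequalities \eqref{eq:cauchy} under this hypothesis.) A useful byproduct is that $\lambda_j(M_{n-1}) \neq \lambda_i(M_n)$ for every $j$, so the denominators in \eqref{eq:interlaceeq} never vanish.

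With $M_{n-1} - \lambda I$ invertible, I would solve the first identity for $w = -c\,(M_{n-1} - \lambda I)^{-1} X$, insert it into the second, and cancel the nonzero scalar $c$ to arrive at
$$ m_{nn} - \lambda_i(M_n) = X^\ast (M_{n-1} - \lambda_i(M_n) I)^{-1} X. $$
To finish, expand the resolvent in an orthonormal eigenbasis of $M_{n-1}$, writing $(M_{n-1} - \lambda I)^{-1} = \sum_{j=1}^{n-1} (\lambda_j(M_{n-1}) - \lambda)^{-1} v_j(M_{n-1}) v_j(M_{n-1})^\ast$; substituting this into the previous display turns the right-hand side into $\sum_{j=1}^{n-1} |v_j(M_{n-1})^\ast X|^2 / (\lambda_j(M_{n-1}) - \lambda_i(M_n))$, which is exactly \eqref{eq:interlaceeq}.

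I expect the only point requiring care to be the reduction to the invertible case — that is, checking that $\lambda_i(M_n)$ avoids the spectrum of $M_{n-1}$ and that the last coordinate $c$ of the eigenvector is nonzero; the rest is routine linear algebra. An alternative, essentially equivalent, route bypasses eigenvectors altogether: once one knows $\det(M_{n-1} - \lambda I) \neq 0$, the Schur determinant identity gives $\det(M_n - \lambda I) = \det(M_{n-1} - \lambda I)\bigl(m_{nn} - \lambda - X^\ast (M_{n-1} - \lambda I)^{-1} X\bigr)$, and setting $\lambda = \lambda_i(M_n)$ (so the left side vanishes while the first determinant on the right does not) yields the displayed identity for $m_{nn} - \lambda_i(M_n)$ directly, after which the same spectral expansion completes the proof.
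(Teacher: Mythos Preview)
The paper does not supply its own proof of this lemma; it simply quotes the statement from \cite{TVuniv}. Your argument is correct and is exactly the standard derivation of the secular equation: read off the two blocks of the eigenvalue equation, verify that the last coordinate $c$ is nonzero and that $\lambda_i(M_n)$ avoids the spectrum of $M_{n-1}$, solve for $w$, substitute, and expand the resolvent spectrally. The alternative Schur-determinant route you sketch is equally valid and leads to the same identity.

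One small caveat worth flagging: your verification that $c\neq 0$ uses that ``$X$ is not orthogonal to any eigenvector of $M_{n-1}$'', whereas the stated hypothesis only says $X$ is not orthogonal to the chosen orthonormal eigenbasis $v_j(M_{n-1})$. These coincide when the spectrum of $M_{n-1}$ is simple --- the only regime in which the lemma is actually applied in this survey (cf.\ Theorem~\ref{thm:simplespectrum} and Remark~\ref{rem:strictinter}) --- but if $M_{n-1}$ had a repeated eigenvalue one could have $v_j(M_{n-1})^\ast X\neq 0$ for every $j$ yet still find an eigenvector in a multi-dimensional eigenspace orthogonal to $X$. This is a wording issue with the lemma rather than a defect in your argument.
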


The following lemma is needed when one wants to consider the coordinates of an eigenvector.  

\begin{lemma}[\cite{ESY1}; Lemma 41 from \cite{TVuniv}] \label{lemma:coordinate}
Let
$$ M_n = \begin{pmatrix} M_{n-1} & X \\ X^\ast & m_{nn} \end{pmatrix} $$
be an $n \times n$ Hermitian matrix, where $M_{n-1}$ is the upper-left $(n-1) \times (n-1)$ minor of $M_n$, $X \in \mathbb{C}^{n-1}$, and $m_{nn} \in \mathbb{R}$.  Let $\begin{pmatrix} v \\ x \end{pmatrix}$ be a unit eigenvector of $\lambda_i(M_n)$, where $v \in \mathbb{C}^{n-1}$ and $x \in \mathbb{C}$.  Suppose that none of the eigenvalues of $M_{n-1}$ are equal to $\lambda_i(M_n)$.  Then
$$ |x|^2 = \frac{1}{1 + \sum_{j=1}^{n-1} (\lambda_j(M_{n-1}) - \lambda_i(M_n))^{-2} |v_j(M_{n-1})^\ast X|^2}, $$
where $v_j(M_{n-1})$ is a unit eigenvector corresponding to the eigenvalue $\lambda_j(M_{n-1})$. 
\end{lemma}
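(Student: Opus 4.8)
The plan is to work directly from the block eigenvector equation together with the unit-norm constraint, in exactly the same spirit as the derivation of the interlacing identity in Lemma \ref{lemma:interlace}. Write $\lambda := \lambda_i(M_n)$ and let $\binom{v}{x}$ be the given unit eigenvector, with $v \in \mathbb{C}^{n-1}$ and $x \in \mathbb{C}$. Expanding $M_n \binom{v}{x} = \lambda \binom{v}{x}$ in block form produces the two relations
$$ (M_{n-1} - \lambda I) v = -x X, \qquad X^\ast v + m_{nn} x = \lambda x. $$
Only the first relation is needed here; the second is the secular equation exploited in Lemma \ref{lemma:interlace}.

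First I would invert: since by hypothesis $\lambda$ is not an eigenvalue of $M_{n-1}$, the matrix $M_{n-1} - \lambda I$ is invertible, so the first relation gives $v = -x (M_{n-1} - \lambda I)^{-1} X$. (In particular $x \neq 0$, for if $x = 0$ then the unit vector $\binom{v}{0}$ would force $v$ to be a nonzero eigenvector of $M_{n-1}$ with eigenvalue $\lambda$, contradicting the hypothesis.) Substituting this expression for $v$ into $\|v\|^2 + |x|^2 = 1$ yields
$$ |x|^2 \left( 1 + \left\| (M_{n-1} - \lambda I)^{-1} X \right\|^2 \right) = 1. $$

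It then remains to evaluate the norm on the right. Using the spectral decomposition $M_{n-1} = \sum_{j=1}^{n-1} \lambda_j(M_{n-1})\, v_j(M_{n-1}) v_j(M_{n-1})^\ast$ with $\{v_j(M_{n-1})\}$ orthonormal, one has $(M_{n-1} - \lambda I)^{-1} X = \sum_{j=1}^{n-1} (\lambda_j(M_{n-1}) - \lambda)^{-1} (v_j(M_{n-1})^\ast X)\, v_j(M_{n-1})$, so by orthonormality
$$ \left\| (M_{n-1} - \lambda I)^{-1} X \right\|^2 = \sum_{j=1}^{n-1} \frac{|v_j(M_{n-1})^\ast X|^2}{(\lambda_j(M_{n-1}) - \lambda)^2}. $$
Plugging this into the previous display and solving for $|x|^2$ gives the asserted formula. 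There is no genuine obstacle here: the argument is a short linear-algebra computation, and the only points demanding any care are the invertibility of $M_{n-1} - \lambda I$ (immediate from the hypothesis) and keeping track of the conjugate transposes so that the argument goes through verbatim in the Hermitian, not merely real symmetric, case.
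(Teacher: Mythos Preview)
Your proof is correct and is the standard argument. Note that the paper does not actually supply a proof of this lemma: it is quoted verbatim from \cite{ESY1} and \cite[Lemma~41]{TVuniv}, so there is no ``paper's own proof'' to compare against. What you have written is precisely the derivation one finds in those references---invert $M_{n-1}-\lambda I$ in the first block equation, substitute into the unit-norm constraint, and expand via the spectral decomposition of $M_{n-1}$---so nothing further is required.
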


\subsection{Spectral norm}
We will make use of the following bound for the spectral norm of a Wigner matrix with sub-gaussian entries.  

\begin{lemma}[Spectral norm of a Wigner matrix; Lemma 5 from \cite{OT}] \label{lemma:norm}
Let $\xi$, $\zeta$ be real sub-gaussian random variables with mean zero, and assume $\xi$ has unit variance.  Let $W$ be an $n \times n$ Wigner random matrix with atom variables $\xi$, $\zeta$.  Then there exists constants $C_0, c_0 > 0$ (depending only on the sub-gaussian moments of $\xi$ and $\zeta$) such that $\|W\| \leq C_0 \sqrt{n}$ with probability at least $1 - C_0 \exp(-c_0 n)$.
\end{lemma}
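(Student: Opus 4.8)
The plan is to prove the bound by a standard $\eps$-net argument, exploiting the fact that for each fixed unit vector the quadratic form $x^{\mathrm T} W x$ is, despite appearances, an ordinary sum of independent mean-zero sub-gaussian random variables. First I would fix a $1/4$-net $\mathcal N$ of the unit sphere $S^{n-1}$, which can be taken of cardinality $|\mathcal N| \le 9^n$, and recall the deterministic estimate $\|W\| \le 2 \max_{x \in \mathcal N} |x^{\mathrm T} W x|$, valid because $W$ is symmetric (see, e.g., \cite{RV}). This reduces the problem to a pointwise tail bound: it suffices to show that for each fixed $x \in S^{n-1}$ one has $\Prob(|x^{\mathrm T} W x| > t \sqrt n) \le 2\exp(-c t^2 n)$ with $c>0$ depending only on the sub-gaussian moments of $\xi,\zeta$, and then take a union bound over $\mathcal N$, choosing the final constant $C_0$ large enough that $9^n \cdot 2\exp(-c C_0^2 n)$ is at most $\exp(-c_0 n)$ for some $c_0 > 0$.

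For the pointwise bound I would expand
$$ x^{\mathrm T} W x = \sum_{i=1}^n w_{ii} x_i^2 + \sum_{1 \le i < j \le n} 2 w_{ij} x_i x_j . $$
Since $\{w_{ij} : 1 \le i \le j \le n\}$ is an independent collection of mean-zero variables, the right-hand side is a linear combination of independent sub-gaussian random variables with deterministic coefficients: the $(i,i)$ term has sub-gaussian norm at most $\|\zeta\|_{\psi_2}\, x_i^2$ and the $(i,j)$ term at most $2\|\xi\|_{\psi_2}\, |x_i x_j|$. The general Hoeffding inequality for sums of independent sub-gaussian variables then gives $\Prob(|x^{\mathrm T} W x| > u) \le 2\exp(-c u^2 / \sigma^2)$ with
$$ \sigma^2 \le C \Big( \|\zeta\|_{\psi_2}^2 \sum_{i} x_i^4 + \|\xi\|_{\psi_2}^2 \sum_{i<j} x_i^2 x_j^2 \Big) \le C\big( \|\xi\|_{\psi_2}^2 + \|\zeta\|_{\psi_2}^2 \big), $$
where the last inequality uses $\sum_i x_i^4 \le (\sum_i x_i^2)^2 = 1$ and $\sum_{i<j} x_i^2 x_j^2 \le \tfrac12(\sum_i x_i^2)^2 = \tfrac12$. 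Taking $u = t\sqrt n$ yields exactly the pointwise bound required above; note that unit variance of $\xi$ is not needed, only the sub-gaussian tails.

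The main thing to get right --- and essentially the only non-bookkeeping step --- is the observation in the second paragraph that $x^{\mathrm T} W x$ is a genuine \emph{linear} form in the independent entries $w_{ij}$ ($i \le j$), so that the elementary Hoeffding bound suffices and no Hanson--Wright-type quadratic-form inequality is needed; once that is noticed, the exponential probability bound and the net cardinality $9^n$ combine in the usual way to absorb the entropy factor. (As an alternative to the entire argument, one could instead invoke the moment/trace method or simply cite \cite[Corollary 2.3.6]{Tao-book}, which already gives $\|W\| = O(\sqrt n)$ with exponentially small failure probability under sub-gaussian tails.)
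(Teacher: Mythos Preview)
Your argument is correct. The paper does not prove this lemma at all; it is simply quoted from \cite{OT} as a known tool, so there is nothing to compare against. Your $\eps$-net approach---reducing $\|W\|$ to $2\max_{x\in\mathcal N}|x^{\mathrm T}Wx|$ via symmetry, then recognizing $x^{\mathrm T}Wx$ as a \emph{linear} (not quadratic) functional in the independent upper-triangular entries and applying sub-gaussian Hoeffding with a bounded variance proxy---is exactly the standard route (essentially the proof behind \cite[Corollary~2.3.6]{Tao-book} and the treatment in \cite{RV}), and all the constants track correctly. The only cosmetic point is that your remark ``unit variance of $\xi$ is not needed'' is true for the upper bound, but the statement as formulated in the paper includes that hypothesis because it is used elsewhere; it does no harm here.
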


\subsection{Local semicircle law}
While Wigner's semicircle law (see, for example, \cite[Theorem 2.5]{BS}) describes the global behavior of the eigenvalues of a Wigner matrix, the local semicircle law describes the fine-scale behavior of the eigenvalues.  Many authors have proved versions of the local semicircle law under varying assumptions; we refer the reader to \cite{E,EKY,EKYY,EYTV, ESY2,ESY1,ESY3,ESY4,ESYY,EY,EYY2,EYY1,EYY,LY,TVedge,TVuniv,TVmeh,TVsur} and references therein.  

The local semicircle law stated below was proved by Lee and Yin in \cite{LY}.  We let $\rho_{\mathrm{sc}}$ denote the density of the semicircle distribution defined by 
\begin{equation} \label{eq:def:rho}
	\rho_{\mathrm{sc}}(x) := \left\{
	\begin{array}{ll}
		\frac{1}{2 \pi} \sqrt{4 - x^2}, & |x| \leq 2, \\
		0, & |x| > 2.
	\end{array} \right. 
\end{equation}

\begin{theorem}[Theorem 3.6 from \cite{LY}] \label{thm:dim} 
Let $\xi$, $\zeta$ be real sub-gaussian random variables with mean zero, and assume $\xi$ has unit variance. Let $W$ be an $n \times n$ Wigner matrix with atom variables $\xi$, $\zeta$. Let $N_I$ be the number of eigenvalues of $\frac{1}{ \sqrt{n}}W$ in the interval $I$. Then, there exist constants $C,c,c' > 0$ (depending on the sub-gaussian moments of $\xi$ and $\zeta$) such that, for any interval $I \subset \mathbb{R}$,  
$$ \Prob \left( \left| N_I - n\int_{I} \rho_{\mathrm{sc}}(x) dx \right| \geq (\log n)^{c' \log \log n} \right) \leq C \exp \left(- c (\log n)^{c \log \log n} \right). $$
\end{theorem}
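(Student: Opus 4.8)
The plan is to follow the standard resolvent (Stieltjes transform) route to the local semicircle law developed by Erd\H{o}s, Schlein, Yau and Erd\H{o}s, Yau, Yin; the slightly unusual error rate $(\log n)^{c'\log\log n}$ and the matching probability bound will come from running the estimates with moments of order a power of $\log n$ rather than a power of $n$. Write $H := \frac{1}{\sqrt n} W$, let $G(z) := (H - zI)^{-1}$ for $z = E + i\eta$, $\eta > 0$, let $m_n(z) := \frac1n \tr G(z) = \frac1n \sum_{i=1}^n G_{ii}(z)$ be its Stieltjes transform, and let $m_{\mathrm{sc}}(z) := \int \frac{\rho_{\mathrm{sc}}(x)}{x-z}\,dx$ be the Stieltjes transform of the density in \eqref{eq:def:rho}, the unique root of $m_{\mathrm{sc}}(z)^2 + z\,m_{\mathrm{sc}}(z) + 1 = 0$ with $\Im m_{\mathrm{sc}}(z) > 0$. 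The target is a \emph{local law}: uniformly for $z$ in a suitable domain with $\eta$ slightly above $1/n$,
\[ |m_n(z) - m_{\mathrm{sc}}(z)| \le \frac{(\log n)^{a\log\log n}}{n\eta} \]
off an event of probability at most $C\exp(-c(\log n)^{c\log\log n})$; the statement about $N_I$ is then extracted from this by a contour/Helffer--Sj\"ostrand argument.

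First I would derive an approximate self-consistent equation for $m_n$. Removing the $i$th row and column of $H$ and applying the Schur complement formula (in the spirit of Lemma \ref{lemma:interlace} and Lemma \ref{lemma:coordinate}) gives
\[ G_{ii}(z) = \frac{1}{h_{ii} - z - \mathbf{a}_i^{*} G^{(i)}(z)\,\mathbf{a}_i}, \]
where $\mathbf{a}_i$ is the $i$th column of $H$ with its $i$th entry deleted and $G^{(i)}$ is the resolvent of the corresponding $(n-1)\times(n-1)$ minor. Since $\mathbf{a}_i$ is independent of $G^{(i)}$ with independent mean-zero sub-gaussian coordinates of variance $1/n$, a large-deviation bound for quadratic forms (Hanson--Wright, in its sub-gaussian form) shows $\mathbf{a}_i^{*} G^{(i)} \mathbf{a}_i$ concentrates around $\frac1n \tr G^{(i)}(z)$, which by interlacing \eqref{eq:interlaceeq} differs from $m_n(z)$ by $O(\tfrac{1}{n\eta})$. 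Hence $G_{ii}(z) = \frac{1}{-z - m_n(z)} + \Delta_i(z)$ with controllable errors $\Delta_i$, and averaging over $i$ yields $m_n(z)^2 + z\,m_n(z) + 1 = \delta_n(z)$ where $\delta_n$ collects the error contributions.

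Next comes the stability and bootstrap step. Away from $\pm 2$ the equation $w^2 + zw + 1 = \delta$ is stable, so $|m_n - m_{\mathrm{sc}}| \lesssim |\delta_n|$ once one knows a priori that $|m_n - m_{\mathrm{sc}}|$ is small; this is seeded by continuity in $\eta$ from $\eta = O(1)$, where both transforms are visibly close and the bound $\|H\| \le C_0$ from Lemma \ref{lemma:norm} controls everything, and then propagated down to $\eta \approx 1/n$ along a fine grid in $z$. Near the edges stability degenerates like the square-root vanishing of $\rho_{\mathrm{sc}}$, so one must track the factor $\sqrt{\kappa + \eta}$ throughout (with $\kappa$ the distance from $E$ to $\pm 2$), as in \cite{EYY, LY}; this is what buys uniformity over \emph{all} intervals $I$, including those meeting the edge. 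Crucially, the crude size of $\delta_n$ is $(n\eta)^{-1/2}$, but the \emph{fluctuation averaging} mechanism---expanding $\frac1n\sum_i \Delta_i$ and exploiting cancellation among off-diagonal resolvent entries---improves it to $(n\eta)^{-1}$ up to logarithmic factors, which is what makes the resulting counting-function error essentially $\eta$-independent.

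The main obstacle, and the source of the unusual rate, is the probabilistic bookkeeping behind the last two steps: to reach failure probability $C\exp(-c(\log n)^{c\log\log n})$ rather than $n^{-D}$ one cannot take $n^{\epsilon}$-th moments of the fluctuations, so instead one estimates the $p$th moments of the quadratic-form errors and of the fluctuation-averaging errors for $p$ of order $(\log n)^{\log\log n}$, using the sub-gaussian hypothesis on $\xi,\zeta$ to keep the combinatorial constants in the resolvent expansions bounded, and then applies Markov's inequality at this moment order, finally taking a union bound over a polynomial net in $z$ (legitimate because $m_n$ is Lipschitz in $z$ with constant $\eta^{-2}$) and over $I$. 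Once the local law is established, I would convert it: using $N_I = \frac{n}{\pi}\int_I \Im m_n(E+i\eta)\,dE$ together with a boundary correction (equivalently the Helffer--Sj\"ostrand representation applied to a smoothed indicator of $I$), and choosing $\eta$ of order $(\log n)^{b\log\log n}/n$, the bound on $m_n - m_{\mathrm{sc}}$ integrates to $|N_I - n\int_I \rho_{\mathrm{sc}}(x)\,dx| \le (\log n)^{c'\log\log n}$ off the stated exceptional event, which is the assertion of the theorem.
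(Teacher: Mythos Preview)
The paper does not prove this theorem at all: it is stated as ``Theorem 3.6 from \cite{LY}'' in the section of tools quoted from the literature, and is simply invoked as a black box. So there is no ``paper's own proof'' to compare against.

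That said, your sketch is a faithful outline of the resolvent/self-consistent-equation method that underlies the cited result of Lee--Yin (and the earlier Erd\H{o}s--Schlein--Yau and Erd\H{o}s--Yau--Yin papers): Schur complement to get an approximate equation for the diagonal resolvent entries, Hanson--Wright to control the quadratic-form errors, stability of the quadratic self-consistent equation, fluctuation averaging to go from $(n\eta)^{-1/2}$ to $(n\eta)^{-1}$, a continuity/bootstrap in $\eta$, and finally Helffer--Sj\"ostrand to pass from the Stieltjes transform to eigenvalue counts. You have also correctly identified the one feature that distinguishes the $(\log n)^{c\log\log n}$ formulation from the more common $n^{-D}$ versions, namely that the high-moment bounds are run at moment order $p \asymp (\log n)^{\log\log n}$ rather than $p \asymp n^{\varepsilon}$. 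This is exactly the bookkeeping convention in \cite{EYY} and \cite{LY}. For the purposes of this survey your sketch is more than adequate; if you were writing it up in full, the genuinely delicate parts you would need to flesh out are the edge stability (tracking the $\sqrt{\kappa+\eta}$ degeneration carefully) and the combinatorics of fluctuation averaging, both of which are lengthy but by now standard.
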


It will occasionally be useful to avoid the $(\log n)^{c' \log \log n}$ term present in Theorem \ref{thm:dim} in favor of a bound of the form $(\log n)^{c'}$.  In these cases, the following result will be useful.  

\begin{theorem} \label{thm:dim2} 
Let $\xi$, $\zeta$ be real sub-gaussian random variables with mean zero, and assume $\xi$ has unit variance. Let $W$ be an $n \times n$ Wigner matrix with atom variables $\xi$, $\zeta$. Let $N_I$ be the number of eigenvalues of $\frac{1}{ \sqrt{n}}W$ in the interval $I$. Then, there exist constants $C,c,c' > 0$ (depending on the sub-gaussian moments of $\xi$ and $\zeta$) such that, for any interval $I \subset \mathbb{R}$,  
$$ \Prob \left( N_I  \geq (1.1) n \int_{I} \rho_{\mathrm{sc}}(x) dx +  (\log n)^{c'} \right) \leq  C \exp \left(- c \log^2 n \right). $$ 
\end{theorem}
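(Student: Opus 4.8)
The statement of Theorem \ref{thm:dim2} is a weaker form of the local semicircle law of Theorem \ref{thm:dim}: it trades the very strong probability bound $1-C\exp(-c(\log n)^{c\log\log n})$ for a correspondingly larger — but still only polylogarithmic — error term, together with the harmless multiplicative slack $1.1$. The plan is therefore to reproduce the argument behind Theorem \ref{thm:dim} (as in \cite{LY}; see also \cite{EYY,EKYY}), changing only the ``large-deviation parameter'' that controls the accuracy of the underlying estimate on the Stieltjes transform. First, by Lemma \ref{lemma:norm} we may restrict attention to a fixed box $[-C_0,C_0]$: with probability at least $1-C_0\exp(-c_0 n)$ the matrix $\frac1{\sqrt n}W$ has no eigenvalues outside $[-C_0,C_0]$, so we may replace $I$ by $I\cap[-C_0,C_0]$ while leaving $n\int_I\rho_{\mathrm{sc}}$ unchanged. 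It then suffices to treat a fixed interval $I=[E-\ell,E+\ell]\subseteq[-C_0,C_0]$.

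Write $m_n(z):=\frac1n\tr\bigl(\frac1{\sqrt n}W-z\bigr)^{-1}$ and let $m_{\mathrm{sc}}(z):=\int\frac{\rho_{\mathrm{sc}}(x)}{x-z}\,dx$ be the Stieltjes transform of the semicircle density \eqref{eq:def:rho}. The key input is a \emph{weak} local law: there exist constants $A,C_1,c_1>0$, depending only on the sub-gaussian moments of $\xi$ and $\zeta$, such that with probability at least $1-C_1\exp(-c_1\log^2 n)=1-C_1 n^{-c_1\log n}$,
\begin{equation} \label{eq:weaklaw-plan}
	\bigl| m_n(z)-m_{\mathrm{sc}}(z)\bigr|\le (\log n)^{C_1}\left(\frac{1}{n\eta}+\sqrt{\frac{\Im m_{\mathrm{sc}}(z)}{n\eta}}\,\right)
\end{equation}
uniformly for all $z=x+i\eta$ with $|x|\le C_0$ and $(\log n)^{A}/n\le\eta\le 10$. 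This is obtained by the standard self-consistent equation analysis (Schur complement expansion together with concentration of quadratic forms $v^{\ast}Rv$), exactly as in the proof of Theorem \ref{thm:dim}; the only difference is that to reach the probability $1-\exp(-c\log^2 n)$ one takes the control parameter in those concentration estimates to be a fixed power of $\log n$ rather than $(\log n)^{c\log\log n}$, and this is precisely what produces the polylogarithmic factor in \eqref{eq:weaklaw-plan} in place of $(\log n)^{c'\log\log n}$.

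Given \eqref{eq:weaklaw-plan}, one passes from $m_n$ to $N_I$ by a Helffer--Sj\"ostrand argument: choose a smooth cutoff $f$ equal to $1$ on a slight enlargement of $I$ and supported on a further enlargement by $\delta\asymp(\log n)^{A}/n$, write $\sum_j f(\lambda_j)$ as $\frac n\pi\int_{\mathbb C}\partial_{\bar z}\tilde f(z)\,m_n(z)\,dx\,dy$ for an almost-analytic extension $\tilde f$, replace $m_n$ by $m_{\mathrm{sc}}$ (so the main term becomes $\frac n\pi\int\partial_{\bar z}\tilde f\,m_{\mathrm{sc}}=n\int f\,\rho_{\mathrm{sc}}$), and bound the error using \eqref{eq:weaklaw-plan}, treating the strip $\eta<(\log n)^{A}/n$ separately via the monotonicity of $\eta\mapsto\eta\,\Im m_n(x+i\eta)$ as in \cite{LY}. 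Since $N_I\le\sum_j f(\lambda_j)$ and $\rho_{\mathrm{sc}}$ is bounded, this yields $N_I\le n\int_I\rho_{\mathrm{sc}}+C(\log n)^{C_2}$ for a suitable $C_2$, which is stronger than the claim. The factor $1.1$ in the statement is deliberate slack: it lets one avoid the more delicate bookkeeping near the spectral edge (where the square-root vanishing of $\rho_{\mathrm{sc}}$ makes the passage from $\Im m_{\mathrm{sc}}$ to the local density less transparent) and near the cutoff scale $\eta\asymp(\log n)^{A}/n$, so that a crude, non-sharp-constant form of the Helffer--Sj\"ostrand estimate already suffices.

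The main obstacle is establishing \eqref{eq:weaklaw-plan} with the claimed probability all the way down to the mesoscopic scale $\eta\asymp(\log n)^{A}/n$, uniformly in $x\in[-C_0,C_0]$: this is where the bulk of the local-law machinery resides (a bootstrap/continuity argument in $\eta$ propagating control from the macroscopic scale $\eta\asymp 1$ downward, using stability of the self-consistent equation together with the concentration estimates run with a $(\log n)^{O(1)}$ control parameter), and the near-edge case has to be carried through with this weaker probabilistic input. A secondary point requiring care is the contribution of the strip $\eta<(\log n)^{A}/n$ to the Helffer--Sj\"ostrand integral, which must be shown to add only $O(\log^{C}n)$ to $N_I$; here one exploits that $N_I$ for such a short interval is itself controlled by $\eta\,\Im m_n$ evaluated at the threshold scale.
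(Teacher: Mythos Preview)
Your outline is correct and complete as a sketch: trading the control parameter in the large-deviation estimates for quadratic forms down from $(\log n)^{c\log\log n}$ to a fixed power of $\log n$ is exactly how one converts the stronger probability bound of Theorem~\ref{thm:dim} into the weaker one here, and the Helffer--Sj\"ostrand passage from $m_n$ to $N_I$ then goes through with only polylogarithmic loss.

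The paper itself does not give a proof at all; it simply records that the statement ``follows from the arguments in \cite{TVuniv, VW} (see \cite[Proposition 66]{TVuniv} for details).'' So your write-up is not so much a different route as a fleshed-out version of what the paper defers to the literature. The only mild discrepancy is the choice of source: you frame the argument via the Lee--Yin/Erd\H{o}s--Yau--Yin machinery behind Theorem~\ref{thm:dim}, whereas the paper points to the earlier Tao--Vu local semicircle law (Proposition~66 of \cite{TVuniv}) together with \cite{VW}. Both lines rest on the same self-consistent equation for $m_n$ and the same concentration input for quadratic forms; the Tao--Vu version already lives at the $\exp(-c\log^2 n)$ probability scale with $(\log n)^{O(1)}$ error, so citing it avoids having to explain the parameter trade-off you describe, but your approach has the virtue of deriving everything from the single black box of Theorem~\ref{thm:dim}'s proof.
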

\begin{remark}
The constant $1.1$ appearing in Theorem \ref{thm:dim2} can be replaced by any absolute constant larger than one.  
\end{remark}

Theorem \ref{thm:dim2} follows from the arguments in \cite{TVuniv, VW} (see \cite[Proposition 66]{TVuniv} for details).

\subsection{Smallest singular value}
We will need the following result concerning the least singular value of a rectangular random matrix with iid entries.  For an $N \times n$ matrix $M$, we let $\sigma_1(M) \geq \cdots \geq \sigma_n(M)$ denote the ordered eigenvalues of $\sqrt{M^\ast M}$.  (Note that the non-zero eigenvalues of $\sqrt{M M^\ast}$ are the same as the non-zero eigenvalues of $\sqrt{M^\ast M}$.)  In particular, $\sigma_1(M) = \|M\|$, and $\sigma_n(M)$ is called the smallest singular value of $M$.  We refer the interested reader to \cite{RVsurvey} for a wonderful survey on the non-asymptotic theory of extreme singular values of random matrices with independent entires.  

\begin{theorem}[Theorem 1.1 from \cite{RVssv}] \label{thm:lsv2}
Let $B$ be an $N \times n$ random matrix, $N \geq n$, whose elements are independent copies of a mean zero sub-gaussian random variable $\xi$ with unit variance.  Then, for every $\eps > 0$, 
$$ \Prob \left( \sigma_n(B) \leq \eps \left( \sqrt{N} - \sqrt{n-1} \right) \right) \leq (C \eps)^{N - n + 1} + e^{-cN}, $$
where $C,c > 0$ depend only on the sub-gaussian moment of $\xi$.  
\end{theorem}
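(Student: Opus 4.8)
The plan is to follow the compressible/incompressible decomposition of the sphere introduced by Litvak, Pajor, Rudelson and Tomczak-Jaegermann and sharpened by Rudelson and Vershynin. Writing $\sigma_n(B)=\inf_{x\in S^{n-1}}\|Bx\|$, I would fix a small constant $\delta>0$ and call a unit vector $x$ \emph{compressible} if it lies within Euclidean distance $\delta$ of a vector supported on at most $\delta n$ coordinates, and \emph{incompressible} otherwise. Since the infimum over $S^{n-1}$ is the minimum of the infima over the two pieces, it suffices to bound separately the probability that $\|Bx\|$ is small on each piece.

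For compressible vectors I would argue by an $\eps$-net. For a \emph{fixed} unit $x$, the quantity $\|Bx\|^2=\sum_{i=1}^N\langle R_i,x\rangle^2$ (the $R_i$ being the rows of $B$) is a sum of $N$ independent sub-exponential random variables of mean $1$, so $\|Bx\|\ge c_0\sqrt N$ outside an event of probability $e^{-c_1N}$. The set of unit vectors supported on $\lceil\delta n\rceil$ fixed coordinates admits a $\gamma$-net (for a small \emph{constant} $\gamma$, independent of $\eps$) of size $(C/\gamma)^{\delta n}$, so the whole compressible set has one of size at most $\binom{n}{\lceil\delta n\rceil}(C/\gamma)^{\delta n}\le e^{c_2(\delta)n}$ with $c_2(\delta)\to0$ as $\delta\to0$. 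Combining a union bound over this net with the operator norm bound $\|B\|\le C'\sqrt N$ (another standard net estimate; cf.\ Lemma \ref{lemma:norm}) to pass from the net to all compressible vectors, and using $N\ge n$, I obtain $\inf_{x\ \text{compr.}}\|Bx\|\ge\tfrac12c_0\sqrt N$ with probability $1-e^{-cN}$ once $\delta$ is small enough that $c_2(\delta)<c_1$. As $\eps(\sqrt N-\sqrt{n-1})\le\eps\sqrt N<\tfrac12c_0\sqrt N$ for small $\eps$ (the statement being vacuous otherwise), this disposes of the compressible part with room to spare.

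For incompressible vectors I would use invertibility via distance. Every incompressible unit $x$ has a set $\sigma(x)\subset[n]$ with $|\sigma(x)|\ge c_3n$ and $|x_k|\ge c_4/\sqrt n$ for $k\in\sigma(x)$; writing $Bx=\sum_j x_jB^{(j)}$ in terms of the columns and setting $H_k:=\Span\{B^{(j)}:j\ne k\}$, one has $\|Bx\|\ge|x_k|\,\dist(B^{(k)},H_k)\ge(c_4/\sqrt n)\dist(B^{(k)},H_k)$ for every $k\in\sigma(x)$. A Markov-type averaging over the indices $k$ for which this distance is small then gives, for every $t>0$,
$$ \Prob\Big(\inf_{x\ \text{incompr.}}\|Bx\|\le\tfrac{c_4t}{\sqrt n}\Big)\le\frac{1}{c_3n}\sum_{k=1}^n\Prob\big(\dist(B^{(k)},H_k)\le t\big). $$
Each $B^{(k)}$ has i.i.d.\ sub-gaussian entries and is independent of $H_k$, and $\dim H_k^\perp=m:=N-n+1$ with probability $1-e^{-cn}$, so $\dist(B^{(k)},H_k)=\|P_{H_k^\perp}B^{(k)}\|$ is the norm of the projection of a sub-gaussian vector onto an independent $m$-dimensional subspace. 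Choosing $t$ of order $\eps\sqrt n\,(\sqrt N-\sqrt{n-1})$ makes $c_4t/\sqrt n$ comparable to $\eps(\sqrt N-\sqrt{n-1})$ and reduces everything to the small-ball bound $\Prob(\|P_EB^{(k)}\|\le\eps'\sqrt m\mid H_k)\le(C\eps')^m$ for an arbitrary $m$-dimensional subspace $E$.

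That small-ball estimate, with the \emph{sharp} $(C\eps')^m$ exponent, is the part I expect to be the main obstacle. When $m$ is a constant proportion of $N$ it follows from a $3$-net on the unit sphere of $E$, a one-dimensional L\'evy-concentration bound for linear forms $\sum_j\theta_j\xi_j$, and a Rogozin/Rudelson--Vershynin tensorization inequality converting per-direction small-ball bounds into the $m$-th power bound. When $m$ is small compared to $N$ --- in the extreme $m=1$ this is exactly the invertibility problem for square matrices --- the concentration function of a linear form need not be $O(\eps')$ in every direction (for instance, for Bernoulli $\xi$ along a coordinate axis it is bounded below by a constant), and one must instead invoke the Littlewood--Offord / least-common-denominator machinery for sub-gaussian vectors to control $\sup_\theta\mathcal L(\langle\theta,B^{(k)}\rangle,\eps')$ over the directions that actually occur; this is also where the sub-gaussian hypothesis is genuinely used. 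Granting this estimate, substituting $t=\eps'\sqrt m$ into the averaging bound above, reconciling the scales, and adding the compressible contribution produces the asserted bound $(C\eps)^{N-n+1}+e^{-cN}$.
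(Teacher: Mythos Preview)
The paper does not give its own proof of this statement: Theorem~\ref{thm:lsv2} is quoted verbatim as ``Theorem~1.1 from \cite{RVssv}'' and used as a black box (it is only invoked later in the proof of Theorem~\ref{thm:uniformsub}). So there is nothing in the paper to compare your argument against.

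That said, your outline is an accurate sketch of the Rudelson--Vershynin proof in \cite{RVssv}: the compressible/incompressible split, the net argument on compressible vectors yielding the $e^{-cN}$ term, the ``invertibility via distance'' reduction on incompressible vectors, and the identification of the sharp small-ball bound $\Prob(\|P_E X\|\le \eps'\sqrt m)\le (C\eps')^m$ for an $m$-dimensional projection as the crux. You are also right that the delicate case is small $m$, where the LCD/Littlewood--Offord machinery is needed rather than a naive tensorization. One minor caution: in the step where you assert $\dim H_k^\perp = N-n+1$ with probability $1-e^{-cn}$, for general sub-gaussian (e.g.\ discrete) entries this requires the same kind of anti-concentration input and is not a free fact; in \cite{RVssv} this is handled within the same framework. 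Otherwise your plan matches the original.
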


Similar bounds for the smallest singular value were also obtained by Vershynin \cite{V} under the assumption that $\xi$ has $4+\eps$ finite moments instead of the sub-gaussian assumption above.  We also refer the reader to \cite{RV} for bounds on the extreme singular values of random matrices with heavy-tailed rows.  

\subsection{Projection lemma}

We will also need the following bound, which follows from the Hanson--Wright inequality (see, for example, \cite[Theorem 1.1]{RVhw}).   

\begin{lemma}[Projection lemma] \label{lemma:qua}
Let $\xi$ be a sub-gaussian random variable with mean zero and unit variance.  Let $B$ be a $n \times m$ random matrix whose entries are iid copies of $\xi$.  Let $H$ be a subspace of $\mathbb{R}^n$ of dimension $d$, and let $P_H$ denote the orthogonal projection onto $H$.  Then there exist constants $C, c>0$ (depending only on the sub-gaussian moment of $\xi$) such that, for any unit vector $y \in \mathbb{R}^m$ and every $t \geq 0$, 
$$ \Prob \left( \left| \|P_H B y \|^2 - d \right| > t \right) \leq C \exp \left(-c \min \left\{ \frac{t^2}{d}, t \right\} \right). $$
\end{lemma}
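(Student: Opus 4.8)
The plan is to rewrite $\|P_H By\|^2$ as a quadratic form in a vector with independent sub-gaussian coordinates and then invoke the Hanson--Wright inequality. First I would set $X := By \in \R^n$. Since $y$ is a fixed unit vector and the entries of $B$ are iid copies of $\xi$, the $i$th coordinate $X_i = \sum_{j=1}^m B_{ij} y_j$ is a linear combination, with coefficients $y_j$, of independent mean-zero unit-variance sub-gaussian random variables; hence $X_i$ has mean zero, variance $\sum_{j=1}^m y_j^2 = 1$, and sub-gaussian norm bounded by an absolute constant times the sub-gaussian norm $K$ of $\xi$ (here we use $\|y\| = 1$). Moreover the coordinates $X_1,\dots,X_n$ are independent, because distinct rows of $B$ are independent. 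Since $P_H$ is an orthogonal projection we have $P_H^\mathrm{T} P_H = P_H^2 = P_H$, so
$$ \|P_H By\|^2 = \|P_H X\|^2 = X^\mathrm{T} P_H X, $$
and, using $\E[X_i X_j] = \delta_{ij}$,
$$ \E\, X^\mathrm{T} P_H X = \sum_{i=1}^n (P_H)_{ii} = \tr(P_H) = d. $$

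Next I would apply the Hanson--Wright inequality in the form of \cite[Theorem 1.1]{RVhw} to the independent sub-gaussian vector $X$ and the matrix $A = P_H$: for every $t \ge 0$,
$$ \Prob\left( \left| X^\mathrm{T} P_H X - \E\, X^\mathrm{T} P_H X \right| > t \right) \le 2\exp\left( -c'\min\left\{ \frac{t^2}{K^4 \|P_H\|_{\mathrm{HS}}^2},\ \frac{t}{K^2 \|P_H\|} \right\} \right), $$
where $c' > 0$ is absolute and $\|\cdot\|_{\mathrm{HS}}$ denotes the Hilbert--Schmidt norm. For an orthogonal projection onto a $d$-dimensional subspace one has $\|P_H\| = 1$ and $\|P_H\|_{\mathrm{HS}}^2 = \tr(P_H^\mathrm{T} P_H) = \tr(P_H) = d$. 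Substituting these values together with $\E\, X^\mathrm{T} P_H X = d$, and absorbing the fixed factors $K^2$ and $K^4$ into a single constant $c$ (which then depends only on the sub-gaussian moment of $\xi$), yields exactly the asserted inequality with $C = 2$.

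I do not expect a genuine obstacle here; the argument is essentially a bookkeeping exercise around a single citation. The only points that deserve a line of care are (i) checking that the coordinates of $By$ remain independent and that their sub-gaussian norm does not blow up --- both consequences of $\|y\| = 1$ and the independence of the rows of $B$ --- and (ii) correctly identifying $\|P_H\| = 1$ and $\|P_H\|_{\mathrm{HS}}^2 = d$ so that the two branches of the Hanson--Wright exponent become $t^2/d$ and $t$, matching the stated $\min\{t^2/d,\,t\}$.
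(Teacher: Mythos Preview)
Your argument is correct, and it uses the same engine (the Hanson--Wright inequality from \cite{RVhw}) as the paper, but the setup is genuinely different. You form the $n$-vector $X=By$, observe that its coordinates are independent (rows of $B$ are independent) and uniformly sub-gaussian (via the standard fact that $\|\sum_j y_j B_{ij}\|_{\psi_2}\le CK\|y\|$), and then apply Hanson--Wright directly to $X^{\mathrm T}P_HX$ with the $n\times n$ projection $P_H$. The paper instead lifts the problem to $\R^{mn}$: it stacks the columns of $B$ into a single $mn$-vector whose entries are literally iid copies of $\xi$, and builds an $mn\times mn$ rank-$d$ projection $\mathcal P$ (depending on $y$ and on an orthonormal basis of $H$) so that $X^{\mathrm T}\mathcal P X=\|P_HBy\|^2$; Hanson--Wright is then applied in this larger space. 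Your route is shorter and avoids constructing $\mathcal P$, at the cost of one extra line verifying the sub-gaussian norm of the linear combinations $X_i$; the paper's route trades that line for the $mn$-dimensional bookkeeping but keeps the entries of the random vector exactly iid, so no appeal to the rotation-type bound for sub-gaussian sums is needed. Both yield the same constants up to absolute factors.
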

\begin{proof}
Let $B_1, \ldots, B_m$ denote the columns of $B$, and set $y = (y_i)_{i=1}^m$.  Since $P_H$ is an orthogonal projection, we write $P_H = \sum_{i=1}^d u_i u_i^\mathrm{T}$, where $\{u_1, \ldots, u_d\}$ is an orthonormal basis of $H$.  

Let $X$ denote the $mn$-vector with iid entries given by 
$$ X := \begin{pmatrix} B_1 \\ \vdots \\ B_m \end{pmatrix}. $$
Define the $mn \times mn$ matrix $\mathcal{P} := \sum_{i=1}^d w_i w_i^\mathrm{T}$, where
$$ w_i := \begin{pmatrix} y_1 u_i \\ \vdots \\ y_m u_i \end{pmatrix}. $$
It follows, from the definitions above, that $X^\mathrm{T} \mathcal{P} X = \| P_H B y \|^2$.  Moreover, since $\{w_1, \ldots, w_d\}$ is an orthonormal set, $\mathcal{P}$ is an orthogonal projection.  Thus, $\|\mathcal{P}\| = 1$ and
$$ \tr \mathcal{P} = \|\mathcal{P}\|^2_2 = \rank(\mathcal{P}) = d. $$

Since the entries of $X$ are iid copies of $\xi$, we find that
$$ \E X^\mathrm{T} \mathcal{P} X = \tr \mathcal{P} = d. $$
So, by the Hanson--Wright inequality (see, for example, \cite[Theorem 1.1]{RVhw}), we obtain, for all $t \geq 0$, 
$$ \Prob \left( \left| X^\mathrm{T} \mathcal{P} X - d \right| > t \right) \leq C \exp \left( - c \min \left\{\frac{t^2}{d}, t \right\} \right), $$
where $C, c > 0$ depends only on the sub-gaussian moment of $\xi$.  
\end{proof}

In the case when $B$ is an $n \times 1$ matrix (i.e. a vector) and $y = 1$, we immediately obtain the following corollary.  

\begin{corollary} \label{cor:projection}
Let $\xi$ be a sub-gaussian random variable with mean zero and unit variance.  Let $X$ be a vector in $\mathbb{R}^n$ whose entries are iid copies of $\xi$.  Let $H$ be a subspace of $\mathbb{R}^n$ of dimension $d$, and let $P_H$ denote the orthogonal projection onto $H$.  Then there exist constants $C, c>0$ (depending only on the sub-gaussian moment of $\xi$) such that, for every $t \geq 0$, 
$$ \Prob \left( \left| \|P_H X \|^2 - d \right| > t \right) \leq C \exp \left(-c \min \left\{ \frac{t^2}{d}, t \right\} \right). $$
\end{corollary}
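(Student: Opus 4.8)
The plan is to obtain this as the special case $m = 1$ of the Projection Lemma (Lemma \ref{lemma:qua}), which I may assume. When $m = 1$, an $n \times 1$ random matrix $B$ with iid copies of $\xi$ as entries is exactly a random vector $X \in \mathbb{R}^n$ with iid entries, and the only unit vectors $y \in \mathbb{R}^1 = \mathbb{R}$ are $y = \pm 1$; for such $y$ one has $By = \pm X$ and hence $\|P_H By\|^2 = \|P_H X\|^2$. Substituting $m = 1$, $y = 1$, and $B = X$ into the conclusion of Lemma \ref{lemma:qua} produces precisely the asserted inequality, with the same constants $C, c > 0$ depending only on the sub-gaussian moment of $\xi$. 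There is no real obstacle here: the corollary is an immediate instance of the lemma.

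If one instead wishes to argue directly (mirroring the proof of Lemma \ref{lemma:qua}), the steps are as follows. First, write the orthogonal projection as $P_H = \sum_{i=1}^d u_i u_i^\mathrm{T}$ for an orthonormal basis $\{u_1, \dots, u_d\}$ of $H$, so that $\|P_H X\|^2 = X^\mathrm{T} P_H X$ is a quadratic form in the iid vector $X$. Second, record the two norm identities $\|P_H\| = 1$ and $\|P_H\|_2^2 = \tr P_H = \rank(P_H) = d$. Third, since the entries of $X$ are iid with mean zero and unit variance, $\E\bigl(X^\mathrm{T} P_H X\bigr) = \tr P_H = d$. Finally, apply the Hanson--Wright inequality (see \cite[Theorem 1.1]{RVhw}) to the quadratic form $X^\mathrm{T} P_H X$, which gives
$$ \Prob \left( \left| X^\mathrm{T} P_H X - d \right| > t \right) \leq C \exp \left( - c \min \left\{ \frac{t^2}{\|P_H\|_2^2}, \frac{t}{\|P_H\|} \right\} \right) = C \exp \left( - c \min \left\{ \frac{t^2}{d}, t \right\} \right), $$
where $C, c > 0$ depend only on the sub-gaussian moment of $\xi$. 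Either route yields the statement; the first is shorter and is the one I would present.
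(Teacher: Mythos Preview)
Your proposal is correct and matches the paper's approach exactly: the paper states just before the corollary that it follows immediately from Lemma \ref{lemma:qua} by taking $B$ to be an $n \times 1$ matrix and $y = 1$. Your optional direct argument via Hanson--Wright is also fine and in fact mirrors the proof of Lemma \ref{lemma:qua} itself.
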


\subsection{Deterministic tools and the equation $Ax = By$}
We now consider the equation $Ax = By$, where $x$ and $y$ are vectors, $A$ is a rectangular matrix, 
and $B$ is a Hermitian matrix.  If $\|x\|$ is small, then $\|Ax \| = \|B y\|$ is also relatively small.  Intuitively, then,
 it must be the case that the vector $y$ is essentially supported on the eigenvectors of $B$ corresponding to small eigenvalues.  
 In the following lemmata, we quantify the structure of $y$ in terms of $\|A\|$ and the spectral decomposition of $B$.  Similar results were implicitly used in \cite{AB}.

\begin{lemma} \label{lemma:structure}
Let $A$ be a $r\times m$ matrix and $B$ be a Hermitian $r \times r$ matrix.  Let $\eps, \tau > 0$.  Let $x$ and $y$ be vectors with $\|x\| \leq \eps \tau$ and $By = Ax$.  Then $y=v+q$, where $v$ and $q$ are orthogonal, $\|q\| \leq \eps$, and 
$$ v \in \Span\{ v_i (B) : |\lambda_i(B)| \leq \tau \|A\| \}. $$
\end{lemma}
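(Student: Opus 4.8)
The plan is to diagonalize $B$ and split the eigenbasis into a "small eigenvalue" part and a "large eigenvalue" part, then show that the component of $y$ in the large-eigenvalue part is controlled by $\|Ax\|$. Concretely, write the spectral decomposition $B = \sum_{i=1}^r \lambda_i(B)\, v_i(B) v_i(B)^\ast$, and let $S := \{ i : |\lambda_i(B)| \le \tau \|A\| \}$, so that its complement is $S^c = \{ i : |\lambda_i(B)| > \tau \|A\| \}$. Define $v := \sum_{i \in S} (v_i(B)^\ast y)\, v_i(B)$ and $q := \sum_{i \in S^c} (v_i(B)^\ast y)\, v_i(B)$. Then $y = v + q$, $v \perp q$ (they are built from orthogonal eigenspaces), and $v \in \Span\{v_i(B) : |\lambda_i(B)| \le \tau\|A\|\}$ by construction. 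It remains only to bound $\|q\|$.

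For the bound on $\|q\|$, the key observation is that $q$ lies in the span of eigenvectors of $B$ whose eigenvalues are bounded below in magnitude by $\tau\|A\|$, so $B$ acts invertibly there with small-norm inverse. First I would write
$$ \|By\|^2 = \sum_{i=1}^r |\lambda_i(B)|^2\, |v_i(B)^\ast y|^2 \ge \sum_{i \in S^c} |\lambda_i(B)|^2\, |v_i(B)^\ast y|^2 \ge (\tau \|A\|)^2 \sum_{i \in S^c} |v_i(B)^\ast y|^2 = (\tau\|A\|)^2 \|q\|^2. $$
On the other hand, $\|By\| = \|Ax\| \le \|A\| \|x\| \le \|A\| \eps \tau$. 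Combining these two inequalities gives $(\tau\|A\|)^2\|q\|^2 \le \|A\|^2 \eps^2 \tau^2$, hence $\|q\| \le \eps$, as desired. (If $\|A\| = 0$ the statement is trivial since then $S^c = \emptyset$ and $q = 0$; one should note this edge case or simply assume $\|A\| > 0$.)

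There is essentially no serious obstacle here — the lemma is a clean linear-algebra decomposition, and the only thing to be careful about is the degenerate case $\|A\| = 0$ (equivalently $\tau\|A\| = 0$, making $S$ all of $[r]$) and the fact that when $By = Ax$ fails to determine $y$ uniquely the decomposition still works for whatever $y$ is given. I would present the proof in the order above: (1) set up the spectral decomposition and define $S$, $v$, $q$; (2) verify orthogonality and the span condition; (3) derive the two-sided estimate sandwiching $\|By\|$ to conclude $\|q\| \le \eps$.
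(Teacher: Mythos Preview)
Your proof is correct and essentially identical to the paper's: both define $v$ and $q$ as the projections of $y$ onto the small- and large-eigenvalue eigenspaces of $B$, then bound $\|q\|$ via $(\tau\|A\|)^2\|q\|^2 \le \|By\|^2 = \|Ax\|^2 \le (\eps\tau\|A\|)^2$, handling the degenerate case $\|A\|=0$ separately.
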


In many cases, the norm of $A$ will be too large for the above lemma to be useful.  However, if we can write $A$ as a sum of two parts, one with small norm and one with low rank, we can still obtain essentially the same conclusion using the following lemma.

\begin{lemma} \label{lemma:structure2}
Let $A$ and $J$ be $r \times m$ matrices with $\|A\| \leq \kappa$, and let $B$ be a Hermitian $r \times r$ matrix.  Let $\eps, \tau > 0$.  Let $x$ and $y$ be vectors with $\|x\| \leq \eps \tau$, $\|y\| \leq 1$, and $By = (A+J)x$.  Then
\begin{enumerate}[(i)]
\item there exists a non-negative real number $\eta$ (depending only on $B, \kappa, \tau, \eps$) such that $B-\eta I$ is invertible, 
\item \label{item:span} the vector $y$ can be decomposed as $y=v+q$, where $\|q\| \leq \eps$, 
$$ v \in V := \Span \left\{ \{v_i(B) : |\lambda_i(B)| \leq \tau \kappa \} \cup \range\left(( B-\eta I)^{-1} J \right) \right\}, $$
and 
\begin{equation} \label{eq:dimbnd}
	\dim(V) \leq |\{1 \leq i \leq r: |\lambda_i(B)| \leq \tau \kappa\}| + \rank(J). 
\end{equation}
\end{enumerate}
\end{lemma}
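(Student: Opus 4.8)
The difficulty of Lemma~\ref{lemma:structure2} is that $\|A+J\|$ may be far larger than $\kappa$, so Lemma~\ref{lemma:structure} cannot be applied directly to $By=(A+J)x$. The plan is to subtract off a correction lying in $\range((B-\eta I)^{-1}J)$ that removes the term $Jx$ from the right-hand side, and then to run the spectral argument behind Lemma~\ref{lemma:structure} on what remains.

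First I would fix $\eta$. Set $\mu:=\min\{|\lambda_i(B)|:|\lambda_i(B)|>\tau\kappa\}$, with $\mu:=+\infty$ if there is no such eigenvalue. Since $B$ has at most $r$ eigenvalues, this minimum is over a finite set, so $\mu>\tau\kappa$ strictly whenever it is finite. If $\mu=+\infty$, then every eigenvalue of $B$ has magnitude at most $\tau\kappa$, so $V$ is the whole space and one takes $v=y$, $q=0$, and $\eta$ any positive non-eigenvalue of $B$; so assume henceforth $\mu<\infty$. Because the spectrum of $B$ is finite and $\mu-\tau\kappa>0$, I can choose a real $\eta$ with
$$ 0<\eta<\frac{\eps(\mu-\tau\kappa)}{1+\eps} $$
that is not an eigenvalue of $B$. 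This $\eta$ depends only on $B,\kappa,\tau,\eps$, and $B-\eta I$ is invertible, which proves (i).

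Next I would set $y':=y-(B-\eta I)^{-1}Jx$. Using $By=(A+J)x$, one computes
$$ (B-\eta I)y' = By-\eta y-Jx = Ax-\eta y, $$
so that $\|(B-\eta I)y'\|\le\|A\|\,\|x\|+\eta\|y\|\le\kappa\eps\tau+\eta$ by the hypotheses $\|A\|\le\kappa$, $\|x\|\le\eps\tau$, $\|y\|\le1$. Expanding $y'=\sum_{i=1}^r c_iv_i(B)$ in an orthonormal eigenbasis of $B$, the matrix $B-\eta I$ has eigenvalues $\lambda_i(B)-\eta$, and $|\lambda_i(B)-\eta|\ge\mu-\eta>0$ for every $i$ with $|\lambda_i(B)|>\tau\kappa$. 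Hence
$$ (\mu-\eta)^2 \sum_{i:\,|\lambda_i(B)|>\tau\kappa}|c_i|^2 \;\le\; \sum_{i=1}^r(\lambda_i(B)-\eta)^2|c_i|^2 \;=\; \|(B-\eta I)y'\|^2 \;\le\; (\kappa\eps\tau+\eta)^2, $$
and the choice of $\eta$ guarantees $(\kappa\eps\tau+\eta)/(\mu-\eta)\le\eps$, so that $\sum_{i:\,|\lambda_i(B)|>\tau\kappa}|c_i|^2\le\eps^2$.

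Finally I would set $q:=\sum_{i:\,|\lambda_i(B)|>\tau\kappa}c_iv_i(B)$ and $v:=y-q$. Then $\|q\|\le\eps$, and since
$$ v=(y'-q)+(B-\eta I)^{-1}Jx=\sum_{i:\,|\lambda_i(B)|\le\tau\kappa}c_iv_i(B)+(B-\eta I)^{-1}Jx, $$
the vector $v$ lies in $V$: the first sum is in $\Span\{v_i(B):|\lambda_i(B)|\le\tau\kappa\}$ and the second term is in $\range((B-\eta I)^{-1}J)$, which gives part (ii). For \eqref{eq:dimbnd}, invertibility of $B-\eta I$ gives $\rank((B-\eta I)^{-1}J)=\rank(J)$, whence $\dim V\le|\{1\le i\le r:|\lambda_i(B)|\le\tau\kappa\}|+\rank(J)$. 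The only genuinely delicate step is the choice of $\eta$: one must exploit the \emph{strict} gap $\mu>\tau\kappa$ — available precisely because $B$ has finitely many eigenvalues — in order to pass from the naive estimate $\|q\|\le(1+o(1))\eps$ that a blunt $\eta\to0$ argument would give, to the clean bound $\|q\|\le\eps$. Everything else is bookkeeping with the spectral decomposition of $B$.
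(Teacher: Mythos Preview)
Your proof is correct and follows essentially the same approach as the paper's: both choose a small $\eta>0$ avoiding the spectrum of $B$, rewrite the equation as $(B-\eta I)y' = Ax-\eta y$ with $y'=y-(B-\eta I)^{-1}Jx$, and then bound the component of $y'$ on the eigenspaces with $|\lambda_i(B)|>\tau\kappa$ using $\|Ax-\eta y\|\le\kappa\eps\tau+\eta$. The only cosmetic difference is that the paper introduces an auxiliary threshold $L=(\kappa\eps\tau+\eta)/\eps$ and checks that the index sets $\{i:|\lambda_i(B)|\le\tau\kappa\}$ and $\{i:|\lambda_i(B)-\eta|\le L\}$ coincide for $\eta$ small, whereas you work directly with the spectral gap $\mu-\tau\kappa$; the resulting inequality $(\kappa\eps\tau+\eta)/(\mu-\eta)\le\eps$ is the same constraint in different clothing.
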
  

\begin{remark}
One can similarly prove versions of Lemmas \ref{lemma:structure} and \ref{lemma:structure2} when $B$ is not Hermitian.  In this case, one must rely on the singular value decomposition of $B$ instead of the spectral theorem.  In particular, the set $V$ appearing in the statement of Lemma \ref{lemma:structure2} will need to be defined in terms of the singular values and singular vectors of $B-\eta I$.  
\end{remark}

We prove Lemmas \ref{lemma:structure} and \ref{lemma:structure2} in Appendix \ref{sec:structure}.  

\section{Proofs of results concerning extremal coordinates} \label{sec:proof:extreme}
This section is devoted to the proofs of Corollary \ref{cor:delo} and Theorem \ref{lower}.  

\subsection{Proof of Corollary \ref{cor:delo}}

The desired bound will follow from \cite[Theorem 6.1]{VW} and a simple truncation argument.  Let $W$ be the Wigner matrix from Corollary \ref{cor:delo} with sub-exponential atom variable $\xi$.  Then, there exist $\alpha, \beta > 0$ such that
\begin{equation} \label{eq:subbetaal}
	\Prob ( |\xi| > t ) \leq \beta \exp(- t^\alpha / \beta) 
\end{equation}
for all $t > 0$.  In addition, since $\xi$ is symmetric, it follows that $\xi$ has mean zero.  Let $C_1 > 0$ be given, and take $C > 0$ to be a large constant to be chosen later.  Define
$$ \tilde{\xi} := \xi \indicator{|\xi| \leq C \log^{1/\alpha} n}, $$
where $\oindicator{E}$ is the indicator function of the event $E$.  Since $\xi$ is symmetric it follows that $\tilde{\xi}$ has mean zero.  Similarly, we define the matrix $\tilde{W} = (\tilde{w}_{ij})_{i,j=1}^n$ by 
$$ \tilde{w}_{ij} := w_{ij} \indicator{|w_{ij}| \leq C \log^{1/\alpha} n}. $$
It follows that $\tilde{W}$ is an $n \times n$ Wigner matrix with atom variable $\tilde{\xi}$.  Moreover, $\tilde{\xi}$ has mean zero and is $(C \log^{1/\alpha} n)$-bounded.  Thus, \cite[Theorem 6.1]{VW} can be applied to $\tilde{W}$,\footnote{Technically, one also has to normalize the entries of $\tilde{W}$ to have unit variance as required by \cite[Theorem 6.1]{VW}.  However, this corresponds to multiplying $\tilde{W}$ by a positive scalar and the unit eigenvectors are invariant under such a scaling.} and we obtain the desired conclusion for $\tilde{W}$.  It remains to show that the same conclusion also holds for $W$.  From \eqref{eq:subbetaal}, we have
\begin{align*}
	\Prob( W \neq \tilde{W} ) &\leq \sum_{i \leq j} \Prob( w_{ij} \neq \tilde{w}_{ij})  \\
	&\leq n^2 \Prob( |\xi| > C \log^{1/\alpha} n ) \\
	&\leq \beta n^2 \exp \left(- \frac{C^{\alpha}}{\beta} \log n \right) \\
	&\leq \beta n^{-C_1} 
\end{align*}
by taking $C$ sufficiently large.  Therefore, on the event where $\tilde{W} = W$, we obtain the desired conclusion, and the proof is complete.

\subsection{Proof of Theorem \ref{lower}}
We will need the following result from \cite{NTV}.

\begin{theorem}[\cite{NTV}] \label{thm:NTV}
Let $W_n$ be the $n \times n$ Wigner matrix from Theorem \ref{lower}.  Let
$$ W_n = \begin{bmatrix} W_{n-1} & X \\ X^\mathrm{T} & w_{nn} \end{bmatrix}, $$
where $W_{n-1}$ is the upper-left $(n-1) \times (n-1)$ minor of $W_n$, $X \in \mathbb{R}^{n-1}$, and $w_{nn} \in \mathbb{R}$.  Then there exist constants $C, c, c_0 > 0$ such that, for any $n^{-c_0} < \alpha < c_0$ and $\delta \geq n^{-c_0 / \alpha}$, 
$$ \sup_{1 \leq i \leq n-1} \Prob \left( \lambda_i(W_{n-1}) - \lambda_i(W_n) \leq \frac{\delta}{\sqrt{n}} \right) \leq C \frac{\delta}{\sqrt{\alpha}} + C \exp(-c \log^2 n) $$
and
$$ \sup_{2 \leq i \leq n} \Prob \left( \lambda_{i}(W_n) - \lambda_{i-1}(W_{n-1}) \leq \frac{\delta}{\sqrt{n}} \right) \leq C \frac{\delta}{\sqrt{\alpha}} + C \exp(- c \log^2 n). $$
Moreover, 
\begin{equation} \label{eq:lowbndvj}
	\inf_{1 \leq j \leq n-1} |v_j(W_{n-1})^\mathrm{T} X | > 0 
\end{equation}
with probability at least $1 - C \exp(-c \log^2 n)$.  
\end{theorem}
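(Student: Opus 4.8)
The plan is to condition on the top-left $(n-1)\times(n-1)$ minor $W_{n-1}$ and use that the last column $X$ and diagonal entry $w_{nn}$ are independent of it. Conditionally on $W_{n-1}$ (assumed for the moment to have simple spectrum, with $X$ not orthogonal to any $v_j(W_{n-1})$), the eigenvalues $\lambda_i(W_n)$ are precisely the roots of the secular equation $w_{nn}-\lambda=\sum_{j=1}^{n-1}\frac{|v_j(W_{n-1})^{\mathrm{T}}X|^2}{\lambda_j(W_{n-1})-\lambda}$, and by Cauchy interlacing the root $\lambda_i(W_n)$ is trapped between the consecutive poles $\lambda_{i-1}(W_{n-1})$ and $\lambda_i(W_{n-1})$. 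Isolating the pole at $\lambda_i(W_{n-1})$ shows that $\lambda_i(W_{n-1})-\lambda_i(W_n)=|v_i(W_{n-1})^{\mathrm{T}}X|^2/D_i$, where $D_i:=w_{nn}-\lambda_i(W_{n-1})-\sum_{j\neq i}\frac{|v_j(W_{n-1})^{\mathrm{T}}X|^2}{\lambda_j(W_{n-1})-\lambda_i(W_{n-1})}$ is a regularized Stieltjes-transform-type quantity; isolating the pole at $\lambda_{i-1}(W_{n-1})$ gives the analogous formula for $\lambda_i(W_n)-\lambda_{i-1}(W_{n-1})$. Combining instead with Lemma \ref{lemma:coordinate} yields the convenient one-sided bound $\lambda_i(W_{n-1})-\lambda_i(W_n)\ge |x_i|\,|v_i(W_{n-1})^{\mathrm{T}}X|$, where $x_i$ is the last coordinate of $v_i(W_n)$. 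Thus the estimates reduce to (a) anti-concentration for the linear form $v_i(W_{n-1})^{\mathrm{T}}X$, and (b) an upper bound on $|D_i|$ (equivalently, a lower bound on $|x_i|$).

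For (b) I would work on a ``good event'' for $W_{n-1}$ of probability at least $1-C\exp(-c\log^2 n)$, built from quantitative delocalization of the eigenvectors of $W_{n-1}$, the local semicircle law in the form of Theorem \ref{thm:dim2}, and the spectral-norm bound of Lemma \ref{lemma:norm}. On this event the coefficient vectors $v_j(W_{n-1})$ are flat and the eigenvalue counting function behaves as predicted, so that $\sum_{j\neq i}\frac{1}{\lambda_j(W_{n-1})-\lambda_i(W_{n-1})}$ is a principal-value sum of size $O(\sqrt n\,\mathrm{polylog}\,n)$; combining this with the Hanson--Wright concentration of the quadratic forms $|v_j(W_{n-1})^{\mathrm{T}}X|^2$ about their means (Lemma \ref{lemma:qua}, Corollary \ref{cor:projection}) and the sub-gaussian tail of $w_{nn}$ gives $|D_i|=O(\sqrt n\,\mathrm{polylog}\,n)$ with conditional failure probability at most $C\exp(-c\log^2 n)$.

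Step (a) is the anti-concentration heart of the argument. Conditionally on the good event, $v_i(W_{n-1})^{\mathrm{T}}X$ is a linear form in the i.i.d.\ mean-zero, unit-variance, sub-gaussian entries of $X$ with a delocalized unit coefficient vector, and one needs $\Prob(|v_i(W_{n-1})^{\mathrm{T}}X|\le t)\le Ct/\sqrt\alpha$ for $t$ down to the scale $n^{-c_0/\alpha}$. A soft Berry--Esseen bound gives $O(t)$ but only up to an additive error comparable to $\|v_i(W_{n-1})\|_{\ell^\infty}$, which is far too coarse; instead one needs a Littlewood--Offord/Esseen-type estimate that extracts a linear-in-$n$ family of coordinates of comparable size, and the aggressiveness of this extraction is exactly the parameter $\alpha$, responsible both for the $1/\sqrt\alpha$ loss and for the cutoff $\delta\ge n^{-c_0/\alpha}$. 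Assembling (a) and (b) then yields the two interlacing inequalities; the mild catch is that $\lambda_i(W_{n-1})-\lambda_i(W_n)$, its companion $\lambda_i(W_n)-\lambda_{i-1}(W_{n-1})$, and the coordinate $x_i$ are mutually entangled through Lemma \ref{lemma:coordinate} (both gaps appear as terms in the series defining $1/|x_i|^2$), so I would first extract a crude bound from the quadratic identity and then bootstrap, rather than estimating all three quantities simultaneously.

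Finally, for the non-degeneracy claim \eqref{eq:lowbndvj} I would condition on $W_{n-1}$, restrict to the good event, and union-bound over $j$, so it suffices to prove $\Prob(v_j(W_{n-1})^{\mathrm{T}}X=0)\le C\exp(-c\log^2 n)/n$ for each $j$. Since $\xi$ may be discrete, a soft argument yields only $O(n^{-1/2})$, which the union bound cannot absorb; the point is that with overwhelming probability the eigenvectors $v_j(W_{n-1})$ carry no additive-arithmetic structure (their essential least common denominator is super-polynomially large), whence a Hal\'asz-type inverse Littlewood--Offord estimate delivers the required exact small-ball bound. This structural input --- which upgrades the mere non-vanishing of eigenvector coordinates in Theorem \ref{thm:simplespectrum} to a quantitative, overwhelming-probability statement --- together with the fine small-ball estimate of Step (a), is the main obstacle; the bookkeeping with the secular equation, the local law, and Hanson--Wright is routine once those two anti-concentration facts are in place.
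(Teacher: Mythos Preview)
The paper does not prove this theorem at all: it simply records that ``Theorem \ref{thm:NTV} follows from \cite[Theorems 4.1 and 4.3]{NTV} and the arguments given in \cite[Section 4]{NTV}.'' So there is no in-paper argument to compare against; the relevant benchmark is the proof in \cite{NTV}.

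Your sketch is a faithful outline of that proof. The architecture --- condition on $W_{n-1}$, use the secular equation / Lemma~\ref{lemma:interlace} to express the interlacing gap as $|v_i(W_{n-1})^{\mathrm T}X|^2$ divided by a regularized Stieltjes-type denominator, bound the denominator via the local law and Hanson--Wright, and obtain the small-ball bound on the numerator via a Littlewood--Offord/Esseen estimate whose tuning parameter is exactly the $\alpha$ in the statement --- is precisely the route taken in \cite[Section~4]{NTV}. Your identification of \eqref{eq:lowbndvj} as an inverse Littlewood--Offord statement (eigenvectors of $W_{n-1}$ have super-polynomial essential LCD with overwhelming probability, hence Hal\'asz gives the required atom bound) is also correct and is the content of \cite[Theorem~4.3]{NTV}.

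Two small points of precision. First, your displayed identity $\lambda_i(W_{n-1})-\lambda_i(W_n)=|v_i(W_{n-1})^{\mathrm T}X|^2/D_i$ is not exact with your definition of $D_i$: the secular equation evaluated at $\lambda=\lambda_i(W_n)$ has $\lambda_i(W_n)$, not $\lambda_i(W_{n-1})$, in every denominator and in the $w_{nn}-\lambda$ term. You already flag the resulting circularity and propose a bootstrap; in \cite{NTV} this is handled by working directly with the sum $\sum_{j\ne i}|v_j^{\mathrm T}X|^2/(\lambda_j(W_{n-1})-\lambda_i(W_n))$ and controlling it on a high-probability event using the local law plus a dyadic decomposition (compare Lemma~\ref{lemma:techsumbnd} here), rather than by freezing $\lambda_i(W_n)$ at $\lambda_i(W_{n-1})$. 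Second, the ``crude bound then bootstrap'' step you allude to is genuinely needed and is not entirely routine: the term $m_i^{-2}$ in the denominator sum is exactly the quantity you are trying to lower-bound, and \cite{NTV} resolves this by first establishing the eigenvalue-gap theorem for $W_n$ itself (their Theorem~2.1) and feeding that back in. Your outline is compatible with this, but be aware that the bootstrap is where most of the technical work in \cite{NTV} lives.
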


\begin{remark} \label{rem:strictinter}
The bound in \eqref{eq:lowbndvj} implies that $X$ is not orthogonal to $v_j(W_{n-1})$ for any $1 \leq j \leq n-1$.  By \cite[Lemma 3]{OT}, this implies that none of the eigenvalues of $W_{n}$ coincide with an eigenvalue of $W_{n-1}$.  In other words, the interlacing described in \eqref{eq:cauchy} is strict.  
\end{remark}

Theorem \ref{thm:NTV} follows from \cite[Theorems 4.1 and 4.3]{NTV} and the arguments given in \cite[Section 4]{NTV}.   We will also need the following technical lemma. 

\begin{lemma} \label{lemma:techsumbnd}
Let $W_n$ be the $n \times n$ Wigner matrix from Theorem \ref{lower}.  Let
$$ W_n = \begin{bmatrix} W_{n-1} & X \\ X^\mathrm{T} & w_{nn} \end{bmatrix}, $$
where $W_{n-1}$ is the upper-left $(n-1) \times (n-1)$ minor of $W_n$, $X \in \mathbb{R}^{n-1}$, and $w_{nn} \in \mathbb{R}$.  Let $\kappa > 0$.  Then there exist constants $C, c, c_1 > 0$ such that, for any $1 \leq i \leq n$ and any $e^{-n^\kappa} < \delta < 1$, 
\begin{equation} \label{eq:sumhelpbnd}
	\sum_{j=1}^{n-1} \frac{ |v_j(W_{n-1})^\mathrm{T} X|^2 }{ | \lambda_j(W_{n-1}) - \lambda_i(W_n) |^2} \leq \frac{ (\log n)^{c_1} }{ 25 } \left[ \frac{ 1} { m_i^2 } + \frac{n}{ \delta^2} \right] 
\end{equation}
with probability at least $1 - C \exp(-c \log^2 n)$, where
$$ m_i := \min_{1 \leq j \leq n-1} |\lambda_j(W_{n-1}) - \lambda_i(W_n)|. $$  
\end{lemma}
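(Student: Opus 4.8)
The plan is to split the sum according to the size of the gap $d_j := |\lambda_j(W_{n-1}) - \lambda_i(W_n)|$: the handful of indices with a very small gap will be handled crudely and absorbed into the term $m_i^{-2}$, while the remaining indices will be grouped into dyadic shells and absorbed into the term $n\delta^{-2}$. We run the argument on the intersection $\mathcal{E}$ of four events, each of probability at least $1 - C\exp(-c\log^2 n)$: (i) the spectral-norm bounds $\|W_n\|, \|W_{n-1}\| \le C_0\sqrt n$ from Lemma \ref{lemma:norm}; (ii) the conclusion of Theorem \ref{thm:NTV}, so that by Remark \ref{rem:strictinter} the spectra of $W_n$ and $W_{n-1}$ are disjoint, whence $d_j > 0$ for every $j$ and $m_i > 0$; (iii) the bound $\max_{1 \le j \le n-1}|v_j(W_{n-1})^{\T}X|^2 \le C\log^2 n$, which follows from Corollary \ref{cor:projection} applied, conditionally on $W_{n-1}$, to the one-dimensional subspaces $\Span\{v_j(W_{n-1})\}$ (using that $X$ is independent of $W_{n-1}$ and has iid sub-gaussian, mean-zero, unit-variance entries), together with a union bound over $j$; and (iv) the event, measurable with respect to $W_{n-1}$, that every interval $I \subset \mathbb{R}$ contains at most $C(\sqrt n\,|I| + \log^{c'}n)$ eigenvalues of $W_{n-1}$, which follows from Theorem \ref{thm:dim2} after rescaling and a routine reduction from a single interval to a net of polynomially many intervals. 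As usual we may assume $n$ exceeds an absolute constant, so that the claimed probability is positive.

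\textbf{Small gaps.} Let $J_1 := \{1 \le j \le n-1 : d_j < \delta/\sqrt n\}$. Every $\lambda_j(W_{n-1})$ with $j \in J_1$ lies in an interval of length $2\delta/\sqrt n$ about $\lambda_i(W_n)$, so by event (iv) and $\delta < 1$ we have $|J_1| \le C(2\delta + \log^{c'}n) \le C'\log^{c'}n$. Since $d_j \ge m_i$ for all $j$, events (iii) and (iv) give
$$ \sum_{j \in J_1} \frac{|v_j(W_{n-1})^{\T}X|^2}{d_j^2} \le \frac{|J_1|}{m_i^2}\max_{1 \le j \le n-1}|v_j(W_{n-1})^{\T}X|^2 \le \frac{C''\log^{c'+2}n}{m_i^2}. $$

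\textbf{Large gaps.} Let $J_2 := \{1 \le j \le n-1 : d_j \ge \delta/\sqrt n\}$. On $\mathcal{E}$ all eigenvalues of $W_{n-1}$ and $\lambda_i(W_n)$ lie in $[-C_0\sqrt n, C_0\sqrt n]$, so $d_j \le 2C_0\sqrt n$. Decompose $J_2 = \bigcup_{k=0}^{K} S_k$ with $S_k := \{j \in J_2 : 2^k\delta/\sqrt n \le d_j < 2^{k+1}\delta/\sqrt n\}$, where $K \le \log_2(2C_0 n/\delta) \le C(\log n + n^\kappa)$ because $\delta > e^{-n^\kappa}$. For $j \in S_k$ one has $d_j^{-2} \le n\,4^{-k}\delta^{-2}$; moreover $\lambda_j(W_{n-1})$ lies within $2^{k+1}\delta/\sqrt n$ of $\lambda_i(W_n)$, so by event (iv), $|S_k| \le C(2^{k+1}\delta + \log^{c'}n) \le C'(2^k + \log^{c'}n)$. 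Hence, using events (iii) and (iv),
$$ \sum_{j \in S_k} \frac{|v_j(W_{n-1})^{\T}X|^2}{d_j^2} \le \frac{n}{4^k\delta^2}\,|S_k|\,\max_{1 \le j \le n-1}|v_j(W_{n-1})^{\T}X|^2 \le \frac{C''\,n\log^2 n}{4^k\delta^2}\bigl(2^k + \log^{c'}n\bigr), $$
and summing the two geometric series over $k \ge 0$,
$$ \sum_{j \in J_2} \frac{|v_j(W_{n-1})^{\T}X|^2}{d_j^2} \le C'''\,\frac{n\log^{c'+2}n}{\delta^2}. $$
Adding the bounds from the two regimes and choosing $c_1 := c'+3$, the total is at most $\tfrac{1}{25}(\log n)^{c_1}\bigl(m_i^{-2} + n\delta^{-2}\bigr)$ once $n$ is large enough, which is the asserted inequality.

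\textbf{Main obstacle.} The one delicate point is that the index sets $J_1$, $J_2$ and $S_k$ are defined through $\lambda_i(W_n)$, which depends on $X$; consequently we cannot apply Corollary \ref{cor:projection} directly to, say, $\sum_{j \in S_k}|v_j(W_{n-1})^{\T}X|^2$. This is circumvented by bounding the cardinalities $|J_1|,|S_k|$ via a supremum of eigenvalue counts over \emph{all} intervals — an event depending only on $W_{n-1}$ — and by estimating $\sum_{j \in S_k}|v_j(W_{n-1})^{\T}X|^2 \le |S_k|\max_j|v_j(W_{n-1})^{\T}X|^2$; this loses only polylogarithmic factors, which the statement can afford. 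The other thing to keep in mind is that $\delta$ may be as small as $e^{-n^\kappa}$, so there are up to $\sim n^\kappa$ dyadic shells; the geometric sums converge irrespective of the number of shells, and the polynomial blow-up in the union bound (or, more precisely, in the net argument underlying event (iv)) is absorbed into $\exp(-c\log^2 n)$ after shrinking $c$.
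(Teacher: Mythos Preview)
Your proof is correct and follows essentially the same route as the paper's: bound the numerators $|v_j(W_{n-1})^{\T}X|^2$ uniformly by $C\log^2 n$ via the projection lemma and a union bound, split the indices into a small-gap set (controlled by $m_i^{-2}$) and dyadic shells (controlled by $n/\delta^2$), bound the cardinalities by the local semicircle law (Theorem~\ref{thm:dim2}), and sum the resulting geometric series. Your packaging of the uniform eigenvalue-count bound as a single $W_{n-1}$-measurable event (iv), together with your explicit discussion of the $\lambda_i(W_n)$-dependence, is exactly what the paper does in its footnote to the proof; the only cosmetic difference is that the paper applies the net argument shell-by-shell rather than once and for all.
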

\begin{proof}
Fix $1 \leq i \leq n$ and $e^{-n^\kappa} < \delta < 1$.  Define the event $\Omega$ to be the intersection of the events
\begin{equation} \label{eq:eventvjWn1}
	\bigcap_{j=1}^{n-1} \left\{ 0 < |v_j(W_{n-1})^\mathrm{T} X|^2 \leq \frac{1}{1000} \log^2 n \right\} 
\end{equation}
and
\begin{equation} \label{eq:normC0bnd}
	\left\{ \|W_n \| \leq C_0 \sqrt{n} \right\} 
\end{equation}
for some constant $C_0$ to be chosen later.  

We claim that, for $C_0$ sufficiently large, 
\begin{equation} \label{eq:Omegaclog2n}
	\Prob(\Omega^c) \leq C \exp(-c \log^2 n) 
\end{equation}
for some constants $C, c > 0$.  To see this, note that $|v_j(W_{n-1})^\mathrm{T} X|$ is the length of the projection of the vector $X$ onto the one-dimensional subspace spanned by $v_j(W_{n-1})$.   Hence, by Corollary \ref{cor:projection}, the union bound, and \eqref{eq:lowbndvj}, it follows that the event in \eqref{eq:eventvjWn1} holds with probability at least $1 - C \exp(-c \log^2 n)$.  In addition, the event in \eqref{eq:normC0bnd} can be dealt with by taking $C_0$ sufficiently large and applying Lemma \ref{lemma:norm}.  Combining the estimates above yields the bound in \eqref{eq:Omegaclog2n}.  

It now suffices to show that, conditionally on $\Omega$, the bound in \eqref{eq:sumhelpbnd} holds with probability at least $1 - C' \exp(-c' \log^2 n)$.  As Remark \ref{rem:strictinter} implies, on $\Omega$, the eigenvalues of $W_{n-1}$ strictly interlace with the eigenvalues of $W_n$, and hence the terms
$$ \sum_{j=1}^{n-1} \frac{ |v_j(W_{n-1})^\mathrm{T} X|^2 }{ | \lambda_j(W_{n-1}) - \lambda_i(W_n) |^2} $$
and $m_i^{-2}$ are well-defined.  Moreover, on $\Omega$, we have
$$ \sum_{j=1}^{n-1} \frac{ |v_j(W_{n-1})^\mathrm{T} X|^2 }{ | \lambda_j(W_{n-1}) - \lambda_i(W_n) |^2}  \leq \frac{\log^2 n}{1000} \sum_{j=1}^{n-1} \frac{1}{ | \lambda_j(W_{n-1}) - \lambda_i(W_n) |^2}. $$
Hence, conditionally on $\Omega$, it suffices to show that
$$ \sum_{j=1}^{n-1} \frac{1}{ | \lambda_j(W_{n-1}) - \lambda_i(W_n) |^2} \leq 10 (\log n)^{c_1} \left[ \frac{1}{m_i^2} + \frac{n}{\delta^2} \right] $$
with probability at least $1 - C' \exp(-c' \log^2 n)$.  

Define the sets
$$ T := \left\{ 1 \leq j \leq n-1 : |\lambda_j(W_{n-1}) - \lambda_i(W_{n})| < \frac{\delta}{\sqrt{n}} \right\} $$
and
$$ T_l := \left\{ 1 \leq j \leq n-1 : 2^l \frac{\delta}{\sqrt{n}} \leq |\lambda_j(W_{n-1}) - \lambda_i(W_n)| < 2^{l+1} \frac{\delta}{\sqrt{n}} \right\} $$
for $l = 0, 1, \ldots, L$, where $L$ is the smallest integer such that $2^L \frac{\delta}{\sqrt{n}} \geq 2C_0 \sqrt{n}$.  In particular, as $\delta > e^{-n^\kappa}$, we obtain $L = O(n^{-\kappa})$.  On the event $\Omega$, it follows that every index $1 \leq j \leq n-1$ is contained in either $T$ or $\cup_{l=0}^L T_l$.  By Theorem \ref{thm:dim2} and the union bound\footnote{Technically, one cannot apply Theorem \ref{thm:dim2} directly to these intervals because the intervals are defined in terms of the eigenvalues.  For instance, $|T|$ is the number of eigenvalues of $W_{n-1}$ in the open interval of radius $\frac{\delta}{\sqrt{n}}$ centered at $\lambda_i(W_n)$.  However, one can first take (say) $n^{100}$ equispaced points in the interval $[-C_0 \sqrt{n}, C_0 \sqrt{n}]$ and apply the theorem to intervals of radius $\frac{\delta}{\sqrt{n}}$ centered at each of these points.  One can then deduce the desired conclusion by approximating $|T|$ using these deterministic intervals.  A similar approximation argument works for each $T_l$.}, there exists $C', c' , c_1 > 0$ such that
$$ |T| \leq (\log n)^{c_1} $$
and 
$$ |T_l| \leq 4 (2^l ) + (\log n)^{c_1}, \quad l = 0, \ldots, L $$
with probability at least $1 - C' \exp(-c' \log^2 n)$.  Hence, on this same event, we conclude that
\begin{align*}
	\sum_{j=1}^{n-1} \frac{1}{ | \lambda_j(W_{n-1}) - \lambda_i(W_n) |^2} &\leq \sum_{j \in T} \frac{1}{ | \lambda_j(W_{n-1}) - \lambda_i(W_n) |^2} \\ 
	&\qquad\qquad\qquad + \sum_{l=0}^L \sum_{j \in T_l} \frac{1}{ | \lambda_j(W_{n-1}) - \lambda_i(W_n) |^2} \\
	&\leq \frac{|T|}{m_i^2} + \frac{n}{\delta^2} \sum_{l=0}^L \frac{|T_l|}{ 2^{2l} } \\
	&\leq \frac{ (\log n)^{c_1} }{ m_i^2 } + \frac{10n (\log n)^{c_1} }{\delta^2}
\end{align*}
for $n$ sufficiently large.  The proof of the lemma is complete.  
\end{proof}

With Theorem \ref{thm:NTV} and Lemma \ref{lemma:techsumbnd} in hand, we are now ready to prove Theorem \ref{lower}.

\begin{proof}[Proof of Theorem \ref{lower}]
Let $W_n$ be the $n \times n$ Wigner matrix with atom variables $\xi, \zeta$.  We will bound the $j$th coordinate of the unit eigenvector $v_i$ in magnitude from below.  By symmetry, it suffices to consider the case when $j=n$.  For this reason, we decompose $W_n$ as 
$$ W_n = \begin{bmatrix} W_{n-1} & X \\ X^\mathrm{T} & w_{nn} \end{bmatrix}, $$
where $W_{n-1}$ is the upper-left $(n-1) \times (n-1)$ minor of $W_n$, $X \in \mathbb{R}^{n-1}$, and $w_{nn} \in \mathbb{R}$.  

Let $c_0$ be as in Theorem \ref{thm:NTV}, and assume $n^{-c_0} < \alpha < c_0$ and $1 > \delta \geq n^{-c_0/\alpha}$ as the claim is trivial when $\delta \geq 1$.  Define the event $\Omega$ to be the intersection of the events
$$ \left\{ \inf_{1 \leq j \leq n-1} |v_j(W_{n-1})^\mathrm{T} X| > 0 \right\} \bigcap  \left\{ m_i > \frac{\delta}{\sqrt{n}} \right\} $$
and
$$ \left\{ \sum_{j=1}^{n-1} \frac{ |v_j(W_{n-1})^\mathrm{T} X|^2 }{ |\lambda_j(W_{n-1}) - \lambda_i(W_n)|^2 } \leq \frac{ (\log n)^{c_1} }{ 25 } \left[ \frac{ 1} { m_i^2 } + \frac{n}{ \delta^2} \right] \right\}, $$
where
$$ m_i := \min_{1 \leq j \leq n-1} |\lambda_j(W_{n-1}) - \lambda_i(W_n)| $$  
and $c_1$ is the constant from Lemma \ref{lemma:techsumbnd}.  If $m_i \leq \frac{\delta}{\sqrt{n}}$, then, by Cauchy's interlacing inequalities \eqref{eq:cauchy}, this implies that either 
$$ |\lambda_i(W_{n-1}) - \lambda_i(W_n)| \leq \frac{\delta}{\sqrt{n}}  $$
or 
$$ |\lambda_{i-1}(W_{n-1}) - \lambda_i(W_n)| \leq \frac{\delta}{\sqrt{n}}. $$
(Here, the first possibility can only occur if $i < n$, and the second possibility can only occur if $i > 1$.)  
Therefore, it follows from Theorem \ref{thm:NTV} and Lemma \ref{lemma:techsumbnd} that, there exists $C', c' > 0$ such that
$\Omega$ holds with probability at least $1 - C' \frac{\delta}{\sqrt{\alpha}} - C' \exp(-c' \log^2 n)$.  

Let $c_2 := c_1 / 2$.  Then
$$ \Prob \left( |v_i(n)| \leq \frac{\delta}{\sqrt{n} (\log n)^{c_2} } \right) \leq \Prob (\Omega^c) + \Prob \left( \left\{ |v_i(n)| \leq \frac{\delta}{\sqrt{n} (\log n)^{c_2} } \right\} \bigcap \Omega \right). $$
Hence, to complete the proof, we will show that the event
\begin{equation} \label{eq:eventsuffice}
	\left\{ |v_i(n)| \leq \frac{\delta}{\sqrt{n} (\log n)^{c_2} } \right\} \bigcap \Omega 
\end{equation}
is empty.  

Fix a realization in this event.  As Remark \ref{rem:strictinter} implies, the eigenvalues of $W_{n-1}$ strictly interlace with the eigenvalues of $W_n$.  Hence, we apply Lemma \ref{lemma:coordinate} and obtain
$$ \frac{1}{1 + \sum_{j=1}^{n-1} \frac{ |v_j(W_{n-1})^\mathrm{T} X |^2}{ | \lambda_j(W_{n-1}) - \lambda_i(W_n) |^2 } } = |v_i(n)|^2 \leq \frac{\delta^2}{n (\log n)^{2 c_2}}. $$
This implies, for $n$ sufficiently large, that
$$ \frac{n (\log n)^{2 c_2 }}{2 \delta^2} \leq \sum_{j=1}^{n-1} \frac{ |v_j(W_{n-1})^\mathrm{T} X |^2}{ | \lambda_j(W_{n-1}) - \lambda_i(W_n) |^2 }. $$
On the other hand, by definition of $\Omega$, we have
\begin{align*}
	\sum_{j=1}^{n-1} \frac{ |v_j(W_{n-1})^\mathrm{T} X |^2}{ | \lambda_j(W_{n-1}) - \lambda_i(W_n) |^2 } \leq \frac{2 (\log n)^{c_1} } { 25} \frac{n} {\delta^2},
\end{align*}
and thus
$$ \frac{n (\log n)^{2 c_2}}{2 \delta^2} \leq  \frac{2 (\log n)^{c_1} } { 25} \frac{n} {\delta^2}. $$
This is a contradiction since $2 c_2 = c_1$.  We conclude that the event in \eqref{eq:eventsuffice} is empty, and the proof is complete.  
\end{proof}

\begin{remark}
In the special case that one considers only $v_1(W_n)$ or $v_n(W_n)$, a simpler argument is possible by applying Lemma \ref{lemma:interlace}.  Indeed, in this case, one can exploit the fact that the terms $\lambda_j(M_{n-1}) - \lambda_i(M_n)$ appearing in the denominator of \eqref{eq:interlaceeq} are always of the same sign due to the eigenvalue interlacing inequalities \eqref{eq:cauchy}.  However, this argument does not appear to generalize to any other eigenvectors.  
\end{remark}

\section{Proofs of no-gaps delocalization results} \label{sec:proof:wigner}

This section is devoted to the proofs of Theorem \ref{thm:generalized} and Corollary \ref{cor:lp}.  

\subsection{Proof for Theorem \ref{thm:generalized}} 
We begin with a few reductions.  Let $\eps > 0$.  By definition of convergence in probability, it suffices to show
$$ \left|\max_{S\subset [n]: |S|=\lfloor\delta n \rfloor} \|v_{k_n}\|_S^2 + \int_{0}^{\delta} H(u) \, du \right| \le \varepsilon $$
and
$$ \left|\min_{S\subset [n]: |S|=\lfloor\delta n \rfloor} \|v_{k_n}\|_S^2 + \int_{1-\delta}^1 H(u) \, du \right| \le \varepsilon $$
with probability $1-o(1)$.  

Let $Z$ be a standard normal distribution with cumulative distribution function $\Phi(x)$.  Let $F(x)$ be the cumulative distribution function of $Z^2$, which has the $\chi^2$-distribution with $1$ degree of freedom.   Hence, 
$$ F(x) = \Prob(Z^2 \le x) = 2\Phi(\sqrt{x})-1 $$ 
for $x\ge 0$. By setting $x=\sqrt{F^{-1}(u)}$ in the following integrals, we observe that 
\begin{align*}
-\int_{1-\delta}^1 H(u) \, du = \int_0^{\delta} F^{-1}(u)\,du =  2 \int_{0}^{\Phi^{-1}(\frac{1+\delta}{2})} x^2 \Phi'(x)\,dx
\end{align*}
and
\begin{align}\label{eq:nor}
-\int_{0}^{\delta} H(u) \, du = \int_{1-\delta}^{1} F^{-1}(u)\,du= 2 \int_{\Phi^{-1}(1-\frac{\delta}{2})}^{\infty} x^2 \Phi'(x)\,dx.
\end{align}
Thus, it suffices to show
\begin{equation} \label{eq:gen:show1}
	\left|\max_{S\subset [n]: |S|=\lfloor\delta n \rfloor} \|v_{k_n}\|_S^2 - 2 \int_{\Phi^{-1}(1 - \frac{\delta}{2})}^{\infty} x^2 \Phi'(x)\,dx \right| \le \varepsilon 
\end{equation}
and
\begin{equation} \label{eq:gen:show2}
	\left|\min_{S\subset [n]: |S|=\lfloor\delta n \rfloor} \|v_{k_n}\|_S^2 - 2 \int_{0}^{\Phi^{-1}(\frac{1+\delta}{2})} x^2 \Phi'(x)\,dx \right| \le \varepsilon 
\end{equation}
with probability $1 - o(1)$, where $\Phi$ is the cumulative distribution function of the standard normal distribution.  

We now turn our attention to proving \eqref{eq:gen:show1} and \eqref{eq:gen:show2}.  In fact, \eqref{eq:gen:show1} follows from \eqref{eq:gen:show2} by applying the identity 
$$ \max_{S \subset [n]: |S|=\lfloor \delta n \rfloor} \|v_{k_n}\|_S^2 + \min_{S \subset [n]: |S|=\lfloor \delta n \rfloor} \|v_{k_n}\|_{S^c}^2= 1 $$
and using the fact that $2 \int_{0}^{\infty} x^2 \Phi'(x) \, dx = 1$.  (Alternatively, one can prove \eqref{eq:gen:show1} by repeating the arguments below.)  Thus, it remains to verify \eqref{eq:gen:show2}.  

Let $\delta \in (0,1)$.  For notational convenience, let $v$ denote the unit eigenvector under consideration, and let $v(j)$ denote its $j$th coordinate.   Let $c > 0$ be a (small) constant to be chosen later.  For each $k \in \mathbb{N}$, define
$$ N(c,k) := \sum_{j=1}^n \indicator{c(k-1) \leq \sqrt{n} |v(j)| < ck} $$
to be the number of coordinates of $\sqrt{n} v$ with magnitude in the interval $[c(k-1), ck)$.  

Let $Z$ be a standard normal random variable.  Define
$$ f(c,k) := n \Prob( c(k-1) \leq |Z| < ck ). $$
Since $\Phi$ is the cumulative distribution function of $Z$, it follows that
$$ f(c,k) = 2n \left( \Phi(ck) - \Phi(c(k-1)) \right). $$

From Corollary \ref{cor:BY}, it follows that, for each $1 \leq j \leq n$, 
$$ \Prob ( c(k-1) \leq \sqrt{n} |v(j)| < ck ) = \Prob( c(k-1) \leq |Z| < ck ) (1 + o(1)). $$
We now claim that this identity holds uniformly for all $1 \leq j \leq n$.  This does not follow from the formulation of Corollary \ref{cor:BY}, but instead follows from the second part of \cite[Corollary 1.3]{BY}, which gives a uniform bound on the convergence of moments.  Indeed, uniform rates of convergence follow by combining the uniform convergence of moments with the identity 
$$ \left| e^{i t X} - \sum_{l=0}^{s-1} \frac{ (i t)^l X^l }{l!} \right| \leq \frac{|t|^s |X|^s}{s!} $$
and the inequality from \cite[page 538]{Fe}.  Therefore, we obtain that
$$ \E N(c,k) = (1 + o(1)) f(c,k). $$
In addition, by a similar argument, it follows that
$$ \var( N(c,k) ) = o(n^2). $$

By Chebyshev's inequality, we have that, for any $\eps > 0$, 
$$ \Prob \left( |N(c,k) - \E N(c,k)| \geq \eps f(c,k) \right) \leq \frac{\var( N(c,k) )}{ \eps^2 f(c,k)^2 }. $$
We note that $f(c,k) \geq c' n$ for some constant $c' > 0$ depending on $c$ and $k$.  Hence, we conclude that
\begin{equation} \label{eq:Nckfck}
	N(c,k) = (1 + o(1)) f(c,k) 
\end{equation}
with probability $1 - o(1)$.  

We will return to \eqref{eq:Nckfck} in a moment.  We now approximate the distribution of $\min_{S \subset [n] : |S| = \lfloor \delta n \rfloor} \|v \|_S^2$.  We will first need to determine the value of $c$ from above and another parameter $k_0$.  Take $c > 0$ and $k_0 \in \mathbb{N}$ such that
\begin{equation} \label{eq:choicec}
	2c \left[ \Phi^{-1} \left( \frac{1 + \delta}{2} \right)\right]^{2} < \frac{\eps}{2} 
\end{equation}
and
\begin{equation} \label{eq:choicek0}
	c k_0 = \Phi^{-1} \left( \frac{1 + \delta}{2} \right). 
\end{equation}
Such choices are always possible by taking $c>0$ sufficiently small such that \eqref{eq:choicec} holds, and then (by possibly decreasing $c$ if necessary) choosing $k_0 \in \mathbb{N}$ which satisfies \eqref{eq:choicek0}.  

There are several important implications of these choices.  First, 
\begin{equation} \label{eq:2sumkk0}
	2 \sum_{k=1}^{k_0} \left( \Phi(ck) - \Phi( (c(k-1) ) \right) = 2 \left( \Phi(ck_0) - \Phi(0) \right) = \delta. 
\end{equation}
Second, we have
\begin{equation} \label{eq:approxbnd1}
\begin{aligned}
	&\left| 2 \sum_{k=1}^{k_0} c^2 (k-1)^2 \left( \Phi(ck) - \Phi(c(k-1)) \right) - 2 \int_{0}^{\Phi^{-1}( \frac{1 + \delta}{2} )} x^2 \Phi'(x) \, dx \right| \\
	&\qquad\qquad= 2 \left| \sum_{k=1}^{k_0} \left[ c^2 (k-1)^2 \left( \Phi(ck) - \Phi(c(k-1)) \right) - \int_{c(k-1)}^{ck} x^2 \Phi'(x) \, dx \right] \right| \\
	&\qquad\qquad= 2 \left| \sum_{k=1}^{k_0} \left[ ( c^2(k-1)^2 - c^2 k^2 ) \Phi(ck) + 2 \int_{c(k-1)}^{ck} x \Phi(x) \, dx \right] \right| \\
	&\qquad\qquad= 4 \left| \sum_{k=1}^{k_0} \int_{c(k-1)}^{ck} x \left( \Phi(x) - \Phi(ck) \right) \, dx \right| \\
	&\qquad\qquad\leq 2 c \left[ \Phi^{-1} \left( \frac{1 + \delta}{2} \right) \right]^{2} \\
	&\qquad\qquad< \frac{\eps}{2} 
\end{aligned}
\end{equation}
by integration by parts and \eqref{eq:choicec}.  Here the first inequality follows from the mean value theorem and the bound $0 \leq \Phi'(x) \leq 1$.  
By a similar argument, we obtain
\begin{equation} \label{eq:approxbnd2}
	\left| 2 \sum_{k=1}^{k_0} c^2 k^2 \left(\Phi(ck) - \Phi(c(k-1)) \right) - 2 \int_{0}^{\Phi^{-1}( \frac{1 + \delta}{2} )} x^2 \Phi'(x) \, dx \right| < \frac{\eps}{2}. 
\end{equation}

From \eqref{eq:Nckfck}, we have that for any $1 \leq k \leq k_0 + 1$, 
\begin{equation} \label{eq:limitNck}
	N(c,k) = (1 + o(1)) f(c,k) = 2n (1 + o(1)) \left( \Phi(ck) - \Phi(c(k-1)) \right) 
\end{equation}
with probability $1 - o(1)$.  In view of \eqref{eq:2sumkk0}, we have
$$ \sum_{k=1}^{k_0} f(c,k) = \delta n. $$
Thus, by the union bound, we obtain that, with probability $1 - o(1)$, 
$$ \sum_{k=1}^{k_0} N(c,k) = (1 + o(1)) \delta n = \lfloor \delta n \rfloor + o(n). $$

We observe that
$$ \sum_{k=1}^{k_0} c^2 (k-1)^2 N(c,k) \leq n \min_{S \subset [n] : |S| = \sum_{k=1}^{k_0} N(c,k)} \|v\|_S^2 \leq \sum_{k=1}^{k_0} c^2 k^2 N(c,k) $$
by definition of $N(c,k)$.  Thus, with probability $1 - o(1)$, there exists a sequence $(\eps_n)_{n \geq 1}$ with $\eps_n \searrow 0$ such that
\begin{align*}
	\sum_{k=1}^{k_0} &c^2 (k-1)^2 N(c,k) - \eps_n c^2 k_0^2 N(c,k_0) \\
	&\leq n  \min_{S \subset [n] : |S| = \lfloor \delta n \rfloor} \|v\|_S^2 \leq \sum_{k=1}^{k_0} c^2 k^2 N(c,k) + \eps_n c^2 (k_0 + 1)^2 N(c,k_0+1). 
\end{align*}
Applying \eqref{eq:limitNck}, we find that 
\begin{align*}
	2 \sum_{k=1}^{k_0} &c^2(k-1)^2(\Phi(ck)-\Phi(c(k-1)))(1+o(1)) \\
	&\leq \min_{S \subset [n]: |S|=\lfloor \delta n \rfloor} \|v\|_S^2 \leq 2\sum_{k=1}^{k_0} c^2 k^2(\Phi(ck)-\Phi(c(k-1)))(1+o(1))
\end{align*}
with probability $1-o(1)$.  Therefore, by \eqref{eq:approxbnd1} and \eqref{eq:approxbnd2}, we conclude that 
$$ \left|\min_{S\subset [n]: |S|=\lfloor\delta n \rfloor} \|v\|_S^2 - 2 \int_{0}^{\Phi^{-1}(\frac{1+\delta}{2})} x^2 \Phi'(x)\,dx \right| \le \varepsilon $$
with probability $1 - o(1)$, and the proof is complete.

\subsection{Proof of Corollary \ref{cor:lp}}

Before proving Corollary \ref{cor:lp}, we will need to establish the following bound.   

\begin{theorem}[Sub-gaussian entries: Lower bound] \label{thm:uniformsublow}
Let $\xi$, $\zeta$ be real sub-gaussian random variables with mean zero, and assume $\xi$ has unit variance.  Let $W$ be an $n \times n$ Wigner matrix with atom variables $\xi$, $\zeta$.  
Then there exist constants $C,c > 0$ and $0 < \eta, \delta < 1$ (depending only on the sub-gaussian moments of $\xi$ and $\zeta$) such that
$$ \min_{1 \leq j \leq n} \min_{S \subset [n] : |S| \geq \delta n} \|v_j(W) \|_S \geq \eta $$
with probability at least $1 - C \exp(-cn)$.  
\end{theorem}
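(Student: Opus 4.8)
The plan is to prove the statement in its contrapositive, union‑bound form. Since $\|v\|_S^2=\sum_{i\in S}|v_i|^2$ is monotone in $S$, the infimum $\min_{|S|\ge\delta n}\|v\|_S$ is attained on a set of size $\lceil\delta n\rceil$, and it suffices to rule out, for every fixed $S\subset[n]$ with $|S|=\lceil\delta n\rceil$, the existence of a unit eigenvector $v$ of $W$ with $\|v\|_S<\eta$. If this holds for each fixed $S$ with probability $1-C\exp(-c_1 n)$, where $c_1$ is (essentially) an absolute rate, then a union over the $\binom{n}{\lceil\delta n\rceil}\le\exp\!\big((\delta\log(e/\delta)+o(1))n\big)$ sets finishes the proof, \emph{provided} $\delta$ is chosen small enough that $\delta\log(e/\delta)$ is below the relevant exponential rates; this is possible because $\delta\log(1/\delta)\to0$.

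Fix $S$, put $T:=[n]\setminus S$, and decompose $W=\begin{pmatrix}W_{TT}&W_{TS}\\ W_{ST}&W_{SS}\end{pmatrix}$; crucially $W_{TT}$ and the block $W_{ST}=W_{TS}^{\T}$ (and $W_{SS}$) are \emph{independent}, as they involve disjoint sets of entries. Work on the event $\mathcal G$ that each of $\|W_{TT}\|,\|W_{SS}\|,\|W_{TS}\|$ is at most $C_0\sqrt n$ (so also $\|W\|\le C_0\sqrt n$ and $|\lambda|\le C_0\sqrt n$ for every eigenvalue), which fails with probability $\le C\exp(-cn)$ by Lemma \ref{lemma:norm} and a standard norm bound for rectangular matrices with sub-gaussian entries. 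Suppose $v=(v_T,v_S)$ is a unit eigenvector with eigenvalue $\lambda$ and $\|v_S\|<\eta$, so $\|v_T\|\ge\tfrac12$ for $\eta$ small. Reading off the rows of $Wv=\lambda v$ indexed by $T$ gives $(W_{TT}-\lambda I)v_T=-W_{TS}v_S$, hence $\|(W_{TT}-\lambda I)v_T\|<C_0\sqrt n\,\eta$; by the spectral theorem for $W_{TT}$ this forces $v_T$ within distance $\nu:=C_0\sqrt n\,\eta/\beta$ of $V_\lambda:=\Span\{w:W_{TT}w=\mu w,\ |\mu-\lambda|\le\beta\}$ for any threshold $\beta>0$. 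Taking $\beta$ of order $\sqrt n\,\eta/\nu$ and invoking a crude large‑deviation bound for the number of eigenvalues of $W_{TT}$ in a window of fixed (normalized) width — a statement easier than Theorem \ref{thm:dim}, following from Wigner's semicircle law and exponential concentration of linear eigenvalue statistics — gives $\dim V_\lambda\le K(\eta/\nu)n=:d$ off probability $\le C\exp(-cn)$. Reading off instead the rows indexed by $S$ gives $W_{ST}v_T=(\lambda I-W_{SS})v_S$, so $\|W_{ST}v_T\|<2C_0\sqrt n\,\eta$; writing $v_T=v_T'+v_T''$ with $v_T'\in V_\lambda$, $\|v_T''\|\le\nu$, and setting $u:=v_T'/\|v_T'\|$, we obtain a unit vector $u\in V_\lambda$ with $\|W_{ST}u\|<2C_0\sqrt n\,(2\eta+\nu)$.

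It therefore suffices to show that $\inf_{u\in V',\,\|u\|=1}\|W_{ST}u\|>2C_0\sqrt n\,(2\eta+\nu)$ simultaneously over every subspace $V'$ that can occur as some $V_\lambda$. This is where the independence of $W_{TT}$ and $W_{ST}$ is used: conditionally on $W_{TT}$, the family $\{V_\lambda:\lambda\in\R\}$ consists of at most $2n$ fixed subspaces, each of dimension $\le d$; choose $\eta$ small relative to $\delta$ so that $d\le\tfrac12\lceil\delta n\rceil$. For a fixed $d$-dimensional $V'$, the restriction $W_{ST}|_{V'}$ (in an orthonormal basis of $V'$) is a $\lceil\delta n\rceil\times d$ matrix with independent, isotropic, sub-gaussian rows, so the Rudelson–Vershynin smallest–singular–value estimate (Theorem \ref{thm:lsv2}, in the version of \cite{RVssv} for independent isotropic sub-gaussian rows) yields $\Prob\big(\sigma_{\min}(W_{ST}|_{V'})\le\eps(\sqrt{\delta n}-\sqrt{d-1})\big)\le(C\eps)^{\delta n-d+1}+e^{-c\delta n}$. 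Unioning over the $\le2n$ subspaces and over the $\binom{n}{\lceil\delta n\rceil}$ sets $S$, and noting that $\sigma_{\min}(W_{ST}|_{V'})\ge\tfrac14\eps\sqrt{\delta n}$ contradicts the previous paragraph once $\eta,\nu$ are small in terms of $\eps,\delta$, we fix $\delta$ small, then $\eps$ small (of order $\delta^2$), then $\nu$ small, then $\eta$ small with $\eta/\nu\le\delta/(2K)$, and set $\eta$ as the final no‑gaps constant.

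\textbf{Main obstacle.} The difficulty is the clash between the two constraints just used. Surviving the union over the $\binom{n}{\lceil\delta n\rceil}\approx e^{\delta\log(1/\delta)n}$ sets forces the use of the \emph{sharp} $(C\eps)^{\Theta(\delta n)}$ small-ball bound with $\eps$ taken small — a cruder $e^{-\Omega(\delta n)}$ per‑subspace bound (e.g. from a bare $\eps$-net) is provably insufficient, since $\delta\log(1/\delta)>c\delta$ for small $\delta$. For the same reason, the additive term $e^{-c\delta n}$ in Theorem \ref{thm:lsv2} cannot be paid once per $S$; it must instead be absorbed into a single global event. Concretely, in the proof of \cite{RVssv} that term bookkeeps the event $\{\|W_{ST}|_{V'}\|>K\sqrt{\delta n}\}$, and on $\mathcal G$ one has $\|W_{ST}|_{V'}\|\le\|W\|\le C_0\sqrt n$ for \emph{all} $S$ and $V'$ at once; feeding this bound into the remainder of that argument (at the cost of worse, $\delta$-dependent constants) gives $\Prob\big(\{\sigma_{\min}(W_{ST}|_{V'})\le\eps\sqrt{\delta n}\}\cap\mathcal G\big)\le(C\eps)^{\delta n-d+1}$. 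Carrying out this decoupling carefully — keeping track of which events are measurable with respect to $W_{TT}$ versus the remaining entries — and then performing the (routine but lengthy) bookkeeping of the hierarchy $\delta\gg\eps\gg\nu\gg\eta$, is the bulk of the work.
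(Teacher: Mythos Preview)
Your approach is substantially more complicated than the paper's, and the ``main obstacle'' you identify is a genuine gap that your proposed fix does not close.

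\textbf{What the paper does instead.} The paper first proves the complementary \emph{upper} bound (their Theorem \ref{thm:uniformsub}): there exist $\eta',\delta'\in(0,1)$ such that $\max_j\max_{|S|\le\delta'n}\|v_j(W)\|_S\le\eta'$ with probability $1-C\exp(-cn)$. The identity $\min_{|S|\ge m}\|v\|_S^2+\max_{|S|\le n-m}\|v\|_S^2=1$ then gives the lower bound immediately (with $\delta=1-\delta'$ close to $1$, which is all the statement asks for). For the upper bound one works on the \emph{small} set $S$ with $|S|=m=\lfloor\delta'n\rfloor$: writing $v_j=(x_j,y_j)$ and reading off the $S^c$-rows gives $Bx_j=(\lambda_jI-D)y_j$, where $B$ is the full $(n-m)\times m$ i.i.d.\ block. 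If $\|y_j\|^2\le\eta$ then $(1-\eta)\sigma_m^2(B)\le\|Bx_j\|^2\le4\eta K^2n$. Now Theorem \ref{thm:lsv2} applies to $B$ \emph{as stated} (i.i.d.\ entries, tall) with exponent $N-n+1=n-2m+1=\Theta(n)$, and the additive term $e^{-cN}=e^{-c(1-\delta')n}$ also has rate $\Theta(n)$. Both beat $\binom{n}{m}\le\exp(\delta'\log(e/\delta')n)$ once $\delta'$ is small.

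\textbf{Why your fix is not enough.} In your route the relevant matrix is $W_{ST}|_{V'}$, which is $\lceil\delta n\rceil\times d$ with \emph{i.i.d.\ isotropic sub-gaussian rows} but not i.i.d.\ entries. The sharp $(C\eps)^{N-n+1}$ bound of \cite{RVssv} hinges on column-wise small-ball estimates that use the i.i.d.\ entry structure (the compressible/incompressible dichotomy for column vectors); it is not available in the generality of independent isotropic sub-gaussian rows. The bound that is available in that generality (e.g.\ \cite[Theorem~5.39]{RV}) gives only rate $\Theta(\delta n)$, exactly the term you correctly flag as insufficient against $\binom{n}{\lceil\delta n\rceil}$. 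Absorbing the norm event into $\mathcal G$ removes the wrong term: what remains is still a net argument over the unit sphere of $V'$, with per-point rate $e^{-c\delta n}$ and net size exponential in $d=\Theta(\delta n)$, so the union over $S$ still fails. (Your localization-to-eigenspaces idea is precisely what the paper uses for the perturbed case, Theorem~\ref{thm:diag}, but there $S$ is \emph{fixed} and the claimed probability is only $1-C\exp(-c(\log n)^{c\log\log n})$.)

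The clean escape is the duality above: work on the complement so that the block $B$ is tall and has genuinely i.i.d.\ entries, giving exponent $\Theta(n)$ from Theorem~\ref{thm:lsv2} directly, with no conditioning on $W_{TT}$ and no eigenspace localization.
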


Theorem \ref{thm:uniformsublow} follows directly from Theorem \ref{thm:RVgapsgen}.  However, Theorem \ref{thm:RVgapsgen} is much stronger than anything we need here.  Additionally, the proof of Theorem \ref{thm:RVgapsgen} is long and complicated, and we do not touch on it in this survey.  Instead, we provide a separate proof of Theorem \ref{thm:uniformsublow} using the uniform bounds below.  

\begin{theorem}[Uniform upper bound] \label{thm:uniformupper}
Let $\xi, \zeta$ be real random variables, and let $W$ be an $n \times n$ Wigner matrix with atom variables $\xi$, $\zeta$.  Take $1 \leq m < n$.  Then, for any $0 < \eta < 1$ and $K > 0$, 
\begin{align*}
	\Prob &\left( \exists j \in [n] \text{ and } S \subset [n] \text{ with } |S| = m \text{ such that } \|v_j(W)\|_{S}^2 \geq \eta \right)  \\
		&\qquad\qquad\qquad\leq \Prob \left( \|W\| > K\sqrt{n} \right) + n \binom{n}{m} \Prob \left( \frac{1}{n} \sigma^2_m(B) \leq \frac{4 (1-\eta) K^2}{\eta} \right),
\end{align*}
where $B$ is a $(n-m) \times m$ matrix whose entries are iid copies of $\xi$.  
\end{theorem}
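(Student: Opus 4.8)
The plan is a union bound over coordinate subsets, after excising the event that $\|W\|$ is abnormally large. First I would write
$$ \Prob\left( \exists j \in [n], S \subset [n], |S| = m : \|v_j(W)\|_S^2 \ge \eta \right) \le \Prob(\|W\| > K\sqrt{n}) + \Prob(\mathcal{E}), $$
where $\mathcal{E}$ denotes the event that $\|W\| \le K\sqrt{n}$ and yet some unit eigenvector of $W$ carries mass at least $\eta$ on some set of $m$ coordinates. Everything then reduces to bounding $\Prob(\mathcal{E})$, and the point is that $\mathcal{E}$ forces a random $(n-m) \times m$ off-diagonal block of $W$ to have an anomalously small least singular value.

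To see this, fix a realization in $\mathcal{E}$ and let $j$ and $S$ witness it; set $v := v_j(W)$, $\lambda := \lambda_j(W)$. After permuting coordinates so that $S$ labels the first $m$ of them, write $W$ in block form with blocks $W_{SS}$, $W_{SS^c}$, $W_{S^cS}$, $W_{S^cS^c}$ and $v = (v_S, v_{S^c})$. The lower block of the identity $Wv = \lambda v$ gives $W_{S^cS} v_S = (\lambda I - W_{S^cS^c}) v_{S^c}$. Since $\|v\|=1$ and $S \cup S^c = [n]$, we have $\|v_{S^c}\|^2 = 1 - \|v\|_S^2 \le 1 - \eta$, while $|\lambda| \le \|W\|$ and $\|W_{S^cS^c}\| \le \|W\| \le K\sqrt n$; hence
$$ \|W_{S^cS} v_S\| = \|(\lambda I - W_{S^cS^c}) v_{S^c}\| \le 2\|W\|\, \|v_{S^c}\| \le 2K\sqrt{n}\,\sqrt{1-\eta}. $$
On the other hand $\|W_{S^cS} v_S\| \ge \sigma_m(W_{S^cS}) \|v_S\| \ge \sigma_m(W_{S^cS}) \sqrt{\eta}$, because $W_{S^cS}$ has $m$ columns. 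Combining these two bounds and squaring yields $\tfrac{1}{n}\sigma_m^2(W_{S^cS}) \le \tfrac{4K^2(1-\eta)}{\eta}$.

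It remains to take a union bound. Since $S$ and $S^c$ are disjoint, every entry of $W_{S^cS}$ is a distinct off-diagonal entry of $W$, so $W_{S^cS}$ is an $(n-m)\times m$ matrix whose entries are iid copies of $\xi$, i.e. $W_{S^cS}$ has the same distribution as $B$. Therefore
$$ \Prob(\mathcal{E}) \le \sum_{j \in [n]} \sum_{S : |S| = m} \Prob\left( \frac{1}{n}\sigma_m^2(W_{S^cS}) \le \frac{4K^2(1-\eta)}{\eta} \right) = n\binom{n}{m}\, \Prob\left( \frac{1}{n}\sigma_m^2(B) \le \frac{4(1-\eta)K^2}{\eta} \right), $$
which is the stated bound. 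The argument is entirely elementary, and I do not expect a genuine obstacle; the only points requiring care are the distributional identification of $W_{S^cS}$ with $B$ (valid precisely because $S \cap S^c = \emptyset$, so no two entries of this block are forced equal by symmetry) and the inequality $\|Mx\| \ge \sigma_m(M)\|x\|$ for matrices with $m$ columns. Note that the argument never inverts $\lambda I - W_{S^cS^c}$, so there is no need to worry about $\lambda$ coinciding with an eigenvalue of $W_{S^cS^c}$; and one could even drop the factor $n$, since the relevant block depends only on $S$, but the slightly weaker bound stated suffices for later use.
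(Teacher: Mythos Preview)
Your proof is correct and follows essentially the same approach as the paper: block-decompose $W$ according to $S$ and $S^c$, use the eigenvalue equation to bound $\|W_{S^cS}v_S\|$ above via $\|W\|$ and $\|v_{S^c}\|$, and below via $\sigma_m(W_{S^cS})\|v_S\|$, then take a union bound. The only cosmetic differences are that the paper first substitutes $\eta \mapsto 1-\eta$ and uses $\|a-b\|^2 \le 2(\|a\|^2+\|b\|^2)$ where you use the operator-norm triangle inequality; your closing remark that the factor $n$ is unnecessary is a valid sharpening the paper does not mention.
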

\begin{proof}
We first observe that, by changing $\eta$ to $1- \eta$, it suffices to show
\begin{align*}
	\Prob &\left( \exists j \in [n] \text{ and } S \subset [n] \text{ with } |S| = m \text{ such that } \|v_j(W)\|_{S}^2 \geq 1- \eta \right)  \\
		&\qquad\qquad\qquad\leq \Prob \left( \|W\| > K\sqrt{n} \right) + n \binom{n}{m} \Prob \left( \frac{1}{n} \sigma^2_m(B) \leq \frac{4 \eta K^2}{1 - \eta} \right).
\end{align*}
For notational convenience, let $\lambda_1 \leq \cdots \leq \lambda_n$ denote the eigenvalues of $W$ with corresponding unit eigenvectors $v_1, \ldots v_n$.  Define the event
$$ \Omega_{n,m}(\eta) := \left\{ \exists j \in [n] \text{ and } S \subset [n] \text{ with } |S| = m \text{ such that } \|v_j\|_{S}^2 \geq 1- \eta \right\}. $$
Then 
\begin{equation} \label{eq:initbnd}
	\Prob (\Omega_{n,m}(\eta)) \leq \Prob \left( \Omega_{n,m}(\eta) \cap \{ \|W\| \leq K \sqrt{n} \} \right) + \Prob \left( \|W\| > K \sqrt{n} \right). 
\end{equation}
By the union bound and symmetry, we have
\begin{align*}
	\Prob &\left( \Omega_{n,m}(\eta) \cap \{ \|W\| \leq K \sqrt{n} \} \right) \\
		&\leq \sum_{j=1}^n \Prob \left( \exists S \subset [n] \text{ with } |S| = m \text{ such that } \|v_j\|_{S^c}^2 \leq \eta \text{ and } \|W\| \leq K \sqrt{n} \right) \\
		&\leq \binom{n}{m} \sum_{j=1}^n \Prob \left( \sum_{k=m+1}^n |v_j(k)|^2 \leq \eta \text{ and } \|W\| \leq K \sqrt{n} \right),
\end{align*}
where $v_j(k)$ denotes the $k$th entry of the unit vector $v_j$.  

Write
$$ W = \begin{pmatrix} A & B^\mathrm{T} \\ B & D \end{pmatrix}, $$
where $A$ is a $m \times m$ matrix, $B$ is a $(n-m) \times m$ matrix, and $D$ is a $(n-m) \times (n-m)$ matrix.  In particular, the entries of $B$ are iid copies of $\xi$.  Decompose 
$$ v_j = \begin{pmatrix} x_j \\ y_j \end{pmatrix}, $$
where $x_j$ is a $m$-vector and $y_j$ is a $(n-m)$-vector.  Then the eigenvalue equation $W v_j = \lambda_j v_j$ implies that $B x_j + D y_j = \lambda_j y_j$.  Therefore, on the event where $\|y_j\|^2 \leq \eta$ and $\|W\| \leq K \sqrt{n}$, we have
\begin{align*}
	\|B x_j\|^2 = \|\lambda_j y_j - D y_j \|^2 \leq 2 \|y_j\|^2 \left( |\lambda_j|^2 + \|D\|^2 \right) \leq 4 \eta K^2 n.
\end{align*}
Here we used the fact that the spectral norm of a matrix is not less than that of any sub-matrix.  Since $\|x_j\|^2 + \|y_j\|^2 = 1$, we find that
$$ (1-\eta) \sigma_m^2(B) \leq \|B x_j \|^2. $$
Combining the bounds above, we conclude that
\begin{align*}
	\Prob \left( \Omega_{n,m}(\eta) \cap \{ \|W\| \leq K \sqrt{n} \} \right) &\leq \binom{n}{m} \sum_{j=1}^n \Prob \left( \|y_j\|^2 \leq \eta \text{ and } \|W\| \leq K \sqrt{n} \right) \\
		&\leq \binom{n}{m} \sum_{j=1}^n \Prob \left( (1-\eta) \sigma_m^2(B) \leq 4 \eta K^2 n \right) \\
		&\leq n \binom{n}{m} \Prob \left( \frac{1}{n} \sigma_m^2(B) \leq \frac{4 \eta K^2}{1 - \eta}  \right).
\end{align*}
The proof is now complete by combining the bound above with \eqref{eq:initbnd}.  
\end{proof} 

\begin{theorem}[Uniform lower bound] \label{thm:uniformlower}
Let $\xi, \zeta$ be real random variables, and let $W$ be an $n \times n$ Wigner matrix with atom variables $\xi$, $\zeta$.  Take $1 \leq m < n$.  Then, for any $0 < \eta < 1$ and $K > 0$, 
\begin{align*}
	\Prob &\left( \exists j \in [n] \text{ and } S \subset [n] \text{ with } |S| = m \text{ such that } \|v_j(W)\|_{S}^2 \leq \eta \right)  \\
		&\qquad\qquad\qquad\leq \Prob \left( \|W\| > K \sqrt{n} \right) + n \binom{n}{m} \Prob \left( \frac{1}{n} \sigma^2_{n-m}(B) \leq \frac{4 \eta K^2}{1 - \eta} \right),
\end{align*}
where $B$ is a $m \times (n-m)$ matrix whose entries are iid copies of $\xi$.
\end{theorem}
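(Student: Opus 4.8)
The plan is to deduce Theorem~\ref{thm:uniformlower} from Theorem~\ref{thm:uniformupper} by passing to complements. For any unit vector $v$ and any $S\subset[n]$ one has $\|v\|_S^2+\|v\|_{S^c}^2=1$, so for each index $j$ the existence of a set $S$ with $|S|=m$ and $\|v_j(W)\|_S^2\le\eta$ is equivalent (take $T=S^c$) to the existence of a set $T$ with $|T|=n-m$ and $\|v_j(W)\|_T^2\ge 1-\eta$. Taking the union over $j\in[n]$, the event in Theorem~\ref{thm:uniformlower} therefore coincides with
\[ \bigl\{\exists\, j\in[n]\text{ and }T\subset[n]\text{ with }|T|=n-m\text{ such that }\|v_j(W)\|_T^2\ge 1-\eta\bigr\}. \]

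Next I would apply Theorem~\ref{thm:uniformupper} with $m$ replaced by $n-m$ and $\eta$ replaced by $1-\eta$; this is legitimate since $1\le n-m<n$ (because $1\le m<n$) and $1-\eta\in(0,1)$. With these substitutions Theorem~\ref{thm:uniformupper} bounds the probability of the displayed event by
\[ \Prob\bigl(\|W\|>K\sqrt{n}\bigr)+n\binom{n}{n-m}\Prob\!\left(\frac1n\sigma_{n-m}^2(B)\le\frac{4(1-(1-\eta))K^2}{1-\eta}\right), \]
where $B$ is an $(n-(n-m))\times(n-m)=m\times(n-m)$ matrix whose entries are iid copies of $\xi$. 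Since $\binom{n}{n-m}=\binom{n}{m}$ and $4(1-(1-\eta))/(1-\eta)=4\eta/(1-\eta)$, this is exactly the bound claimed in Theorem~\ref{thm:uniformlower}, which completes the argument.

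There is no genuine obstacle here, as the statement is the ``dual'' of the preceding theorem; the only point requiring care is the bookkeeping, namely checking that the three substitutions (in the binomial coefficient, in the dimensions of $B$ together with the index on $\sigma$, and in the constant on the right-hand side) all land on precisely the stated quantities, an off-by-one in the index of $\sigma$ or the size of $B$ being the easiest slip. If a self-contained argument is preferred, one can instead mirror the proof of Theorem~\ref{thm:uniformupper} with a single change of block: reduce to $S=[m]$ by symmetry and the union bound, write $W=\left(\begin{smallmatrix}A&B^\mathrm{T} \\ B&D\end{smallmatrix}\right)$ with $A$ of size $m\times m$ and decompose the eigenvector $v_j=(x_j,y_j)$ with $x_j\in\mathbb{R}^m$, and then use the \emph{top} block of $Wv_j=\lambda_j v_j$ to get $B^\mathrm{T} y_j=(\lambda_j I-A)x_j$; on $\{\|W\|\le K\sqrt{n}\}$ with $\|x_j\|^2\le\eta$ this gives $\|B^\mathrm{T} y_j\|^2\le 2\|x_j\|^2(|\lambda_j|^2+\|A\|^2)\le 4\eta K^2 n$, while $\|B^\mathrm{T} y_j\|^2\ge\sigma_{n-m}^2(B^\mathrm{T})\|y_j\|^2\ge(1-\eta)\sigma_{n-m}^2(B^\mathrm{T})$, and since $B^\mathrm{T}$ is an $m\times(n-m)$ matrix with iid copies of $\xi$ the claim follows.
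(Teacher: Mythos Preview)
Your proposal is correct and matches the paper's proof essentially verbatim: the paper also observes $\|v_j(W)\|_S^2+\|v_j(W)\|_{S^c}^2=1$, rewrites the event with $|S|=m$ and $\|v_j(W)\|_S^2\le\eta$ as the event with $|S|=n-m$ and $\|v_j(W)\|_S^2\ge 1-\eta$, and then invokes Theorem~\ref{thm:uniformupper}. Your bookkeeping of the substitutions (in the binomial coefficient, the dimensions of $B$, the singular-value index, and the constant) is exactly right.
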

\begin{proof}
From the relation 
$$ \|v_j(W) \|_S^2 + \|v_j(W)\|_{S^c}^2 = 1, $$
we find that
\begin{align*}
	\Prob &\left( \exists j \in [n] \text{ and } S \subset [n] \text{ with } |S| = m \text{ such that } \|v_j(W)\|_{S}^2 \leq \eta \right) \\
		&= \Prob \left( \exists j \in [n] \text{ and } S \subset [n] \text{ with } |S| = n-m \text{ such that } \|v_j(W)\|_{S}^2 \geq 1- \eta \right).
\end{align*}
Therefore, Theorem \ref{thm:uniformlower} follows immediately from Theorem \ref{thm:uniformupper}. 
\end{proof}

Before proving Theorem \ref{thm:uniformsublow}, we prove the following upper bound.

\begin{theorem}[Sub-gaussian entries: Upper bound] \label{thm:uniformsub}
Let $\xi$, $\zeta$ be sub-gaussian random variables with mean zero, and assume $\xi$ has unit variance.  Let $W$ be an $n \times n$ Wigner matrix with atom variables $\xi$, $\zeta$.  Then there exist constants $C,c > 0$ and $0 < \eta, \delta < 1$ (depending only on the sub-gaussian moments of $\xi$ and $\zeta$) such that
$$ \max_{1 \leq j \leq n} \max_{S \subset [n] : |S| \leq \delta n} \|v_j(W) \|_S \leq \eta $$
with probability at least $1 - C \exp(-cn)$.  
\end{theorem}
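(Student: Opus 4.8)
The plan is to deduce Theorem~\ref{thm:uniformsub} from the deterministic reduction in Theorem~\ref{thm:uniformupper}, the spectral norm bound of Lemma~\ref{lemma:norm}, and the least-singular-value estimate of Theorem~\ref{thm:lsv2}. Since $\|v\|_S \le \|v\|_T$ whenever $S\subseteq T$, it is enough to bound $\max_{1\le j\le n}\max_{S:|S|=m}\|v_j(W)\|_S$ with $m := \lfloor\delta n\rfloor$; and since the conclusion concerns $\|v_j\|_S$ rather than its square, it suffices to exhibit some $\eta_0\in(0,1)$ with
$$ \Prob\left( \exists\, j\in[n],\ S\subset[n],\ |S|=m:\ \|v_j(W)\|_S^2 \ge \eta_0 \right) \le C\exp(-cn), $$
and then set $\eta := \sqrt{\eta_0}$.

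First I would fix $K := C_0$, the constant from Lemma~\ref{lemma:norm}, so that $\Prob(\|W\| > K\sqrt n)\le C_0\exp(-c_0 n)$, and apply Theorem~\ref{thm:uniformupper} with this $K$ and with $\eta_0$ in the role of $\eta$. This reduces the problem to controlling $n\binom{n}{m}\,\Prob\big(\tfrac{1}{n}\sigma_m^2(B)\le \tfrac{4(1-\eta_0)K^2}{\eta_0}\big)$, where $B$ is an $(n-m)\times m$ matrix with i.i.d. entries distributed as the sub-gaussian variable $\xi$. For $\delta < 1/2$ the block $B$ has more rows than columns, so Theorem~\ref{thm:lsv2} applies: setting $\beta_\delta := (\sqrt{1-\delta}-\sqrt\delta)^2 > 0$ and noting that $\tfrac{1}{n}(\sqrt{n-m}-\sqrt{m-1})^2 \ge \beta_\delta/2$ for $n$ large, one gets, for every $\eps>0$,
$$ \Prob\left( \tfrac{1}{n}\sigma_m^2(B) \le \tfrac{\eps^2\beta_\delta}{2} \right) \le (C_1\eps)^{n-2m+1} + e^{-c_1(n-m)}, $$
with $C_1,c_1>0$ depending only on the sub-gaussian moment of $\xi$.

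It then remains to choose the parameters in an order that avoids circularity: (i) shrink $\delta$ so that $\delta<1/2$ and $\delta\log(e/\delta)<c_1(1-\delta)$, which (using $\binom{n}{m}\le (e/\delta)^{\delta n}$) makes $n\binom{n}{m} e^{-c_1(n-m)}$ decay exponentially; (ii) fix $\eps>0$ small enough that $(C_1\eps)^{1-2\delta}(e/\delta)^{\delta}<1$, so that $n\binom{n}{m} (C_1\eps)^{n-2m+1}$ also decays exponentially; (iii) since $\frac{1-\eta_0}{\eta_0}\to 0$ as $\eta_0\to 1$, fix $\eta_0\in(0,1)$ close enough to $1$ that $\frac{4(1-\eta_0)K^2}{\eta_0}<\tfrac{\eps^2\beta_\delta}{2}$. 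Combining, the displayed probability is $\le C\exp(-cn)$ for suitable $C,c>0$ (enlarging $C$ to absorb the finitely many small $n$), and $\eta:=\sqrt{\eta_0}$ gives the claim. The \emph{main obstacle} is balancing the union bound over the $\binom{n}{m}\approx e^{\delta\log(1/\delta)\,n}$ choices of $S$ against the exponential gain $e^{-c_1(n-m)}$ supplied by Theorem~\ref{thm:lsv2}: this forces $\delta$ to be small, so the argument yields the statement for \emph{some} small $\delta$ rather than for every $\delta<1/2$. Everything else is routine bookkeeping once the constants are chosen in the right order.
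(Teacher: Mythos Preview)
Your proposal is correct and follows essentially the same route as the paper: reduce via Theorem~\ref{thm:uniformupper}, control $\|W\|$ by Lemma~\ref{lemma:norm}, bound the smallest singular value of the rectangular block $B$ using Theorem~\ref{thm:lsv2}, and then choose $\delta$ small enough that the entropy $\binom{n}{m}\le(e/\delta)^{\delta n}$ is beaten by the exponential gain, finally pushing $\eta_0$ toward $1$. The paper organizes the constants slightly differently (it fixes an auxiliary $c_0$ first and then packages the two terms from Theorem~\ref{thm:lsv2} into a single $2\exp(-c_0 c n)$), but the argument is the same.
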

\begin{proof}
We observe that it suffices to show 
$$  \max_{1 \leq j \leq n} \max_{S \subset [n] : |S| = \lfloor \delta n \rfloor} \|v_j(W) \|_S \leq \eta $$
with probability at least $1 - C n \exp(-c n)$.  Moreover, in view of Theorem \ref{thm:uniformupper} and Lemma \ref{lemma:norm}, it suffices to show there exists $0 < \eta, \delta < 1$ such that
$$ n \binom{n}{\lfloor \delta n \rfloor} \Prob \left( \frac{1}{n} \sigma_{\lfloor \delta n \rfloor}^2(B) \leq \frac{4C_0^2 (1-\eta)}{\eta} \right) \leq C n \exp(-cn), $$
where $B$ is a $(n - \lfloor \delta n \rfloor) \times \lfloor \delta n \rfloor$ matrix whose entries are iid copies of $\xi$.  

Fix $0 < c_0 < 1$.  Let $c>0$ be the corresponding constant from Theorem \ref{thm:lsv2} which depends only on the sub-gaussian moment of $\xi$.  Let $0 < \delta < 1/2$ be such that
\begin{align*}
	\sqrt{n- \lfloor \delta n \rfloor} - \sqrt{ \lfloor \delta n \rfloor -1} &\geq c_0 \sqrt{n}, \\
	n-2 \lfloor \delta n \rfloor + 1 &\geq c_0 n, 
\end{align*}
and
\begin{equation} \label{eq:delta'}
	\delta \log \left( \frac{2e}{\delta} \right) \leq c_0 \frac{c}{2}.
\end{equation}
We observe that such a choice is always possible since $\delta' \log(1/\delta') \to 0$ as $\delta' \downarrow 0$.  Set $m := \lfloor \delta n \rfloor$.  

By Theorem \ref{thm:lsv2} and our choice of $\delta$, there exists $c_1 > 0$ (depending on $c_0$, $c$, and the sub-gaussian moments of $\xi$) such that
$$ \Prob \left( \sigma^2_m(B) \leq c_1 n \right) \leq 2 \exp(-c_0 c n). $$
Taking $0 < \eta < 1$ sufficiently close to $1$ (so that $\frac{4C_0^2(1-\eta)}{\eta} < c_1$), we obtain 
\begin{align*}
	n \binom{n}{m} \Prob \left( \frac{1}{n} \sigma_m^2(B) \leq \frac{4C_0^2(1-\eta)}{\eta} \right) &\leq 2 n \binom{n}{m} \exp(-c_0 cn) \\
		&\leq 2n \left( \frac{n e}{m} \right)^m \exp(-c_0 cn) \\
		&\leq 2n \exp\left( m \log \left( \frac{ne}{m} \right) - c_0 cn \right).
\end{align*}
The claim now follows since
$$ m \log \left( \frac{ne}{m} \right) \leq \delta n \log \left( \frac{2e}{\delta} \right) \leq c_0\frac{c}{2} n $$
by \eqref{eq:delta'} for $n$ sufficiently large.  
\end{proof}

Using Theorem \ref{thm:uniformsub}, we immediately obtain the proof of Theorem \ref{thm:uniformsublow}.  

\begin{proof}[Proof of Theorem \ref{thm:uniformsublow}]
Theorem \ref{thm:uniformsublow} follows immediately from Theorem \ref{thm:uniformsub} and the relation
$$ \min_{1 \leq j \leq n} \min_{\substack{S \subset [n] \\ |S| \geq m}} \|v_j(W)\|_S^2 + \max_{1 \leq j \leq n} \max_{\substack{S \subset [n] \\ |S| \leq n-m}} \|v_j(W)\|_{S}^2 = 1. $$
We also note that one can prove Theorem \ref{thm:uniformsublow} directly by applying Theorem \ref{thm:uniformlower}.  
\end{proof}

We now present the proof of Corollary \ref{cor:lp}.  Indeed, Corollary \ref{cor:lp} follows immediately from Theorem \ref{thm:uniformsublow}, Theorem \ref{thm:uniformsub}, and the two lemmas below.  

\begin{lemma}[Uniform lower bound implies upper bound on the $\ell^p$-norm] \label{lemma:upperellp}
Let $v$ be a unit vector in $\mathbb{R}^n$, and let $1 \leq m \leq n$.  If 
$$ \min_{S \subset [n] : |S| = m} \|v\|_S^2 \geq \eta, $$
then, for any $1 \leq p \leq 2$, 
$$ \|v\|_{\ell^p}^p \leq \left\lceil \frac{n}{n-m} \right\rceil \frac{ 1-\eta }{\eta^{1-p/2}} m^{1-p/2}. $$
\end{lemma}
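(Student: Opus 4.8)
The plan is to extract two elementary consequences of the hypothesis and then split $[n]$ into a bounded number of blocks on each of which the $\ell^2$-mass of $v$ is small. First I would observe that the hypothesis forces $\eta \le m/n$: applying it to the set $S^*$ of the $m$ coordinates of $v$ that are smallest in magnitude, and noting that the $m$ smallest of the values $|v_i|^2$ have average at most the global average $\tfrac1n\sum_i |v_i|^2 = \tfrac1n$, gives $\eta \le \|v\|_{S^*}^2 \le m/n$. Rearranging, this is exactly $n-m \le \tfrac{(1-\eta)m}{\eta}$, the inequality that will later convert a bound involving the block size $n-m$ into one involving $m/\eta$. (If $m=n$ the asserted bound is $+\infty$ and there is nothing to prove; if $\eta \le 0$ the hypothesis is vacuous or the bound trivial; so I may assume $1 \le m < n$ and $0 < \eta \le 1$.)

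Second, I would record that for every $T\subset[n]$ with $|T|\le n-m$ one has $\|v\|_T^2 \le 1-\eta$: the complement $T^c$ has size at least $m$, hence contains some $m$-set $S$, and then $\|v\|_{T^c}^2 \ge \|v\|_S^2 \ge \eta$, so $\|v\|_T^2 = 1-\|v\|_{T^c}^2 \le 1-\eta$.

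With these in hand, partition $[n]$ into $k := \lceil \tfrac{n}{n-m}\rceil$ consecutive blocks $B_1,\dots,B_k$, each of size at most $n-m$; this is possible because $k(n-m)\ge n$ and $n-m$ is an integer, so blocks of size $\lceil n/k\rceil \le n-m$ work. On each block $B_j$ the second observation gives $\|v\|_{B_j}^2 \le 1-\eta$, and Hölder's inequality (equivalently the power-mean inequality, valid for $1\le p\le 2$) gives $\sum_{i\in B_j}|v_i|^p \le |B_j|^{1-p/2}\big(\sum_{i\in B_j}|v_i|^2\big)^{p/2} \le |B_j|^{1-p/2}(1-\eta)^{p/2}$. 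Since $1-p/2\ge 0$ and $|B_j|\le n-m\le \tfrac{(1-\eta)m}{\eta}$ by the first observation, this is at most $\big(\tfrac{(1-\eta)m}{\eta}\big)^{1-p/2}(1-\eta)^{p/2} = \tfrac{1-\eta}{\eta^{1-p/2}}\,m^{1-p/2}$. Summing over the $k$ blocks yields $\|v\|_{\ell^p}^p = \sum_{j=1}^k\sum_{i\in B_j}|v_i|^p \le \lceil\tfrac{n}{n-m}\rceil\,\tfrac{1-\eta}{\eta^{1-p/2}}\,m^{1-p/2}$, which is the claim.

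The computation is entirely elementary; the only point requiring a moment's thought — and the step I would flag as the crux — is the observation that the hypothesis forces $\eta \le m/n$, i.e. $n-m \le (1-\eta)m/\eta$. Without it one only obtains the weaker but structurally identical bound $k(n-m)^{1-p/2}(1-\eta)^{p/2}$, and it is precisely this inequality that trades the block-size $n-m$ for the advertised $m/\eta$. Minor bookkeeping — empty blocks contribute $0$, and the degenerate cases $m=n$ and $\eta=0$ — should be dispatched at the outset.
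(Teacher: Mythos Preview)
Your proof is correct, and the final partitioning step matches the paper's argument, but your route to the per-block bound is genuinely different. The paper takes $S$ to be the set of the $m$ smallest coordinates, observes that its largest entry $|v_l|$ satisfies $|v_l|^2 \ge \eta/m$, and then on $S^c$ uses the pointwise comparison $|v_j|^2 \ge |v_l|^{2-p}|v_j|^p$ (valid since every $j\in S^c$ has $|v_j|\ge |v_l|$) to get directly $\sum_{j\in S^c}|v_j|^p \le (1-\eta)\,(m/\eta)^{1-p/2}$; this bounds the $\ell^p$-mass of the \emph{largest} $n-m$ coordinates, and hence of any block of that size, after which the partition step finishes. You instead extract the averaging consequence $\eta \le m/n$, i.e.\ $n-m \le (1-\eta)m/\eta$, and then on each block apply H\"older $\sum|v_i|^p \le |B_j|^{1-p/2}\|v\|_{B_j}^p$ together with $\|v\|_{B_j}^2 \le 1-\eta$. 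Both arguments are elementary; yours is perhaps slightly slicker since it avoids singling out a particular coordinate, while the paper's version yields the intermediate pointwise bound $|v_l|\ge\sqrt{\eta/m}$, which can be of independent interest.
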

\begin{proof}
Let $S \subset [n]$ be the subset which minimizes $\|v\|_S^2$ given the constraint $|S| = m$.  In particular, $S$ contains the $m$-smallest coordinates of $v=(v_i)_{i=1}^n$ in absolute value.  Let $l$ be such that
$$ |v_l| = \max\{|v_j| : j \in S \}. $$
Then
$$ |S| |v_l|^2 \geq \sum_{j \in S} |v_j|^2 \geq \eta $$
by assumption.  Hence, we obtain
$$ |v_l|^2 \geq \frac{\eta}{|S|}. $$
Therefore, we conclude that, for $1 \leq p \leq 2$, 
\begin{align*}
	1-\eta \geq \sum_{j \in S^c} |v_j|^2 \geq |v_l|^{2-p} \sum_{j \in S^c} |v_j|^p \geq \left( \frac{\eta}{|S|} \right)^{1 - p/2} \sum_{j \in S^c} |v_j|^p.
\end{align*}
Since $|S| = m$, and $S$ was the minimizer, we have
$$ \max_{T \subset [n] : |T| = n-m} \sum_{j \in T} |v_j|^p \leq \frac{ 1-\eta }{\eta^{1-p/2}} m^{1-p/2}. $$
Let $T_1, \ldots, T_{k_0}$ be a partition of $[n]$ such that $|T_k| \leq n-m$ for $1 \leq k \leq k_0$, and $k_0 \leq \left\lceil \frac{n}{n-m} \right\rceil$.  We then find that
$$ \sum_{j = 1}^n |v_j|^p = \sum_{k=1}^{k_0} \sum_{j \in T_k} |v_j|^p \leq \left\lceil \frac{n}{n-m} \right\rceil \frac{ 1-\eta }{\eta^{1-p/2}} m^{1-p/2}, $$
and the proof is complete.
\end{proof}

\begin{lemma}[Uniform upper bound implies lower bound on the $\ell^p$-norm] \label{lemma:lowerellp}
Let $v$ be a unit vector in $\mathbb{R}^n$, and let $1 \leq m \leq n$.  If 
$$ \max_{S \subset [n] : |S| = m} \|v\|_S^2 \leq \eta, $$
then, for any $1 \leq p \leq 2$, 
$$ \|v\|_{\ell^p}^p \geq \frac{1- \eta}{\eta^{1- p/2}} m^{1-p/2}. $$
\end{lemma}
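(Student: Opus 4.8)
The plan is to run the argument dual to the proof of Lemma \ref{lemma:upperellp}. First I would choose $S \subset [n]$ with $|S| = m$ attaining $\max_{|T|=m}\|v\|_T^2$; concretely, $S$ consists of the $m$ coordinates of $v = (v_i)_{i=1}^n$ that are largest in absolute value. The hypothesis then reads $\|v\|_S^2 \le \eta$, so that
$$ \sum_{j \in S^c} |v_j|^2 = 1 - \|v\|_S^2 \ge 1 - \eta. $$
(Note that $\eta > 0$ automatically, since $\|v\|_S^2 \ge m/n$ for a unit vector, so the quantity $\eta^{1-p/2}$ appearing below is harmless.)

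Second, I would derive a uniform pointwise bound on the small coordinates. Because $\sum_{i \in S} |v_i|^2 \le \eta$ and $|S| = m$, an averaging argument produces an index $i_0 \in S$ with $|v_{i_0}|^2 \le \eta/m$; and since $S$ holds the $m$ largest entries of $v$ in absolute value, $|v_j| \le |v_{i_0}| \le \sqrt{\eta/m}$ for every $j \in S^c$.

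Third, for $1 \le p \le 2$ I would interpolate between the $\ell^2$ mass of $v$ on $S^c$ and its $\ell^p$ mass. Since $2 - p \ge 0$, replacing $|v_j|$ by its upper bound in the factor $|v_j|^{2-p}$ is legitimate, which gives
$$ 1 - \eta \le \sum_{j \in S^c} |v_j|^{2-p}\,|v_j|^p \le \left(\frac{\eta}{m}\right)^{1-p/2}\sum_{j \in S^c}|v_j|^p \le \left(\frac{\eta}{m}\right)^{1-p/2}\|v\|_{\ell^p}^p. $$
Rearranging yields $\|v\|_{\ell^p}^p \ge (1-\eta)(m/\eta)^{1-p/2} = \frac{1-\eta}{\eta^{1-p/2}}\,m^{1-p/2}$, as claimed. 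There is essentially no obstacle here; the only points to watch are the sign of the exponent $2-p$ (so the substitution goes in the right direction) and the fact that it suffices to bound $\|v\|_{\ell^p}^p$ from below by the contribution of $S^c$ alone — no partition of $[n]$ into blocks is needed here, in contrast to the proof of Lemma \ref{lemma:upperellp}.
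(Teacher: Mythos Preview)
Your proof is correct and essentially identical to the paper's own argument: the paper likewise takes $S$ to be the set of the $m$ largest coordinates, bounds the smallest entry of $S$ by $\sqrt{\eta/m}$ (via $m|v_l|^2 \le \sum_{i\in S}|v_i|^2 \le \eta$ with $l$ the minimizer over $S$, which is the same as your averaging step), and then interpolates $\sum_{j\in S^c}|v_j|^2 \le |v_l|^{2-p}\sum_{j\in S^c}|v_j|^p$ exactly as you do. Your parenthetical noting $\eta>0$ automatically is a nice clarification the paper leaves implicit.
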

\begin{proof}
Let $S \subset [n]$ be the subset which maximizes $\|v\|_S^2$ given the constraint $|S| = m$.  In particular, $S$ contains the $m$-largest coordinates of $v=(v_i)_{i=1}^n$ in absolute value.  Let $l$ be such that
$$ |v_l| = \min\{|v_j| : j \in S \}. $$
Then
$$ |S| |v_l|^2 \leq \sum_{j \in S} |v_j|^2 \leq \eta $$
by assumption.  Hence, we obtain
$$ |v_l|^2 \leq \frac{\eta}{|S|}. $$
Therefore, we conclude that, for $1 \leq p \leq 2$, 
\begin{align*}
	1 - \eta \leq \sum_{j \in S^c} |v_j|^2 \leq |v_l|^{2-p} \sum_{j \in S^c} |v_j|^p \leq \left( \frac{\eta}{|S|} \right)^{1 - p/2} \sum_{j=1}^n |v_j|^p.
\end{align*}
Since $|S| = m$, the proof is complete.  
\end{proof}

\section{Proofs for random matrices with non-zero mean}\label{sec:proof:nonzero}

This section is devoted to the proofs of Theorems \ref{thm:Gnpsmallcor}, \ref{thm:diag}, \ref{thm:perturb} and \ref{thm:singlerank1}.  

\subsection{Proof of Theorem \ref{thm:Gnpsmallcor}}
The proof of Theorem \ref{thm:Gnpsmallcor} is similar to the proof of Theorem \ref{lower}.  We begin with a result from \cite{NTV}.

\begin{theorem}[\cite{NTV}] \label{thm:NTVGnp}
Let $A_n$ be the adjacency matrix of $G(n,p)$ for some fixed value of $p \in (0,1)$.  Let
$$ A_n = \begin{bmatrix} A_{n-1} & X \\ X^\mathrm{T} & 0 \end{bmatrix}, $$
where $A_{n-1}$ is the upper-left $(n-1) \times (n-1)$ minor of $A_n$ and $X \in \{0,1\}^{n-1}$.  Then, for any $\alpha > 0$, there exists a constant $C$ (depending on $p, \alpha$) such that, for any $\delta > n^{-\alpha}$, 
$$ \sup_{1 \leq i \leq n-1} \Prob \left( \lambda_i(A_{n-1}) - \lambda_i(A_n) \leq \frac{\delta}{\sqrt{n}} \right) \leq C n^{o(1)} \delta + o(1) $$
and
$$ \sup_{2 \leq i \leq n} \Prob \left( \lambda_{i}(A_n) - \lambda_{i-1}(A_{n-1}) \leq \frac{\delta}{\sqrt{n}} \right) \leq C n^{o(1)} \delta + o(1). $$
Moreover, 
$$ \inf_{1 \leq j \leq n-1} |v_j(A_{n-1})^\mathrm{T} X | > 0 $$
with probability $1 - o(1)$.  Here, the rate of convergence to zero implicit in the $o(1)$ terms depends on $p$ and $\alpha$. 
\end{theorem}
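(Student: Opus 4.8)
The plan is to adapt the proof of Theorem~\ref{thm:NTV} to the adjacency matrix $A_n=A_n(p)$; the new feature is the Perron eigenvalue $\approx pn$, but this only ever makes the statements easier. One works conditionally on the $(n-1)\times(n-1)$ minor $A_{n-1}$, so that the last column $X\in\{0,1\}^{n-1}$ consists of iid $\mathrm{Bernoulli}(p)$ entries, independent of $A_{n-1}$. The two indices directly involving the Perron eigenvalue are disposed of immediately: in the first bound $i=n-1$ compares $\lambda_{n-1}(A_{n-1})\approx p(n-1)$ with the bulk eigenvalue $\lambda_{n-1}(A_n)=O(\sqrt n)$, an $\Omega(n)$ gap; in the second bound $i=n$ is the Perron--Perron gap $\lambda_n(A_n)-\lambda_{n-1}(A_{n-1})$, governed by the anti-concentration of the degree sum $\sum_k X_k$ and easier than the bulk case. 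So assume $i$ is a non-Perron index. Writing $A_n=\begin{pmatrix}A_{n-1}&X\\ X^{\mathrm T}&0\end{pmatrix}$, whenever $A_{n-1}-tI$ is invertible the eigenvalue $\lambda_i(A_n)$ is the unique root, in $(\lambda_{i-1}(A_{n-1}),\lambda_i(A_{n-1}))$, of the strictly decreasing function
\[
 Q(t):=-\,t-X^{\mathrm T}(A_{n-1}-tI)^{-1}X=-\,t-\sum_{j=1}^{n-1}\frac{|v_j(A_{n-1})^{\mathrm T}X|^2}{\lambda_j(A_{n-1})-t}.
\]
Put $t_0:=\lambda_i(A_{n-1})-\delta/\sqrt n$, which depends only on $A_{n-1}$. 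When the consecutive gap $\lambda_i(A_{n-1})-\lambda_{i-1}(A_{n-1})\le\delta/\sqrt n$, Cauchy's interlacing inequality \eqref{eq:cauchy} already forces $\lambda_i(A_{n-1})-\lambda_i(A_n)\le\delta/\sqrt n$; this subcase is absorbed into an induction on $n$, since that consecutive gap of $A_{n-1}$ is squeezed (again by \eqref{eq:cauchy}) between two minor gaps of the form handled by Theorem~\ref{thm:NTVGnp} applied with $n-1$ in place of $n$. Otherwise $t_0\in(\lambda_{i-1}(A_{n-1}),\lambda_i(A_{n-1}))$, and monotonicity of $Q$ turns $\{\lambda_i(A_{n-1})-\lambda_i(A_n)\le\delta/\sqrt n\}$ into $\{Q(t_0)\ge0\}$, which entails $X^{\mathrm T}(A_{n-1}-t_0I)^{-1}X\le|t_0|=O(\sqrt n)$; the second displayed inequality is symmetric (apply this to $-A_n$, or use the neighbouring eigenvalue).

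Thus, conditionally on $A_{n-1}$, the problem reduces to an anti-concentration estimate for the quadratic form $X^{\mathrm T}RX$ with $R:=(A_{n-1}-t_0I)^{-1}$ a fixed matrix. The deterministic scaffolding is a good event for $A_{n-1}$ of probability $1-o(1)$: via the decomposition $A_{n-1}=p\BJ_{n-1}+B_{n-1}(p)$, the local semicircle law for the Wigner part $B_{n-1}(p)$ (Theorems~\ref{thm:dim} and~\ref{thm:dim2}, after rescaling to unit variance; equivalently the Erd\"os--R\'enyi local laws of \cite{EKYY,LY}) and a rank-one interlacing argument, one obtains rigidity of the spectrum of $A_{n-1}$ (Perron eigenvalue near $p(n-1)$, the rest near the semicircle on $[-2\sqrt{p(1-p)n},2\sqrt{p(1-p)n}]$ at the optimal scale); to this one adds the $\ell^\infty$-delocalization $\|v_j(A_{n-1})\|_{\ell^\infty}\le n^{-1/2+o(1)}$ of the bulk eigenvectors (Theorem~\ref{thm:norm} with $p$ fixed), the bound $|v_j(A_{n-1})^{\mathrm T}\Bj_{n-1}|\le n^{o(1)}$ for bulk $j$, and inductive lower bounds on the handful of eigenvalue gaps of $A_{n-1}$ nearest $t_0$. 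On this event $R$ has a single enormous positive eigenvalue of size $\sqrt n/\delta$ with eigenvector $\approx v_i(A_{n-1})$ (from the $j=i$ term), a bounded number of moderately large eigenvalues of either sign (from eigenvalues of $A_{n-1}$ near $t_0$), and all others of size $O(\sqrt n\,n^{o(1)})$; hence $\E[X^{\mathrm T}RX\mid A_{n-1}]\gtrsim\sqrt n/(\delta\,n^{o(1)})$, whereas the event of interest forces $X^{\mathrm T}RX\le|t_0|=O(\sqrt n)$ — the form must fall below its conditional mean by a factor of order $\delta$.

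Bounding the probability of this deviation by $Cn^{o(1)}\delta$ is the crux. The naive route — writing $X^{\mathrm T}RX\ge\frac{\sqrt n}{\delta}|v_i(A_{n-1})^{\mathrm T}X|^2$ minus the indefinite moderate part and invoking the Berry--Esseen/Erd\"os--Littlewood--Offord small-ball estimate for the sum $v_i(A_{n-1})^{\mathrm T}X$ of iid Bernoullis with a delocalized coefficient vector (so $\Prob(|v_i(A_{n-1})^{\mathrm T}X|\le s\mid A_{n-1})\le Cn^{o(1)}(s+n^{-1/2})$) — yields only the weaker bound $Cn^{o(1)}\sqrt\delta$. To reach the sharp rate one follows \cite{NTV}: pass to the eigenbasis of $R$, observe that $\{X^{\mathrm T}RX\le|t_0|\}$ is then a small-ball event for the projection of $X$ onto the low-dimensional span of $v_i(A_{n-1})$ and a few neighbouring eigenvectors of $A_{n-1}$, and use that the small-ball function of a projection onto a subspace of dimension $\ge2$ decays like the square of its radius — equivalently, one removes two rows and columns of $A_n$ at once and thereby gains a power of two. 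I expect this refined level-repulsion step, together with bookkeeping the $n^{o(1)}$ losses from rigidity and delocalization, to be where the real difficulty lies; granting it, a union bound over the negligible complement of the good event completes both tail bounds.

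Finally, for the assertion $\inf_{1\le j\le n-1}|v_j(A_{n-1})^{\mathrm T}X|>0$ with probability $1-o(1)$: if $X$ were orthogonal to some unit eigenvector $v_j(A_{n-1})$, then appending a zero to $v_j(A_{n-1})$ would produce an eigenvector of $A_n$ with vanishing last coordinate. But Theorem~\ref{thm:simplespectrum} applies to $A_n(p)$ for fixed $p\in(0,1)$, so with probability $1-o(1)$ the spectrum of $A_n$ is simple and every coordinate of every eigenvector of $A_n$ is non-zero; the same theorem gives that $A_{n-1}$ also has simple spectrum with probability $1-o(1)$, so the vectors $v_j(A_{n-1})$ are well defined. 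On this event no such $j$ exists, which is the claim.
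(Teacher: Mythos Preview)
The paper does not actually prove Theorem~\ref{thm:NTVGnp}; it merely records that the result ``follows from \cite[Theorem 2.7]{NTV} and the arguments given in \cite[Section 7]{NTV} and \cite{TVsimple}.'' Your sketch is therefore far more detailed than what the paper provides, and it correctly reconstructs the NTV strategy: condition on $A_{n-1}$, use the interlacing identity to convert the minor-gap event into $\{X^{\mathrm T}(A_{n-1}-t_0 I)^{-1}X\le O(\sqrt n)\}$ for a deterministic $t_0$, feed in rigidity and delocalization of $A_{n-1}$ (via the Wigner decomposition $A_{n-1}=p\BJ+B_{n-1}(p)$ and the local law) to control the resolvent, and then invoke the NTV small-ball estimate for the quadratic form. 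You also correctly flag that the naive linear anti-concentration gives only $\sqrt\delta$ and that the sharp $\delta$ rate requires the ``remove two rows'' level-repulsion mechanism of \cite{NTV}; this is indeed the heart of the matter and is not reproved in the present paper. Your treatment of the Perron indices $i=n-1$ and $i=n$ is sound, and your derivation of the final assertion from Theorem~\ref{thm:simplespectrum} is exactly what the paper's citation of \cite{TVsimple} encodes.

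One remark on the inductive handling of the subcase $\lambda_i(A_{n-1})-\lambda_{i-1}(A_{n-1})\le\delta/\sqrt n$: this is plausible but is not the route taken in \cite{NTV}. There the consecutive-gap bound for $A_{n-1}$ is obtained directly from the same quadratic-form anti-concentration (it is essentially the same event after one application of interlacing), so no induction on $n$ is needed and there is no risk of the $o(1)$ errors accumulating. If you wish to make your sketch self-contained, replacing the induction by a direct appeal to the minor-gap bound for $A_{n-1}$ (which you are proving anyway, uniformly in the index) would be cleaner.
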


Theorem \ref{thm:NTVGnp} follows from \cite[Theorem 2.7]{NTV} and the arguments given in \cite[Section 7]{NTV} and \cite{TVsimple}.   We will also need the following version of Theorem \ref{thm:dim2} generalized to the adjacency matrix $A_n(p)$.  

\begin{lemma} \label{lemma:dimGnp}
Let $A_n(p)$ be the adjacency matrix of $G(n,p)$ for some fixed value of $p \in (0,1)$.  Let $N_I$ denote the number of eigenvalues of $\frac{1}{\sqrt{n p (1-p)}} A_{n}(p)$ in the interval $I$.  Then there exist constants $C, c, c' > 0$ (depending on $p$) such that, for any interval $I \subset \mathbb{R}$, 
$$ \Prob \left( N_I \geq \frac{(1.1)}{\pi} n |I|  + (\log n)^{c'} \right) \leq C \exp(-c \log^2 n), $$
where $|I|$ denotes the length of $I$.  
\end{lemma}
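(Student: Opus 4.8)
\textbf{Proof proposal for Lemma \ref{lemma:dimGnp}.}

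The plan is to deduce this from Theorem \ref{thm:dim2} by writing $A_n(p)$ as a low-rank deterministic perturbation of a Wigner matrix and then invoking an eigenvalue-interlacing argument. Recall from the introduction that $A_n(p) = p\BJ_n + B_n(p) - p\diag(1,\dots,1) + (\text{diagonal correction})$; more precisely $\tilde A_n(p) = p\BJ_n + B_n(p)$ and $A_n(p)$ is obtained from $\tilde A_n(p)$ by zeroing the diagonal, which is a rank-at-most-$n$ change but only on the diagonal, hence a perturbation of operator norm $O(1)$ almost surely is \emph{not} available directly — instead I will use that zeroing the diagonal changes each eigenvalue by at most $\max_i |(\tilde A_n)_{ii}| \le 1$ via Weyl's inequality, since the diagonal correction has operator norm at most $1$. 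Thus it suffices to control the eigenvalue counting function of $\frac{1}{\sqrt{np(1-p)}}\tilde A_n(p) = \frac{1}{\sqrt{np(1-p)}}(p\BJ_n + B_n(p))$, up to enlarging the interval $I$ by an additive $O(1/\sqrt{np(1-p)}) = O(n^{-1/2})$, which is absorbed into the $(\log n)^{c'}$ error term.

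Next, $p\BJ_n$ is a rank-one matrix, so by Cauchy interlacing (the rank-one version: adding a rank-one Hermitian matrix interlaces eigenvalues), the number of eigenvalues of $p\BJ_n + B_n(p)$ in any interval $I$ differs from the number of eigenvalues of $B_n(p)$ in $I$ by at most $1$. Now $B_n(p)$ is a genuine Wigner matrix with atom variable $\xi$ of mean zero and variance $p(1-p)$; normalizing, $\frac{1}{\sqrt{p(1-p)}} B_n(p)$ is a Wigner matrix whose off-diagonal atom variable has unit variance and is bounded (hence sub-gaussian), and whose diagonal atom variable is also bounded. Therefore Theorem \ref{thm:dim2} applies to $\frac{1}{\sqrt{p(1-p)}} B_n(p)$: for any interval $I$,
$$ \Prob\left( N_I^{B} \ge (1.1)\, n \int_I \rho_{\mathrm{sc}}(x)\,dx + (\log n)^{c'} \right) \le C\exp(-c\log^2 n), $$
where $N_I^B$ counts eigenvalues of $\frac{1}{\sqrt{np(1-p)}} B_n(p)$ in $I$. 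Finally I use the elementary bound $\rho_{\mathrm{sc}}(x) \le \frac{1}{\pi}$ for all $x$, so $n\int_I \rho_{\mathrm{sc}}(x)\,dx \le \frac{n|I|}{\pi}$, and I slightly inflate the constant $1.1$ (allowed by the Remark following Theorem \ref{thm:dim2}, and by the fact that the additive $+1$ and the $O(n^{-1/2})$ interval enlargement are lower order) to obtain the stated form. Chaining the three comparisons — diagonal correction, rank-one correction, and the density bound — gives the claim with possibly adjusted constants $C, c, c'$ depending on $p$.

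The main obstacle is a bookkeeping one: I must verify that the two deterministic modifications (zeroing the diagonal and removing $p\BJ_n$) genuinely only shift $N_I$ by an additive $O(1)$ plus an $O(n^{-1/2})$ widening of $I$, and that Theorem \ref{thm:dim2} is being applied to a correctly normalized Wigner matrix — in particular that the boundedness/sub-gaussian hypotheses hold for both the off-diagonal and diagonal atom variables of $B_n(p)$, which they do since $\xi$ takes values in $\{1-p, -p\}$ and the diagonal entries of $B_n(p)$ are iid copies of $\xi$ as well (using the single-atom-variable convention for $B_n(p)$). There is a minor subtlety that the interval-widening trick produces intervals whose endpoints depend on $n$ but not on the random matrix, so the probability bound is uniform over $I$ exactly as in Theorem \ref{thm:dim2}; no union bound over a net is needed here since Theorem \ref{thm:dim2} is already stated uniformly in $I$.
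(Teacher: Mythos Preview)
Your overall strategy --- write $A_n(p)$ as a Wigner matrix plus a rank-one perturbation plus a diagonal correction, use rank-one interlacing, apply Theorem~\ref{thm:dim2}, and bound $\rho_{\mathrm{sc}}\le 1/\pi$ --- matches the paper's. However, the step handling the diagonal has a real gap. You remove the diagonal of $\tilde A_n(p)$ via Weyl's inequality, which shifts each normalized eigenvalue by at most $O(n^{-1/2})$ and forces you to widen $I$ by that amount. But the widened portion contributes $\frac{1.1}{\pi}\,n\cdot O(n^{-1/2})=O(\sqrt{n})$ to the eigenvalue count, not $(\log n)^{c'}$; this is \emph{not} absorbed into the polylogarithmic error as you claim, and the stated conclusion does not follow.

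The paper avoids this by choosing the Wigner matrix to have zero diagonal from the outset: write $A_n(p)=W_n+p(\BJ_n-I_n)$ where $W_n$ has diagonal entries identically $0$ (still a legitimate Wigner matrix with $\zeta\equiv 0$, hence Theorem~\ref{thm:dim2} applies). Then the diagonal correction is the \emph{scalar} $-pI_n$, which shifts every eigenvalue by exactly $-p$; after normalization this translates $I$ to $I+\gamma$ with $\gamma=p/\sqrt{np(1-p)}$, an interval of the \emph{same length}. The rank-one piece $p\BJ_n$ is then handled by interlacing, adding at most $1$ to the count, and the rest of your argument goes through. The fix is thus to replace your random-diagonal Weyl step by this deterministic scalar shift.
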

\begin{proof}
We observe that $A_n(p)$ has the same distribution as $W_n + p(\BJ_n - I_n)$, where $W_n$ is the Wigner matrix whose diagonal entries are zero and whose upper-triangular entries are iid copies of the random variable defined in \eqref{eq:def:Bernoullip}, $\BJ_n$ is the all-ones matrix, and $I_n$ is the identity matrix.  Thus, it suffices to prove the result for $W_n + p(\BJ_n - I_n)$.  Accordingly, redefine $N_I$ to be the number of eigenvalues of $\frac{1}{\sqrt{n p(1-p) } } (W_n + p(\BJ_n - I_n))$ in the interval $I$.  

For $\gamma \in \mathbb{R}$ and any interval $I$, we let $I+\gamma$ denote the interval $I$ shifted to the right by $\gamma$ units.  Clearly, both $I$ and $I + \gamma$ have the same length, i.e. $|I| = |I + \gamma|$.  Set $\gamma := p / \sqrt{n p(1-p)}$, and let $I \subset \mathbb{R}$ be any interval.  It follows from Theorem \ref{thm:dim2} that the number of eigenvalues of $\frac{1}{\sqrt{n p (1-p)}} W_n$ in the interval $I + \gamma$ is at most
$$ (1.1) n \int_{I+\gamma} \rho_{\mathrm{sc}}(x) dx + (\log n)^{c'} \leq \frac{1.1}{\pi} n |I| + (\log n)^{c'} $$
with probability at least $1 - C \exp(-c \log^2 n)$.  The inequality above follows from bounding the semicircle density $\rho_{\mathrm{sc}}$ by $\frac{1}{\pi}$ (and the fact that the intervals $I$ and $I+\gamma$ have the same length).  The normalization factor $\sqrt{p(1-p)}$ ensures the entries of the matrix have unit variance as required by Theorem \ref{thm:dim2}.  

Since $\BJ_n$ is rank one, it follows from eigenvalue interlacing (see, for instance, \cite[Exercise III.2.4]{Bhatia}) that the number of eigenvalues of $\frac{1}{\sqrt{n p (1-p)}} (W_n + p \BJ_n)$ in the interval $I + \gamma$ is at most 
$$ \frac{1.1}{\pi} n |I| + (\log n)^{c'} + 1 \leq \frac{1.1}{\pi} n |I| + (\log n)^{c'+1} $$
for $n$ sufficiently large.  Subtracting $\gamma I_n$ from a matrix only shifts the eigenvalues of the matrix by $\gamma$.  Hence, we conclude that
$$ N_I \leq \frac{1.1}{\pi} n |I| + (\log n)^{c'+1} $$
with probability at least $1- C \exp(-c \log^2 n)$.  
\end{proof}

We will also need the following technical lemma. 

\begin{lemma} \label{lemma:techsumbndGnp}
Let $A_n$ be the adjacency matrix of $G(n,p)$ for some fixed value of $p \in (0,1)$.  Let
$$ A_n = \begin{bmatrix} A_{n-1} & X \\ X^\mathrm{T} & 0 \end{bmatrix}, $$
where $A_{n-1}$ is the upper-left $(n-1) \times (n-1)$ minor of $A_n$ and $X \in \{0,1\}^{n-1}$.  Let $\kappa > 0$.  Then there exists a constant $c_1 > 0$ such that, for any $1 \leq i \leq n-1$ and any $e^{-n^\kappa} < \delta < 1$, 
\begin{equation} \label{eq:sumhelpbndGnp}
	\sum_{j=1}^{n-1} \frac{ |v_j(A_{n-1})^\mathrm{T} X|^2 }{ | \lambda_j(A_{n-1}) - \lambda_i(A_n) |^2} \leq \frac{ (\log n)^{c_1} }{ 25 } \left[ \frac{ 1} { m_i^2 } + \frac{n}{ \delta^2} \right] 
\end{equation}
with probability $1 - o(1)$, where
$$ m_i := \min_{1 \leq j \leq n-1} |\lambda_j(A_{n-1}) - \lambda_i(A_n)|. $$  
Here, the rate of convergence to zero implicit in the $o(1)$ term depends on $p$ and $\kappa$.
\end{lemma}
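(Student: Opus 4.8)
The plan is to follow the skeleton of the proof of Lemma~\ref{lemma:techsumbnd}, adapting it to two features specific to $A_n(p)$: the matrix is not Wigner (its largest eigenvalue has order $n$), and the last column $X$ does not have mean zero. Throughout, fix $1\le i\le n-1$ and $e^{-n^\kappa}<\delta<1$, and use the decomposition $A_n = W_n + p(\BJ_n-I_n)$, where $W_n$ is the Bernoulli Wigner matrix with zero diagonal and off-diagonal entries iid copies of the variable in \eqref{eq:def:Bernoullip}; the corresponding $(n-1)\times(n-1)$ minors satisfy $A_{n-1}=W_{n-1}+p(\BJ_{n-1}-I_{n-1})$. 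Since the edges of $G(n,p)$ are independent, $A_{n-1}$ and $X$ are independent, and I would write $X = p\Bj_{n-1}+\tilde X$, where $\tilde X$ has independent, mean-zero, $\{-p,1-p\}$-valued (hence bounded) coordinates.

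First I would locate the spectra. On the event $\mathcal{E}:=\{\|W_n\|\le C_0\sqrt{n}\}$, which by Lemma~\ref{lemma:norm} (after the usual rescaling to unit variance) holds with probability $1-C\exp(-cn)$ for $C_0$ large, the eigenvalue interlacing for the rank-one positive semidefinite perturbation $p\BJ$, applied to $A_{n-1}+pI_{n-1}=W_{n-1}+p\BJ_{n-1}$ and $A_n+pI_n=W_n+p\BJ_n$, gives $|\lambda_j(A_{n-1})|\le 2C_0\sqrt{n}$ for $1\le j\le n-2$, $|\lambda_i(A_n)|\le 2C_0\sqrt{n}$ (here I use $i\le n-1$), and, via Weyl's inequality, $\lambda_{n-1}(A_{n-1})\ge pn/4$ for $n$ large. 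Thus the top eigenvalue of $A_{n-1}$ is at distance at least $pn/8$ from $\lambda_i(A_n)$, so the $j=n-1$ term of the sum in \eqref{eq:sumhelpbndGnp} is at most $\|X\|^2(pn/8)^{-2}\le 64/(p^2 n)\le n/\delta^2$ and can be absorbed into the right-hand side; moreover $m_i$ is then attained at some index $j\le n-2$.

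Next I would control $|v_j(A_{n-1})^{\mathrm T}X|$ for $1\le j\le n-2$. From $A_{n-1}v_j=\lambda_j v_j$ and $A_{n-1}=W_{n-1}+p\BJ_{n-1}-pI_{n-1}$ one gets $p(\Bj_{n-1}^{\mathrm T}v_j)\Bj_{n-1}=(\lambda_j+p)v_j-W_{n-1}v_j$; taking norms and using $|\lambda_j|\le 2C_0\sqrt{n}$ and $\mathcal{E}$ shows $|\Bj_{n-1}^{\mathrm T}v_j|=O(1/p)$ deterministically on $\mathcal{E}$. (Alternatively one may deduce near-orthogonality of the non-top eigenvectors of $A_{n-1}$ to $\Bj_{n-1}$ from Theorem~\ref{thm:Mitra}.) Hence $|v_j^{\mathrm T}X|\le p|\Bj_{n-1}^{\mathrm T}v_j|+|v_j^{\mathrm T}\tilde X|=O(1)+|v_j^{\mathrm T}\tilde X|$; conditioning on $A_{n-1}$, the variable $v_j^{\mathrm T}\tilde X$ is a sum of independent mean-zero bounded terms with $\sum_k v_j(k)^2=1$, so Hoeffding's inequality and a union bound over $j\le n-2$ give $\max_{1\le j\le n-2}|v_j^{\mathrm T}\tilde X|\le C\log n$ with probability $1-C\exp(-c\log^2 n)$. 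Consequently $|v_j(A_{n-1})^{\mathrm T}X|^2\le(\log n)^{c_1'}$ for all $1\le j\le n-2$ and a suitable constant $c_1'$.

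Finally, on the intersection of these events together with the event $\{\inf_{1\le j\le n-1}|v_j(A_{n-1})^{\mathrm T}X|>0\}$ --- which by Theorem~\ref{thm:NTVGnp} holds with probability $1-o(1)$ and, via \cite[Lemma~3]{OT}, makes the interlacing strict so that $m_i$ and the sum are well defined --- it remains to bound $\sum_{j=1}^{n-2}|\lambda_j(A_{n-1})-\lambda_i(A_n)|^{-2}$, which I would do exactly as in Lemma~\ref{lemma:techsumbnd}: partition $\{1,\dots,n-2\}$ into $T:=\{j:|\lambda_j(A_{n-1})-\lambda_i(A_n)|<\delta/\sqrt{n}\}$ and dyadic shells $T_l$, $l=0,\dots,L$ with $L=O(n^\kappa)$ (using $\delta>e^{-n^\kappa}$), note that on $\mathcal{E}$ every $j\le n-2$ lies in $T\cup\bigcup_l T_l$ since all relevant eigenvalues lie in $[-2C_0\sqrt{n},2C_0\sqrt{n}]$, and then apply Lemma~\ref{lemma:dimGnp} in place of Theorem~\ref{thm:dim2} (together with the same discretization device used in the footnote of the proof of Lemma~\ref{lemma:techsumbnd}) to obtain $|T|\le(\log n)^{c_1}$ and $|T_l|\le C2^l+(\log n)^{c_1}$ with probability $1-C\exp(-c\log^2 n)$. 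A geometric sum then yields $\sum_{j=1}^{n-2}|\lambda_j(A_{n-1})-\lambda_i(A_n)|^{-2}\le C(\log n)^{c_1}(m_i^{-2}+n\delta^{-2})$, and combining this with $|v_j^{\mathrm T}X|^2\le(\log n)^{c_1'}$ and the absorbed top-eigenvalue term proves \eqref{eq:sumhelpbndGnp} after enlarging $c_1$. I expect the main obstacle to be precisely the uniform bound on $|v_j(A_{n-1})^{\mathrm T}X|$ over all non-top $j$: $X$ is a ``heavy'' vector with $\Theta(n)$ nonzero coordinates, so one must simultaneously exploit the near-orthogonality of the non-top eigenvectors of $A_{n-1}$ to $\Bj_{n-1}$ and the sub-gaussian concentration of the centered part $\tilde X$; this is also the reason the conclusion is only ``with probability $1-o(1)$'', Theorem~\ref{thm:NTVGnp} being the one ingredient without an exponentially small failure probability.
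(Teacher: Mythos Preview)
Your proposal is correct and follows the same architecture as the paper's proof: separate the $j=n-1$ term using the $\Theta(n)$ gap between $\lambda_{n-1}(A_{n-1})$ and $\lambda_i(A_n)$, bound $|v_j(A_{n-1})^{\mathrm T}X|$ uniformly over $j\le n-2$ by splitting $X=p\Bj_{n-1}+\tilde X$, and then run the dyadic-shell argument with Lemma~\ref{lemma:dimGnp} exactly as in Lemma~\ref{lemma:techsumbnd}.

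The one genuine difference is how you control $|\Bj_{n-1}^{\mathrm T}v_j(A_{n-1})|$ for $j\le n-2$. The paper writes
\[
|v_j^{\mathrm T}X|\le |v_j^{\mathrm T}(X-p\Bj)|+p\sqrt{n}\,\|v_{n-1}(A_{n-1})-n^{-1/2}\Bj\|
\]
and invokes Theorem~\ref{thm:Mitra} to bound the second term by $O(\sqrt{\log n})$; the centered part is then handled via Corollary~\ref{cor:projection}. Your route is more elementary: from $p(\Bj^{\mathrm T}v_j)\Bj=(\lambda_j+p)v_j-W_{n-1}v_j$ you read off $|\Bj^{\mathrm T}v_j|=O(1/p)$ \emph{deterministically} on $\{\|W_n\|\le C_0\sqrt n\}$, so $p|\Bj^{\mathrm T}v_j|=O(1)$, and Hoeffding handles $v_j^{\mathrm T}\tilde X$. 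This avoids importing Mitra's $\ell^\infty$ estimate and gives the same $(\log n)^{O(1)}$ bound on the numerators. Similarly, where the paper cites \cite{FK} for the separation of $\lambda_{n-1}(A_{n-1})$ from the bulk, your Weyl/interlacing argument on $\mathcal E$ suffices. Everything else (strict interlacing via Theorem~\ref{thm:NTVGnp}, the discretization footnote, the geometric sum over $T,T_0,\dots,T_L$ with $L=O(n^\kappa)$) matches the paper.
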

\begin{proof}
Fix $1 \leq i \leq n-1$ and $e^{-n^\kappa} < \delta < 1$.  Define the event $\Omega$ to be the intersection of the events
\begin{align} \label{eq:eventvjWn1Gnp}
	\bigcap_{j=1}^{n-2} &\left\{ |v_j(A_{n-1})^\mathrm{T} X|^2 \leq \frac{p(1-p)}{1000} \log^2 n \right\}, \\
	&\left\{ \inf_{1 \leq j \leq n-1} |v_{j}(A_{n-1})^\mathrm{T} X |^2 > 0 \right\}, \label{eq:lowerbndipvj}
\end{align}
and
\begin{equation} \label{eq:normC0bndGnp}
	\left\{ \|A_n \| \leq 10np \right\}.
\end{equation}  

We claim that 
\begin{equation} \label{eq:Omegaclog2nGnp}
	\Prob(\Omega) = 1 - o(1). 
\end{equation}
Note that the event in \eqref{eq:normC0bndGnp} can be shown to hold with probability $1 - o(1)$ by combining the bounds in \cite{FK} with Hoeffding's inequality.  Additionally, the event in \eqref{eq:lowerbndipvj} holds with probability $1 - o(1)$ by Theorem \ref{thm:NTVGnp}.  We now consider the events in \eqref{eq:eventvjWn1Gnp}.  For $1 \leq j \leq n-2$, by the Cauchy--Schwarz inequality, we obtain
\begin{align*}
	|v_{j}(A_{n-1})^\mathrm{T} X | &\leq |v_{j}(A_{n-1})^\mathrm{T} (X - p \Bj)| + | v_{j}(A_{n-1})^\mathrm{T} (p \Bj - p n^{1/2} v_{n-1}(A_{n-1}) ) | \\
	&\leq |v_{j}(A_{n-1})^\mathrm{T} (X - p \Bj)| + p n^{1/2} \| n^{-1/2} \Bj - v_{n-1}(A_{n-1}) \| \\
	&\leq |v_{j}(A_{n-1})^\mathrm{T} (X - p \Bj)| + p n \| n^{-1/2} \Bj - v_{n-1}(A_{n-1}) \|_{\ell^\infty}
\end{align*}
since $v_j(A_{n-1})$ is orthogonal to $v_{n-1}(A_{n-1})$.  By Theorem \ref{thm:Mitra}, it follows that 
$$ p n \| n^{-1/2} \Bj - v_{n-1}(A_{n-1}) \|_{\ell^\infty} \leq C_0 \sqrt{\log n} $$
with probability $1- o(1)$.  We now observe that the vector $X - p \Bj$ has mean zero and $|v_{j}(A_{n-1})^\mathrm{T} (X - p \Bj)|$ is the length of the projection of the vector $X - p \Bj$ onto the one-dimensional subspace spanned by $v_j(A_{n-1})$.   Hence, by Corollary \ref{cor:projection} and the union bound, it follows that
$$ \sup_{1 \leq j \leq n-2} |v_{j}(A_{n-1})^\mathrm{T} (X - p \Bj)|^2 \leq \frac{p(1-p)}{1000^2} \log^2 n $$
with probability $1 - o(1)$.  Combining the estimates above yields the bound in \eqref{eq:Omegaclog2nGnp}.  

It now suffices to show that, conditionally on $\Omega$, the bound in \eqref{eq:sumhelpbndGnp} holds with probability $1 - o(1)$.  It follows (see Remark \ref{rem:strictinter} for details) that, on $\Omega$, the eigenvalues of $A_{n-1}$ strictly interlace with the eigenvalues of $A_n$, and hence the terms
$$ \sum_{j=1}^{n-1} \frac{ |v_j(A_{n-1})^\mathrm{T} X|^2 }{ | \lambda_j(A_{n-1}) - \lambda_i(A_n) |^2} $$
and $m_i^{-2}$ are well-defined.  

It follows from the results in \cite{FK} that 
$$ |\lambda_{n-1}(A_{n-1}) - \lambda_{i}(A_n) | \geq \frac{1}{100} np $$
with probability $1- o(1)$.  Hence, we obtain
$$ \frac{ |v_{n-1}(A_{n-1})^\mathrm{T} X|^2 }{ |\lambda_{n-1}(A_{n-1}) - \lambda_i(A_n) |^2 } \leq \frac{ 100^2 \|X\|^2 }{ n^2 p^2 } \leq \frac{100^2 }{ n p^2 }. $$
Thus, on $\Omega$, we have
$$ \sum_{j=1}^{n-1} \frac{ |v_j(A_{n-1})^\mathrm{T} X|^2 }{ | \lambda_j(A_{n-1}) - \lambda_i(A_n) |^2}  \leq \frac{p(1-p)}{1000} \log^2 n \sum_{j=1}^{n-2} \frac{1}{ | \lambda_j(W_{n-1}) - \lambda_i(W_n) |^2} + \frac{100^2 }{ n p^2 } $$
with probability $1-o(1)$.  Therefore, conditionally on $\Omega$, it suffices to show that
$$ \sum_{j=1}^{n-2} \frac{1}{ | \lambda_j(A_{n-1}) - \lambda_i(A_n) |^2} \leq \frac{10}{p(1-p)} (\log n)^{c_1} \left[ \frac{1}{m_i^2} + \frac{n}{\delta^2} \right] $$
with probability  $1 - o(1)$.  

Define the sets
$$ T := \left\{ 1 \leq j \leq n-2 : \frac{|\lambda_j(A_{n-1}) - \lambda_i(A_{n})|}{ \sqrt{p(1-p)} } <  \frac{\delta}{\sqrt{n}} \right\} $$
and
$$ T_l := \left\{ 1 \leq j \leq n-2 : 2^l \frac{\delta}{\sqrt{n}} \leq \frac{ |\lambda_j(A_{n-1}) - \lambda_i(A_n)|}{ \sqrt{p(1-p)} } < 2^{l+1} \frac{\delta}{\sqrt{n}} \right\} $$
for $l = 0, 1, \ldots, L$, where $L$ is the smallest integer such that $2^L \sqrt{p (1-p) } \frac{\delta}{\sqrt{n}} \geq 20 np$.  In particular, as $\delta > e^{-n^\kappa}$, we obtain $L = O(n^{-\kappa})$.  On the event $\Omega$, it follows that every index $1 \leq j \leq n-2$ is contained in either $T$ or $\cup_{l=0}^L T_l$.  By Lemma \ref{lemma:dimGnp} and the union bound, there exists $C, c , c_1 > 0$ such that
$$ |T| \leq (\log n)^{c_1} $$
and 
$$ |T_l| \leq 4 (2^l ) + (\log n)^{c_1}, \quad l = 0, \ldots, L $$
with probability at least $1 - C \exp(-c \log^2 n)$.  Hence, on this same event, we conclude that
\begin{align*}
	\sum_{j=1}^{n-2} \frac{p(1-p)}{ | \lambda_j(A_{n-1}) - \lambda_i(A_n) |^2} &\leq \sum_{j \in T} \frac{p(1-p)}{ | \lambda_j(A_{n-1}) - \lambda_i(A_n) |^2} \\ 
	&\qquad\qquad\qquad + \sum_{l=0}^L \sum_{j \in T_l} \frac{p(1-p)}{ | \lambda_j(A_{n-1}) - \lambda_i(A_n) |^2} \\
	&\leq \frac{|T|}{m_i^2} + \frac{n}{\delta^2} \sum_{l=0}^L \frac{|T_l|}{ 2^{2l} } \\
	&\leq \frac{ (\log n)^{c_1} }{ m_i^2 } + \frac{10n (\log n)^{c_1} }{\delta^2}
\end{align*}
for $n$ sufficiently large.  The proof of the lemma is complete.  
\end{proof}

With Theorem \ref{thm:NTVGnp} and Lemma \ref{lemma:techsumbndGnp} in hand, we are now ready to prove Theorem \ref{thm:Gnpsmallcor}.

\begin{proof}[Proof of Theorem \ref{thm:Gnpsmallcor}]
Let $A_n$ be the adjacency matrix of $G(n,p)$.  We will bound the $j$th coordinate of the unit eigenvector $v_i$ in magnitude from below.  By symmetry, it suffices to consider the case when $j=n$.  For this reason, we decompose $A_n$ as 
$$ A_n = \begin{bmatrix} A_{n-1} & X \\ X^\mathrm{T} & 0 \end{bmatrix}, $$
where $A_{n-1}$ is the upper-left $(n-1) \times (n-1)$ minor of $A_n$ and $X \in \{0,1\}^{n-1}$.  

Let $1 \leq i \leq n-1$.  Let $\alpha > 0$, and assume $1 > \delta > n^{-\alpha}$ as the claim is trivial when $\delta \geq 1$.  Define the event $\Omega$ to be the intersection of the events
$$ \left\{ \inf_{1 \leq j \leq n-1} |v_j(A_{n-1})^\mathrm{T} X| > 0 \right\} \bigcap  \left\{ m_i > \frac{\delta}{\sqrt{n}} \right\} $$
and
$$ \left\{ \sum_{j=1}^{n-1} \frac{ |v_j(A_{n-1})^\mathrm{T} X|^2 }{ |\lambda_j(A_{n-1}) - \lambda_i(A_n)|^2 } \leq \frac{ (\log n)^{c_1} }{ 25 } \left[ \frac{ 1} { m_i^2 } + \frac{n}{ \delta^2} \right] \right\}, $$
where
$$ m_i := \min_{1 \leq j \leq n-1} |\lambda_j(A_{n-1}) - \lambda_i(A_n)| $$  
and $c_1$ is the constant from Lemma \ref{lemma:techsumbndGnp}.  If $m_i \leq \frac{\delta}{\sqrt{n}}$, then, by Cauchy's interlacing inequalities \eqref{eq:cauchy}, this implies that either 
$$ |\lambda_i(A_{n-1}) - \lambda_i(A_n)| \leq \frac{\delta}{\sqrt{n}}  $$
or 
$$ |\lambda_{i-1}(A_{n-1}) - \lambda_i(A_n)| \leq \frac{\delta}{\sqrt{n}}. $$
(Here, the second possibility can only occur if $i > 1$.)  
Therefore, it follows from Theorem \ref{thm:NTVGnp} and Lemma \ref{lemma:techsumbndGnp} that, there exists $C > 0$ such that
$\Omega$ holds with probability at least $1 - C n^{o(1)} \delta - o(1)$.  

Let $c_2 := c_1 / 2$.  Then
$$ \Prob \left( |v_i(n)| \leq \frac{\delta}{\sqrt{n} (\log n)^{c_2} } \right) \leq \Prob (\Omega^c) + \Prob \left( \left\{ |v_i(n)| \leq \frac{\delta}{\sqrt{n} (\log n)^{c_2} } \right\} \bigcap \Omega \right). $$
Hence, to complete the proof, we will show that the event
\begin{equation} \label{eq:eventsufficeGnp}
	\left\{ |v_i(n)| \leq \frac{\delta}{\sqrt{n} (\log n)^{c_2} } \right\} \bigcap \Omega 
\end{equation}
is empty.  

Fix a realization in this event.  As discussed above (see Remark \ref{rem:strictinter}), on $\Omega$, the eigenvalues of $A_{n-1}$ strictly interlace with the eigenvalues of $A_n$.  Hence, we apply Lemma \ref{lemma:coordinate} and obtain
$$ \frac{1}{1 + \sum_{j=1}^{n-1} \frac{ |v_j(A_{n-1})^\mathrm{T} X |^2}{ | \lambda_j(A_{n-1}) - \lambda_i(A_n) |^2 } } = |v_i(n)|^2 \leq \frac{\delta^2}{n (\log n)^{2 c_2}}. $$
This implies, for $n$ sufficiently large, that
$$ \frac{n (\log n)^{2 c_2 }}{2 \delta^2} \leq \sum_{j=1}^{n-1} \frac{ |v_j(A_{n-1})^\mathrm{T} X |^2}{ | \lambda_j(A_{n-1}) - \lambda_i(A_n) |^2 }. $$
On the other hand, by definition of $\Omega$, we have
\begin{align*}
	\sum_{j=1}^{n-1} \frac{ |v_j(A_{n-1})^\mathrm{T} X |^2}{ | \lambda_j(A_{n-1}) - \lambda_i(A_n) |^2 } \leq \frac{2 (\log n)^{c_1} } { 25} \frac{n} {\delta^2},
\end{align*}
and thus
$$ \frac{n (\log n)^{2 c_2}}{2 \delta^2} \leq  \frac{2 (\log n)^{c_1} } { 25} \frac{n} {\delta^2}. $$
This is a contradiction since $2 c_2 = c_1$.  We conclude that the event in \eqref{eq:eventsufficeGnp} is empty, and the proof is complete.  
\end{proof}

\subsection{Proof of Theorem \ref{thm:diag}}

To begin, we introduce $\varepsilon$-nets as a convenient way to discretize a compact set.  

\begin{definition}[$\eps$-net]
Let $(X,d)$ be a metric space, and let $\eps > 0$.  A subset $\mathcal{N}$ of $X$ is called an $\eps$-net of $X$ if every point $x \in X$ can be approximated to within $\eps$ by some point $y \in \mathcal{N}$.  That is, for every $x \in X$ there exists $y \in \mathcal{N}$ so that $d(x,y) \leq \eps$.  
\end{definition}

The following estimate for the maximum size of an $\varepsilon$-net of a sphere is well-known. 

\begin{lemma} \label{lemma:net}
A unit sphere in $d$ dimensions admits an $\varepsilon$-net of size at most 
$$ \left(1+\frac{2}{\varepsilon} \right)^d.$$
\end{lemma}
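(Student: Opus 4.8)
The plan is to bound the size of a maximal $\eps$-separated subset of the sphere and then observe that any such subset is automatically an $\eps$-net. Concretely, let $X = S^{d-1}$ be the unit sphere in $\R^d$, and let $\mathcal{N} \subset X$ be a subset that is \emph{maximal} with respect to the property that any two distinct points of $\mathcal{N}$ are at distance strictly greater than $\eps$. (Such a maximal set exists, e.g.\ by Zorn's lemma, or just because the sphere is compact and so $\mathcal{N}$ must be finite.) Maximality forces $\mathcal{N}$ to be an $\eps$-net: if some $x \in X$ were at distance greater than $\eps$ from every point of $\mathcal{N}$, then $\mathcal{N} \cup \{x\}$ would still be $\eps$-separated, contradicting maximality. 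So it remains only to bound $|\mathcal{N}|$.

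The key step is a volume (packing) argument. Since the points of $\mathcal{N}$ are pairwise at distance greater than $\eps$, the open Euclidean balls $B(y, \eps/2)$ for $y \in \mathcal{N}$ are pairwise disjoint. Moreover, each such ball is contained in the ball $B(0, 1 + \eps/2)$ centered at the origin (any point within $\eps/2$ of a unit vector has norm at most $1 + \eps/2$). Comparing Lebesgue measures in $\R^d$ and using that the volume of a Euclidean ball scales like the $d$th power of its radius, we get
\begin{equation*}
	|\mathcal{N}| \cdot \left( \frac{\eps}{2} \right)^d \le \left( 1 + \frac{\eps}{2} \right)^d,
\end{equation*}
so that $|\mathcal{N}| \le \left( 1 + \frac{2}{\eps} \right)^d$ after dividing through by $(\eps/2)^d$. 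This gives the claimed bound.

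I expect no serious obstacle here; the only mild subtlety is being careful about which ambient space the volume comparison takes place in — one compares $d$-dimensional volumes of thin shells/balls in $\R^d$, not surface measures on the sphere, so that the scaling $\mathrm{vol}(B(0,r)) = r^d \, \mathrm{vol}(B(0,1))$ can be used cleanly and the dimension-dependent constant cancels. One should also note that the bound is stated for the unit sphere in $d$ dimensions, i.e.\ $S^{d-1} \subset \R^d$; the same argument with obvious modifications also covers $\eps$-nets of the unit ball, which is the form in which the estimate is sometimes invoked later. A standard reference for this estimate is \cite[Lemma 5.2]{V} or any of the surveys cited in the paper.
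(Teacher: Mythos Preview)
Your proof is correct and follows essentially the same route as the paper: take a maximal $\eps$-separated subset of the sphere, argue by maximality that it is an $\eps$-net, and bound its cardinality via the volume packing argument with balls of radius $\eps/2$ inside the ball of radius $1+\eps/2$. The two proofs differ only in cosmetic details (strict versus non-strict separation), so there is nothing to add.
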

\begin{proof}
Let $S$ be the unit sphere in question.  Let $\mathcal{N}$ be a maximal $\varepsilon$-separated subset of $S$.  That is, $\|x-y\| \geq \varepsilon$ for all distinct $x,y \in \mathcal{N}$ and no subset of $S$ containing $\mathcal{N}$ has this property.  Such a set can always be constructed by starting with an arbitrary point in $S$ and at each step selecting a point that is at least $\varepsilon$ distance away from those already selected.  Since $S$ is compact, this procedure will terminate after a finite number of steps.  

We now claim that $\mathcal{N}$ is an $\varepsilon$-net.  Suppose to the contrary.  Then there would exist $x \in S$ that is at least $\varepsilon$ from all points in $\mathcal{N}$.  In other words, $\mathcal{N}\cup\{x\}$ would still be an $\varepsilon$-separated subset of $S$.  This contradicts the maximal assumption above.  

We now proceed by a volume argument.  At each point of $\mathcal{N}$ we place a ball of radius $\varepsilon/2$.  By the triangle inequality, it is easy to verify that all such balls are disjoint and lie in the ball of radius $1+\varepsilon/2$ centered at the origin.  Comparing the volumes give
$$ |\mathcal{N}| \leq \frac{(1+\varepsilon/2)^d}{(\varepsilon/2)^d} = \left( 1 + \frac{2}{\varepsilon} \right)^d. $$
\end{proof}

We will need the following lemmata in order to prove Theorem \ref{thm:diag}.  

\begin{lemma} \label{lemma:dimbnd}
Let $\xi$, $\zeta$ be real sub-gaussian random variables with mean zero, and assume $\xi$ has unit variance.  Let $W$ be an $n \times n$ Wigner matrix with atom variables $\xi$, $\zeta$.  Let $\tau \geq \tau_0 > 0$ and $K > 1$.  Then there exists constants $C,c,c' > 0$ (depending only on $\tau_0$, $K$, and the sub-gaussian moments of $\xi$ and $\zeta$) such that
$$ \sup_{\lambda \in [-K \sqrt{n}, K \sqrt{n}]} \left| \left\{1 \leq i \leq n : |\lambda_i(W) - \lambda| \leq \tau \sqrt{n} \right\} \right| \leq 4 \tau n + 2(\log n)^{c' \log \log n} $$
with probability at least 
$$ 1 - C \exp\left( - c (\log n)^{c \log \log n} \right). $$
\end{lemma}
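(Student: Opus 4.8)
The plan is to reduce this to a uniform (over a grid) application of the local semicircle law in the form of Theorem~\ref{thm:dim2}, combined with a standard net/monotonicity argument to pass from a finite grid of $\lambda$-values to the supremum over the whole interval $[-K\sqrt{n}, K\sqrt{n}]$. First I would observe that for a fixed $\lambda$, the set $\{1\le i\le n : |\lambda_i(W)-\lambda|\le \tau\sqrt{n}\}$ has cardinality equal to $N_I$, the number of eigenvalues of $W$ in the interval $I:=[\lambda-\tau\sqrt n,\lambda+\tau\sqrt n]$. Equivalently, this is $N_{I'}$ where $I'$ is the corresponding interval for the normalized matrix $\frac1{\sqrt n}W$, namely $I'=[\lambda/\sqrt n - \tau, \lambda/\sqrt n + \tau]$, which has length $2\tau$. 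By Theorem~\ref{thm:dim2}, for this fixed interval,
$$ N_{I'} \le (1.1)\, n \int_{I'}\rho_{\mathrm{sc}}(x)\,dx + (\log n)^{c'} $$
with probability at least $1 - C\exp(-c\log^2 n)$. Since $\rho_{\mathrm{sc}}(x)\le \frac1{2\pi}\sqrt{4-x^2}\le \frac1\pi$ uniformly, we have $(1.1)\,n\int_{I'}\rho_{\mathrm{sc}} \le (1.1)\frac1\pi n\cdot 2\tau = \frac{2.2}{\pi}\tau n < 4\tau n$, which gives the desired bound $N_{I'}\le 4\tau n + (\log n)^{c'}$ for each individual $\lambda$.

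The second step is to make this uniform over $\lambda$. I would take a grid $\{\lambda^{(1)},\dots,\lambda^{(M)}\}$ of $M = n^{A}$ equispaced points in $[-K\sqrt n, K\sqrt n]$ for a large constant $A$ (say $A=10$), so the spacing is at most $2K\sqrt n / n^A = 2K n^{1/2-A}$. Applying Theorem~\ref{thm:dim2} to the interval of radius $\tau+ K n^{1/2-A}$ (still of length $O(\tau)$ since $K n^{1/2-A}\ll \tau_0 \le \tau$ for $n$ large) centered at each grid point, and union-bounding over the $M=n^A$ points, the event
$$ N_{[\lambda^{(k)} - (\tau + Kn^{1/2-A})\sqrt n,\ \lambda^{(k)} + (\tau+Kn^{1/2-A})\sqrt n]} \le 4\tau n + (\log n)^{c'} \quad \text{for all } k $$
holds with probability at least $1 - C n^A \exp(-c\log^2 n) \ge 1 - C\exp(-\tfrac{c}{2}\log^2 n)$, absorbing the polynomial factor into the stretched-exponential tail. (Here I am sliding the multiplicative constant from $2.2/\pi$ to $4$ with room to spare, so the extra $Kn^{1/2-A}$ enlargement of the radius costs nothing.) On this event, any $\lambda\in[-K\sqrt n, K\sqrt n]$ lies within $Kn^{1/2-A}\le Kn^{1/2-A}\sqrt n$ of some grid point $\lambda^{(k)}$, hence the interval $[\lambda-\tau\sqrt n,\lambda+\tau\sqrt n]$ is contained in $[\lambda^{(k)}-(\tau+Kn^{1/2-A})\sqrt n,\lambda^{(k)}+(\tau+Kn^{1/2-A})\sqrt n]$, so $N$ over the former is bounded by $N$ over the latter, giving the uniform bound $\sup_\lambda |\{i:|\lambda_i(W)-\lambda|\le\tau\sqrt n\}|\le 4\tau n + (\log n)^{c'}$. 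Since $(\log n)^{c'} \le 2(\log n)^{c'\log\log n}$ for $n$ large, and since $C\exp(-\tfrac{c}{2}\log^2 n) \le C\exp(-c'' (\log n)^{c''\log\log n})$, this matches the claimed form (after renaming constants).

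The main technical point to be careful about — and the only place where something could go subtly wrong — is the footnote-style issue already flagged in the proof of Lemma~\ref{lemma:techsumbnd}: Theorem~\ref{thm:dim2} is stated for \emph{deterministic} intervals, whereas here we want a bound that is uniform over a continuum (or at least a fine grid) of centers $\lambda$. The resolution is exactly the grid-plus-monotonicity argument above: because $N_I$ is monotone in $I$ under inclusion, it suffices to control $N$ on a polynomially fine deterministic grid of slightly enlarged intervals, and the union bound over $\mathrm{poly}(n)$ deterministic intervals is cheap against the $\exp(-c\log^2 n)$ tail. One should also confirm that the hypothesis $\tau\ge\tau_0>0$ is used precisely to ensure the grid-enlargement term $Kn^{1/2-A}$ is negligible relative to $\tau$ and that the interval length stays $\Theta(\tau)$, so that the semicircle-density bound $\rho_{\mathrm{sc}}\le 1/\pi$ still yields a count below $4\tau n$; without a lower bound on $\tau$ one would instead need a more careful treatment of very short intervals, but that is not needed here. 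No other obstacle arises: the sub-gaussian hypotheses on $\xi,\zeta$ are exactly those required by Theorem~\ref{thm:dim2}, and the constants $C,c,c'$ produced depend only on $\tau_0$, $K$, and the sub-gaussian moments, as claimed.
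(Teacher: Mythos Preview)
Your overall strategy---a grid of deterministic centers, enlargement/monotonicity of $N_I$ under inclusion, and a union bound against the local semicircle law---is exactly the paper's approach. The paper uses an $n^{-1}$-net of $[-100K\sqrt{n},100K\sqrt{n}]$ and the crude bound $\sup_\lambda N_{I_\lambda}\le 2\sup_{\lambda\in\mathcal{N}} N_{I_\lambda}$, whereas you enlarge the interval radius; these are equivalent devices.

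There is, however, a genuine gap in your final step. You invoke Theorem~\ref{thm:dim2}, which yields failure probability $C\exp(-c\log^2 n)$, and then assert that
\[
C\exp\!\Big(-\tfrac{c}{2}\log^2 n\Big)\ \le\ C\exp\!\Big(-c''(\log n)^{c''\log\log n}\Big)
\]
for some $c''>0$. This inequality is false for large $n$: for any fixed $c''>0$ one has $(\log n)^{c''\log\log n}=\exp\big(c''(\log\log n)^2\big)$, which eventually dominates $(\log n)^2=\exp(2\log\log n)$, so in fact $\exp(-c''(\log n)^{c''\log\log n})<\exp(-\tfrac{c}{2}\log^2 n)$ for $n$ large. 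In other words, the probability bound produced by Theorem~\ref{thm:dim2} is strictly \emph{weaker} than the one stated in the lemma, and cannot be upgraded by renaming constants.

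The fix is immediate: use Theorem~\ref{thm:dim} instead of Theorem~\ref{thm:dim2}. Theorem~\ref{thm:dim} already gives, for each deterministic interval, the error term $(\log n)^{c'\log\log n}$ and the failure probability $C\exp\big(-c(\log n)^{c\log\log n}\big)$, so the union bound over a polynomial-size grid is absorbed just as easily (indeed more easily). This is precisely what the paper does. With that one substitution your argument is correct and essentially identical to the paper's.
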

\begin{proof}
The proof relies on Theorem \ref{thm:dim}.  In particular, for any interval $I \subset \mathbb{R}$, Theorem \ref{thm:dim} implies that 
\begin{align} \label{eq:dim2}
	\Prob &\left( \left| N_I - n\int_{I} \rho_{\mathrm{sc}}(x) dx \right| \geq (\log n)^{c' \log \log n} \right) \leq C \exp \left(- c (\log n)^{c \log \log n} \right) ,
\end{align}
where $N_I$ denotes the number of eigenvalues of $\frac{1}{\sqrt{n}} W$ in $I$.  Here $C, c, c' > 0$ depend only on the sub-gaussian moments of $\xi$ and $\zeta$.  

For each $\lambda \in \mathbb{R}$, let $I_{\lambda}$ be the interval
$$ I_{\lambda} := \left[ \frac{\lambda}{\sqrt{n}} - \tau, \frac{\lambda}{\sqrt{n}} + \tau \right]. $$
Thus, using the notation from above, the problem reduces to showing
$$ \sup_{\lambda \in [-K \sqrt{n}, K \sqrt{n}]} N_{I_\lambda} \leq 4 \tau n + 2(\log n)^{c' \log \log n} $$
with sufficiently high probability.  

Let $\mathcal{N}$ be a $n^{-1}$-net of the interval $[-100 K \sqrt{n}, 100 K \sqrt{n}]$.  Then $|\mathcal{N}| \leq C' n^{3/2}$, where $C' > 0$ depends only on $K$.  In addition, a simple net argument reveals that
$$ \sup_{\lambda \in [-K \sqrt{n}, K \sqrt{n}]} N_{I_\lambda}  \leq 2 \sup_{\lambda \in \mathcal{N}} N_{I_{\lambda}} $$
for all $n \geq n_0$, where $n_0$ depends only on $\tau_0$.  Thus, we have
\begin{align*}
	\Prob &\left( \sup_{\lambda \in [-K \sqrt{n}, K \sqrt{n}]} N_{I_\lambda} > 4 \tau n + 2 (\log n)^{c' \log \log n} \right) \\
	&\qquad\qquad\leq \Prob \left( \sup_{\lambda \in \mathcal{N}} N_{I_{\lambda}} > 2 \tau n + (\log n)^{c' \log \log n} \right) \\
	&\qquad\qquad\leq \sum_{\lambda \in \mathcal{N}} \Prob \left( \left | N_{I_\lambda} - n \int_{I_\lambda} \rho_{\mathrm{sc}}(x) dx \right| > (\log n)^{c' \log \log n} \right) 
\end{align*}
by the union bound.  The claim now follows by applying \eqref{eq:dim2}.  
\end{proof}

\begin{remark} \label{rem:perturb}
If $J$ is a real symmetric matrix with rank $k$, then, by \cite[Theorem III.2.1]{Bhatia}, it follows that 
$$ \left| \left\{1 \leq i \leq n : |\lambda_i(W+J) - \lambda| \leq \tau \sqrt{n} \right\}\right| \leq \left| \left\{1 \leq i \leq n : |\lambda_i(W) - \lambda| \leq \tau \sqrt{n} \right\}\right| + k. $$
\end{remark}

\begin{lemma} \label{lemma:largenet}
Let $\xi$, $\zeta$ be real sub-gaussian random variables with mean zero, and assume $\xi$ has unit variance.  Let $W$ be an $n \times n$ Wigner matrix with atom variables $\xi$, $\zeta$.  Let $k$ be a non-negative integer.  Let $J$ be a $n \times n$ deterministic real symmetric matrix with rank at most $k$.  In addition, let $H$ be a subspace of $\mathbb{R}^n$, which may depend only on $W$, that satisfies $\dim(H) \leq k$ almost surely.  Let $\tau_1 \geq \tau \geq \tau_0 > 0$.  For each $\lambda \in \mathbb{R}$, define the subspaces
$$ V_{\lambda} := \Span \left\{ v_i(W+J) : |\lambda_i(W+J) - \lambda| \leq \tau \sqrt{n} \right\} $$
and 
$$ H_{\lambda} := \Span \left\{ V_\lambda \cup H \right\}. $$
Then there exists constants $C, c, c' > 0$ (depending only on $\tau_0$, $\tau_1$, and the sub-gaussian moments of $\xi$ and $\zeta$) such that, with probability at least 
$$ 1 - C \exp\left( - c (\log n)^{c \log \log n} \right), $$
the following holds.  For every $\eps > 0$, there exists a subset $\mathcal{N}$ of the unit sphere in $\mathbb{R}^n$ (depending only on $W$, $J$, and $\eps$) such that
\begin{enumerate}[(i)]
\item \label{item:epsnet} for every $\lambda \in \mathbb{R}$ and every unit vector $w \in H_{\lambda}$, there exists $w' \in \mathcal{N}$ such that $|w - w'| \leq \eps$,
\item $|\mathcal{N}| \leq n^2 \left( 1 + \frac{2}{\eps} \right)^{4 \tau n + 2 (\log n)^{c' \log \log n} + 2k}$.
\end{enumerate}
\end{lemma}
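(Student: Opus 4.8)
The plan is to replace the continuum $\{H_\lambda\}_{\lambda\in\mathbb{R}}$ by a finite family of subspaces, each of dimension at most $D:=4\tau n+2(\log n)^{c'\log\log n}+2k$, and to glue together one small $\eps$-net of the unit sphere of each. Fix $C_0$ as in Lemma \ref{lemma:norm}, enlarged if necessary so that $C_0\ge 1$, and set $K:=C_0+\tau_1$. Let $\mathcal{E}$ be the intersection of the event $\{\|W\|\le C_0\sqrt n\}$ with the event, supplied by Lemma \ref{lemma:dimbnd} applied to the given $\tau\ge\tau_0$ and to this $K$, that
$$\sup_{\lambda\in[-K\sqrt n,\,K\sqrt n]}\bigl|\{1\le i\le n:|\lambda_i(W)-\lambda|\le\tau\sqrt n\}\bigr|\le 4\tau n+2(\log n)^{c'\log\log n}.$$
By Lemmas \ref{lemma:norm} and \ref{lemma:dimbnd}, $\Prob(\mathcal{E})\ge 1-C\exp(-c(\log n)^{c\log\log n})$ after relabelling constants (the $\exp(-c_0n)$ tail from Lemma \ref{lemma:norm} is negligible). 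All remaining steps are deterministic and are carried out on $\mathcal{E}$, on which we may also assume $\dim H\le k$; the constants produced depend only on $\tau_0$, $\tau_1$, and the sub-gaussian moments of $\xi,\zeta$.

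Next I would bound $\dim H_\lambda$ uniformly in $\lambda$. On $\mathcal{E}$ one has $[\lambda_1(W),\lambda_n(W)]\subseteq[-C_0\sqrt n,C_0\sqrt n]$. Since $J$ has rank at most $k$, at most $2k$ eigenvalues of $W+J$ lie outside $[\lambda_1(W),\lambda_n(W)]$: by the Courant--Fischer characterization, for every subspace $S$ with $\dim S>k$ the intersection $S\cap\range(J)^\perp$ is nontrivial and any unit $u$ there satisfies $u^\mathrm{T}(W+J)u=u^\mathrm{T}Wu\in[\lambda_1(W),\lambda_n(W)]$, which forces all but at most $k$ of the $\lambda_i(W+J)$ to be $\le\lambda_n(W)$ and, symmetrically, all but at most $k$ to be $\ge\lambda_1(W)$ (see also \cite[Theorem III.2.1]{Bhatia}). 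Let $E$ be the span of the corresponding $\le 2k$ eigenvectors of $W+J$; it is determined by $W$ and $J$. If $|\lambda|>K\sqrt n$ then every eigenvalue of $W+J$ within distance $\tau\sqrt n$ of $\lambda$ has modulus exceeding $C_0\sqrt n$ and hence lies in $E$, so $V_\lambda\subseteq E$ and $\dim H_\lambda\le\dim E+\dim H\le 3k\le D$ for $n$ large. If $|\lambda|\le K\sqrt n$, Remark \ref{rem:perturb} combined with the defining inequality of $\mathcal{E}$ gives $\dim V_\lambda\le 4\tau n+2(\log n)^{c'\log\log n}+k$, hence $\dim H_\lambda\le D$. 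Thus $\dim H_\lambda\le D$ for all $\lambda\in\mathbb{R}$.

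Finally I would observe that $V_\lambda$, and therefore $H_\lambda=\Span(V_\lambda\cup H)$, depends on $\lambda$ only through the index set $\{i:\lambda_i(W+J)\in[\lambda-\tau\sqrt n,\lambda+\tau\sqrt n]\}$, which changes only as $\lambda$ crosses one of the $2n$ numbers $\lambda_i(W+J)\pm\tau\sqrt n$; hence $\{H_\lambda:\lambda\in\mathbb{R}\}$ consists of at most $2n+1$ distinct subspaces, each of dimension at most $D$. For each such $H'\ne\{0\}$, Lemma \ref{lemma:net} provides an $\eps$-net $\mathcal{N}_{H'}$ of the unit sphere of $H'$ with $|\mathcal{N}_{H'}|\le(1+2/\eps)^{\dim H'}\le(1+2/\eps)^{D}$, which we regard as a subset of the unit sphere of $\mathbb{R}^n$; put $\mathcal{N}:=\bigcup_{H'}\mathcal{N}_{H'}$. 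Since $H$ is a function of $W$, $\mathcal{N}$ depends only on $W$, $J$, and $\eps$. Property (i) holds because any unit $w\in H_\lambda$ lies in the unit sphere of one of the subspaces $H'$, so some $w'\in\mathcal{N}_{H'}\subseteq\mathcal{N}$ satisfies $|w-w'|\le\eps$; property (ii) follows from $|\mathcal{N}|\le(2n+1)(1+2/\eps)^{D}\le n^2(1+2/\eps)^{D}$ for $n\ge 3$, the finitely many smaller $n$ being absorbed into $C$. I expect the main obstacle to be the combined second and third steps: recognizing that one should not attempt a net that is uniform over the continuum of $\lambda$ but instead bound the number of distinct $H_\lambda$ (which is only $O(n)$), and separately disposing of the at most $2k$ eigenvalues of $W+J$ that escape the bulk interval $[-C_0\sqrt n,C_0\sqrt n]$ through the rank-$k$ perturbation estimate; the net counting itself is then routine.
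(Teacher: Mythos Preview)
Your proof is correct and follows essentially the same approach as the paper: bound $\dim H_\lambda$ uniformly via Lemma~\ref{lemma:dimbnd} and Remark~\ref{rem:perturb}, observe that only finitely many distinct $H_\lambda$ arise as $\lambda$ varies, and take the union of $\eps$-nets of their unit spheres. The only cosmetic differences are that the paper handles $|\lambda|>K\sqrt n$ more directly (the eigenvalue count for $W$ is zero there, so Remark~\ref{rem:perturb} gives at most $k$ for $W+J$, without introducing your auxiliary subspace $E$), and the paper counts at most $n^2$ distinct $V_\lambda$ via the contiguous-block observation rather than your sharper $2n+1$; neither affects the final bound.
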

\begin{proof}
By Lemma \ref{lemma:norm}, $\|W\| \leq C_0 \sqrt{n}$ with probability at least $1 - C_0 \exp(-c_0 n)$.  On this event, there exists some constant $K > 1$ (depending only on $C_0$ and $\tau_1$) such that if $|\lambda| \geq K \sqrt{n}$, then 
$$ \left| \left\{ 1 \leq i \leq n : |\lambda_i(W) - \lambda| \leq \tau \sqrt{n} \right\} \right| = 0. $$  
Thus, by applying Lemma \ref{lemma:dimbnd}, we conclude that
$$ \sup_{\lambda \in \mathbb{R}} \left| \left\{ 1 \leq i \leq n : |\lambda_i(W) - \lambda| \leq \tau \sqrt{n} \right\} \right| \leq 4 \tau n + 2 (\log n)^{c' \log \log n} $$
with probability at least 
$$ 1 - C \exp\left( - c (\log n)^{c \log \log n} \right), $$
where $C,c,c' > 0$ depend only on $\tau_0$, $\tau_1$, and the sub-gaussian moments of $\xi$ and $\zeta$.  In view of Remark \ref{rem:perturb}, we have
\begin{equation} \label{eq:suplambnd}
	\sup_{\lambda \in \mathbb{R}} \dim(V_\lambda) \leq 4 \tau n + 2 (\log n)^{c' \log \log n} + k
\end{equation}
on the same event.  For the remainder of the proof, we fix a realization in which $\dim(H) \leq k$ and the bound in \eqref{eq:suplambnd} holds.  

Observe that the collection $\{V_{\lambda}\}_{\lambda \in \mathbb{R}}$ contains at most $n^2$ distinct subspaces.  This follows since each $V_{\lambda}$ has the form
$$ V_{\lambda} = \Span \left\{ v_{i}(W+J) : a_{\lambda} \leq \lambda_i(W+J) \leq b_{\lambda} \right\} $$
for some $a_\lambda, b_\lambda \in \mathbb{R}$.  Let $V_{\lambda_1}, \ldots, V_{\lambda_N}$ be the distinct subspaces, where $N \leq n^2$. 

From \eqref{eq:suplambnd}, we have 
$$ \sup_{1 \leq j \leq N} \dim(H_{\lambda_j}) \leq 4 \tau n + 2 (\log n)^{c' \log \log n} + 2k. $$
For each $1 \leq j \leq N$, let $\mathcal{N}_j$ be an $\eps$-net of the unit sphere in $H_{\lambda_j}$.  In particular, by Lemma \ref{lemma:net}, we can choose $\mathcal{N}_j$ such that
$$ |\mathcal{N}_j| \leq \left( 1+ \frac{2}{\eps} \right)^{4 \tau n + 2 (\log n)^{c' \log \log n} + 2k} . $$
Set $\mathcal{N} := \cup_{j=1}^N \mathcal{N}_j$.  Then, by construction,
$$ |\mathcal{N}| \leq N \left( 1+ \frac{2}{\eps} \right)^{4 \tau n + 2 (\log n)^{c' \log \log n} + 2k} \leq n^2 \left( 1+ \frac{2}{\eps} \right)^{4 \tau n + 2 (\log n)^{c' \log \log n} + 2k}. $$

It remains to show that $\mathcal{N}$ satisfies property \eqref{item:epsnet}.  Let $w$ be a unit vector in $H_{\lambda}$ for some $\lambda \in \mathbb{R}$.  Then $w \in H_{\lambda_j}$ for some $1 \leq j \leq N$.  Thus, there exists $w' \in \mathcal{N}_j \subseteq \mathcal{N}$ such that $|w - w'| \leq \eps$, and the proof is complete.   
\end{proof}

We now prove Theorem \ref{thm:diag}.

\begin{proof}[Proof of Theorem \ref{thm:diag}]
From the identity 
$$ \| v_j(W+J) \|_S^2 + \|v_j(W+J) \|_{S^c}^2 = 1, $$
it suffices to prove the lower bound for $\|v_j(W+J)\|_S$.  Let $m := \lfloor \delta n \rfloor$ and $r := n -m$.  By symmetry, it suffices to consider the case when $S := \{1, \ldots, m\}$.  

We decompose $W+J$ as 
$$ M := W + J = \begin{pmatrix} A & B \\ B^\mathrm{T} & D \end{pmatrix} + \begin{pmatrix} J_A & 0 \\ 0 & J_D \end{pmatrix}, $$
where $A$ and $J_A$ are $m \times m$ matrices, $D$ and $J_D$ are $r \times r$ matrices, and $B$ is a $m \times r$ matrix.  In particular, $A$ is an $m$-dimensional Wigner matrix, $D$ is an $r$-dimensional Wigner matrix, and $B^\mathrm{T}$ is an $r \times m$ matrix whose entries are iid copies of $\xi$.  In addition, $J_A$ and $J_D$ are diagonal matrices both having rank at most $k$.  

Fix $1 \leq j \leq n$ such that $\lambda_j(M) \in [\lambda_1(W), \lambda_n(W)]$, and write 
$$ v_j(M) = \begin{pmatrix} x_j \\ y_j \end{pmatrix}, $$
where $x_j$ is an $m$-vector and $y_j$ is an $r$-vector.  From the eigenvalue equation $M v_j(M) = \lambda_j(M) v_j(M)$, we obtain 
\begin{align}
	A x_j + B y_j + J_A x_j &= \lambda_j(M) x_j \label{eq:diageig1}\\
	B^\mathrm{T} x_j + D y_j + J_D y_j &= \lambda_j(M) y_j. \label{eq:diageig2}
\end{align}

In order to reach a contradiction, assume $\|x_j\| \leq \tau \eps$ for some constants $0 < \tau, \eps < 1$ (depending only on $\delta$, $k$, and the sub-gaussian moments of $\xi$ and $\zeta$) to be chosen later.  Define the event
$$ \Omega_{n,1} := \{ \|W\| \leq C_0 \sqrt{n} \}. $$
In particular, on the event $\Omega_{n,1}$, $|\lambda_j(M)| \leq C_0 \sqrt{n}$.  

From \eqref{eq:diageig2}, we find
$$ (D + J_D - \lambda_j(M)I) y_j = - B^\mathrm{T} x_j. $$
Thus, on the event $\Omega_{n,1}$,  Lemma \ref{lemma:structure} implies that $y_j = v_j + q_j$, where $v_j$, $q_j$ are orthogonal, $\|q_j\| \leq \eps$, and 
$$ v_j \in V_j := \Span\{ v_i (D + J_D) : |\lambda_i(D + J_D) - \lambda_j(M)| \leq C_0 \tau \sqrt{n} \}. $$

Let $0 < c_0 < 1$ be a constant (depending only on $\delta$, $k$, and the sub-gaussian moments of $\xi$ and $\zeta$) to be chosen later.  By Lemma \ref{lemma:largenet}, there exists a subset $\mathcal{N}$ of the unit sphere in $\mathbb{R}^r$ such that
\begin{enumerate}[(i)]
\item \label{item:everyunit} for every unit vector $v \in V_j$ there exists $v' \in \mathcal{N}$ such that $\|v - v'\| \leq c_0$,
\item \label{item:netsize} we have
$$ |\mathcal{N}| \leq n^2 \left( 1 + \frac{2}{c_0} \right)^{12 \tau (\sqrt{n} / \sqrt{r}) n + 2 (\log n)^{c' \log \log n} + 2k}, $$
\item $\mathcal{N}$ depends only on $D$, $J_D$, and $c_0$.  
\end{enumerate}
We now condition on the sub-matrix $D$ such that properties \eqref{item:everyunit} and \eqref{item:netsize} hold.  By independence, this conditioning does not effect the matrices $A$ and $B$.  Since $\mathcal{N}$ only depends on $D$, $J_D$, and $c_0$, we now treat $\mathcal{N}$ as a deterministic set.  

Let $H := \range(J_A)^{\perp}$, and let $P_H$ denote the orthogonal projection onto $H$.  Then, from \eqref{eq:diageig1}, we have
\begin{align*}
	2C_0 \sqrt{n} \|x_j\| \geq \| (A - \lambda_j(M)) x_j\| = \|B y_j + J_A x_j \| &\geq \|P_H B y_j\| \\
		&\geq \| P_H B v_j \| - C_0 \eps \sqrt{n} \\
		&\geq \|v_j\| \inf_{\substack{v \in V_j \\ \|v\|=1}} \|P_H B v \| - C_0 \eps \sqrt{n} 
\end{align*}
on the event $\Omega_{n,1}$.  As $\|q_j\|^2 + \|v_j\|^2 = \|y_j\|^2 = 1 - \|x_j\|^2$, we have
$$ \|v_j \|^2 \geq 1 - \eps^2 \tau^2 - \eps^2, $$
and hence, we conclude that
\begin{equation} \label{eq:epstaubnd}
	2C_0 \sqrt{n} \|x_j\| \geq \sqrt{1 - \eps^2 \tau^2 - \eps^2} \inf_{\substack{v \in V_j \\ \|v\|=1}} \|P_H B v \| - C_0 \eps \sqrt{n}.  
\end{equation}

We now obtain a lower bound for 
$$ \inf_{\substack{v \in V_j \\ \|v\|=1}} \|P_H B v \|. $$
Indeed, Since $J_A$ has rank at most $k$, $\dim(H) \geq n-k$.  Thus, by taking $c_0$ sufficiently small, we find that
\begin{align*}
	\Prob &\left( \inf_{\substack{v \in V_j \\ \|v\|=1}} \|P_H B v \| \leq \frac{1}{100} \sqrt{m-k} \right) \\
	&\qquad\leq \Prob \left( \inf_{v \in \mathcal{N}} \|P_H B v\| \leq \frac{1}{100} \sqrt{m-k} + c_0 \|B\| \right) \\
	&\qquad\leq \Prob \left( \left\{ \inf_{v \in \mathcal{N}} \|P_H B v\| \leq \frac{1}{100} \sqrt{m-k} + c_0 \|B\| \right\} \cap \Omega_{n,1} \right) + \Prob \left( \Omega_{n,1}^c \right) \\
	&\qquad\leq \Prob \left( \inf_{v \in \mathcal{N}} \|P_H B v\| \leq \frac{1}{100} \sqrt{m-k} + C_0 c_0 \sqrt{n} \right) + \Prob \left( \Omega_{n,1}^c \right) \\
	&\qquad\leq \Prob \left( \inf_{v \in \mathcal{N}} \|P_H B v\| \leq \frac{1}{2} \sqrt{m-k} \right) + \Prob \left( \Omega_{n,1}^c \right) \\
	&\qquad\leq \sum_{v \in \mathcal{N}} \Prob \left( \| P_H B v \| \leq \frac{1}{2} \sqrt{m-k} \right) + \Prob \left( \Omega_{n,1}^c \right).
\end{align*}
Therefore, in view of Lemma \ref{lemma:qua}, we obtain
\begin{align*}
	\Prob &\left( \inf_{\substack{v \in V_j \\ \|v\|=1}} \|P_H B v \| \leq \frac{1}{100} \sqrt{m-k} \right) \\
	&\leq 2 n^2 \left( 1 + \frac{2}{c_0} \right)^{12 \tau (\sqrt{n}/\sqrt{r}) n + 2 (\log n)^{c' \log \log n} + 2k} \exp(-c (m-k)) + \Prob \left( \Omega_{n,1}^c \right), 
\end{align*}
where $c > 0$ depends only on the sub-gaussian moment of $\xi$.  Therefore, by taking $\tau$ sufficiently small and applying Lemma \ref{lemma:norm}, we have
$$ \inf_{\substack{v \in V_j \\ \|v\|=1}} \|P_H B v \| \geq \frac{1}{100} \sqrt{m-k} $$
with probability at least 
$$ 1 - C_2 \exp \left( -c_2 (\log n)^{c_2 \log \log n} \right), $$
where $C_2, c_2 > 0$ depend only on $\delta$, $k$, and the sub-gaussian moments of $\xi$ and $\zeta$.  

On this event, \eqref{eq:epstaubnd} implies that
$$ \eps \tau \geq \|x_j\| \geq \frac{ \sqrt{1 - \eps^2 \tau^2 - \eps^2}}{200 C_0} \sqrt{ \frac{m-k}{n} } - \frac{\eps}{2}, $$
a contradiction for $\eps$ sufficiently small.  Therefore, on the same event, we conclude that $\|x_j\| \geq \eps \tau$.  
\end{proof}

\subsection{Proof of Theorem \ref{thm:perturb}}

Unsurprisingly, the proof of Theorem \ref{thm:perturb} is very similar to the proof of Theorem \ref{thm:diag}.  

\begin{proof}[Proof of Theorem \ref{thm:perturb}]
Let $m := \lfloor \delta n \rfloor$ and $r := n -m$.  By symmetry, it suffices to consider the case when $S := \{1, \ldots, m\}$.  

We decompose $W+J$ as 
$$ M := W + J = \begin{pmatrix} A & B \\ B^\mathrm{T} & D \end{pmatrix} + \begin{pmatrix} J_A & J_B \\ J_B^\mathrm{T} & J_D \end{pmatrix}, $$
where $A$ and $J_A$ are $m \times m$ matrices, $B$ and $J_B$ are $m \times r$ matrices, and $D$ and $J_D$ are $r \times r$ matrices.  In particular, $A$ is an $m$-dimensional Wigner matrix, $D$ is an $r$-dimensional Wigner matrix, and $B^\mathrm{T}$ is an $r \times m$ matrix whose entries are iid copies of $\xi$.  

Since the rank of any sub-matrix is not more than the rank of the original matrix (see, for example, \cite[Section 0.4.5]{HJ}), it follows that $J_A$, $J_B$, and $J_D$ all have rank at most $k$.  

Fix $\eps_1 n \leq j \leq (1 - \eps_1) n$.  Then, by \cite[Theorem III.2.1]{Bhatia}, for $n$ sufficiently large (in terms of $k$ and $\eps_1$), 
\begin{equation} \label{eq:eiginter}
	\lambda_{j+k}(W) \leq \lambda_j(M) \leq \lambda_{j-k}(W). 
\end{equation}
Express $v_j(M)$ as 
$$ v_j(M) = \begin{pmatrix} x_j \\ y_j \end{pmatrix}, $$
where $x_j$ is an $m$-vector and $y_j$ is an $r$-vector.  From the eigenvalue equation $M v_j(M) = \lambda_j(M) v_j(M)$, we obtain
\begin{align}
	A x_j + B y_j + J_A x_j + J_B y_j &= \lambda_j(M) x_j \label{eq:eigen1} \\
	B^\mathrm{T} x_j + D y_j + J_B^\mathrm{T} x_j + J_D y_j &= \lambda_j(M) y_j. \label{eq:eigen2}
\end{align}

In order to reach a contradiction, assume $\frac{1}{n^{1 - \eps_0}} \leq \|x_j\| \leq \tau \eps$ for some constants $0 < \tau, \eps < 1$ (depending only on $\delta$, $k$, $\eps_0$, $\eps_1$, and the sub-gaussian moments of $\xi$ and $\zeta$) to be chosen later.  Define the event
$$ \Omega_{n,1} := \{ \|W\| \leq C_0 \sqrt{n} \}. $$
In particular, in view of \eqref{eq:eiginter}, on the event $\Omega_{n,1}$, $|\lambda_j(M)| \leq C_0 \sqrt{n}$.   

Let $\gamma_j$ denote the classical location of the $j$th eigenvalue of a Wigner matrix.  That is, $\gamma_j$ is defined by
$$ n \int_{-\infty}^{\gamma_j} \rho_{\mathrm{sc}}(x)dx = j, $$
where $\rho_{\mathrm{sc}}$ is defined in \eqref{eq:def:rho}.  Let $\alpha > 0$ be a small parameter (depending on $\eps_0$) to be chosen later.  From \eqref{eq:eiginter} and \cite[Theorem 3.6]{LY}, we conclude that the event
$$ \Omega_{n,2} := \left\{ \left| \lambda_j(M) - \sqrt{n} \gamma_j \right| \leq \frac{n^{\alpha}}{n^{1/2}} \right\} $$
holds with probability at least $1 - C \exp \left( - c (\log n)^{c \log \log n} \right)$, where $C, c > 0$ depend on $\alpha$, $k$, and the sub-gaussian moments of $\xi$ and $\zeta$.  

From \eqref{eq:eigen2}, we obtain
$$ (D + J_D - \lambda_j(M) I ) y_j = - (B^\mathrm{T} + J_B^\mathrm{T}) x_j, $$
which we rewrite as
$$ (D + J_D - \sqrt{n} \gamma_j I ) y_j = - ( B^\mathrm{T} + (\sqrt{n} \gamma_j - \lambda_j(M)) Q_j + J_B^\mathrm{T}) x_j, $$
where $Q_J$ is the rank one matrix given by
$$ Q_j := \frac{ y_j x_j^\mathrm{T} }{\|x_j\|^2}. $$
In particular, $Q_j x_j = y_j$.  Since $\|x_j\| \geq \frac{1}{n^{1 - \eps_0}}$, we have
\begin{align*}
	\| (\sqrt{n} \gamma_j - \lambda_j(M)) Q_j \| \leq \frac{n^{\alpha}}{n^{1/2}} \frac{1}{\|x_j\|} \leq n^{1/2 - \eps_0/2} 
\end{align*}
on the event $\Omega_{n,2}$ by taking $\alpha$ sufficient small.  Thus, on the event $\Omega_{n,1} \cap \Omega_{n,2}$, we have 
$$ \| B^\mathrm{T} + (\sqrt{n} \gamma_j - \lambda_j(M)) Q_j \| \leq C_0 \sqrt{n} + n^{1/2 - \eps_0/2}. $$
Thus, on the same event, we apply Lemma \ref{lemma:structure2} and obtain the following.
\begin{enumerate}[(i)]
\item There exists $\eta > 0$ such that $D + J_D - (\sqrt{n} \gamma_j + \eta) I$ is invertible.
\item $y_j = v_j + q_j$, where $\|q_j\| \leq \eps$, $v_j \in \Span\{V_j \cup H_j\}$, 
$$ V_j := \Span \left\{ v_i(D + J_D) : | \lambda_i(D + J_D) - \sqrt{n} \gamma_j| \leq C_0 \tau \sqrt{n} + \tau n^{1/2 - \eps_0/2} \right \} $$
and
$$ H_j := \range \left( (D+ J_D - (\sqrt{n} \gamma_j + \eta) I )^{-1} J_B^\mathrm{T} \right). $$
\end{enumerate}
In particular, $V_j$ and $H_j$ depend only on $D$, and $H_j$ has dimension at most $k$. 

We pause a moment to note the following.  We introduced the classical location $\gamma_j$ so that the subspace $H_j$ depends only on $D$ and not on the entire matrix $W$.  If we had not introduced $\gamma_j$, then $H_j$ would depend on $\lambda_j(M)$ (and hence on $W$).  Since $H_j$ only depends on $D$, Lemma \ref{lemma:largenet} is applicable, and the net $\mathcal{N}$, introduced below, also depends only on $D$.  

Let $c_0 > 0$ be a constant (depending on $\delta$, $k$, and the sub-gaussian moments of $\xi$ and $\zeta$) to be chosen later.  Then, by Lemma \ref{lemma:largenet}, there exists a subset $\mathcal{N}$ of the unit ball in $\mathbb{R}^r$ such that
\begin{enumerate}[(i)]
\item for every unit vector $v \in \Span\{V_j \cup H_j\}$, there exists $v' \in \mathcal{N}$ such that $\|v - v'\| \leq c_0$,
\item we have
$$ |\mathcal{N}| \leq n^2 \left( 1 + \frac{2}{c_0} \right)^{12 \tau \frac{\sqrt{n}}{\sqrt{r}}n + 4 \tau \frac{ \sqrt{n}}{\sqrt{r} n^{\eps_0/2}}n + 2 (\log n)^{c' \log \log n} + 2k}, $$
\item $\mathcal{N}$ depends only on $D$, $J_D$, $J_B^\mathrm{T}$, and $c_0$.  
\end{enumerate}
We now condition on the sub-matrix $D$ such that the first two properties hold.  By independence, this conditioning does not effect the matrices $A$ and $B$.  We now treat $\mathcal{N}$ as a deterministic set. 

Returning to \eqref{eq:eigen1}, on the event $\Omega_{n,1}$, we have
\begin{align*}
	2 C_0 \sqrt{n} \|x_j \| \geq \| (A - \lambda_j(M) I ) x_j \| &= \|B y_j + J_B y_j + J_A x_j \| \\
		&= \|B y_j + J' v_j(M) \| \\
		& \geq \| P_H B y_j \| \\
		& \geq \| P_H B v_j \| - C_0 \eps \sqrt{n},
\end{align*}
where $J' = \begin{pmatrix} J_A & J_B \end{pmatrix}$, $H = (\range J')^\perp$, and $P_H$ is the orthogonal projection onto $H$.  (Note that $H$ and $H_j$ are different subspaces.)  Thus, we conclude that
\begin{equation} \label{eq:6nxjbnd}
	2 C_0 \sqrt{n} \|x_j \| \geq \left( \sqrt{1 - \eps^2 \tau^2} - \eps \right) \inf_{\substack{v \in \Span\{V_j \cup H_j\} \\ \|v\| = 1}} \|P_H B v \| - C_0 \eps \sqrt{n}. 
\end{equation}

We now obtain a lower bound for 
$$ \inf_{\substack{v \in \Span\{V_j \cup H_j\} \\ \|v\| = 1}} \|P_H B v \| $$
as in the proof of Theorem \ref{thm:diag}.  Indeed, for $c_0$ sufficiently small, we find 
\begin{align*}
	\Prob &\left( \inf_{\substack{v \in \Span\{V_j \cup H_j\} \\ \|v\| = 1}} \|P_H B v \| \leq \frac{1}{100} \sqrt{m-k} \right) \\
		&\leq 2 n^2 \left( 1 + \frac{2}{c_0} \right)^{12 \tau \frac{\sqrt{n}}{\sqrt{r}}n + 4 \tau \frac{ \sqrt{n}}{\sqrt{r} n^{\eps_0/2}}n + 2 (\log n)^{c' \log \log n} + 2k} \exp(-c(m-k)) + \Prob\left(\Omega_{n,1}^c \right).
\end{align*}
Therefore, from \eqref{eq:6nxjbnd} and by taking $\tau$ sufficiently small, we find that
$$ \eps \tau \geq \|x_j \| \geq \frac{ \sqrt{1 - \eps^2 \tau^2} - \eps }{ 200 C_0 } \sqrt{ \frac{m-k}{n} } - \frac{\eps}{2} $$
with probability at least $1 - C_2 \exp \left( -c_2 (\log n)^{c_2 \log \log n} \right)$.  This is a contradiction for $\eps$ sufficiently small.  Thus, we conclude that, on the same event, either $\|x_j\| \geq \eps \tau$ or $\|x_j\| \leq \frac{1}{n^{1 - \eps_0}}$.  
\end{proof}

\subsection{Proof of Theorem \ref{thm:singlerank1}}

In order to prove Theorem \ref{thm:singlerank1}, we will apply Theorem \ref{thm:BY}.  

Without loss of generality, we assume $\theta > 0$.  If $\theta < 0$, one can consider $-W - J$ instead of $W + J$.  Indeed, $-W$ is a Wigner matrix with atom variable $-\xi, -\zeta$.  In addition, $-W - J$ and $W+J$ have the same eigenvectors while the eigenvalues only differ by sign.  

Recall that $M := W + J$.  Beginning with the eigenvalue equation
$$ M v_j(M) = \lambda_j(M) v_j(M), $$
we multiply on the left by $v_j(W)^\mathrm{T}$ to obtain
$$ \lambda_j(W) v_j(W)^\mathrm{T} v_j(M) + v_j(W)^\mathrm{T} J v_j(M) = \lambda_j(M) v_j(W)^\mathrm{T} v_j(M). $$
Thus, we find that
$$ |v_j(W)^\mathrm{T} J v_j(M) | \leq |\lambda_j(M) - \lambda_j(W)|. $$
As $J$ has rank $1$, eigenvalue interlacing (see, for instance, \cite[Exercise III.2.4]{Bhatia}) implies that
$$ |v_j(W)^\mathrm{T} J v_j(M) | \leq \lambda_{j+1}(W) - \lambda_j(W). $$
Since $J = \theta u u^\mathrm{T}$, we conclude that
$$ |v_j(W) \cdot u| |v_j(M) \cdot u| \leq \frac{\lambda_{j+1}(W) - \lambda_j(W)}{\theta}. $$

We will bound $\lambda_{j+1}(W) - \lambda_j(W)$ above using the rigidity of eigenvalues estimate from \cite{EYY}.  Indeed, by \cite[Theorem 2.2]{EYY}, it follows that
$$ \lambda_{j+1}(W) - \lambda_j(W) \leq \frac{C (\log n)^{c \log \log n}}{\sqrt{n}} $$
with probability $1 - o(1)$ for some constants $C, c > 0$.  Thus, we obtain that
$$ \sqrt{n} |v_j(W) \cdot u| |v_j(M) \cdot u| \leq \frac{C (\log n)^{c \log \log n}}{\theta} $$
with probability $1 - o(1)$.  

From Theorem \ref{thm:BY}, we have
$$ \sqrt{n} |v_j(W) \cdot u| \geq \frac{1}{\log n} $$
with probability $1 - o(1)$.  Thus, we conclude that
$$ |v_j(M) \cdot u| \leq \frac{C (\log n)^{(c+1) \log \log n}}{\theta} $$
with probability $1 - o(1)$, and the proof is complete.

\appendix

\section{Proof of Lemma \ref{lem:beta}}\label{appendix:beta}

For each $n \geq 1$, let $V_n$ and $W_n$ be independent $\chi^2$-distribution random variables with $\alpha_n$ and $\beta_n$ degrees of freedom respectively.  Then
$$ \frac{V_n}{V_n + W_n} \sim \mathrm{Beta} \left( \frac{\alpha_n}{2}, \frac{\beta_n}{2} \right). $$
This can be verified by computing the distribution of the ratio directly; see \cite{S} for details.  

Thus, in order to prove Lemma \ref{lem:beta}, it suffices to show that
\begin{equation} \label{eq:beta:show}
	\sqrt{ \frac{(\alpha_n + \beta_n)^3}{2 \alpha_n \beta_n} } \left( \frac{V_n}{V_n + W_n} - \frac{\alpha_n}{\alpha_n + \beta_n} \right) \longrightarrow N(0,1) 
\end{equation}
in distribution as $n \to \infty$.  

We decompose the left-hand side of \eqref{eq:beta:show} as
\begin{align*}
	\sqrt{ \frac{(\alpha_n + \beta_n)^3}{2 \alpha_n \beta_n} }& \left( \frac{V_n}{V_n + W_n} - \frac{\alpha_n}{\alpha_n + \beta_n} \right) \\
	&= \sqrt{ \frac{\alpha_n + \beta_n}{2 \alpha_n \beta_n} } \left( \frac{ \beta_n V_n - \alpha_n W_n}{\alpha_n + \beta_n} \right) \frac{\alpha_n + \beta_n}{V_n + W_n} \\
	&= \sqrt{ \frac{\alpha_n + \beta_n}{2 \alpha_n \beta_n} } \left( \frac{ \beta_n \sqrt{\alpha_n} \left( \frac{V_n - \alpha_n}{\sqrt{\alpha_n}} \right) - \alpha_n \sqrt{\beta_n} \left( \frac{W_n - \beta_n}{\sqrt{\beta_n}} \right) }{ \alpha_n + \beta_n} \right) \frac{\alpha_n + \beta_n}{V_n + W_n} \\
	&= \left[ \sqrt{ \frac{\beta_n}{\alpha_n + \beta_n}} \left( \frac{V_n - \alpha_n}{\sqrt{2 \alpha_n}} \right) - \sqrt{\frac{\alpha_n}{\alpha_n + \beta_n}} \left( \frac{W_n - \beta_n}{\sqrt{2 \beta_n}} \right) \right]  \frac{\alpha_n + \beta_n}{V_n + W_n}.
\end{align*}

By definition of the $\chi^2$-distribution, $V_n$ has the same distribution as the sum of $\alpha_n$ independent squared standard normal random variables.  Similarly, $W_n$ has the same distribution as the sum of $\beta_n$ independent squared standard normal random variables.  Thus, by the central limit theorem, 
$$ \frac{V_n - \alpha_n}{\sqrt{2 \alpha_n}} \longrightarrow N(0,1) \quad \text{and} \quad \frac{W_n - \beta_n}{\sqrt{2 \beta_n}} \longrightarrow N(0,1) $$
in distribution as $n \to \infty$.  Moreover, since $V_n$ and $W_n$ are independent, we conclude that 
$$ \sqrt{ \frac{\beta_n}{\alpha_n + \beta_n}} \left( \frac{V_n - \alpha_n}{\sqrt{2 \alpha_n}} \right) - \sqrt{\frac{\alpha_n}{\alpha_n + \beta_n}} \left( \frac{W_n - \beta_n}{\sqrt{2 \beta_n}} \right) \longrightarrow N(0,1) $$
in distribution as $n \to \infty$.  Here we used Slutsky's theorem (see Theorem 11.4 in \cite[Chapter 5]{G}) since both 
$$ \sqrt{ \frac{\beta_n}{\alpha_n + \beta_n}} \quad \text{and} \quad \sqrt{\frac{\alpha_n}{\alpha_n + \beta_n}} $$
converge to limits in $[0,1]$ by supposition.  

Finally, by the law of large numbers, we observe that
$$ \frac{\alpha_n + \beta_n}{V_n + W_n} \longrightarrow 1 $$
almost surely as $n \to \infty$.  Thus, by another application of Slutsky's theorem, we conclude that
$$ \left[ \sqrt{ \frac{\beta_n}{\alpha_n + \beta_n}} \left( \frac{V_n - \alpha_n}{\sqrt{2 \alpha_n}} \right) - \sqrt{\frac{\alpha_n}{\alpha_n + \beta_n}} \left( \frac{W_n - \beta_n}{\sqrt{2 \beta_n}} \right) \right]  \frac{\alpha_n + \beta_n}{V_n + W_n} \longrightarrow N(0,1) $$
in distribution as $n \to \infty$, and the proof is complete.

\section{Proof of Lemmas \ref{lemma:structure} and \ref{lemma:structure2}} \label{sec:structure}

We now present the proof of Lemmas \ref{lemma:structure} and \ref{lemma:structure2}.

\begin{proof}[Proof of Lemma \ref{lemma:structure}]
By the spectral theorem, the eigenvectors $v_1(B), \ldots, v_r(B)$ of $B$ form an orthonormal basis.  Write $y= \sum_{i=1}^r \alpha_i v_i(B)$.  Define
$$ q := \sum_{i : |\lambda_i(B)| > \tau \|A\|} \alpha_i v_i(B), $$
and set $v := y-q$.  Clearly, $v$ and $q$ are orthogonal.  Moreover, 
$$ v \in \Span\{ v_i(B) : |\lambda_i(B)| \leq \tau \|A\| \} $$
by construction.  It remains to show $\|q\| \leq \eps$.  

We now utilize the equation $By = Ax$.  We first consider the case when $\|A\| = 0$.  In this case, $By = 0$; in other words, $y$ is in the null space of $B$.  Thus, the vector $q$ is zero, and the claim follows.  

Assume $\|A \| > 0$.  Since $\|Ax \| \leq \|A \| \|x\| \leq \eps \tau \|A\|$, we have $\|B y\| \leq \eps \tau \|A\|$.  Thus, by the spectral theorem, we obtain
$$ \|B y\|^2 = \sum_{i=1}^r \lambda_i^2(B) |\alpha_i|^2 \leq \eps^2 \tau^2 \|A\|^2. $$
By definition of $q$, this implies
$$ \tau^2 \|A\|^2 \|q\|^2 = \tau^2 \|A\|^2 \sum_{i : |\lambda_i(B)| > \tau \|A\|} |\alpha_i|^2 \leq \sum_{i : |\lambda_i(B)| > \tau \|A\|} \lambda_i^2(B) |\alpha_i|^2 \leq \eps^2 \tau^2 \|A\|^2. $$
Therefore, we conclude that 
$$ \|q\|^2 \leq \frac{\eps^2 \tau^2 \|A\|^2}{\tau^2 \|A\|^2} = \eps^2, $$
and the proof is complete.  
\end{proof}

\begin{proof}[Proof of Lemma \ref{lemma:structure2}]
Let $\{\eta_k\}_{k =1}^\infty$ be a non-increasing sequence of non-negative real numbers such that $\lim_{k \to \infty} \eta_k = 0$ and $B-\eta_k I$ is invertible for all $k \geq 1$.  It is always possible to find such a sequence since $B$ has at most $r$ distinct eigenvalues.  

Without loss of generality, we assume $0 < \eps < 1$.  Indeed, if $\eps \geq 1$, part \eqref{item:span} of the lemma follows by taking $q := y$ and $v := 0$.  

Define the value
\begin{equation} \label{eq:defL}
	L:= \frac{\kappa \eps \tau + \eta_k}{\eps} 
\end{equation}
and the subspace
$$ W := \Span \{ v_i(B) : |\lambda_i(B)| \leq \tau \kappa\}. $$

We make two simple observations related to the subspace $W$.
\begin{itemize}
\item If $|\lambda_i(B)| \leq \tau \kappa$, then, by the triangle inequality, 
$$ |\lambda_i(B) - \eta_k| \leq \tau \kappa + \eta_k \leq L. $$
Here the last inequality follows from the fact that $0 < \eps < 1$.  
\item If $|\lambda_i(B) - \eta_k| \leq L$, then
$$ |\lambda_i(B)| \leq L + \eta_k = \kappa \tau + \eta_k \left( 1 + \frac{1}{\eps} \right). $$
\end{itemize}
From the two observations above, we conclude that, for all $k$ sufficiently large (i.e. $\eta_k$ sufficiently small),
$$ \{ 1 \leq i \leq r : |\lambda_i(B)| \leq \tau \kappa \} = \{1 \leq i \leq r : |\lambda_i(B) - \eta_k| \leq L \}. $$
Fix $k$ sufficiently large so that the above equality holds.  Then 
\begin{equation} \label{eq:charW}
	W = \Span \{ v_i(B) : |\lambda_i(B) - \eta_k| \leq L \}. 
\end{equation}
Let 
$$ V := \Span \left\{ W \cup \range\left(( B-\eta_k I)^{-1} J \right) \right\}. $$
The bound in \eqref{eq:dimbnd} follows from the fact that the rank of $( B - \eta_k I)^{-1} J$ is no larger than the rank of $J$ (see \cite[Section 0.4]{HJ}).    

Since $B$ is Hermitian, we have
$$ \lambda_i(B - \eta_k I) = \lambda_i(B) - \eta_k, \quad v_i(B - \eta_k I) = v_i(B) $$
for all $1 \leq i \leq r$.  In particular, by the spectral theorem, we have
\begin{equation} \label{eq:sdBinv}
	(B - \eta_k I)^{-1} = \sum_{i=1}^r \frac{1}{\lambda_i(B) - \eta_k} v_i(B) v_i(B)^\ast. 
\end{equation} 

We rewrite $By = (A+J)x$ as
$$ (B- \eta_k I) y = Ax - \eta_k y + Jx, $$
and hence obtain
$$ y = (B- \eta_k I)^{-1}(Ax - \eta_k y) + (B- \eta_k I)^{-1} Jx. $$
Decompose
$$ Ax - \eta_k y = \sum_{i=1}^r \alpha_i v_i(B). $$
Then, by \eqref{eq:sdBinv}, we have
\begin{align*}
	(B-\eta_k I)^{-1} (Ax - \eta_k y) = \sum_{i=1}^r \frac{1}{\lambda_i(B) - \eta_k} \alpha_i v_i(B) = q + w,
\end{align*}
where
$$ q := \sum_{i : |\lambda_i(B) - \eta_k| > L} \frac{1}{\lambda_i(B) - \eta_k} \alpha_i v_i(B) $$
and 
$$ w := \sum_{i : |\lambda_i(B) - \eta_k| \leq L} \frac{1}{\lambda_i(B) - \eta_k} \alpha_i v_i(B). $$
Set $v := w + (B- \eta_k I)^{-1} Jx$.  From \eqref{eq:charW}, we find that $w \in W$, and hence $v \in V$.  It remains to show $\|q\| \leq \eps$.  

By definition of $q$, we obtain
$$ \|q\|^2 = \sum_{i : |\lambda_i(B) - \eta_k| > L} \frac{1}{ |\lambda_i(B) - \eta_k|^2} |\alpha_i|^2 \leq \frac{1}{L^2} \|A x - \eta_k y\|^2. $$
By supposition, we have $\|Ax - \eta_k  y\| \leq \kappa \eps \tau + \eta_k$.  Thus, in view of \eqref{eq:defL}, we conclude that 
$$ \|q\|^2 \leq \frac{(\kappa \eps \tau + \eta_k)^2}{L^2} = \eps^2, $$
and the proof of the lemma is complete.  
\end{proof}

\section*{Acknowledgments}
The first author thanks Professor Roman Vershynin for clarifications.  The third author would like to thank Professor Tiefeng Jiang for many useful discussions and constant encouragement.  The authors also thank the anonymous referee for valuable comments.


\begin{thebibliography}{10}

\bibitem{AGZ}
G.~W. Anderson, A.~Guionnet, and O.~Zeitouni.
\newblock {\em An introduction to random matrices}, volume 118 of {\em
  Cambridge Studies in Advanced Mathematics}.
\newblock Cambridge University Press, Cambridge, 2010.

\bibitem{AB}
S.~Arora and A.~Bhaskara.
\newblock Eigenvectors of random graphs: delocalization and nodal domains.
\newblock Preprint, available at {\verb+http://www.cs.princeton.edu/~bhaskara/files/deloc.pdf+}.

\bibitem{ABAP}
A.~Auffinger, G.~Ben Arous, and S. P\'{e}ch\'{e}.
\newblock Poisson convergence for the largest eigenvalues of heavy tailed random matrices.
\newblock {\em Ann. Inst. H. Poincar\'{e} Probab. Statist.}, Volume 45, Number 3 (2009), 589--610.

\bibitem{BS}
Z.~Bai and J.~W. Silverstein.
\newblock {\em Spectral analysis of large dimensional random matrices}.
\newblock Springer Series in Statistics. Springer, New York, second edition,
  2010.
  
\bibitem{BBP}
J.~Baik, G.~Ben Arous, and S. P\'{e}ch\'{e}.
\newblock Phase transition of the largest eigenvalue for nonnull complex sample covariance matrices.
\newblock {\em  Ann. Probab.}, Volume 33, Number 5 (2005), 1643--1697.

\bibitem{BKY}
R.~Bauerschmidt, A.~Knowles, and H.-T. Yau.
\newblock Local semicircle law for random regular graphs.
\newblock {\em Preprint, \href{http://arxiv.org/abs/1503.08702}{arXiv:1503.08702}}, 2015.

\bibitem{BG}
F.~Benaych-Georges and A.~Guionnet.
\newblock Central limit theorem for eigenvectors of heavy tailed matrices. 
\newblock{\em Electron. J. Prob.}, Vol. 19 (2014), no. 54, 1--27.

\bibitem{BP} 
F.~Benaych-Georges and S.~P\'{e}ch\'{e}.
\newblock Localization and delocalization for heavy tailed band matrices.
\newblock {\em Ann. Inst. H. Poincar\'{e} Probab. Statist.}, Volume 50, Number 4 (2014), 1385--1403.

\bibitem{BR}
F.~Benaych-Georges and R. Rao Nadakuditi.
\newblock Eigenvalues and eigenvectors of finite, low rank perturbation of large random matrices.
\newblock{ \em Advances in Mathematics}, 227(1):494--521, 2009.

\bibitem{Bhatia}
R.~Bhatia.
\newblock {\em Matrix analysis}, volume 169 of {\em Graduate Texts in
  Mathematics}.
\newblock Springer-Verlag, New York, 1997.

\bibitem{BEKHY}
A.~Bloemendal, L.~Erdos, A.~Knowles, H.-T. Yau, and J.~Yin.
\newblock Isotropic local laws for sample covariance and generalized wigner
  matrices.
\newblock {\em Electron. J. Probab}, 19(33):1--53, 2014.

\bibitem{BorG}
C.~Bordenave and A.~Guionnet. 
\newblock Localization and delocalization of eigenvectors for heavy-tailed random matrices. 
\newblock {\em Probability Theory and Related Fields}, Vol. 157 (2013), no. 3, 885--953.

\bibitem{BorG2}
C.~Bordenave and A.~Guionnet.
\newblock Delocalization at small energy for heavy-tailed random matrices.
\newblock {\em Preprint, \href{http://arxiv.org/abs/1603.08845}{arXiv:1603.08845}}, 2016.  

\bibitem{BY}
P.~Bourgade and H.-T. Yau.
\newblock The eigenvector moment flow and local quantum unique ergodicty.
\newblock {\em Preprint, \href{http://arxiv.org/abs/1312.1301}{arXiv:1312.1301}}, 2013.

\bibitem{BL}
S.~Brooks and E.~Lindenstrauss.
\newblock Non-localization of eigenfunctions on large regular graphs.
\newblock {\em Israel Journal of Mathematics}, 193(1):1--14, 2013.

\bibitem{CAB}
C.~Cacciapuoti, A.~Maltsev, and B.~Schlein.
\newblock Local marchenko-pastur law at the hard edge of sample covariance
  matrices.
\newblock {\em Preprint, \href{http://arxiv.org/abs/1206.1730}{arXiv:1206.1730}}, 2012.

\bibitem{CB}
P.~Cizeau and J.~P.~Bouchaud. 
\newblock Theory of L\'{e}vy matrices.
\newblock {\em Phys. Rev. E} 50 (1994).  

\bibitem{CHM}
S.~Cs{{\"o}}rg{\H{o}}, E.~Haeusler, and D.~M. Mason.
\newblock The asymptotic distribution of extreme sums.
\newblock {\em Ann. Probab.}, 19(2):783--811, 1991.

\bibitem{deHaan}
L.~de~Haan and A.~Ferreira.
\newblock {\em Extreme value theory: An introduction}.
\newblock Springer Series in Operations Research and Financial Engineering.
  Springer, New York, 2006.

\bibitem{DLL}
Y.~Dekel, J.~R. Lee, and N.~Linial.
\newblock Eigenvectors of random graphs: nodal domains.
\newblock {\em Random Structures \& Algorithms}, 39(1):39--58, 2011.

\bibitem{DP}
I.~Dumitriu and S.~Pal.
\newblock Sparse regular random graphs: spectral density and eigenvectors.
\newblock {\em Annals of Probability}, 40(5):2197--2235, 2012.

\bibitem{E}
L.~Erd{\H{o}}s.
\newblock Universality of {W}igner random matrices: a survey of recent results.
\newblock {\em Uspekhi Mat. Nauk}, 66(3(399)):67--198, 2011.

\bibitem{EK} 
L.~Erd\"{o}s and A. Knowles. 
\newblock Quantum Diffusion and Eigenfunction Delocalization in a Random Band Matrix Model.
\newblock {\em Communications in Mathematical Physics}, 303 (2011), no. 2, 509--554.

\bibitem{EK2}
L.~Erd\"{o}s and A. Knowles.
\newblock Quantum Diffusion and Delocalization for Band Matrices with General Distribution
\newblock {\em Annales Henri Poincar\'{e}}, 12 (2011), no. 7, 1227--1319.  

\bibitem{EKY}
L.~Erd{\H{o}}s, A.~Knowles, and H.-T. Yau.
\newblock Averaging fluctuations in resolvents of random band matrices.
\newblock {\em Ann. Henri Poincar{\'e}}, 14(8):1837--1926, 2013.

\bibitem{EKYY}
L.~Erd{\H{o}}s, A.~Knowles, H.-T. Yau, and J.~Yin.
\newblock Spectral statistics of Erd{\H o}s-R{\'e}nyi graphs I: Local
  semicircle law.
\newblock {\em Ann. Probab.}, 41(3B):2279--2375, 2013.

\bibitem{EKYY2} 
L.~Erd{\H{o}}s, A.~Knowles, H.-T. Yau, and J.~Yin.
\newblock Delocalization and Diffusion Profile for Random Band Matrices. 
\newblock {\em Communications in Mathematical Physics}, 323 (2013), no. 1, 367--416.

\bibitem{EYTV}
L.~Erd{\H{o}}s, J.~Ram{\'{\i}}rez, B.~Schlein, T.~Tao, V.~Vu, and H.-T. Yau.
\newblock Bulk universality for {W}igner {H}ermitian matrices with
  subexponential decay.
\newblock {\em Math. Res. Lett.}, 17(4):667--674, 2010.

\bibitem{ESY2}
L.~Erd{\H{o}}s, B.~Schlein, and H.-T. Yau.
\newblock Local semicircle law and complete delocalization for {W}igner random
  matrices.
\newblock {\em Comm. Math. Phys.}, 287(2):641--655, 2009.

\bibitem{ESY1}
L.~Erd{\H{o}}s, B.~Schlein, and H.-T. Yau.
\newblock Semicircle law on short scales and delocalization of eigenvectors for
  {W}igner random matrices.
\newblock {\em Ann. Probab.}, 37(3):815--852, 2009.

\bibitem{ESY3}
L.~Erd{\H{o}}s, B.~Schlein, and H.-T. Yau.
\newblock Wegner estimate and level repulsion for {W}igner random matrices.
\newblock {\em Int. Math. Res. Not. IMRN}, (3):436--479, 2010.

\bibitem{ESY4}
L.~Erd{\H{o}}s, B.~Schlein, and H.-T. Yau.
\newblock Universality of random matrices and local relaxation flow.
\newblock {\em Invent. Math.}, 185(1):75--119, 2011.

\bibitem{ESYY}
L.~Erd{\H{o}}s, B.~Schlein, H.-T. Yau, and J.~Yin.
\newblock The local relaxation flow approach to universality of the local
  statistics for random matrices.
\newblock {\em Ann. Inst. Henri Poincar{\'e} Probab. Stat.}, 48(1):1--46, 2012.

\bibitem{EY}
L.~Erd{\H{o}}s and H.-T. Yau.
\newblock Universality of local spectral statistics of random matrices.
\newblock {\em Bull. Amer. Math. Soc. (N.S.)}, 49(3):377--414, 2012.

\bibitem{EYY2}
L.~Erd{\H{o}}s, H.-T. Yau, and J.~Yin.
\newblock Universality for generalized {W}igner matrices with {B}ernoulli
  distribution.
\newblock {\em J. Comb.}, 2(1):15--81, 2011.

\bibitem{EYY1}
L.~Erd{\H{o}}s, H.-T. Yau, and J.~Yin.
\newblock Bulk universality for generalized {W}igner matrices.
\newblock {\em Probab. Theory Related Fields}, 154(1-2):341--407, 2012.

\bibitem{EYY}
L.~Erd{\H{o}}s, H.-T. Yau, and J.~Yin.
\newblock Rigidity of eigenvalues of generalized {W}igner matrices.
\newblock {\em Adv. Math.}, 229(3):1435--1515, 2012.

\bibitem{Fe}
W.~Feller.
\newblock {\em An introduction to probability theory and its applications.
  {V}ol. {II}.}
\newblock Second edition. John Wiley \& Sons, Inc., New York-London-Sydney,
  1971.
  
\bibitem{FK}
Z. F\"{u}redi and J. Koml\'{o}s.
\newblock The eigenvalues of random symmetric matrices.
\newblock {\em Combinatorica}, 1(3):233--241, 1981.
  
\bibitem{Gfr}
L.~Gallardo.
\newblock Au sujet du contenu probabiliste d'un lemme d'Henri Poincar\'{e}. 
\newblock Saint-Flour Probability Summer Schools (Saint-Flour, 1979/1980).
{\em Ann. Sci. Univ. Clermont-Ferrand II Math.} No. 19 (1981), 185--190.

\bibitem{G}
A.~Gut.
\newblock {\em Probability: a graduate course}.
\newblock Springer Texts in Statistics. Springer, New York, 2005.

\bibitem{HJ}
R.~A. Horn and C.~R. Johnson.
\newblock {\em Matrix analysis}.
\newblock Cambridge University Press, Cambridge, second edition, 2013.

\bibitem{J}
T.~Jiang.
\newblock How many entries of a typical orthogonal matrix can be approximated
  by independent normals?
\newblock {\em Ann. Probab.}, 34(4):1497--1529, 2006.

\bibitem{KY}
A.~Knowles and J.~Yin.
\newblock Eigenvector distribution of Wigner matrices.
\newblock {\em Probability Theory and Related Fields}, 155(3-4):543--582, 2013.

\bibitem{KY2}
A.~Knowles and J.~Yin.
\newblock The outliers of a deformed Wigner matrix
\newblock{\em Ann. Probab.}, Volume 42, Number 5 (2014), 1980--2031.

\bibitem{LM}
B.~Laurent and P.~Massart.
\newblock Adaptive estimation of a quadratic functional by model selection.
\newblock {\em Ann. Statist.}, 28(5):1302--1338, 2000.

\bibitem{LY}
J.~O. Lee and J.~Yin.
\newblock A necessary and sufficient condition for edge universality of
  {W}igner matrices.
\newblock {\em Duke Math. J.}, 163(1):117--173, 2014.

\bibitem{M}
J.~Matou{\v{s}}ek.
\newblock {\em Lectures on discrete geometry}, volume 212 of {\em Graduate
  Texts in Mathematics}.
\newblock Springer-Verlag, New York, 2002.

\bibitem{McSherry}
F.~McSherry.
\newblock Spectral partitioning of random graphs.
\newblock In {\em 42nd {IEEE} {S}ymposium on {F}oundations of {C}omputer
  {S}cience ({L}as {V}egas, {NV}, 2001)}, pages 529--537. IEEE Computer Soc.,
  Los Alamitos, CA, 2001.

\bibitem{Mitra}
P.~Mitra.
\newblock Entrywise bounds for eigenvectors of random graphs.
\newblock {\em Electron. J. Combin.}, 16(1):Research Paper 131, 18, 2009.

\bibitem{Newman}
M.~E. Newman.
\newblock Finding community structure in networks using the eigenvectors of
  matrices.
\newblock {\em Physical review E}, 74(3):036104, 2006.

\bibitem{Newman1}
M.~E. Newman.
\newblock Modularity and community structure in networks.
\newblock {\em Proceedings of the National Academy of Sciences},
  103(23):8577--8582, 2006.

\bibitem{NTV}
H.~Nguyen, T.~Tao, and V.~Vu.
\newblock Random matrices: tail bounds for gaps between eigenvalues.
\newblock {\em Preprint, \href{http://arxiv.org/abs/1504.00396}{arXiv:1504.00396}}, 2015.

\bibitem{OT}
S.~O'Rourke and B.~Touri.
\newblock Controllability of random systems: Universality and minimal
  controllability.
\newblock {\em Preprint, \href{http://arxiv.org/abs/1506.03125}{arXiv:1506.03125}}, 2015.

\bibitem{OVW}
S.~O'Rourke, V.~Vu, and K.~Wang.
\newblock Random perturbation of low rank matrices: Improving classical bounds.
\newblock {\em Preprint, \href{http://arxiv.org/abs/1311.2657}{arXiv:1311.2657}}, 2016.

\bibitem{PBMW}
L.~Page, S.~Brin, R.~Motwani, and T.~Winograd.
\newblock The pagerank citation ranking: bringing order to the web.
\newblock {\em Technical Report. Stanford InfoLab}, 1999.

\bibitem{PY}
N.~S. Pillai and J.~Yin.
\newblock Universality of covariance matrices.
\newblock {\em Ann. Appl. Probab.}, 24(3):935--1001, 2014.

\bibitem{PSR}
A.~Pizzo, D.~Renfrew, and A.~Soshnikov.
\newblock On Finite Rank Deformations of Wigner Matrices.
\newblock {\em Annales de l'Institut Henri Poincar\'{e} Probabilites et Statistiques}, vol. 49, no. 1, 64--94, (2013). 

\bibitem{PSL}
A.~Pothen, H.~D. Simon, and K.-P. Liou.
\newblock Partitioning sparse matrices with eigenvectors of graphs.
\newblock {\em SIAM J. Matrix Anal. Appl.}, 11(3):430--452, 1990.
\newblock Sparse matrices (Gleneden Beach, OR, 1989).

\bibitem{RS}
D.~Renfrew and A.~Soshnikov.
\newblock On Finite Rank Deformations of Wigner Matrices II: Delocalized Perturbations.
\newblock {\em Random Matrices: Theory and Applications}, vol. 2, no. 1 (2013).

\bibitem{RVssv}
M.~Rudelson and R.~Vershynin.
\newblock Smallest singular value of a random rectangular matrix.
\newblock {\em Comm. Pure Appl. Math.}, 62(12):1707--1739, 2009.

\bibitem{RVsurvey}
M.~Rudelson and R.~Vershynin.
\newblock Non-asymptotic theory of random matrices: extreme singular values.
\newblock In {\em Proceedings of the {I}nternational {C}ongress of
  {M}athematicians. {V}olume {III}}, pages 1576--1602. Hindustan Book Agency,
  New Delhi, 2010.

\bibitem{RVeigen}
M.~Rudelson and R.~Vershynin.
\newblock Delocalization of eigenvectors of random matrices with independent
  entries.
\newblock {\em Preprint, \href{http://arxiv.org/abs/1306.2887}{arXiv:1306.2887}}, 2013.

\bibitem{RVhw}
M.~Rudelson and R.~Vershynin.
\newblock Hanson-{W}right inequality and sub-{G}aussian concentration.
\newblock {\em Electron. Commun. Probab.}, vol. 18, no. 82, 1--9, 2013.

\bibitem{RVgaps}
M.~Rudelson and R.~Vershynin.
\newblock No-gaps delocalization for general random matrices.
\newblock {\em Preprint, \href{http://arxiv.org/abs/1506.04012}{arXiv:1506.04012}}, 2015.

\bibitem{Sband}
J.~Schenker. 
\newblock Eigenvector Localization for Random Band Matrices with Power Law Band Width.
\newblock {\em Communications in Mathematical Physics}, 290 (2009), no. 3, 1065--1097.  

\bibitem{SM}
J.~Shi and J.~Malik.
\newblock Normalized cuts and image segmentation.
\newblock {\em Pattern Analysis and Machine Intelligence, IEEE Transactions
  on}, 22(8):888--905, 2000.
  
\bibitem{Sosh}
A.~Soshnikov.
\newblock Poisson statistics for the largest eigenvalues of Wigner random matrices with heavy tails.
\newblock {\em Elect. Commun. in Probab.}, 9 (2004), 82--91.

\bibitem{Sosh2} 
A.~Soshnikov.
\newblock Poisson statistics for the largest eigenvalues in random matrix ensembles.  
\newblock In {\em Mathematical physics of quantum mechanics}, vol. 690 of {\em Lecture Notes in Phys.}, Springer, Berlin, 2006, pp. 351--364.  

\bibitem{Soxford}
T.~Spencer.
\newblock Random banded and sparse matrices.
\newblock In {\em The Oxford handbook of random matrix theory}.  Oxford University Press (2011), 471--488.  

\bibitem{S}
M.~D. Springer.
\newblock {\em The algebra of random variables}.
\newblock John Wiley \& Sons, New York-Chichester-Brisbane, 1979.
\newblock Wiley Series in Probability and Mathematical Statistics.

\bibitem{taka}
R.~Takahashi.
\newblock Normalizing constants of a distribution which belongs to the domain
  of attraction of the {G}umbel distribution.
\newblock {\em Statist. Probab. Lett.}, 5(3):197--200, 1987.

\bibitem{Tao-book}
T.~Tao.
\newblock {\em Topics in random matrix theory}, volume 132.
\newblock American Mathematical Soc., 2012.

\bibitem{TVedge}
T.~Tao and V.~Vu.
\newblock Random matrices: universality of local eigenvalue statistics up to
  the edge.
\newblock {\em Comm. Math. Phys.}, 298(2):549--572, 2010.

\bibitem{TVuniv}
T.~Tao and V.~Vu.
\newblock Random matrices: universality of local eigenvalue statistics.
\newblock {\em Acta Math.}, 206(1):127--204, 2011.

\bibitem{TVmeh}
T.~Tao and V.~Vu.
\newblock The {W}igner-{D}yson-{M}ehta bulk universality conjecture for
  {W}igner matrices.
\newblock {\em Electron. J. Probab.}, 16(77):2104--2121, 2011.

\bibitem{TVcovariance}
T.~Tao and V.~Vu.
\newblock Random covariance matrices: Universality of local statistics of
  eigenvalues.
\newblock {\em The Annals of Probability}, 40(3):1285--1315, 2012.

\bibitem{TVuniv-vector}
T.~Tao and V.~Vu.
\newblock Random matrices: Universal properties of eigenvectors.
\newblock {\em Random Matrices: Theory and Applications}, 1(01):1150001, 2012.

\bibitem{TVsur}
T.~Tao and V.~Vu.
\newblock Random matrices: the universality phenomenon for {W}igner ensembles.
\newblock In {\em Modern aspects of random matrix theory}, volume~72 of {\em
  Proc. Sympos. Appl. Math.}, pages 121--172. Amer. Math. Soc., Providence, RI,
  2014.

\bibitem{TVsimple}
T.~Tao and V.~Vu.
\newblock Random matrices have simple spectrum.
\newblock Available at {\tt arXiv:1412.1438}.

\bibitem{TVW}
L.~V. Tran, V.~H. Vu, and K.~Wang.
\newblock Sparse random graphs: eigenvalues and eigenvectors.
\newblock {\em Random Structures Algorithms}, 42(1):110--134, 2013.

\bibitem{V}
R.~Vershynin.
\newblock Spectral norm of products of random and deterministic matrices.
\newblock {\em Probab. Theory Related Fields}, 150(3-4):471--509, 2011.

\bibitem{RV}
R.~Vershynin.
\newblock Introduction to the non-asymptotic analysis of random matrices.
\newblock In {\em Compressed sensing}, pages 210--268. Cambridge Univ. Press,
  Cambridge, 2012.

\bibitem{vonL}
U.~von Luxburg.
\newblock A tutorial on spectral clustering.
\newblock {\em Stat. Comput.}, 17(4):395--416, 2007.

\bibitem{VW}
V.~Vu and K.~Wang.
\newblock Random weighted projections, random quadratic forms and random
  eigenvectors.
\newblock {\em Random Structures \& Algorithms}, to appear.

\bibitem{W-edge}
K.~Wang.
\newblock Random covariance matrices: universality of local statistics of
  eigenvalues up to the edge.
\newblock {\em Random Matrices: Theory and Applications}, 1(01):1150005, 2012.

\end{thebibliography}

\end{document}